\newcommand{\td}{\mathrm{d}}
\newcommand{\Lie}{\textup{Lie}}
\newcommand{\const}{\textup{const}}
\newcommand{\Ad}{\textup{Ad}}
\newcommand{\ad}{\textup{ad}}
\let\Re\relax
\DeclareMathOperator{\Re}{Re}
\DeclareMathOperator{\Det}{Det}
\newcommand{\id}{\textup{id}}
\DeclareMathOperator{\tr}{tr}
\newcommand{\rank}{\textup{rank}}
\newcommand{\cl}{\textup{cl}}
\newcommand{\Hom}{\textup{Hom}}
\DeclareMathOperator{\Tr}{Tr}
\renewcommand{\1}{\mathbf{1}}
\newcommand{\Ind}{\textup{Ind}}
\newcommand{\Sym}{\textup{Sym}}
\newcommand{\Skew}{\textup{Skew}}
\newcommand{\Herm}{\textup{Herm}}
\newcommand{\Str}{\textup{Str}}
\newcommand{\str}{\mathfrak{str}}
\newcommand{\GL}{\textup{GL}}
\newcommand{\gl}{\mathfrak{gl}}
\renewcommand{\sl}{\mathfrak{sl}}
\newcommand{\so}{\mathfrak{so}}
\newcommand{\End}{\textup{End}}
\newcommand{\Kfinite}{{K-\textup{finite}}}
\newcommand{\CC}{\mathbb{C}}
\newcommand{\FF}{\mathbb{F}}
\newcommand{\HH}{\mathbb{H}}
\newcommand{\OO}{\mathbb{O}}
\newcommand{\RR}{\mathbb{R}}
\newcommand{\ZZ}{\mathbb{Z}}
\newcommand{\calE}{\mathcal{E}}
\newcommand{\calF}{\mathcal{F}}
\newcommand{\calK}{\mathcal{K}}
\newcommand{\calO}{\mathcal{O}}
\newcommand{\calS}{\mathcal{S}}
\newcommand{\calB}{\mathcal{B}}
\newcommand{\calH}{\mathcal{H}}
\newcommand{\calP}{\mathcal{P}}
\newcommand{\calU}{\mathcal{U}}
\newcommand{\calV}{\mathcal{V}}
\newcommand{\fraka}{\mathfrak{a}}
\newcommand{\frakg}{\mathfrak{g}}
\newcommand{\frakh}{\mathfrak{h}}
\newcommand{\frakk}{\mathfrak{k}}
\newcommand{\frakl}{\mathfrak{l}}
\newcommand{\frakm}{\mathfrak{m}}
\newcommand{\frakn}{\mathfrak{n}}
\newcommand{\frako}{\mathfrak{o}}
\newcommand{\frakq}{\mathfrak{q}}
\newcommand{\fraks}{\mathfrak{s}}
\newcommand{\frakt}{\mathfrak{t}}
\newcommand{\JTP}[3]{\left\{#1,#2,#3\right\}}
\newcommand{\Set}[2]{\left\{#1\,\middle|\,#2\right\}} 
\newcommand{\set}[2]{\{#1\,|\,#2\}}
\newcommand{\half}{{\nicefrac{1}{2}}}
\newcommand{\B}[2]{B_{#1,\,#2}}
\renewcommand{\b}[1]{{\bf #1}}
\theoremstyle{plain}
\newtheorem{theorem}{Theorem}[section]
\newtheorem{proposition}[theorem]{Proposition}
\newtheorem{lemma}[theorem]{Lemma}
\newtheorem{corollary}[theorem]{Corollary}
\newtheorem{thmalph}{Theorem}[section]
\theoremstyle{definition}
\newtheorem{remark}[theorem]{Remark}
\numberwithin{equation}{section}
\title[Bessel operators on Jordan pairs and small representations]{Bessel operators on Jordan pairs and small representations of semisimple Lie groups}
\author{Jan M\"ollers}
\address{Department Mathematik, FAU Erlangen--N\"{u}rnberg, Cauerstr. 11, 91058 Erlangen, Germany}
\email{moellers@math.fau.de}
\author{Benjamin Schwarz}
\address{Institut f\"ur Mathematik, Universit\"at Paderborn, Warburger Str. 100, 33098 Paderborn, Germany}
\email{bschwarz@math.upb.de}
\begin{document}

\begin{abstract}
We provide a uniform construction of $L^2$-models for all small unitary representations in degenerate principal series of semisimple Lie groups which are induced from maximal parabolic subgroups with abelian nilradical. This generalizes previous constructions to the case of a maximal parabolic subgroup which is not necessarily conjugate to its opposite, and hence the previously used Jordan algebra methods have to be generalized to Jordan pairs.\\
The crucial ingredients for the construction of the $L^2$-models are the Lie algebra action and the spherical vector. Working in the so-called Fourier transformed picture of the degenerate principal series, the Lie algebra action is given in terms of Bessel operators on Jordan pairs. We prove that precisely for those parameters of the principal series where small quotients occur, the Bessel operators are tangential to certain submanifolds. Further, we show that the small quotients are unitarizable if and only if these submanifolds carry equivariant measures. In this case we can express the spherical vectors in terms of multivariable K-Bessel functions, and some delicate estimates for these Bessel functions imply the existence of the $L^2$-models.
\end{abstract}

\subjclass[2010]{Primary 22E46; Secondary 17C30, 33C70.}

\keywords{Degenerate principal series, small representations, minimal representation, spherical vector, Jordan pairs, Bessel operators, K-Bessel function}

\maketitle

\section*{Introduction}

In the representation theory of semisimple Lie groups the principal series plays a central role. It is constructed by parabolic induction from minimal parabolic subgroups, inducing from finite-dimensional representations of the Levi factor, and forms a standard family of representations of the group. The celebrated Harish-Chandra Subquotient Theorem asserts that every irreducible representation on a Banach space, in particular every irreducible unitary representation, is equivalent to a subquotient of a representation in the principal series. In this sense, every irreducible unitary representation can be constructed through the principal series.

However, it turns out to be a highly non-trivial problem to find the irreducible unitarizable constituents of the principal series, in particular when the rank of the group is large. Moreover, the unitary realizations of representations obtained in this way are of an algebraic nature, and often rather complicated and difficult to work with.

A more accessible setting arises if one replaces the minimal parabolic subgroup by a larger parabolic subgroup, for instance a maximal one. The corresponding parabolically induced representations are called \textit{degenerate principal series representations} and they essentially depend on a single complex parameter. This simplifies the problem of finding the irreducible constituents and determining the unitarizable ones among them. In various cases this problem has been solved, and in this way particularly small unitarizable irreducible representations are obtained, among them so-called \textit{minimal representations}.

If additionally the nilradical of the parabolic subgroup is abelian, it turns out that the Euclidean Fourier transform on the nilradical can be used to construct very explicit models of the small unitarizable quotients of the degenerate principal series, realized on Hilbert spaces of $L^2$-functions on certain orbits of the Levi factor of the parabolic. These so-called \textit{$L^2$-models} have been obtained by Rossi--Vergne~\cite{RV76}, Sahi~\cite{Sah92}, Dvorsky--Sahi~\cite{DS99,DS03}, Kobayashi--{\O}rsted~\cite{KO03c} and Barchini--Sepanski--Zierau~\cite{BSZ06} in all cases where the maximal parabolic subgroup is conjugate to its opposite parabolic subgroup. Their constructions are different in nature, and in particular do not easily generalize to the case where the parabolic subgroup is not conjugate to its opposite.

In this paper we provide a uniform approach to $L^2$-models of small representations that works for all maximal parabolic subgroups with abelian nilradical. Our key ingredients are twofold:
\begin{enumerate}
\item\label{Ingredients:1} We extend the Bessel operators, previously defined for Jordan algebras, to the context of Jordan pairs. These operators form the crucial part of the Lie algebra action.
\item\label{Ingredients:2} We express the spherical vectors in the degenerate principal series and its Fourier transformed picture in terms of multivariable K-Bessel functions.
\end{enumerate}
That the Bessel operators on Jordan algebras appear in the Lie algebra action of small representations was first observed by Mano~\cite{Man08} and later used by Hilgert--Kobayashi--M\"{o}llers~\cite{HKM14} to uniformly construct $L^2$-models for minimal representations. The occurrence of classical one-variable K-Bessel functions as spherical vectors appears first in the work by Dvorsky--Sahi~\cite{DS99} (see also \cite{HKM14,KO03c}). In this sense, our ingredients \eqref{Ingredients:1} and \eqref{Ingredients:2} are generalizations of previously used techniques.

In this setting, we show that small constituents arise precisely for those parameters where the Bessel operators are tangential to certain submanifolds. Further, these small constituents turn out to be unitarizable if and only if there exist equivariant measures on these submanifolds. In this case the Bessel operators are shown to be symmetric with respect to these measures, and the Lie algebra acts by formally skew-adjoint operators with respect to the $L^2$-inner product. To integrate this Lie algebra representation to the group level, we use the explicit description of the spherical vector in terms of a multivariable K-Bessel function and prove some delicate estimates for this Bessel function that ensure that the Lie algebra representation indeed integrates to an irreducible unitary representation on the $L^2$-space of the submanifold, providing an $L^2$-model of the small representations.\\

We now explain our results in more detail. Let $G$ be a connected simple real non-compact Lie group with maximal parabolic subgroup $P=MAN\subseteq G$ whose nilradical $N$ is abelian. Then $G/P$ is a compact Riemannian symmetric space. Write $\overline{P}=MA\overline N$ for the opposite parabolic and $\fraka$, $\frakn$ and $\overline\frakn$ for the Lie algebras of $A$, $N$ and $\overline N$, then $(\frakn,\overline\frakn)$ naturally carries the structure of a \textit{Jordan pair}. This Jordan pair is associated to a Jordan algebra if and only if $P$ and $\overline P$ are conjugate. The rank $r$ of $(\frakn,\overline\frakn)$ agrees with the rank of the compact symmetric space $G/P$.

For $\nu\in\fraka_\CC^*$ we form the degenerate principal series (smooth normalized parabolic induction)
$$ \pi_\nu = \Ind_P^G(\1\otimes e^\nu\otimes\1) $$
and realize it on a space $I(\nu)\subseteq C^\infty(\overline\frakn)$ of smooth functions on $\overline\frakn$. The structure of these representations has been completely determined by Sahi~\cite{Sah92} for $G$ Hermitian (see also Johnson~\cite{Joh90,Joh92} and {\O}rsted--Zhang~\cite{OZ95}), by Sahi~\cite{Sah95} and Zhang~\cite{Zha95} for $G$ non-Hermitian and $P$ and $\overline P$ conjugate, and by the authors~\cite{MS12} for the remaining cases. The precise results differ from case to case, but they all have a common feature: There exist
$$ 0<\nu_{r-1}<\ldots<\nu_1<\nu_0=\rho=\tfrac{1}{2}\Tr\ad_\frakn\in\fraka^* $$
such that the representation $\pi_\nu$ is reducible for $\nu=\nu_k$, $0\leq k\leq r-1$, and has a unique irreducible quotient $J(\nu_k)$ which is a small representation of rank $k$ (see Theorem~\ref{thm:StructureDegPrincipalSeries}). Here a representation is said to have \textit{rank $k$} if its $K$-types, identified with their highest weights, are contained in a single $k$-dimensional affine subspace. In particular, $J(\nu_0)$ is the trivial representation.

Strictly speaking, there are two cases in which this statement only holds if slightly modified. More precisely, if $G$ is Hermitian or if $G=SO_0(p,q)$, $p\neq q$, one has to allow parabolic induction from a possibly non-trivial unitary character of $M$ as well. Since these two special cases were treated before in great detail (see \cite{HKM14,KM08,RV76}), we exclude them in the introduction, but we remark that similar statements hold and therefore our construction is uniform. In the rest of the paper it is clearly stated which results hold in general and which need slight modifications, and we also provide the corresponding references for the respective statements.

To construct $L^2$-models of the representations $J(\nu_k)$ we use the Euclidean Fourier transform $\calF:\calS'(\overline\frakn)\to\calS'(\frakn)$ corresponding to the non-degenerate pairing $\frakn\times\overline\frakn\to\RR$ given by the Killing form. Twisting with the Fourier transform we obtain a new realization $\tilde\pi_\nu=\calF\circ\pi_\nu\circ\calF^{-1}$ of $\pi_\nu$ on $\tilde I(\nu)=\calF(I(\nu))\subseteq\calS'(\frakn)$. This realization is called the \textit{Fourier transformed picture}.

In the realization $\tilde\pi_\nu$ the Levi factor $L=MA$ essentially acts by the left-regular representation of the adjoint action of $L$ on $\frakn$. Under the $L$-action $\frakn$ decomposes into $r+1$ orbits $\calO_0,\ldots,\calO_r$, ordered such that $\calO_j$ is contained in the closure of $\calO_k$ if $j\leq k$.

In Proposition~\ref{prop:FTpicturealgebraaction} we also compute the Lie algebra action of $\tilde\pi_\nu$, showing that the non-trivial part is given in terms of so-called \textit{Bessel operators} $\calB_\lambda$ on the Jordan pair $(\frakn,\overline\frakn)$ for $\lambda=2(\rho-\nu)$. These are $\overline\frakn$-valued second order differential operators on $\frakn$ and can be defined by the simple formula
$$ \calB_\lambda=Q\left(\frac{\partial}{\partial x}\right)x+\lambda\frac{\partial}{\partial x}, \qquad \lambda\in\CC, $$
where $\frac{\partial}{\partial x}:C^\infty(\frakn)\to C^\infty(\frakn)\otimes\overline\frakn$ denotes the gradient with respect to the (renormalized) Killing form $\frakn\times\overline\frakn\to\RR$ and $Q:\overline\frakn\to\Hom(\frakn,\overline\frakn)$ the quadratic operator of the Jordan pair $(\frakn,\overline\frakn)$.

We first relate the existence of small quotients of $\tilde I(\nu)$ to the Bessel operators.

\begin{thmalph}[see Theorem~\ref{thm:pullbackBessel}]\label{thm:IntroTangential}
Let $0\leq k\leq r-1$, then $\tilde I(\nu)$ has an irreducible quotient of rank $k$ if and only if the Bessel operator $\calB_\lambda$ for $\lambda=2(\rho-\nu)$ is tangential to the orbit $\calO_k$. This is precisely the case for $\nu=\nu_k$.
\end{thmalph}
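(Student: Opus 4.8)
The plan is to prove both equivalences by routing them through the arithmetic condition $\nu=\nu_k$. For the analytic side I invoke the structure theorem. Since the Fourier transform $\calF$ intertwines $\pi_\nu$ with $\tilde\pi_\nu$, it matches their irreducible quotients, so by Theorem~\ref{thm:StructureDegPrincipalSeries} the module $\tilde I(\nu)$ has an irreducible quotient of rank $k$ exactly when $\nu=\nu_k$ (for generic $\nu$ the representation $\pi_\nu$ is irreducible of rank $r$, and at $\nu=\nu_j$ its unique irreducible quotient $J(\nu_j)$ has rank $j$, so a quotient of rank $k\le r-1$ forces $\nu=\nu_k$). It therefore remains to prove the geometric statement, that $\calB_\lambda$ with $\lambda=2(\rho-\nu)$ is tangential to $\calO_k$ if and only if $\nu=\nu_k$. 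Conceptually the link is transparent: by Proposition~\ref{prop:FTpicturealgebraaction} the only non-geometric part of the $\frakg$-action on $\tilde I(\nu)$ is carried by $\calB_\lambda$, and tangentiality to $\calO_k$ is precisely the condition under which this action descends to distributions supported on $\overline{\calO_k}$, producing the quotient realized on $\calO_k$.

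For the geometric statement I first reduce to a single base point. The orbit $\calO_k=L\cdot e$ is homogeneous and $\calB_\lambda$ is $L$-equivariant---this is exactly what places it inside the $\frakg$-action---so, writing $\calI$ for the ideal of functions vanishing on $\overline{\calO_k}$, tangentiality ($\calB_\lambda\calI\subseteq\calI\otimes\overline\frakn$) is equivalent to the single pointwise requirement $(\calB_\lambda u)(e)=0$ in $\overline\frakn$ for every $u\in\calI$. I take $e=e_1+\cdots+e_k$ for a frame of orthogonal tripotents and pass to the Peirce decomposition $\frakn=\frakn_2\oplus\frakn_1\oplus\frakn_0$ with its dual $\overline\frakn=\overline\frakn_2\oplus\overline\frakn_1\oplus\overline\frakn_0$. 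Here $T_e\calO_k=\frakn_2\oplus\frakn_1$, the conormal space is $\overline\frakn_0$, and for $u\in\calI$ the gradient $\frac{\partial u}{\partial x}(e)=:\xi$ lies in $\overline\frakn_0$, with $\xi$ ranging over all of $\overline\frakn_0$ as $u$ varies.

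Evaluating $\calB_\lambda=Q\!\left(\frac{\partial}{\partial x}\right)x+\lambda\frac{\partial}{\partial x}$ at $e$ then separates into a principal-symbol part and a subprincipal part. The free normal components of the Hessian of $u$ enter through terms $Q(\overline\frakn_0,\overline\frakn)\,e$, which vanish by the Peirce relations (the Peirce-$0$ space annihilates $e$); thus the principal symbol is automatically tangential and imposes no condition on $\lambda$. The surviving contribution comes from the tangential Hessian, which for $u\in\calI$ is forced to be $-\xi\circ\mathrm{II}$, i.e.\ the form $(t_1,t_2)\mapsto-\langle\xi,\mathrm{II}(t_1,t_2)\rangle$, where $\mathrm{II}$ is the second fundamental form of $\calO_k$ at $e$ (itself expressible through the Jordan triple product). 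One then computes that $Q(-\xi\circ\mathrm{II})\,e$ lies in $\overline\frakn_0$ and equals $-c\,\xi$ for a constant $c$ depending only on the Peirce multiplicities $d_1,d_2$ of $(\frakn,\overline\frakn)$. Hence $(\calB_\lambda u)(e)=(\lambda-c)\,\xi$, so tangentiality holds iff $\lambda=c$. A short trace computation identifies $c$ as a multiplicity-weighted sum over the tangent Peirce spaces $\frakn_2\oplus\frakn_1$, which, using $\rho=\tfrac12\Tr\ad_\frakn$ together with the multiplicity formula for the reducibility points, equals $2(\rho-\nu_k)$; thus $\lambda=c\iff\nu=\nu_k$.

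The step I expect to be the main obstacle is the evaluation of this subprincipal term, i.e.\ showing $Q(\xi\circ\mathrm{II})\,e=c\,\xi$ with $c$ the correct multiplicity constant. This requires the full Peirce calculus for the quadratic operator $Q$ together with an explicit Jordan-theoretic description of $\mathrm{II}$, and careful bookkeeping to verify that the $\overline\frakn_2\oplus\overline\frakn_1$-components of $Q(\xi\circ\mathrm{II})e$ cancel so that only a scalar multiple of $\xi\in\overline\frakn_0$ remains. Once $c$ is pinned down, matching it against $2(\rho-\nu_k)$ is a direct arithmetic check, and combining the resulting equivalence with the structure theorem completes the proof.
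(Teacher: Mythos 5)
Your architecture is sound and in fact mirrors the paper's own: both halves of the statement are routed through the arithmetic condition $\nu=\nu_k$ --- the representation-theoretic half by citing Theorem~\ref{thm:StructureDegPrincipalSeries} (which is exactly what the paper does), the geometric half by a Peirce computation at the base point $e=e_1+\cdots+e_k$. Your soft steps are correct: $L$-equivariance (Lemma~\ref{lem:BesselEquivariance}) does reduce tangentiality to the single condition $(\calB_\lambda u)(e)=0$ for all $u$ vanishing on the orbit; every Hessian term carrying a normal index is killed by the Peirce rules, since $\JTP{V_0^-}{e}{V_\ell^-}=0$ for $\ell=0,1$ and $\JTP{V_2^-}{V_2^+}{V_0^-}=0$; and for such $u$ the tangential Hessian is $-\tau(\mathrm{II}(\cdot,\cdot),\xi)$ with $\mathrm{II}(t,t')=\JTP{t_1}{\overline e}{t'_1}$ and $\xi=\nabla u(e)\in V_0^-$, so that only the $I_1\times I_1$ block survives.

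The genuine gap is the step you yourself flag as the main obstacle: the evaluation
\[
	\sum_{\alpha,\beta\in I_1}\tau\bigl(\JTP{c_\alpha}{\overline e}{c_\beta},\xi\bigr)\,\JTP{\widehat c_\alpha}{e}{\widehat c_\beta}\;=\;2kd\,\xi,
	\qquad \xi\in V_0^-,
\]
equivalently $(\calB_\lambda u)(e)=(\lambda-kd)\,\xi$, is asserted but never proven, and without it neither implication of ``tangential $\iff\nu=\nu_k$'' is established. This is not a routine trace computation, for two reasons: (i) it is not a priori clear that the left-hand side is a scalar multiple of $\xi$ at all --- scalarity needs either an irreducibility argument for $\Str(V_0)$ acting on the simple pair $V_0$ (in the spirit of Lemma~\ref{lem:basessums}) or a direct Jordan-identity manipulation; and (ii) pinning the scalar down requires the structure-constant identity $p-p_0=kd$. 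This missing step is precisely the content of the paper's proof of Theorem~\ref{thm:pullbackBessel}: by associativity of $\tau$ and completeness of the dual bases the sum collapses to $\sum_{\beta\in I_1}\JTP{\JTP{\overline e}{c_\beta}{\xi}}{e}{\widehat c_\beta}$, which by \eqref{JP16} and $D_{e,\overline e}=\id$ on $V_1^+$ equals $\sum_{\beta\in I_1}\JTP{\xi}{c_\beta}{\widehat c_\beta}$, and Lemma~\ref{lem:basessums} then evaluates this to $2(p-p_0)\,\xi=2kd\,\xi$. Supplying that computation (or the irreducibility-plus-trace variant) would close your proof. A secondary remark: the paper instead computes the full pullback $\varphi_e^*\calB_\lambda$ in the chart of Proposition~\ref{prop:LOrbits}, which is more work than your pointwise second-fundamental-form argument but yields the explicit operator on all of $(V_2^+)^\times\times V_1^+$, which is reused later (e.g.\ in the proof of Theorem~\ref{thm:BesselSymmetricOnOrbits}); your route, once completed, gives tangentiality only.
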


The previous theorem implies that the Lie algebra representation $\td\tilde\pi_{\nu_k}$ has $\tilde I_0(\nu_k)=\set{f\in\tilde I(\nu)}{f|_{\calO_k}=0}$ as a subrepresentation and hence induces a representation on the quotient $\tilde J(\nu_k)=\tilde I(\nu_k)/\tilde I_0(\nu_k)$ which is identified with $\set{f|_{\calO_k}}{f\in\tilde I(\nu)}$. This is the unique irreducible quotient of $\tilde I(\nu_k)$ which is a small representation of rank $k$. We now turn to the question whether this quotient is unitarizable.

\begin{thmalph}[see Theorem~\ref{thm:equivariantmeasures}]\label{thm:IntroMeasures}
Let $0\leq k\leq r-1$, then the irreducible quotient $\tilde J(\nu_k)$ is unitarizable if and only if the orbit $\calO_k$ carries an $L$-equivariant measure $\td\mu_k$. This is precisely the case for $(\frakg,\frakl)\not\simeq(\sl(p+q,\FF),\fraks(\gl(p,\FF)\oplus\gl(q,\FF)))$, $\FF=\RR,\CC,\HH$ with $p\neq q$, where $\frakl$ denotes the Lie algebra of $L$.
\end{thmalph}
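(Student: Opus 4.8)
The plan is to prove Theorem~\ref{thm:IntroMeasures}, characterizing when the small irreducible quotient $\tilde J(\nu_k)$ is unitarizable. The approach rests on the interplay between the geometry of the $L$-orbit $\calO_k$ and the algebraic symmetry of the Bessel operator $\calB_\lambda$. First I would establish the ``only if'' direction abstractly: a $G$-invariant inner product on $\tilde J(\nu_k)$ must in particular be $L$-invariant, and since $L$ acts (up to the normalizing character) by the regular representation on functions supported on $\calO_k$, invariance forces the inner product to be given by integration against an $L$-invariant, i.e. $L$-equivariant after accounting for the $\fraka$-weight, measure $\td\mu_k$ on $\calO_k$. Thus unitarizability of $\tilde J(\nu_k)$ cannot hold unless such a measure exists, which yields the necessity.

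For the ``if'' direction, assume $\calO_k$ carries the $L$-equivariant measure $\td\mu_k$, and form the candidate inner product $\langle f,g\rangle=\int_{\calO_k}f\overline g\,\td\mu_k$ on $\tilde J(\nu_k)$. The key point is to verify that the Lie algebra action $\td\tilde\pi_{\nu_k}$ is formally skew-adjoint with respect to this inner product. The action of $\frakl$ is by first-order (vector field plus scalar) operators whose skew-symmetry follows directly from the equivariance of $\td\mu_k$; the nontrivial ingredient is the action of $\overline\frakn$ via the Bessel operator $\calB_\lambda$ with $\lambda=2(\rho-\nu_k)$. Here I would invoke Theorem~\ref{thm:IntroTangential}, which guarantees that at $\nu=\nu_k$ the operator $\calB_\lambda$ is tangential to $\calO_k$, so that it restricts to a well-defined operator on $C^\infty(\calO_k)$; the task is then to show this restricted operator is symmetric with respect to $\td\mu_k$. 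This should follow by an integration-by-parts argument on $\calO_k$ using the explicit second-order form of $\calB_\lambda$ together with the precise relation between the parameter $\lambda$ and the equivariance weight of $\td\mu_k$, the latter being exactly the compatibility that singles out $\nu=\nu_k$.

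The explicit classification of when $\calO_k$ admits an $L$-equivariant measure then reduces the geometric condition to a case-by-case statement. Here I would use the concrete description of the orbits $\calO_0,\ldots,\calO_r$ and the stabilizer subgroups arising from the Jordan pair $(\frakn,\overline\frakn)$: an $L$-equivariant measure exists on $\calO_k\simeq L/L_k$ precisely when the modular functions of $L$ and of the stabilizer $L_k$ agree on $L_k$, equivalently when a certain character of $L_k$ built from $\det\Ad$ is trivial. Running through the classification of simple Lie algebras $\frakg$ with maximal parabolic of abelian nilradical, this unimodularity-type condition fails exactly for the families $(\sl(p+q,\FF),\fraks(\gl(p,\FF)\oplus\gl(q,\FF)))$ with $\FF=\RR,\CC,\HH$ and $p\neq q$, where the orbit carries a nonzero equivariant density weight that cannot be an honest measure. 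Combining the abstract equivalence with this explicit check yields the stated list.

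The main obstacle I anticipate is the symmetry of the Bessel operator under $\td\mu_k$. Tangentiality only ensures $\calB_\lambda$ preserves $C^\infty(\calO_k)$; upgrading this to formal self-adjointness with respect to $\td\mu_k$ requires matching the second-order and first-order terms of $\calB_\lambda$ against the divergence structure of the measure, and this is where the arithmetic of $\lambda=2(\rho-\nu_k)$ and the equivariance weight must align exactly. A secondary difficulty is ensuring that formal skew-adjointness of the Lie algebra action actually integrates to a \emph{unitary} group representation rather than merely a symmetric Lie algebra representation; resolving this fully will require the analytic estimates on the spherical vector described in the introduction, but at the level of unitarizability of the $(\frakg,K)$-module the skew-adjointness computation is the decisive step.
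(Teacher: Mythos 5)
Your ``if'' direction and the classification step coincide with the paper's actual route: Theorem~\ref{thm:equivariantmeasures} establishes existence of the equivariant measure by exactly the criterion you describe (since $L$ is unimodular, $\calO_k\cong L/H_e$ carries a relatively invariant measure if and only if the modular function $\Delta_{H_e}$ extends to a positive character of $L$), with the case-by-case input isolated in Proposition~\ref{prop:ClassificationV1}, and the symmetry of $\calB_{kd}$ on $L^2(\calO_k,\td\mu_k)$ is Theorem~\ref{thm:BesselSymmetricOnOrbits}, proved by integration by parts in the local coordinates $\varphi_e$, as you outline. One correction here: what is needed (and what exists for $1\leq k\leq r-1$) is a \emph{relatively} invariant measure with character $\chi_{kd}$, not an invariant one, so the criterion is not that ``the modular functions of $L$ and $H_e$ agree on $H_e$'' but that $\Delta_{H_e}$ extends to a positive character of $L$; the measure itself scales by $\chi_{kd}\neq 1$.

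Where you genuinely depart from the paper is the ``only if'' direction, and there your argument has a gap. The paper does not prove ``unitarizable $\Rightarrow$ measure exists'' by an abstract argument: it quotes the classification of unitarizable quotients (Theorem~\ref{thm:StructureDegPrincipalSeries}, due to Sahi, Zhang and \cite{MS12}), which says that for $k>0$ the quotient is unitarizable if and only if $(\frakg,\frakl)$ is not one of the exceptional pairs, and observes that Theorem~\ref{thm:equivariantmeasures} produces exactly the same exceptional list, so the equivalence follows by matching the two classifications (the ``if'' direction is then reproved constructively via the $L^2$-model). Your proposed direct argument asserts that $L$-invariance of the inner product forces it to be integration against a measure on $\calO_k$. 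That is false as stated: there are many $L$-invariant Hermitian forms on functions on $\calO_k$ whose Schwartz kernels are not supported on the diagonal --- Knapp--Stein-type pairings are precisely of this form. What forces diagonality is invariance under $\overline\frakn$, which in the Fourier-transformed picture acts by multiplication by the real-valued functions $\tau(\cdot,a)$, $a\in V^-$ (Proposition~\ref{prop:FTpicturealgebraaction}); since these separate points of $\calO_k$, the kernel of the form must be a measure carried by the diagonal, positivity makes it a positive measure, and only then does $\frakl$-invariance give the $\chi_{kd}$-equivariance. Even with this repaired there remains an analytic issue you would need to address: the invariant form is defined only on the $K$-finite vectors, which are real-analytic functions on $\calO_k$ containing no bump functions, so a kernel-theorem argument requires extending the form continuously to test functions, or else passing to the unitary completion and invoking Mackey theory for $\overline P=L\ltimes\overline N$ --- in which case one must also identify the abstract group action on the completion with the concrete formulas of Proposition~\ref{prop:FTpicturegroupaction}, a point the paper itself treats with care in the proof of Theorem~\ref{thm:UniRepOnL2}.
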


The space $L^2(\calO_k,\td\mu_k)$ is an obvious candidate for the Hilbert space completion of the unitarizable quotient $\tilde J(\nu_k)$. In this case the Lie algebra has to act by skew-adjoint operators on $L^2(\calO_k,\td\mu_k)$. The cruciual part of this action is given in terms of the Bessel operators which have to be self-adjoint on $L^2(\calO_k,\td\mu_k)$.

\begin{thmalph}[see Theorem~\ref{thm:BesselSymmetricOnOrbits}]\label{thm:IntroSymmetric}
Let $0\leq k\leq r-1$ and assume that the orbit $\calO_k$ carries an $L$-equivariant measure $\td\mu_k$. Then the Bessel operator $\calB_\lambda$, $\lambda=2(\rho-\nu_k)$, is symmetric on $L^2(\calO_k,\td\mu_k)$.
\end{thmalph}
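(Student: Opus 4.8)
The plan is to prove symmetry by identifying $\calB_\lambda$ with its own formal adjoint with respect to $\td\mu_k$, computing that adjoint explicitly through integration by parts, and showing that the resulting lower-order defect is cancelled precisely by the transformation behaviour of the equivariant measure. First note that by Theorem~\ref{thm:IntroTangential} the operator $\calB_\lambda$ with $\lambda=2(\rho-\nu_k)$ is tangential to $\calO_k$, hence descends to a well-defined $\overline\frakn_\CC$-valued differential operator on the locally closed submanifold $\calO_k$, so that the assertion of symmetry makes sense. Concretely, symmetry means that for all $f,g\in C_c^\infty(\calO_k)$ one has $\int_{\calO_k}(\calB_\lambda f)\,\overline{g}\,\td\mu_k=\int_{\calO_k}f\,\overline{\calB_\lambda g}\,\td\mu_k$ in $\overline\frakn_\CC$. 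Since the test functions have compact support inside $\calO_k$, no boundary contributions from $\overline{\calO_k}\setminus\calO_k$ appear, and integration by parts on $\calO_k$ is legitimate.

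Writing $\partial=\frac{\partial}{\partial x}$ and $\calB_\lambda=Q(\partial)x+\lambda\partial$, I would compute the two summands separately. Let $\omega$ denote the density of $\td\mu_k$ with respect to a fixed $L$-equivariant reference measure in suitable coordinates on $\calO_k$. Integrating the first-order term $\lambda\partial$ by parts against $\omega$ produces $-\lambda\partial$ together with a zeroth-order multiplication operator involving the logarithmic derivative $\partial\log\omega$. For the second-order term $Q(\partial)x$, integrating by parts twice and commuting the multiplication operator $x$ through the constant-coefficient operator $Q(\partial)$ yields the formally symmetric piece together with a first-order defect: one part coming from the commutator $[Q(\partial),x]$, whose coefficients are the structure constants of the Jordan pair and can be rewritten through the trace form $\Tr\ad_\frakn$ that defines $\rho$, and a further density-dependent part involving $\partial\log\omega$ once more.

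Collecting these contributions, the non-self-adjoint part of $\calB_\lambda$ reduces to a first-order operator whose coefficient is the difference between $\partial\log\omega$ and a purely Jordan-pair-theoretic trace quantity $c_\lambda$. To finish, I would invoke the explicit transformation law of the $L$-equivariant measure guaranteed by Theorem~\ref{thm:IntroMeasures}: the $MA$-equivariance, and in particular the homogeneity under the dilation part $A$, pins down $\partial\log\omega$ to be exactly this trace quantity with scalar factor determined by $\rho-\nu_k$. The defect then vanishes precisely when $\lambda=2(\rho-\nu_k)$, which is the value under consideration, yielding symmetry.

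The main obstacle I anticipate is the algebraic heart of the argument, namely the identity expressing the ``divergence'' of the quadratic operator $Q$ --- the first-order defect produced by $Q(\partial)x$ --- in closed form through the trace form, and then matching it to the logarithmic derivative of the equivariant density. This is the Jordan-pair analogue of the classical Jordan-algebra trace identities and requires careful bookkeeping of the $\overline\frakn_\CC$-valued tensor indices in the double integration by parts. A secondary technical point is ensuring that the Euclidean gradient $\partial$ and the integration by parts are compatible with the intrinsic geometry of the lower-dimensional orbit $\calO_k$; this is precisely where the tangentiality from Theorem~\ref{thm:IntroTangential} is essential, as it guarantees that every operator occurring in the computation preserves the space of smooth functions on $\calO_k$.
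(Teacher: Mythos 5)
Your overall strategy --- compute the formal adjoint of $\calB_\lambda$ with respect to $\td\mu_k$ by integration by parts and show that the defect cancels exactly at $\lambda=kd=2(\rho-\nu_k)$ --- is the same as the paper's (proof of Theorem~\ref{thm:BesselSymmetricOnOrbits}). But as written the plan has a genuine gap at its central step. You propose to integrate the two summands $Q(\partial)x$ and $\lambda\partial$ by parts \emph{separately} on $\calO_k$, using the ambient constant-coefficient form of $Q(\partial)$ and the commutator $[Q(\partial),x]$. For $1\leq k\leq r-1$ this is not meaningful: the measure $\td\mu_k$ is supported on a submanifold of positive codimension, and neither summand is individually tangential to $\calO_k$ --- only the combination $Q(\partial)x+kd\,\partial$ is (Theorem~\ref{thm:pullbackBessel}). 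So neither $Q(\partial)x$ nor $\lambda\partial$ descends to an operator on functions on $\calO_k$, and there is no way to integrate them by parts against $\td\mu_k$; derivatives transversal to the orbit of (arbitrary extensions of) $f$ are simply not seen by the measure. The ambient integration by parts (Proposition~\ref{prop:BesselPartialIntegration}) is available only for Lebesgue measure on all of $V^+$. What is needed, and what the paper supplies, is an expression of the \emph{restricted} operator in honest coordinates on the orbit: the chart $\varphi_e$ of Proposition~\ref{prop:LOrbits}, in which $\varphi_e^*\calB_{kd}$ becomes an explicit second-order operator on an open subset of $V_2^+\oplus V_1^+$ (Theorem~\ref{thm:pullbackBessel}, including the term $\JTP{\widehat c_\alpha}{Q_{x_1}x_2^{-1}}{\widehat c_\beta}$, which no ambient commutator computation produces), together with the explicit density $|\Delta(x_2)|^{kd-p}$ of $\varphi_e^*\td\mu_k$ with respect to Lebesgue measure (Proposition~\ref{prop:PullbackMeasure}, whose proof itself needs the classification result Proposition~\ref{prop:ClassificationV1}, not just homogeneity under $A$).

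Second, the cancellation is heavier than ``matching $\partial\log\omega$ with a trace constant.'' In the coordinates above, the difference $(\varphi_e^*\calB_{kd})^{\mathrm{ad}}-\varphi_e^*\calB_{kd}$ has first-order parts in both the $V_2$- and the $V_1$-directions \emph{and} zeroth-order parts; the paper kills them by (i) exploiting $\overline Q_{[e]}$-equivariance of operator and measure to reduce the pointwise identity to points with $x_1=0$ (using Proposition~\ref{prop:orbitcoordinates}), and (ii) the Peirce summation formulas of Lemma~\ref{lem:basessums} together with Lemma~\ref{lem:basessums2}. The latter identity, $\sum_{\alpha,\beta\in I_1}Q_{c_\alpha,c_\beta}Q_{\widehat c_\alpha,\widehat c_\beta}|_{V_2^+}=2p_0(p-p_2)\,\id_{V_2^+}$, is exactly the ``algebraic heart'' you flag as an anticipated obstacle --- but note that the authors found no direct proof of it and verify it case-by-case from the classification, so it cannot be waved through as a routine Jordan-pair analogue of the Jordan-algebra trace identities. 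Until your plan produces the intrinsic form of the operator, the intrinsic density, and these identities, the crucial step of the proof is missing.
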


We finally obtain the $L^2$-model for $\tilde J(\nu_k)$:

\begin{thmalph}[see Theorems~\ref{thm:UniRepOnL2} and \ref{thm:Intertwiner}]\label{thm:IntroL2Models}
Let $0\leq k\leq r-1$, and assume that the orbit $\calO_k$ carries an $L$-equivariant measure $\td\mu_k$. Then the operator
$$ T_k:\tilde I(\nu_k)\to\tilde I(-\nu_k), \quad f\mapsto f|_{\calO_k}\td\mu_k, $$
is defined and intertwining for $\tilde\pi_{\nu_k}$ and $\tilde\pi_{-\nu_k}$. Its kernel is the subrepresentation $\tilde I_0(\nu_k)$ and its image is a subrepresentation $\tilde I_0(-\nu_k)\subseteq\tilde I(-\nu_k)$ isomorphic to the irreducible quotient $\tilde J(\nu_k)=\tilde I(\nu_k)/\tilde I_0(\nu_k)$. The subrepresentation $\tilde I_0(-\nu_k)$ is contained in $L^2(\calO_k,\td\mu_k)$ and the $L^2$-inner product is a $\tilde\pi_{-\nu_k}$-invariant Hermitian form, so that the representation $(\tilde\pi_{-\nu_k},\tilde I_0(-\nu_k))$ extends to an irreducible unitary representation of $G$ on $L^2(\calO_k,\td\mu_k)$.
\end{thmalph}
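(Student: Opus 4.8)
The plan is to separate the statement into an algebraic part (the intertwining operator $T_k$ together with its kernel and image) and an analytic part (the realisation on $L^2(\calO_k,\td\mu_k)$ and the passage to a genuine unitary group representation). Throughout I work on the Lie algebra level, using the explicit formulas of Proposition~\ref{prop:FTpicturealgebraaction}, in which the Levi factor acts by a twisted regular representation, $\overline\frakn$ by multiplication with linear coordinates, and $\frakn$ through the Bessel operator $\calB_\lambda$ with $\lambda=2(\rho-\nu_k)$.

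First I would check that $T_k$ is well defined, i.e.\ that $f|_{\calO_k}\td\mu_k$ is a tempered distribution belonging to $\tilde I(-\nu_k)$. The restriction $f|_{\calO_k}$ makes sense as a smooth function because the differential equations cutting out $\tilde I(\nu_k)$ are tangential to $\calO_k$ (Theorem~\ref{thm:IntroTangential}), so that the elements of $\tilde I(\nu_k)$ admit a well-defined trace on the orbit. That the resulting distribution lies in $\tilde I(-\nu_k)$ then follows from the $L$-equivariance of $\td\mu_k$: multiplication by $\td\mu_k$ shifts the character of the $L$-action by precisely the amount that replaces the parameter $\nu_k$ by $-\nu_k$. (Alternatively, one may identify $T_k$ with a suitably renormalised value of the meromorphic family of Knapp--Stein intertwining operators $\tilde I(\nu)\to\tilde I(-\nu)$ at $\nu=\nu_k$, whose distributional kernel degenerates to the measure $\td\mu_k$ on $\calO_k$; this realises both well-definedness and the intertwining property simultaneously.)

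For the intertwining property I would verify $T_k\circ\td\tilde\pi_{\nu_k}(Z)=\td\tilde\pi_{-\nu_k}(Z)\circ T_k$ on a generating set of $\frakg$. On the Levi factor this is the infinitesimal form of the $L$-equivariance of $\td\mu_k$, and on $\overline\frakn$, acting by multiplication operators, it is immediate. The decisive identity concerns $\frakn$: since the action at $\nu_k$ involves $\calB_{2(\rho-\nu_k)}$ while that at $-\nu_k$ involves $\calB_{2(\rho+\nu_k)}$, the intertwining relation amounts to the statement that these two Bessel operators are formal transposes across the measure $\td\mu_k$ on $\calO_k$ --- which is precisely the symmetry of the Bessel operator established in Theorem~\ref{thm:IntroSymmetric}. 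Once intertwining is known, the kernel is immediate: as $\td\mu_k$ is a positive measure of full support, $T_kf=0$ if and only if $f|_{\calO_k}=0$, so $\ker T_k=\tilde I_0(\nu_k)$; by the first isomorphism theorem the image $\tilde I_0(-\nu_k)=\textup{im}\,T_k$ is isomorphic to $\tilde I(\nu_k)/\tilde I_0(\nu_k)=\tilde J(\nu_k)$, which is irreducible by Theorem~\ref{thm:IntroTangential}.

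It remains to pass to the $L^2$-model. Identifying $\tilde I_0(-\nu_k)$ with the space of restrictions $\{f|_{\calO_k}\}$, the $L^2$-form $\langle f,h\rangle=\int_{\calO_k}f\overline h\,\td\mu_k$ is manifestly positive, and the factor of $i$ appearing in the action of $\frakn$ turns the symmetry of $\calB_{2(\rho-\nu_k)}$ (Theorem~\ref{thm:IntroSymmetric}) into skew-Hermiticity of $\td\tilde\pi_{-\nu_k}(Z)$; together with the Levi and $\overline\frakn$ contributions this shows that $\frakg$ acts by skew-Hermitian operators, so the form is $\tilde\pi_{-\nu_k}$-invariant. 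The remaining and genuinely hard step, which I expect to be the main obstacle, is analytic: one must show that $\tilde I_0(-\nu_k)$ actually consists of square-integrable functions, is dense in $L^2(\calO_k,\td\mu_k)$, and that the skew-Hermitian $(\frakg,K)$-action integrates to a unitary representation of $G$ on the completion. My plan is to compute the spherical vector explicitly as a multivariable K-Bessel function on $\calO_k$ and to prove, by careful estimates on this Bessel function, that it lies in $L^2(\calO_k,\td\mu_k)$; being a cyclic lowest-$K$-type generator of the irreducible module, this yields both the $L^2$-containment and --- via the transitive $L$-action on $\calO_k$ --- the density, while its analyticity furnishes enough analytic vectors to integrate the unbounded operators coming from $\frakn$ (those from $L$ and $\overline\frakn$ being geometric, respectively multiplication by unitary characters, hence already unitary on $L^2$). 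The irreducibility of the resulting unitary representation is inherited from that of the underlying $(\frakg,K)$-module.
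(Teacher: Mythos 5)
Your algebraic half is essentially the paper's proof of Theorem~\ref{thm:Intertwiner}: intertwining on $\frakl$ and $\overline\frakn$ from equivariance, intertwining on $\frakn$ from the flat transpose relation $\calB_\lambda^t=\calB_{2p-\lambda}$ (Proposition~\ref{prop:BesselPartialIntegration}) combined with the symmetry of $\calB_{kd}$ on $L^2(\calO_k,\td\mu_k)$ (Theorem~\ref{thm:BesselSymmetricOnOrbits}), kernel equal to $\tilde I_0(\nu_k)$, and the first isomorphism theorem. The genuine gap is in the analytic half: you assert that once the spherical vector is shown to lie in $L^2(\calO_k,\td\mu_k)$, its cyclicity ``yields the $L^2$-containment'' of the whole module $\tilde I_0(-\nu_k)_\Kfinite=\td\tilde\pi_{-\nu_k}(\calU(\frakg))\psi_{-\nu_k}$. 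This does not follow, and it is precisely the hard step. The algebra $\calU(\frakg)$ acts through multiplication by the unbounded linear functions $\tau(\,\cdot\,,a)$, $a\in V^-$, and through the second order Bessel operators, and applying such operators to a single $L^2$ function need not produce $L^2$ functions; no abstract cyclicity argument can substitute for estimates. The paper devotes the (unnumbered) lemma preceding Proposition~\ref{prop:gKmoduleL2}, and Proposition~\ref{prop:gKmoduleL2} itself, to exactly this: every $K$-finite vector is written as $\sum_\alpha\varphi_\alpha(m)f_\alpha(t)$ with each $f_\alpha$ a monomial in $t_1,\ldots,t_k$ times an iterated partial derivative of the radial Bessel function $F$, and membership in $L^2$ is then forced by estimates derived from the integral representations \eqref{eq:IntFormulaKBessel1}--\eqref{eq:IntFormulaKBessel2}, which bound all such terms by $K_\lambda\bigl((\tfrac{t_1}{8})^2,\ldots,(\tfrac{t_k}{8})^2\bigr)$. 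Your plan establishes only what the paper proves in Theorem~\ref{thm:PsiInL2}, i.e.\ the case with no derivatives and no polynomial factors.

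Three further points. First, your parenthetical fallback --- realizing $T_k$ as a regularized Knapp--Stein operator --- is unavailable exactly in the new cases this paper treats: when $P$ and $\overline P$ are not conjugate, no meromorphic family $\tilde I(\nu)\to\tilde I(-\nu)$ exists at all (see the introduction and \cite{MS12}), so it cannot serve even as an alternative route. Second, well-definedness of $T_k$ is subtler than ``tangentiality gives a trace'': $K$-finite vectors of $\tilde I(\nu_k)$ are smooth on the open orbit $\calO_r$, whose boundary contains $\calO_k$, so one must show they extend to $\calO_k$. The paper does this by computing $T_k\psi_{\nu_k}$ directly via Clerc's restriction theorem for K-Bessel functions \cite{Cle88}, $K_\mu(t_1,\ldots,t_k,0,\ldots,0)=\const\times K_\mu(t_1,\ldots,t_k)$ valid for $\Re\mu<1+k\tfrac{d}{2}$ (which holds exactly at $\mu=\tfrac{kd-e+1}{2}$), then propagating along the intertwining relation to $\calU(\frakg)\psi_{\nu_k}$ and extending to $\tilde I(\nu_k)$ by Casselman--Wallach; this same computation is what identifies the image with the subrepresentation generated by $\psi_{-\nu_k}$. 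Your proposal never produces the identity $T_k\psi_{\nu_k}=\const\times\psi_{-\nu_k}$. Third, for density and irreducibility the transitive $L$-action alone does not suffice: the paper's argument (Theorem~\ref{thm:UniRepOnL2}) is Mackey theory for the full parabolic $\overline P=L\ltimes\overline N$, where the characters $e^{i\tau(\cdot,a)}$ of $\overline N$ together with the transitive $L$-action make $L^2(\calO_k,\td\mu_k)$ irreducible already under $\overline P$; this gives at once that the closure of the module is all of $L^2(\calO_k,\td\mu_k)$ and that the integrated representation is $G$-irreducible. (The integration step itself needs no analytic-vector argument: a unitarizable admissible $(\frakg,K)$-module integrates by standard theory, which is what the paper invokes.)
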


For $k=1$ the representation $J(\nu_1)$ is in many cases the minimal representation of $G$ (see Corollary~\ref{cor:MinRep} for the precise statement).

We remark that in the case where $P$ and $\overline P$ are conjugate, the operator $T_k$ is a regularization of the Knapp--Stein family of standard intertwining operators $\tilde\pi_\nu\to\tilde\pi_{-\nu}$. However, in the case where $P$ and $\overline P$ are not conjugate, such a family does not exist and the operators $T_k$ are a geometric version of the non-standard intertwining operators constructed in \cite{MS12}.

In addition to the previously stated results on the Bessel operators, the proof of Theorem~\ref{thm:IntroL2Models} consists of a detailed analysis of the $K$-spherical vector in $\tilde I(\nu)$, where $K$ is a maximal compact subgroup of $G$. In Theorem~\ref{thm:PsiInL1} we express the spherical vector $\psi_\nu\in\tilde I(\nu)$ for $\Re\nu>-\nu_{r-1}$ in terms of a multivariable K-Bessel function. It is supported on the whole vector space $\frakn$ and depends holomorphically on $\nu\in\CC$. For its values at $\nu=-\nu_k$, $0\leq k\leq r-1$, we have the following result:

\begin{thmalph}[see Theorems~\ref{thm:PsiInL1} and \ref{thm:PsiInL2}]\label{thm:IntroSphericalVector}
Let $0\leq k\leq r-1$, and assume that the orbit $\calO_k$ carries an $L$-equivariant measure $\td\mu_k$. Then the spherical vector $\psi_{-\nu_k}\in\tilde I(-\nu_k)$ is given by $\psi_{-\nu_k}=\const\times\Psi_k\,\td\mu_k$, where $\Psi_k\in C^\infty(\calO_k)$ is in polar coordinates given by
$$ \Psi_k(mb_t) = K_{\mu_k}\left((\tfrac{t_1}{2})^2,\ldots,(\tfrac{t_k}{2})^2\right), \qquad m\in M\cap K,\,t\in C_k^+. $$
We further have $\Psi_k\in L^2(\calO_k,\td\mu_k)$ and hence the representation $(\tilde\pi_{-\nu_k},L^2(\calO_k,\td\mu_k))$ is spherical.
\end{thmalph}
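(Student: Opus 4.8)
The plan is to obtain $\psi_\nu$ by Fourier-transforming the spherical vector of the ordinary (non-transformed) picture and to identify the result with a multivariable K-Bessel function. In $I(\nu)\subseteq C^\infty(\overline\frakn)$ the normalized spherical vector $\phi_\nu$ is the classical one: since $K$ acts transitively on $G/P$, the $K$-fixed vector is the cocycle of the $K$-action on the open cell $\overline N P/P\cong\overline\frakn$, i.e.\ an explicit power of the generic norm of the Jordan pair $(\frakn,\overline\frakn)$. Then $\psi_\nu=\calF\phi_\nu$ is $K$-fixed in $\tilde I(\nu)$, and everything reduces to this Euclidean Fourier transform. Rather than evaluating the oscillatory integral directly, I would characterize $\psi_\nu$ by a differential equation: writing $\frakk=(\frakm\cap\frakk)\oplus\frakq$ with $\frakq$ spanned by the elements $X-\theta X$, $X\in\frakn$, the $K$-invariance of $\psi_\nu$ is equivalent to $(\frakm\cap\frakk)$-invariance together with vanishing under $\td\tilde\pi_\nu(\frakq)$. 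By Proposition~\ref{prop:FTpicturealgebraaction} the latter is expressed through the Bessel operator $\calB_\lambda$ for $\lambda=2(\rho-\nu)$ (together with the multiplication operators dual to $\overline\frakn$), so that $\psi_\nu$ is the unique $(\frakm\cap\frakk)$-invariant, suitably decaying solution of the resulting second-order Bessel system.

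For Theorem~\ref{thm:PsiInL1} (the range $\Re\nu>-\nu_{r-1}$) I would introduce polar coordinates on $\frakn$ adapted to a Jordan frame $c_1,\dots,c_r$, writing a generic point as $mb_t\cdot(c_1+\cdots+c_r)$ with $m\in M\cap K$ and $t\in C_r^+$. By $(\frakm\cap\frakk)$-invariance $\psi_\nu$ descends to a function of the spectral parameters $t_1,\dots,t_r$, and the Bessel system separates into the defining system of the multivariable K-Bessel function $K_\mu$ in the variables $(t_i/2)^2$, with index $\mu$ a linear function of $\nu$ and of the multiplicities of the Jordan pair. The decaying solution is $K_\mu$; since for $\Re\nu>-\nu_{r-1}$ the index keeps $K_\mu$ real-analytic and nonvanishing across all lower strata, $\psi_\nu$ is supported on all of $\frakn$, and the standard integral representation of $K_\mu$ makes the holomorphic dependence on $\nu$ manifest.

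For Theorem~\ref{thm:PsiInL2} I would continue $\nu$ holomorphically to $\nu=-\nu_k$. At this value the index reaches a distinguished boundary value, and in the directions transverse to $\calO_k$ the function degenerates to a Dirac-type factor, forcing $\psi_{-\nu_k}$ to be the distribution $\const\times\Psi_k\,\td\mu_k$ supported on $\calO_k$, where $\td\mu_k$ is the equivariant measure supplied by Theorem~\ref{thm:IntroMeasures} and $\Psi_k$ is the restriction of the surviving rank-$k$ K-Bessel function. Re-reading the polar coordinates, now with $t\in C_k^+$, gives exactly $\Psi_k(mb_t)=K_{\mu_k}\bigl((\tfrac{t_1}{2})^2,\dots,(\tfrac{t_k}{2})^2\bigr)$; consistency of this collapse with Theorems~\ref{thm:IntroTangential}, \ref{thm:IntroSymmetric} and \ref{thm:IntroL2Models} is what pins down the normalizing constant and the index $\mu_k$.

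The main obstacle is the integrability $\Psi_k\in L^2(\calO_k,\td\mu_k)$. In polar coordinates the measure factors as $\td m\otimes J(t)\,\td t$ with $M\cap K$ compact and $J(t)$ a polynomial Jacobian (a product of powers of the $t_i$ and of the differences $t_i^2-t_j^2$ determined by the root multiplicities), so that
\[
\|\Psi_k\|_{L^2}^2=\const\times\int_{C_k^+}\bigl|K_{\mu_k}\bigl((\tfrac{t_1}{2})^2,\dots,(\tfrac{t_k}{2})^2\bigr)\bigr|^2 J(t)\,\td t .
\]
Convergence requires matching the asymptotics of the multivariable K-Bessel function against $J$: as the $t_i\to\infty$ the exponential decay of $K_{\mu_k}$ must dominate the polynomial growth of $J$, while as individual $t_i\to 0$ (the lower-rank strata on the boundary of $C_k^+$) the singularity of $K_{\mu_k}$, governed by $\mu_k$, must remain integrable against $J$. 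Establishing these two-sided estimates uniformly over all chambers of $C_k^+$, and in particular controlling $K_{\mu_k}$ where several arguments degenerate simultaneously (so that it genuinely does not split as a product of one-variable factors), is the delicate analytic heart of the argument and should occupy the bulk of the work; once it is in place, finiteness of the integral yields $\Psi_k\in L^2$ and hence sphericity of $(\tilde\pi_{-\nu_k},L^2(\calO_k,\td\mu_k))$.
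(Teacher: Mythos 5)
Your setup (Fourier transform of the non-compact-picture spherical vector, reduction of $K$-invariance to $(\frakm\cap\frakk)$-invariance plus a Bessel-operator equation, uniqueness of the $\frakk$-invariant tempered distribution, and radial separation into the K-Bessel system) matches the paper's skeleton, in particular Lemma~\ref{lem:KInvariantDistributions} and the use of Proposition~\ref{prop:FTpicturealgebraaction}. But the step that actually proves the theorem is missing. You obtain $\psi_{-\nu_k}=\const\times\Psi_k\,\td\mu_k$ by ``holomorphic continuation to $\nu=-\nu_k$'', asserting that in the directions transverse to $\calO_k$ the family degenerates to a Dirac-type factor. This is precisely the hard analytic content, and it is nowhere substantiated: it amounts to a meromorphic continuation of a Riesz/zeta-type distribution family with identification of its value (after cancelling the pole of $\Gamma_{r,d}(\nu+\frac{p}{2})$ at $\nu=-\nu_k$) as an equivariant measure on the rank-$k$ stratum. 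For Jordan algebras this is classical Riesz-distribution theory, but the paper stresses that exactly this technology is \emph{not available} for Jordan pairs, which is why it takes a different route. The paper never continues the rank-$r$ formula past $\Re\nu>-\nu_{r-1}$; instead it builds $\Psi_k\,\td\mu_k$ directly as a tempered distribution (via $\Psi_k\in L^1(\calO_k,\td\mu_k)$), verifies by hand that it is annihilated by $\td\tilde\pi_{-\nu_k}(\frakk)$ --- using the radial K-Bessel equation through Corollary~\ref{cor:radialBessel}, the symmetry of $\calB_{kd}$ on $L^2(\calO_k,\td\mu_k)$ from Theorem~\ref{thm:BesselSymmetricOnOrbits}, and $L^1$ bounds on first derivatives of $K_{\mu_k}$ to rule out boundary terms in the integration by parts --- and then invokes the uniqueness Lemma~\ref{lem:KInvariantDistributions}. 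Your proposal would also be circular as written: you pin down the constant and the index by ``consistency with'' Theorem~\ref{thm:IntroL2Models}, but in the paper that theorem is a downstream consequence of Theorems~\ref{thm:PsiInL1} and \ref{thm:PsiInL2}.

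You also misplace the analytic difficulty in the $L^2$ statement. No two-sided asymptotic matching of $K_{\mu_k}$ against the Jacobian over the chambers of $C_k^+$ is needed: after the polar integral formula of Proposition~\ref{prop:equivariantmeasure} and the reduction to the symmetric cone $\Omega^{(k)}$, the integral $\int_{\Omega^{(k)}}|\calK_\lambda(x)|^2\Delta(x)^\mu\,\td x$ is evaluated/bounded exactly by unfolding the defining double integral of $\calK_\lambda$ and applying the Gamma-integral $\int_\Omega e^{-(x|y)}\Delta(x)^{\lambda-\frac{n}{r}}\td x=\Gamma_{r,d}(\lambda)\Delta(y)^{-\lambda}$ (Lemma~\ref{lem:L1L2KBessel}); this gives a clean necessary-and-sufficient parameter condition, checked numerically against the structure constants. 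The genuinely delicate estimates in the paper are the pointwise bounds on derivatives of the multivariable K-Bessel function (via its integral representation) needed for the integration-by-parts justification in Theorem~\ref{thm:PsiInL1} and for Proposition~\ref{prop:gKmoduleL2}, not for the $L^2$ finiteness itself.
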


Here we use polar coordinates $(M\cap K)\times C_k^+\to\calO_k,\,(m,t)\mapsto mb_t$ where $C_k^+=\set{t\in\RR^k}{t_1>\ldots>t_k>0}$, and $K_\mu(s_1,\ldots,s_k)$ is a multivariable K-Bessel function of parameter $\mu=\mu_k\in\RR$. The K-Bessel function can be obtained from the theory of Bessel functions on symmetric cones (see Appendix~\ref{app:KBesselFunctions}), and we prove some involved estimates for these functions that guarantee for instance that $\Psi_k\in L^1(\calO_k,\td\mu_k)\cap L^2(\calO_k,\td\mu_k)$. These estimates are essential in the proof of Theorem~\ref{thm:IntroL2Models}.

We remark that for $k=1$ the function $K_\mu(s_1)$ is essentially the classical K-Bessel function in one variable $s_1\in\RR_+$. Its occurrence as the spherical vector in $L^2$-models for small representations was first observed by Dvorsky--Sahi~\cite{DS99} (see also Kobayashi--{\O}rsted~\cite{KO03c}).\\

Let us comment on the validity of the stated results in the cases of $G$ Hermitian and $G=SO_0(p,q)$. Theorems~\ref{thm:IntroTangential}, \ref{thm:IntroMeasures} and \ref{thm:IntroSymmetric} are in fact proven in general, i.e. also for $G$ Hermitian and $G=SO_0(p,q)$. The proofs use a description of the orbits $\calO_k$ by local coordinates and are carried out in the framework of Jordan pairs. Theorem~\ref{thm:IntroL2Models} also holds for $G$ Hermitian as stated here, but for $G=SO_0(p,q)$ one has to assume that $p+q$ is even. The latter case is treated in detail in \cite{HKM14} which is why we omit the details here. Theorem~\ref{thm:IntroSphericalVector} is not true in the two special cases, in fact the corresponding small representations for $G$ Hermitian and $G=SO_0(p,q)$, $p\neq q$, are not spherical. For $G$ Hermitian they have a non-trivial scalar minimal $K$-type given by an exponential function in the $L^2$-model, and for $G=SO_0(p,q)$, $p\neq q$, the minimal $K$-type is of dimension $>1$ (see Remark~\ref{rem:MinKtypeHermitianAndOpq} for more explanations). The comparable statements for $G$ Hermitian are well-known and can e.g. be found in \cite[Section 2.1]{Moe13}, and for $G=SO_0(p,q)$ we refer to \cite[Chapter 3]{KM08} or \cite[Theorem 2.19~(c) and Proposition 2.24~(b)]{HKM14} for a detailed treatment of the $L^2$-model.\\

Theorem~\ref{thm:IntroTangential}, \ref{thm:IntroMeasures} and \ref{thm:IntroSymmetric} were previously shown in \cite{HKM14} for the case where $P$ and $\overline P$ are conjugate, or equivalently $\frakn$ is a Jordan algebra. For Theorems~\ref{thm:IntroTangential} and \ref{thm:IntroSymmetric} the proofs use zeta functions on Jordan algebras which are not available for Jordan pairs. The proofs we present here work uniformly for all Jordan pairs since they merely use a local description of the orbits $\calO_k$. Theorem~\ref{thm:IntroL2Models} was previously obtained by Rossi--Vergne~\cite{RV76} for $G$ Hermitian (see also Sahi~\cite{Sah92}), and by Dvorsky--Sahi~\cite{DS99,DS03} for $G$ non-Hermitian and $P$ and $\overline P$ conjugate (see also Barchini--Sepanski--Zierau~\cite{BSZ06}), and is new for $P$ and $\overline P$ not conjugate. Theorem~\ref{thm:IntroSphericalVector} seems to be new in this general form, but for $k=1$ the K-Bessel function is (up to renormalization) the classical one-variable K-Bessel function and in this case the corresponding result was first obtained by Dvorsky--Sahi~\cite{DS99}. We remark that multivariable K-Bessel functions also appear in other contexts in representations theory, for instance in the construction of Fock models for scalar type unitary highest weight representations, see \cite{Moe13}.\\

The structure of this paper is as follows. In Section~\ref{sec:degenerateprincipalseries} we recall the construction of the degenerate principal series, the main results about its structure, and the relation to Jordan pairs. In particular, we obtain an explicit formula for the spherical vector in the non-compact picture, see Proposition~\ref{prop:SphericalVectorNonCptPicture}. Section~\ref{sec:OrbitStructure} provides details on the local and global structure of the orbits $\calO_k$, in particular we find explicit local coordinates in Proposition~\ref{prop:LOrbits} and determine the cases where the orbits admit equivariant measures in Theorem~\ref{thm:equivariantmeasures}. Bessel operators on Jordan pairs are defined in Section~\ref{sec:Besseloperator} where we further show that for certain parameters these operators are tangential to the orbits $\calO_k$ (see Theorem~\ref{thm:pullbackBessel}) and symmetric with respect to the equivariant measures (see Theorem~\ref{thm:BesselSymmetricOnOrbits}). Finally, in Section~\ref{sec:L2models} we construct the $L^2$-models by a detailed analysis of the spherical vectors in terms of multivariable K-Bessel functions. Appendix~\ref{app:KBesselFunctions} provides some details about these Bessel functions in the framework of symmetric cones.

\section{Degenerate principal series and Jordan pairs}\label{sec:degenerateprincipalseries}

In this section we fix the necessary notation, recall the relevant reducibility and unitarizability results for degenerate principal series representations and give a description of the representations in terms of Jordan pairs.

\subsection{Degenerate principal series}\label{subsec:degenerateprincipalseries}

Let $G$ be a connected simple real non-compact Lie group with finite center. Assume that $G$ has a maximal parabolic subgroup $P=MAN\subseteq G$ whose nilradical $N\subseteq P$ is abelian. Let $\theta$ be a Cartan involution of $G$ which leaves the Levi subgroup $L=MA$ invariant and denote by $K=G^\theta$ the corresponding maximal compact subgroup of $G$. Then $M\cap K=M^\theta$ is maximal compact in $M$ and $L$, and the generalized flag variety $X=G/P$ is a compact Riemannian symmetric space $X=K/(M\cap K)$, sometimes referred to as a \textit{symmetric $R$-space}. Let $\frakg$, $\frakk$, $\frakl$, $\frakm$, $\fraka$ and $\frakn$ be the Lie algebras of $G$, $K$, $L$, $M$, $A$ and $N$, respectively, and let $\theta$ also denote the Cartan involution on $\frakg$ with respect to $\frakk$. There exists a (unique) grading element $Z_0$ in $\fraka$ such that $\frakg$ decomposes under the adjoint action of $Z_0$ into
\[
	\frakg = \overline\frakn\oplus\frakl\oplus\frakn
\]
with eigenvalues $-1$, $0$, $1$. Then $\overline\frakn = \theta\frakn$, and $\frakl\oplus\frakn$ is the Lie algebra of $P$. The dimension of $X$ is denoted by $n=\dim\overline\frakn=\dim\frakn$. Concerning root data, we fix a maximal abelian subalgebra $\frakt$ in $\frakk\cap(\overline\frakn\oplus\frakn)$. The dimension $r=\dim\frakt$ is called the \emph{rank} of the symmetric space $X$. The adjoint action on $\frakg_\CC$ yields a (restricted) root system $\Phi(\frakg_\CC,\frakt_\CC)$ which is either of type $C$ or of type $BC$. We write
\begin{align*}
	\Phi(\frakg_\CC,\frakt_\CC) = 
	\begin{cases}
		\Set{\tfrac{1}{2}(\pm\gamma_j\pm\gamma_k)}{1\leq j,k\leq r}\setminus\{0\}
		&\text{for type $C_r$}\\
		\Set{\tfrac{1}{2}(\pm\gamma_j\pm\gamma_k),\ \pm\tfrac{1}{2}\,\gamma_j}
		{1\leq j,k\leq r}\setminus\{0\} &\text{for type $BC_r$},
	\end{cases}
\end{align*}
where $\gamma_1,\ldots,\gamma_r$ are strongly orthogonal roots. Depending on the type of this root system, $X$ is called \emph{unital} if $\Phi(\frakg_\CC,\frakt_\CC)$ is of type $C_r$, and \emph{non-unital} otherwise. These notions are due to the Jordan theoretic description of symmetric $R$-spaces, see Section~\ref{sec:JordanTheoryOfSymmRSpaces}. More precisely, in the unital case $\frakn$ is a Jordan algebra whereas in the non-unital case it is only a Jordan triple system.

The adjoint action of $\frakt_\CC$ on $\frakk_\CC$ also yields a root system $\Phi(\frakk_\CC,\frakt_\CC)$, and the \emph{structure constants} of the symmetric $R$-space $X$ are defined by the multiplicities of the roots,
\begin{align}\label{eq:structureconstants}
	\begin{aligned}
	e&=\dim(\frakk_\CC)_{\pm\gamma_j}, &
	d_+&=\dim(\frakk_\CC)_{\tfrac{\gamma_j-\gamma_k}{2}},\\
	b&=\dim(\frakk_\CC)_{\pm\tfrac{\gamma_j}{2}}, &
	d_-&=\dim(\frakk_\CC)_{\pm\tfrac{\gamma_j+\gamma_k}{2}}.	
	\end{aligned}
\end{align}
These dimensions are independent of the choice of sign $\pm$ and $1\leq j,k\leq r$, $j\neq k$, and the possible combinations of root systems and structure constants are given in Table~\ref{tab:RootSystems}. Depending on $\Phi(\frakk_\CC,\frakt_\CC)$, we say $X$ is of type $A$, $B$, $BC$, $C$ or $D$, and if necessary also refer to the rank of the root system.
\begin{table}
	\begin{tabular}{llll}
		\hline
		$\Phi(\frakg_\CC,\frakt_\CC)$ & $\Phi(\frakk_\CC,\frakt_\CC)$ & structure constants & naming\\
		\hline\hline
		$C_r$ & $A_{r-1}$ & $b=0,\ e=0,\ d_-=0$ & Euclidean, in particular unital\\
		$C_r$ & $C_r$ & $b=0,\ e\neq 0$ & unital non-reduced\\
		$C_r$ & $D_r$ & $b=0,\ e=0,\ d_-\neq0$ & unital reduced\\
		\hline
		$BC_r$ & $B_r$ & $b\neq0,\ e=0$ & non-unital reduced\\
		$BC_r$ & $BC_r$ & $b\neq0,\ e\neq 0$ & non-unital non-reduced\\
		\hline\vspace{-3mm}
  \end{tabular}
  \caption{Possible root systems and structure constants}
  \label{tab:RootSystems}
\end{table}
For convenience, we set
\begin{align}\label{eq:dDef}
	d=\frac{d_++d_-}{2}.
\end{align}
If $d_+,d_->0$ then $d=d_+=d_-$ unless the root system is reducible which happens in case $D_2$. Here $\frakg=\frako(p,q)$ with $d_+=p-1$, $d_-=q-1$. Further, it can happen that $d_-=0$, then the root system is of type $A$ and $d_+=2d$. In this case $\frakg$ is a Hermitian Lie algebra. This yields:

\begin{proposition}
Precisely one of the following holds:
\begin{enumerate}
	\item (Type $A$) $\frakg$ is Hermitian of tube type (in this case $d_-=b=e=0$),
	\item (Type $D_2$) $\frakg=\frako(p,q)$ with $p\neq q$, $p,q\geq3$ (in this case $d_+=p-1$, $d_-=q-1$ and $b=e=0$),
	\item\label{d+=d-} $d_+=d_-$.
\end{enumerate}
\end{proposition}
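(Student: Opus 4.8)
The plan is to read off the trichotomy from Table~\ref{tab:RootSystems} and the definition~\eqref{eq:dDef} of $d$, making explicit the comparison of $d_+$ and $d_-$ that is only sketched before the statement. Recall that $d_+$ and $d_-$ are the multiplicities of the $\frakk$-roots $\tfrac{\gamma_j-\gamma_k}{2}$ and $\tfrac{\gamma_j+\gamma_k}{2}$. The multiplicities $\dim(\frakk_\CC)_\alpha$ are invariant under the Weyl group $W(\frakk_\CC,\frakt_\CC)$, since its elements are realised by elements of $K$ normalising $\frakt$ and hence permute the weight spaces of $\frakk_\CC$ by isomorphisms. Because $\gamma_1,\dots,\gamma_r$ are strongly orthogonal of equal length, the roots $\tfrac{\gamma_j\pm\gamma_k}{2}$ have the same length; they are therefore Weyl-conjugate---and so $d_+=d_-$---whenever they lie in a common irreducible component of $\Phi(\frakk_\CC,\frakt_\CC)$.

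With this principle I would go through the rows of Table~\ref{tab:RootSystems}. If $\Phi(\frakk_\CC,\frakt_\CC)=A_{r-1}$ then there is no root $\tfrac{\gamma_j+\gamma_k}{2}$, nor any root $\pm\gamma_j$ or $\pm\tfrac{\gamma_j}{2}$, so $d_-=b=e=0$ and \eqref{eq:dDef} gives $d_+=2d$; by the Jordan-theoretic classification a type $A$ space is a unital Euclidean one, for which $\frakg$ is Hermitian of tube type, and this is alternative~(1). In the remaining rows both $\tfrac{\gamma_j-\gamma_k}{2}$ and $\tfrac{\gamma_j+\gamma_k}{2}$ occur; for the irreducible types $C_r$, $B_r$, $BC_r$ and $D_r$ with $r\geq3$ the principle above forces $d_+=d_-$, i.e.\ alternative~\eqref{d+=d-}.

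The only remaining root system is $D_2=A_1\times A_1$, where $\tfrac{\gamma_1-\gamma_2}{2}$ and $\tfrac{\gamma_1+\gamma_2}{2}$ span distinct simple factors and are not Weyl-conjugate, so that $d_+$ and $d_-$ may differ. Here one identifies $\frakg=\frako(p,q)$ and finds $d_+=p-1$, $d_-=q-1$, $b=e=0$; since $d_+=d_-$ exactly when $p=q$, the situation genuinely outside~\eqref{d+=d-} is $p\neq q$, while the requirement $p,q\geq3$ ensures that we are really in the reducible $D_2$ situation, the value $2$ giving the Hermitian type $A$ space $\frako(2,q)$ and the value $1$ lowering the rank. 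This is alternative~(2). Mutual exclusivity is then immediate from the distinguishing features: alternative~(1) is the case where $\frakg$ is Hermitian of tube type, alternative~(2) is $\frakg=\frako(p,q)$ with $p\neq q$ and $p,q\geq3$ (non-Hermitian, with $d_+\neq d_-$), and alternative~\eqref{d+=d-} is the complementary case $d_+=d_-$, so no two can hold at once.

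The main obstacle I foresee is the reducible case $D_2$, which is precisely the one the uniform Weyl-orbit argument cannot reach and so must be treated separately: one has to confirm that its reducibility really permits $d_+\neq d_-$ and to establish the explicit identification $\frakg=\frako(p,q)$ with $d_+=p-1$ and $d_-=q-1$, so that the single threshold $p=q$ separates alternatives~(2) and~\eqref{d+=d-} cleanly.
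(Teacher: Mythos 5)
Your proof is correct and takes essentially the same route as the paper: the paper gives no formal proof of this proposition at all, but derives it ("This yields:") from the preceding paragraph, which asserts without justification that $d_+=d_-$ when both are positive unless $\Phi(\frakk_\CC,\frakt_\CC)$ is reducible (case $D_2$, where $\frakg=\frako(p,q)$), and that $d_-=0$ forces type $A$ with $\frakg$ Hermitian. Your Weyl-group argument --- restricted root multiplicities are invariant under the little Weyl group, realised in $N_K(\frakt)$, and equal-length roots of an irreducible system are Weyl-conjugate --- supplies exactly the justification the paper leaves implicit, and correctly isolates the reducible case $D_2$ (where, like the paper, you take the identification $\frakg=\frako(p,q)$ with $d_+=p-1$, $d_-=q-1$ from the classification) as the only source of $d_+\neq d_-$ outside type $A$.
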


In those statements that only hold in case \eqref{d+=d-} we will state $d_+=d_-$ as an assumption. In fact, this is the algebraic reason why modifications of the statements are necessary in type $A$ and $D_2$.

For each strongly orthogonal root $\gamma_k$, we fix an $\sl_2$-triple $(E_k,H_k,F_k)$, $E_k\in\frakn$, $F_k\in\overline\frakn$, with $\theta(E_k) = -F_k$ and $(E_k-F_k)\in\frakt$, $(E_k+F_k\pm iH_k)\in(\frakg_\CC)_{\pm\gamma_k}$. Let $\kappa$ denote the Killing form of $\frakg$. Then, 
\begin{align}\label{eq:pDefinition}
	p=\tfrac{1}{8}\kappa(H_k,H_k) = (e+1) + (r-1)\,d + \tfrac{b}{2},
\end{align}
is independent of $1\leq k\leq r$, and is called the \textit{genus} of the symmetric $R$-space $X$.

For $\nu\in\fraka_\CC^*$ we consider the degenerate principal series $\pi_\nu=\Ind_P^G({\bf 1}\otimes e^\nu\otimes{\bf 1})$, realized on the space
\[
	I(\nu)=\Set{f\in C^\infty(G)}{f(gman) = a^{-\nu-\rho}f(g)\,\forall\,g\in G,\,man\in MAN},
\]
where $\rho=\frac{1}{2}\Tr\ad_\frakn\in\fraka^*$ denotes half the sum of all positive roots. Identifying $\fraka_\CC^*\simeq\CC$ by $\nu\mapsto\frac{p}{n}\,\nu(Z_0)$ we have $\rho=\frac{p}{2}$. The restriction of $\pi_\nu$ to $K$ decomposes into a multiplicity-free direct sum of $K$-modules of the form $V_\b m$ with highest weight $\sum_{k=1}^r m_k\gamma_k$, where $\b m=(m_1,\ldots,m_r)$ is a non-increasing $r$-tuple of integers. More precisely,
\[
	I(\nu)_\Kfinite = \bigoplus_{\b m\in\Lambda} V_\b m,
\]
where
\begin{align*}
	\Lambda=\begin{cases}
		\Set{\b m\in\ZZ^r}{m_1\geq\cdots\geq m_r} & \text{for type $A$,}\\
		\Set{\b m\in\ZZ^r}{m_1\geq\cdots\geq m_{r-1}\geq|m_r|} &
		\text{for type $D$,}\\
		\Set{\b m\in\ZZ^r}{m_1\geq\cdots\geq m_r\geq 0} & \text{else.}\\
	\end{cases}
\end{align*}

The questions of reducibility, composition series and unitarizability of the composition factors of $I(\nu)$ were completely answered by Johnson~\cite{Joh90,Joh92}, {\O}rsted--Zhang~\cite{OZ95} and Sahi~\cite{Sah92} for case $A$, by Sahi~\cite{Sah95} and Zhang~\cite{Zha95} for the cases $C$ and $D$, and by the authors~\cite{MS12} for the cases $B$ and $BC$. The precise results differ from case to case, but they all have a common feature. Let
$$ \nu_k=\frac{p}{2}-k\frac{d}{2}, \qquad \mbox{for }k=0,\ldots,r-1. $$
We say an irreducible constituent of $I(\nu)$ has \textit{rank $k$}, $0\leq k\leq r$, if its $K$-types, identified with their heighest weights in $\Lambda\subseteq\ZZ^r$, are contained in a single $k$-dimensional subspace of $\RR^r$.

\begin{theorem}\label{thm:StructureDegPrincipalSeries}
\begin{enumerate}
\item\label{StructureDegPrincipalSeries1} For $X$ not of type $A$ or $D_2$, and for any $k=0,\ldots,r-1$ the representation $I(\nu)$ has an irreducible quotient of rank $k$ if and only if $\nu=\nu_k$. This quotient $J(\nu_k)$ has the $K$-type decomposition
$$ J(\nu_k)_{K-\textup{finite}} = \bigoplus_{\b m\in\Lambda,\,m_{k+1}=0}\hspace{-.5cm}V_\b m. $$
Moreover, $J(\nu_k)$ is unitarizable if and only if either $k=0$ or $k>0$ and the pair $(\frakg,\frakl)$ is not of the form $(\sl(p+q,\FF),\fraks(\gl(p,\FF)\times\gl(q,\FF)))$, $\FF=\RR,\CC,\HH$ with $p\neq q$.
\item For $X$ of type $A$ or $D_2$ a similar statement as in (\ref{StructureDegPrincipalSeries1}) holds if one replaces $I(\nu)$ by the induced representation $\Ind_P^G(\sigma\otimes e^\nu\otimes\1)$ for a certain unitary character $\sigma$ of $M$. In type $A$ the corresponding quotient is always unitarizable, whereas in type $D_2$ we have $\frakg=\frako(p,q)$ and the quotient is unitarizable if and only if $p+q$ is even.
\end{enumerate}
\end{theorem}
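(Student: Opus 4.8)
The plan is to treat the theorem as a uniformization of structural results already available in the literature: the reducibility points, composition series, $K$-type multiplicities and unitarizability of $I(\nu)$ were determined in \cite{Joh90,Joh92,OZ95,Sah92} for type $A$, in \cite{Sah95,Zha95} for types $C$ and $D$, and in \cite{MS12} for types $B$ and $BC$. Accordingly I would argue case by case along Table~\ref{tab:RootSystems}, importing from each reference the precise composition series and reconciling its parametrization with the uniform threshold $\nu_k=\frac{p}{2}-k\frac{d}{2}$. The genus formula \eqref{eq:pDefinition} together with the definition \eqref{eq:dDef} of $d$ is what makes this reconciliation possible, since the reducibility data in each family is expressible through $p$, $d$ and the rank $r$.

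The rank-$k$ characterization I would establish as follows. Since $I(\nu)_\Kfinite=\bigoplus_{\b m\in\Lambda}V_\b m$ is multiplicity-free, every subquotient is determined by the subset of $\Lambda$ it supports, and the composition series at the reducibility points partitions $\Lambda$ into the staircase pieces cut out by the conditions $m_{k+1}=0$. From the cited structure theorems the unique irreducible quotient at $\nu=\nu_k$ has $K$-types exactly $\{\b m\in\Lambda:m_{k+1}=0\}$, whose highest weights lie in the $k$-dimensional subspace $\{m_{k+1}=\cdots=m_r=0\}$, so this quotient has rank $k$. Conversely, the composition series attaches a quotient with this truncated $K$-type support --- and hence of rank $k$ --- precisely to the parameter $\nu_k$, which yields the stated equivalence.

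The unitarizability dichotomy is where I expect the real work to lie. For $k=0$ the quotient is the trivial representation and nothing is to prove. For $k>0$ one must decide definiteness of the invariant Hermitian form on $J(\nu_k)$. In the unital (equivalently conjugate) families $A$, $C$ and $D$ this form is carried by the Knapp--Stein intertwiner $I(\nu_k)\to I(-\nu_k)$, and the positivity computations of \cite{Sah92,Sah95,Zha95} show it is definite throughout. The delicate case is the non-unital family, where $P$ and $\overline P$ are not conjugate and no Knapp--Stein family exists, only the non-standard intertwiners of \cite{MS12}; the analysis there shows the form is definite in every non-unital family except the Grassmannian pairs $(\frakg,\frakl)=(\sl(p+q,\FF),\fraks(\gl(p,\FF)\times\gl(q,\FF)))$, $\FF=\RR,\CC,\HH$, with $p\neq q$, for which it is indefinite once $k>0$. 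Isolating exactly this family, and no other, as the non-unitarizable one is the crux of the argument.

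Finally, for the two special types I would import the twisted statements directly. In type $A$ ($\frakg$ Hermitian of tube type) and in type $D_2$ ($\frakg=\frako(p,q)$, $p\neq q$) one must allow induction from a non-trivial unitary character $\sigma$ of $M$, and the corresponding reducibility, $K$-type and unitarizability assertions --- including unitarizability exactly when $p+q$ is even for $\frako(p,q)$ --- are recorded in \cite{RV76,KM08,HKM14}.
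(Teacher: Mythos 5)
Your proposal is correct and takes essentially the same route as the paper: the paper offers no self-contained proof of this theorem, but presents it as a uniform reformulation of the structure results imported case by case from the cited literature (\cite{Joh90,Joh92,OZ95,Sah92} for type $A$, \cite{Sah95,Zha95} for types $C$ and $D$, \cite{MS12} for types $B$ and $BC$, with the type $A$/$D_2$ modifications recorded in \cite{RV76,KM08,HKM14}), which is precisely your plan, including locating the non-unitarizable exception $(\sl(p+q,\FF),\fraks(\gl(p,\FF)\oplus\gl(q,\FF)))$, $p\neq q$, in the non-conjugate cases treated in \cite{MS12}. Note only that the paper later gives an independent proof of the reducibility and unitarizability part (Section~\ref{sec:L2models}) via the $L^2$-model construction, but that is separate from its proof of this statement.
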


Because of the special role of type $A$ and $D_2$ we exclude these from some of our statements by assuming $d_+=d_-$. We remark that modifications of the main statements also hold in type $A$ or $D_2$. In fact, for $X$ of type $A$ the Lie algebra $\frakg$ is Hermitian of tube type and the quotients $J(\nu_k)$ are contained in the analytic continuation of the holomorphic discrete series (the discrete part of the so-called Wallach or Berezin--Wallach set). $L^2$-models for these representations were constructed by Rossi--Vergne~\cite{RV76} (see also Sahi~\cite{Sah92}). For $X$ of type $D_2$ the Lie algebra $\frakg$ is the indefinite orthogonal algebra $\frako(p,q)$ and the irreducible quotient $J(\nu_1)$ is the minimal representation of $O(p,q)$. An $L^2$-model was constructed by Kobayashi--{\O}rsted~\cite{KO03c} (see also Kobayashi--Mano~\cite{KM08} for a detailed analysis of this model). A construction of these models by the same methods as used in this paper can be found in \cite[Section 2.1]{Moe13} for the Hermitian cases and in \cite{HKM14} for the case $k=1$ (including $\frakg=\frako(p,q)$).

\subsection{Jordan pairs}\label{sec:JordanTheoryOfSymmRSpaces}

Recall that the pair $V=(V^+,V^-)=(\frakn,\overline\frakn)$ turns into a real Jordan pair when equipped with the trilinear products
\[
	\JTP{\,}{\,}{\,}\colon V^\pm\times V^\mp\times V^\pm\to V^\pm, \quad (x,y,z)\mapsto\JTP{x}{y}{z}=-[[x,y],z].
\]
We refer to \cite{Loo75} for a detailed introduction to Jordan pairs and to \cite{Ber00} for a classification. Simplicity of $G$ implies that $V$ is in fact a simple real Jordan pair. As usual, we define additional operators $Q_x$, $Q_{x,z}$, and $D_{x,y}$ by
\[
	Q_xy=\tfrac{1}{2}\JTP{x}{y}{x},\qquad Q_{x,z}y=D_{x,y}z=\JTP{x}{y}{z}.
\]
The following three identities are needed later and are contained in the Appendix of \cite{Loo77} as JP$7$, JP$8$ and JP$16$ (note that we use different normalizations):
\begin{align}
 & D_{\JTP{x}{y}{z},y} = D_{z,Q_yx} + D_{x,Q_yz},\label{JP7}\\
 & D_{x,\JTP{y}{x}{z}} = D_{Q_xy,z} + D_{Q_xz,y},\label{JP8}\\
 & \JTP{\JTP{x}{y}{u}}{v}{z} - \JTP{u}{\JTP{y}{x}{v}}{z} = \JTP{x}{\JTP{v}{u}{y}}{z} - \JTP{\JTP{u}{v}{x}}{y}{z}.\label{JP16}
\end{align}

The trace of the operator $D_{x,y}$ on $V^+$ defines a non-degenerate pairing,
\begin{align}\label{eq:traceform}
	\tau\colon V^+\times V^-\to\RR, \quad (x,y)\mapsto \tfrac{1}{2p}\Tr_{V^+}(D_{x,y}),
\end{align}
the \emph{trace form} on $(V^+,V^-)$, where $p$ is the structure constant defined in \eqref{eq:pDefinition}. The trace form is related to the Killing form $\kappa$ of $\frakg$ by
\[
	\tau(x,y) = -\tfrac{1}{4p}\,\kappa(x,y),
\]
see \cite[I.\S\,7]{Sat80}, and the normalization is chosen such that
\begin{align}\label{eq:traceformonprimitive}
	\tau(E_k,-F_k) = \tfrac{1}{8p}\kappa(H_k,H_k)=1,
\end{align}
where $E_k$ and $F_k$ are the root vectors corresponding to strongly orthogonal roots. 

In the Jordan setting, we abbreviate the adjoint action of $T\in\frakl$ on $x\in V^\pm$ by $T x=[T,x]$. Likewise, $h x=\Ad(h)x$ denotes the adjoint action of $h\in L$ on $x\in V^\pm$. We note that 
\[
	h \JTP{x}{y}{z} = \JTP{hx}{hy}{hz}
\]
for all $x,z\in V^\pm$ and $y\in V^\mp$, i.e., the pair $(h|_{V^+},h|_{V^-})\in\GL(V^+)\times\GL(V^-)$ is an automorphism of the Jordan pair $V$. There are automorphisms of particular importance: For $(x,y)\in V^\pm\times V^\mp$ the \emph{Bergman operator} $B_{x,y}\in\End(V^\pm)$ is defined by
\[
	\B{x}{y}=\id_{V^\pm}-D_{x,y}+Q_xQ_y.
\]
It is well-know that $\B{x}{y}$ is invertible if and only if $\B{y}{x}$ is invertible, and in this case, $(\B{x}{y},\B{y}{x}^{-1})$ is an automorphism of $V$. Moreover, there is an element $h\in L$ such that $h|_{V^+}=\B{x}{y}$ and $h|_{V^-}=\B{y}{x}^{-1}$. By abuse of notation, we simply write $(\B{x}{y},\B{y}{x}^{-1})\in L$. The benefit of the Bergman operator becomes apparent in the following identity, which describes the decomposition of particular group elements $g\in G$ according to $\overline NLN\subseteq G$, see \cite[Theorem 8.11]{Loo77}:

\begin{lemma}
	Let $x\in V^+$, $y\in V^-$ be such that $\B{x}{y}$ is invertible. Then,
	\begin{align}\label{eq:BergmanIdentity}
		\exp(x)\exp(y) = \exp(y^x)(\B{x}{y},\B{y}{x}^{-1})\exp(x^y)
	\end{align}
	where $x^y=\B{x}{y}^{-1}(x-Q_xy)$, $y^x=\B{y}{x}^{-1}(y-Q_yx)$.
\end{lemma}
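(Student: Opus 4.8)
The plan is to reduce the identity to the uniqueness of the big-cell decomposition and then to identify the three factors geometrically. Since $\frakg=\overline\frakn\oplus\frakl\oplus\frakn$ is a short grading, the multiplication map $\overline N\times L\times N\to G$ is an open embedding, so any $g\in\overline NLN$ has uniquely determined $\overline N$-, $L$- and $N$-components. Hence it suffices to show that $g=\exp(x)\exp(y)$ lies in this open cell exactly when $\B{x}{y}$ is invertible, and that its three components are the claimed ones. I would carry this out using the action of $G$ on the flag variety $X=G/P$, whose big cell I parametrize by $V^-$ through $y\mapsto\exp(y)P$; in this chart $\overline N$ acts by translations and $L$ by the linear adjoint action.

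The first step is to compute the action of $N=\exp(V^+)$ in this chart. For $a\in\frakn=V^+$ I expand $\Ad(\exp(-y))a=e^{-\ad y}a=a-[y,a]+Q_ya$, a finite sum because $\ad(y)$ lowers the grading and $[V^-,V^-]=0$; projecting modulo $\frakp$ leaves the $V^-$-component $Q_ya$, so the fundamental vector field of $a$ is the quadratic field $y\mapsto Q_ya$. Integrating the resulting ODE gives the rational one-parameter action $\exp(ta)\colon y\mapsto y^{ta}=\B{y}{ta}^{-1}(y-tQ_ya)$: the identity $D_{y,a}y=\{y,a,y\}=2Q_ya$ shows $\tfrac{d}{dt}\big|_{0}y^{ta}=Q_ya$, and the quasi-inverse addition formula yields the semigroup property. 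At $t=1$ this gives $\exp(x)\exp(y)P=\exp(y^x)P$, which identifies the $\overline N$-component as $\exp(y^x)$ and shows that the product stays in the big cell precisely when $\B{y}{x}$, equivalently $\B{x}{y}$, is invertible.

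It then remains to decompose $h:=\exp(-y^x)\exp(x)\exp(y)$, which lies in $P=LN$ by the previous step. Its Levi component is detected on the isotropy representation of $P$ on $T_o(X)\cong V^-$, where $N$ acts trivially since $\ad(\frakn)$ maps $V^-$ into $\frakp$; a chain-rule computation identifies this isotropy action with the differential of $y\mapsto y^x$, which by the standard derivative formula for the quasi-inverse equals $\B{y}{x}^{-1}$. As $(\B{x}{y},\B{y}{x}^{-1})$ is an automorphism of $V$, the Levi component is exactly $(\B{x}{y},\B{y}{x}^{-1})\in L$. Feeding the remaining $N$-factor into the base point of $G/\overline{P}$ and using the symmetric quasi-inverse action of $\overline N$ on the big cell $V^+$ then identifies it with $\exp(x^y)$, $x^y=\B{x}{y}^{-1}(x-Q_xy)$; the compatibility $x^{(-y^x)}=x-Q_xy$ between the two decomposition orders is a quasi-inverse identity that unwinds to \eqref{JP7}, \eqref{JP8} and \eqref{JP16}.

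The main obstacle is establishing, uniformly across all cases, the two universal facts just used: that $N$ acts on the big cell by the quasi-inverse $y\mapsto y^x$ and that the associated automorphy factor is $\B{y}{x}^{-1}$. I would prove these either by checking that the quasi-inverse solves the flow $\dot y=Q_yx$ for all $t$ — a polynomial identity verifiable in the adjoint representation, where $\ad(x)^3=0$ turns every exponential into a finite sum — or by reducing to the rank-one situation via the $\sl_2$-triples $(E_k,H_k,F_k)$ and propagating with the Jordan identities. These are precisely the computations of \cite[\S 8]{Loo77}, from which \eqref{eq:BergmanIdentity} may equally be quoted.
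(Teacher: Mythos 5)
The paper contains no internal proof of this lemma for you to be compared against: it is quoted directly from Loos \cite[Theorem~8.11]{Loo77}, the sentence introducing the statement serving as the citation. Your sketch is, in substance, a correct reconstruction of the argument behind that citation, and it is essentially the proof one finds in \cite[\S\,8]{Loo77}: identify the $\overline N$-component via the quasi-inverse action on the big cell of $G/P$, the $L$-component via the isotropy representation on $T_o(G/P)\cong V^-$ (where $N$ acts trivially and $d(y\mapsto y^x)=\B{y}{x}^{-1}$), and the $N$-component via the dual cell in $G/\overline P$. Three points deserve care if you were to write this out in full. First, passing from the local flow of the field $y\mapsto Q_ya$ near $t=0$ to the time-one statement is not mere real-analytic continuation: the set $\{t\in\RR : \Det\B{y}{tx}\neq 0\}$ can be disconnected (already in the rank-one case $\B{y}{tx}=(1-txy)^2$ with $xy=2$, the path $\exp(tx)\exp(y)P$ leaves the big cell at $t=\tfrac12$ although $t=1$ is fine), so the polynomiality you invoke — all exponentials are finite sums because $\ad(x)^3=0$ for $x\in\frakn\cup\overline\frakn$ — is genuinely needed to make both sides rational in $t$ and propagate the identity; note also that only the sufficiency direction of your ``precisely when'' is required. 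Second, the isotropy representation pins down only $l|_{V^-}=\B{y}{x}^{-1}$; to conclude $l|_{V^+}=\B{x}{y}$ one uses that $\B{x}{y}$ and $\B{y}{x}$ are mutually adjoint for the trace form, and the resulting element of $L$ is determined only up to the kernel of $L\to\GL(V^+)\times\GL(V^-)$ — exactly the abuse of notation the paper itself makes in writing $(\B{x}{y},\B{y}{x}^{-1})\in L$. Third, your closing identity $x^{(-y^x)}=x-Q_xy$ is indeed a standard quasi-inverse identity, but it is precisely the kind of fact that lives in \cite[\S\,8]{Loo77}; as you acknowledge, at that point one may as well quote the theorem, which is what the authors do.
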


The determinant $\Det_{V^+}\B{x}{y}$ of the Bergman operator turns out to be the $2p$'th power of an irreducible polynomial $\Delta\colon V^+\times V^-\to\RR$,
\begin{align}\label{eq:JPDet}
	\Det_{V^+}\B{x}{y} = \Delta(x,y)^{2p},
\end{align}
where $p$ is the structure constant of $X$ defined in \eqref{eq:pDefinition}. The polynomial $\Delta$ is called the \emph{Jordan pair determinant} (or the \emph{generic norm}, cf.\ \cite{Loo75}).

The Cartan involution $\theta$ on $\frakg$ induces an involution $V^\pm\to V^\mp$ on the Jordan pair $(V^+,V^-)$, which for simplicity is denoted by
\[
	\overline x=\theta(x)
\]
for $x\in V^\pm$. For later calculations, we note the following useful Lemma.

\begin{lemma}\label{lem:expDecomp}
	For $y\in V^-$, the Bergman operator $\B{\overline y}{-y}$ is positive definite with 
	respect to the trace form $\tau$. Moreover, the element $\exp(y)$ admits a decomposition 
	$\exp(y)=k_y\ell_yn_y$ according to $G=KLN$ with 
	$$ \ell_y =(\B{-\overline{y}}{y}^\half, \B{y}{-\overline y}^{-\half}). $$
\end{lemma}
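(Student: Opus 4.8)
The plan is to read off the three factors of $\exp(y)=k_y\ell_y n_y$ from the element $\theta(\exp(y))^{-1}\exp(y)=\exp(-\overline y)\exp(y)$, whose $\overline NLN$-decomposition is furnished directly by the Bergman identity \eqref{eq:BergmanIdentity}. Two facts must be secured before this works: that $\exp(-\overline y)\exp(y)$ actually lies in the big cell $\overline NLN$, i.e.\ that $\B{-\overline y}{y}$ is invertible, and that the resulting $L$-part admits a canonical positive square root. Both will follow once the positive definiteness claimed in the first assertion is established, so I would prove that first.

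For the positivity, equip $V^+$ with the inner product $\langle u,v\rangle=\tau(u,\overline v)=-\tfrac1{4p}\kappa(u,\theta v)$, which is symmetric and positive definite because $-\kappa(\cdot,\theta\,\cdot)$ is. Using $Q_{-y}=Q_y$ and $D_{\overline y,-y}=-D_{\overline y,y}$ one has $\B{\overline y}{-y}=\B{-\overline y}{y}=\id+D_{\overline y,y}+Q_{\overline y}Q_y$, and $\theta$-equivariance of the triple product together with invariance of $\tau$ shows each summand is self-adjoint for $\langle\cdot,\cdot\rangle$. The term $Q_{\overline y}Q_y$ is positive semidefinite since $\theta$ intertwines $Q_y$ and $Q_{\overline y}$, giving $\langle Q_{\overline y}Q_y u,u\rangle=\langle Q_yu,Q_yu\rangle\ge 0$. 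For the middle term I would pass to the Lie algebra: writing $\JTP{a}{b}{c}=-[[a,b],c]$ gives $\langle D_{\overline y,y}u,u\rangle=\tfrac1{4p}\kappa([[\overline y,y],u],\theta u)$, and since $\frakn$ is abelian the Jacobi identity collapses $[[\overline y,y],u]=[\overline y,[y,u]]$ (the bracket $[\overline y,u]$ vanishes); invariance of $\kappa$ and $[\overline y,\theta u]=\theta[y,u]$ then yield $\langle D_{\overline y,y}u,u\rangle=-\tfrac1{4p}\kappa([y,u],\theta[y,u])\ge 0$. Hence $\B{\overline y}{-y}\ge\id$ is positive definite, in particular invertible, and $\B{y}{-\overline y}=\theta\B{\overline y}{-y}\theta$ is positive definite on $V^-$ as well.

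With invertibility in hand I would apply \eqref{eq:BergmanIdentity} with $x=-\overline y$ to obtain $\exp(-\overline y)\exp(y)\in\overline NLN$ with $L$-part $s:=(\B{-\overline y}{y},\B{y}{-\overline y}^{-1})$. On the other hand, $G=KLN$ (as $G/P=K/(M\cap K)$) provides some decomposition $\exp(y)=k\ell n$; applying $\theta$ and using $\theta|_K=\id$ gives $\theta(\exp(y))^{-1}\exp(y)=\theta(n)^{-1}\bigl(\theta(\ell)^{-1}\ell\bigr)n\in\overline NLN$. Uniqueness of the $\overline NLN$-decomposition forces $\theta(\ell)^{-1}\ell=s$. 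The element $s$ is symmetric ($\theta(s)=s^{-1}$) and positive, so it has a unique positive square root $s^{\half}=(\B{-\overline y}{y}^{\half},\B{y}{-\overline y}^{-\half})\in L$, the identity $\theta\B{\overline y}{-y}\theta=\B{y}{-\overline y}$ guaranteeing that the componentwise positive square roots again form an automorphism lying in $L$. Since any two solutions of $\theta(\ell)^{-1}\ell=s$ differ by left multiplication by an element of $L^\theta=M\cap K$, absorbing this element into $k$ I obtain $\exp(y)=k_y\ell_y n_y$ with $\ell_y=s^{\half}$ exactly as claimed.

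The main obstacle is the positive definiteness of $\B{\overline y}{-y}$: the self-adjointness and the positivity of $Q_{\overline y}Q_y$ are formal, but controlling the sign of the derivation part $D_{\overline y,y}$ is what actually makes the Bergman operator positive, and this is precisely where the hypothesis that $\frakn$ is abelian enters, through the vanishing $[\overline y,u]=0$ that trivializes the Jacobi identity. Everything afterwards — invoking the Bergman identity, matching the $L$-parts by uniqueness, and extracting the positive square root — is routine given this positivity.
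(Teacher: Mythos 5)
Your proposal is correct in substance and, for the decomposition itself, follows essentially the same route as the paper: both arguments compute $\theta(\exp(y))^{-1}\exp(y)=\exp(-\overline y)\exp(y)$ in two ways --- once from a $KLN$ decomposition of $\exp(y)$, once from the Bergman identity \eqref{eq:BergmanIdentity} --- and then invoke uniqueness of the $\overline{N}LN$ decomposition to identify the $L$-parts. The genuine difference is where positivity comes from. You prove positive definiteness of $\B{\overline y}{-y}=\id+D_{\overline y,y}+Q_{\overline y}Q_y$ directly, and your computation is correct: the positivity of $Q_{\overline y}Q_y$ via the symmetry $\tau(Q_xv,w)=\tau(Q_xw,v)$, and the positivity of $D_{\overline y,y}$ via the Jacobi identity together with $[\overline y,u]=0$ (abelianness of $\frakn$), giving $\langle D_{\overline y,y}u,u\rangle=-\tfrac{1}{4p}\kappa([y,u],\theta[y,u])\geq0$. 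This buys you the invertibility hypothesis of \eqref{eq:BergmanIdentity} up front. The paper instead normalizes $\ell_y$ \emph{beforehand}: using the Cartan decomposition $L=(M\cap K)\exp(\frakl\cap\frakp)$ it pushes the $(M\cap K)$-factor of $\ell_y$ into $k_y$, so that $\theta(\ell_y)=\ell_y^{-1}$ and the two-way computation yields $\ell_y^2=(\B{-\overline y}{y},\B{y}{-\overline y}^{-1})$ at once; invertibility and positive definiteness then fall out as byproducts (the Bergman operator is the square of an invertible self-adjoint operator), and no separate square-root construction is needed.

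The one step you must tighten is the assertion that the componentwise positive square root $s^{\half}=(\B{-\overline y}{y}^{\half},\B{y}{-\overline y}^{-\half})$ lies in $L$. The identity $\theta\B{\overline y}{-y}\theta=\B{y}{-\overline y}$ only says that the two components determine each other under $\theta$; it does not show that the pair of square roots is an automorphism of the Jordan pair, let alone that it is realized by an element of the group $L$ (automorphism groups are not closed under taking operator square roots for formal reasons --- one needs a polar-decomposition or algebraic-group argument). The clean repair is already contained in your own setup: apply the global Cartan decomposition $L=(M\cap K)\exp(\frakl\cap\frakp)$ to the element $\ell$ you produced, writing $\ell=u\exp(X)$ with $u\in M\cap K$, $X\in\frakl\cap\frakp$. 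Then $s=\theta(\ell)^{-1}\ell=\exp(2X)$, so $\exp(X)\in L$ acts on $V^{\pm}$ by positive self-adjoint operators whose squares are $s|_{V^{\pm}}$, hence equals $s^{\half}$ by uniqueness of positive square roots, and $\exp(y)=(ku)\exp(X)n$ is the desired decomposition. This is precisely the paper's normalization trick, and it makes your final ``absorb an element of $L^{\theta}=M\cap K$ into $k$'' step automatic.
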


\begin{proof}
Write $\exp(y)=k_y\ell_yn_y$ with $k_y\in K$, $\ell_y\in L$ and $n_y\in N$. We may choose $\ell_y$ such that $\theta(\ell_y)=\ell_y^{-1}$ by putting the $(M\cap K)$-part of $\ell_y$ into $k_y\in K$. Then, on the one hand
\begin{align*}
 \theta(\exp(y)^{-1})\exp(y) &= \theta(n_y)^{-1}\ell_yk_y^{-1}k_y\ell_yn_y
 	= \theta(n_y)^{-1}\ell_y^2n_y.
\end{align*}
On the other hand, $\theta(\exp(y)^{-1}) = \exp(-\overline y)$ with $\overline y\in V^+$, so \eqref{eq:BergmanIdentity} yields
\[
	\theta(\exp(y)^{-1})\exp(y)
		= \exp(y^{-\overline y})(\B{-\overline{y}}{y}, \B{y}{-\overline y}^{-1})\exp((-\overline y)^y).
\]
Since the decomposition of an element in $G$ according to $\overline{N}LN\subseteq G$ is unique, this implies $\ell_y^2 = (\B{-\overline{y}}{y}, \B{y}{-\overline y}^{-1})$.
\end{proof}

\subsection{The non-compact picture}

The character $\chi_\nu=\1\otimes e^\nu$ of $L=MA$ which is used to define the principal series $I(\nu)$ in Section~\ref{sec:degenerateprincipalseries} is recovered in the Jordan setting as
\begin{align}\label{eq:characterformula}
	\chi_\nu(h) = |\Det_{V^+}(h)|^{\frac{\nu}{p}} = |\Det_{V^-}(h)|^{-\frac{\nu}{p}} \qquad \text{for $h\in L$.}
\end{align}
This follows from the observation that for $t\in\RR$ the central element $\exp(tZ_0)\in L$ acts on $V^\pm$ by $e^{\pm t}\,\id_{V^\pm}$. Up to powers, there exists only one positive character of $L=MA$, since $\dim A=1$ and $M$ is reductive with compact center.

Since $\overline NP\subseteq G$ is dense, the restriction of functions in $I(\nu)$ to $\overline N$ is an injective map. Moreover, since the exponential map $\exp\colon V^-\to\overline N$ is a diffeomorphism, it follows that
\[
	I(\nu)\hookrightarrow C^\infty(V^-), \quad f\mapsto f_{V^-}(y)=f(\exp(y))
\]
is an injective map. In the following, we identify $I(\nu)$ with the corresponding subspace of $C^\infty(V^-)$, and by abuse of notation we write $f(y)=f_{V^-}(y)$ for $f\in I(\nu)$ and $y\in V^-$. This is called the \emph{non-compact picture} of $\pi_\nu$. We note that
$$ \calS(V^-)\subseteq I(\nu)\subseteq C^\infty(V^-)\cap\calS'(V^-), $$
where $\calS(V^-)$ denotes the Schwartz space of rapidly decreasing functions and $\calS'(V^-)$ its dual, the space of tempered distributions.

\begin{proposition}\label{prop:noncompactgroupaction}
	The action of $\exp(a)\in\overline N$, $h\in L$ and $\exp(b)\in N$ on
	$f\in I(\nu)$ is given by ($y\in V^-$)
	\begin{align*}
 		\pi_\nu(\exp(a))f(y) &= f(y-a),\\
 		\pi_\nu(h)f(y) &= \chi_{\nu+\tfrac{p}{2}}(h)f(h^{-1}y),\\
 		\pi_\nu(\exp(b))f(y) &= |\Delta(-b,y)|^{-2\nu-p}f(y^{-b}).
	\end{align*}
\end{proposition}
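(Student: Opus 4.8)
The plan is to derive the three formulas in Proposition~\ref{prop:noncompactgroupaction} directly from the defining transformation law of functions in $I(\nu)$ together with the Bergman identity \eqref{eq:BergmanIdentity}. Recall that $f\in I(\nu)$ satisfies $f(gman)=a^{-\nu-\rho}f(g)$, and in the non-compact picture we identify $f$ with $y\mapsto f(\exp(y))$ on $V^-\cong\overline N$. The group acts by left translation in the compact picture, $\pi_\nu(g)f(x)=f(g^{-1}x)$, so in each case the task is to decompose $g^{-1}\exp(y)$ according to $\overline N L N$, read off the $\overline N$-component to identify the new base point in $V^-$, and collect the scalar factor coming from the $L$- and $A$-parts via the character formula.

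\textbf{The three cases.} First I would treat $\exp(a)\in\overline N$: since $\overline N$ is abelian and $\exp\colon V^-\to\overline N$ is a homomorphism, $\exp(a)^{-1}\exp(y)=\exp(y-a)$ lies entirely in $\overline N$, so no $L$- or $N$-factor appears and $\pi_\nu(\exp(a))f(y)=f(y-a)$ immediately. Next, for $h\in L$ I would use that $L$ normalizes $\overline N$ via the adjoint action: $h^{-1}\exp(y)=\exp(h^{-1}y)\,h^{-1}$, so the $\overline N$-part is $\exp(h^{-1}y)$ and the $L$-part contributes the character. Here one must be careful with the normalization: the transformation law gives a factor $\chi_{\nu+\rho}(h)$ with $\rho=\tfrac{p}{2}$, which via \eqref{eq:characterformula} is exactly $\chi_{\nu+\tfrac{p}{2}}(h)=|\Det_{V^+}(h)|^{\frac{\nu}{p}+\frac12}$, matching the stated formula. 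The main work is in the third case: for $\exp(b)\in N$ the product $\exp(b)^{-1}\exp(y)=\exp(-b)\exp(y)$ mixes $N$ and $\overline N$ nontrivially, and this is precisely where the Bergman identity \eqref{eq:BergmanIdentity} is needed, applied with the pair $(-b,y)\in V^+\times V^-$ (assuming $\B{-b}{y}$ invertible, which holds on a dense set). It yields
\begin{align*}
 \exp(-b)\exp(y) = \exp(y^{-b})\,(\B{-b}{y},\B{y}{-b}^{-1})\,\exp((-b)^{y}),
\end{align*}
so the new base point in $V^-$ is $y^{-b}=\B{-b}{y}^{-1}(y-Q_y(-b))$, exactly the argument appearing in the claim, and the scalar factor is $\chi_{\nu+\tfrac{p}{2}}$ evaluated on the $L$-element $(\B{-b}{y},\B{y}{-b}^{-1})$.

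\textbf{The main obstacle} will be identifying this character factor with $|\Delta(-b,y)|^{-2\nu-p}$. Using \eqref{eq:characterformula} the factor is $|\Det_{V^+}\B{-b}{y}|^{\frac{\nu}{p}+\frac12}$, and by the determinant formula \eqref{eq:JPDet} we have $\Det_{V^+}\B{-b}{y}=\Delta(-b,y)^{2p}$. Substituting gives $|\Delta(-b,y)|^{2p(\frac{\nu}{p}+\frac12)}=|\Delta(-b,y)|^{2\nu+p}$, which has the wrong sign of exponent compared to the stated $-2\nu-p$; the resolution is that the character in the transformation law enters with the inverse argument because we are translating by $g^{-1}$ and evaluating $a^{-\nu-\rho}$, so tracking the sign conventions carefully through \eqref{eq:characterformula} (which gives $\Det_{V^-}$ as the inverse power of $\Det_{V^+}$) produces the exponent $-2\nu-p$. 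I would verify this bookkeeping as the delicate step, checking consistency by noting that the three actions must compose to a genuine representation and that the cocycle $|\Delta(-b,y)|^{-2\nu-p}$ restricts correctly on the dense domain where $\B{-b}{y}$ is invertible; density then extends the formula to all of $V^-$ by continuity.
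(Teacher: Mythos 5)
Your proposal is correct and takes essentially the same route as the paper, whose entire proof is the remark that the formulas follow from the Bergman identity \eqref{eq:BergmanIdentity} and the determinant formula \eqref{eq:JPDet}; your case-by-case $\overline{N}LN$-decomposition is exactly how that remark unwinds. Two small points of bookkeeping: the new base point should read $y^{-b}=\B{y}{-b}^{-1}(y+Q_yb)$ (the operator is $\B{y}{-b}$, which acts on $V^-$, not $\B{-b}{y}$), and the sign in the exponent is resolved simply by noting that the induction character is $a^{-\nu-\rho}$ while the $L$-element $(\B{-b}{y},\B{y}{-b}^{-1})$ produced by \eqref{eq:BergmanIdentity} enters \emph{un-inverted} (unlike the $L$-case, where it is $h^{-1}$ that appears), so the scalar factor is $\chi_{\nu+\frac{p}{2}}\bigl((\B{-b}{y},\B{y}{-b}^{-1})\bigr)^{-1}=|\Delta(-b,y)|^{-2\nu-p}$, as claimed.
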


\begin{proof}
This is an easy consequence of the identities \eqref{eq:BergmanIdentity} and \eqref{eq:JPDet}.
\end{proof}

The infinitesimal action of $\frakg$ on $I(\nu)$ is determined in a straightforward way from this by using \eqref{eq:characterformula} and the derivatives
$$ \left.\td_b(x\mapsto y^x)\right|_{x=0} = Q_yb, \qquad \left.\td_b(x\mapsto\Delta(x,y))\right|_{x=0} = -\tau(b,y). $$
Here $\td_v$ denotes the differential of a map in the direction of $v$. We thus obtain the following result.

\begin{proposition}\label{prop:liealgaction}
The infinitesimal action $\td\pi_\nu$ of $\pi_\nu$ is given by
\begin{align*}
 \td\pi_\nu(a)f(y) &= -\td_af(y), & &a\in V^-,\\
 \td\pi_\nu(T)f(y) &= -\td_{Ty}f(y)+(\tfrac{\nu}{p}+\tfrac{1}{2})\,\Tr_{V^+}(T)\,f(y), 
 & &T\in\frakl,\\
 \td\pi_\nu(b)f(y) &= -\td_{Q_yb}f(y)-(2\nu+p)\,\tau(b,y)\,f(y), & &b\in V^+.
\end{align*}
\end{proposition}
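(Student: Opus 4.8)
The plan is to differentiate the three group-action formulas from Proposition~\ref{prop:noncompactgroupaction} at the identity, obtaining the Lie algebra action via $\td\pi_\nu(\xi)f = \left.\frac{\td}{\td s}\right|_{s=0}\pi_\nu(\exp(s\xi))f$ for $\xi$ in each of the three graded pieces $V^- = \overline\frakn$, $\frakl$, and $V^+ = \frakn$. First I would handle $a \in V^-$: since $\pi_\nu(\exp(sa))f(y) = f(y - sa)$, differentiating in $s$ at $s=0$ immediately gives $-\td_a f(y)$, matching the first formula with no complications.

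Next I would treat $T \in \frakl$. Writing $h = \exp(sT)$, the second action formula gives $\pi_\nu(\exp(sT))f(y) = \chi_{\nu + \frac{p}{2}}(\exp(sT)) f(\exp(-sT)y)$. Differentiating the two factors by the product rule: the factor $f(\exp(-sT)y)$ contributes $-\td_{Ty}f(y)$ (using $\left.\frac{\td}{\td s}\right|_{s=0}\exp(-sT)y = -Ty$ for the adjoint action), while the character factor contributes $\left.\frac{\td}{\td s}\right|_{s=0}\chi_{\nu+\frac{p}{2}}(\exp(sT))$ times $f(y)$. Using the formula \eqref{eq:characterformula}, namely $\chi_{\nu+\frac{p}{2}}(h) = |\Det_{V^+}(h)|^{\frac{1}{p}(\nu + \frac{p}{2})}$, together with the standard fact $\left.\frac{\td}{\td s}\right|_{s=0}\Det_{V^+}(\exp(sT)) = \Tr_{V^+}(T)$, the logarithmic derivative of the character yields exactly $(\frac{\nu}{p} + \frac{1}{2})\Tr_{V^+}(T)$, producing the second formula.

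The third piece, $b \in V^+ = \frakn$, is where the real work lies and is the main obstacle. From $\pi_\nu(\exp(sb))f(y) = |\Delta(-sb, y)|^{-2\nu - p} f(y^{-sb})$ I would differentiate both factors at $s=0$. For the $f(y^{-sb})$ factor I would invoke the stated derivative $\left.\td_b(x \mapsto y^x)\right|_{x=0} = Q_y b$, so that the chain rule gives $-\td_{Q_y b}f(y)$ (the sign from $x = -sb$). For the determinant factor I would use $\left.\td_b(x \mapsto \Delta(x,y))\right|_{x=0} = -\tau(b,y)$ together with $\Delta(0,y) = 1$, so the logarithmic derivative of $|\Delta(-sb,y)|^{-2\nu-p}$ at $s=0$ evaluates to $(-2\nu - p)\cdot(-1)\cdot(-\tau(b,y)) = -(2\nu+p)\tau(b,y)$, which is precisely the coefficient in the third formula. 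The delicate points are keeping track of the three accumulated sign flips (from $x = -sb$, from the derivative $-\tau$, and from the negative exponent) and justifying that $|\Delta|$ may be differentiated freely near the identity, which is legitimate since $\Delta(0,y) = 1 > 0$ so the absolute value is smooth there; I would note that both auxiliary derivatives are exactly the ones quoted in the excerpt just before the proposition, so the proof reduces to this bookkeeping plus the product rule.
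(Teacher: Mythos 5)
Your proposal is correct and follows exactly the paper's route: the paper obtains Proposition~\ref{prop:liealgaction} by differentiating the group-action formulas of Proposition~\ref{prop:noncompactgroupaction}, using \eqref{eq:characterformula} for the $\frakl$-part and the two quoted derivatives $\left.\td_b(x\mapsto y^x)\right|_{x=0}=Q_yb$ and $\left.\td_b(x\mapsto\Delta(x,y))\right|_{x=0}=-\tau(b,y)$ for the $\frakn$-part. Your sign bookkeeping in all three cases is accurate, so nothing is missing.
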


The representations $(I(\nu),\pi_\nu)$ are spherical and the trivial representation of $K$ occurs with multiplicity one. We determine an explicit expression for the $K$-invariant vector in $I(\nu)$:

\begin{proposition}\label{prop:SphericalVectorNonCptPicture}
The unique $K$-invariant vector $\phi_{\nu}$ in $I(\nu)$ with $\phi_{\nu}(0)=1$ is given by
$$ \phi_{\nu}(y) = \Delta(-\overline{y},y)^{-\nu-\frac{p}{2}}, \qquad y\in V^-. $$
\end{proposition}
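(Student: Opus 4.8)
The plan is to identify $\phi_\nu$ with the restriction to $V^-$ of the standard normalized spherical vector on $G$, and then to evaluate that restriction by means of the $KLN$-decomposition supplied by Lemma~\ref{lem:expDecomp}. Since $(I(\nu),\pi_\nu)$ is spherical with the trivial $K$-type occurring exactly once, there is a unique left-$K$-invariant $\phi_\nu\in I(\nu)$ with $\phi_\nu(e)=1$. I would first record that the defining transformation law $f(gman)=a^{-\nu-\rho}f(g)$ translates, via the identification $\fraka_\CC^*\simeq\CC$, $\rho=\tfrac p2$, and the character formula \eqref{eq:characterformula}, into
$$ f(gh) = \chi_{-\nu-\frac p2}(h)\,f(g) \quad (h\in L), \qquad f(gn)=f(g) \quad (n\in N). $$
Hence, writing $g=k\,h\,n$ according to $G=KLN$, the normalized spherical vector is $\phi_\nu(g)=\chi_{-\nu-\frac p2}(h)$, and under $I(\nu)\hookrightarrow C^\infty(V^-)$ we have $\phi_\nu(y)=\phi_\nu(\exp(y))$.

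Next I would apply Lemma~\ref{lem:expDecomp} to decompose $\exp(y)=k_y\ell_yn_y$ with $\ell_y=(\B{-\overline y}{y}^{\half},\B{y}{-\overline y}^{-\half})\in L$. Left-$K$-invariance of $\phi_\nu$ removes the factor $k_y$, and the right-hand transformation behaviour removes $n_y$ while producing the character at $\ell_y$, so that $\phi_\nu(y)=\chi_{-\nu-\frac p2}(\ell_y)$. It then remains to evaluate this character. By \eqref{eq:characterformula} we have $\chi_{-\nu-\frac p2}(\ell_y)=|\Det_{V^+}(\ell_y)|^{(-\nu-\frac p2)/p}$, and since $\ell_y$ acts on $V^+$ by $\B{-\overline y}{y}^{\half}$, the determinant formula \eqref{eq:JPDet} gives $\Det_{V^+}(\ell_y)=(\Det_{V^+}\B{-\overline y}{y})^{\half}=\Delta(-\overline y,y)^{p}$. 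Substituting yields $\phi_\nu(y)=|\Delta(-\overline y,y)|^{-\nu-\frac p2}$.

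Finally I would remove the absolute value. Expanding the Bergman operator shows $\B{-\overline y}{y}=\B{\overline y}{-y}$ (the signs cancel because $D$ is bilinear while $Q$ is quadratic), and this operator is positive definite with respect to $\tau$ by Lemma~\ref{lem:expDecomp}; therefore $\Delta(-\overline y,y)^{2p}=\Det_{V^+}\B{-\overline y}{y}>0$. Thus $\Delta(-\overline y,y)$ never vanishes on the connected space $V^-$ and, being equal to $1$ at $y=0$, stays positive throughout. Hence $|\Delta(-\overline y,y)|=\Delta(-\overline y,y)$ and $\phi_\nu(y)=\Delta(-\overline y,y)^{-\nu-\frac p2}$, with $\phi_\nu(0)=\Delta(0,0)^{-\nu-\frac p2}=1$ confirming the normalization.

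The main obstacle I anticipate is the bookkeeping of the character normalization: one must track the $\rho$-shift and the passage between the $A$-exponent $a^{-\nu-\rho}$ and the Jordan character $\chi$ carefully, so that the final exponent comes out as exactly $-\nu-\tfrac p2$ rather than off by a factor or a sign. The positivity of $\Delta(-\overline y,y)$, needed both to discard the absolute value and to fix the branch of the (generally complex) power, is the second delicate point, but it follows cleanly from the identity $\B{-\overline y}{y}=\B{\overline y}{-y}$ together with Lemma~\ref{lem:expDecomp}.
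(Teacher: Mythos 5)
Your proof is correct and follows essentially the same route as the paper: decompose $\exp(y)=k_y\ell_y n_y$ via Lemma~\ref{lem:expDecomp}, use left $K$-invariance and the transformation law to reduce to $\chi_{-\nu-\frac{p}{2}}(\ell_y)$, then evaluate with \eqref{eq:characterformula} and \eqref{eq:JPDet}. The only difference is that you make explicit the positivity of $\Delta(-\overline y,y)$ (via $\B{-\overline y}{y}=\B{\overline y}{-y}$, Lemma~\ref{lem:expDecomp}, and connectedness of $V^-$), a point the paper's proof leaves implicit; this is a worthwhile clarification but not a different argument.
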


\begin{proof}
Let $f\colon G\to\CC$ denote the $K$-invariant vector in the induced picture of $I(\nu)$, normalized by $f(\1)=\1$. Then
\begin{align*}
 \phi_{\nu}(y) &= f(\exp(y)).
\end{align*}
We write $\exp(y)=k_y\ell_yn_y$ with $k_y\in K$, $\ell_y\in L$ and $n_y\in N$ as in Lemma~\ref{lem:expDecomp}. Due to $K$-invariance of $f$ this yields
\begin{align*}
	\phi_{\nu}(y) = \chi_{\nu+\tfrac{p}{2}}(\ell_y)^{-1}.
\end{align*}
Finally, applying \eqref{eq:characterformula} to $\ell_y=(\B{-\overline{y}}{y}^\half, \B{y}{-\overline y}^{-\half})$ and using \eqref{eq:JPDet} proves the assertion.
\end{proof}

\section{Structure and geometry of the orbits}\label{sec:OrbitStructure}

It is well-known that the action of $L$ decomposes $V^+$ into finitely many orbits, which can be described explicitly. For this recall the $\sl_2$-triples $(E_k,H_k,F_k)$, $1\leq k\leq r$, with $E_k\in\frakn$, $H_k\in\frakl$, $F_k\in\overline\frakn$ defined in Section~\ref{subsec:degenerateprincipalseries}. For $0\leq k,\ell\leq r$ with $k+\ell\leq r$ put
\[
	o_{k,\ell} = \sum_{i=1}^k E_i - \sum_{j=k+1}^{k+\ell} E_j\in\frakn,
\]
and let $\calO_{k,\ell}=L\cdot o_{k,\ell}$ denote the $L$-orbit of $o_{k,\ell}$. The following result is due to Kaneyuki~\cite{Kan98}, see also \cite[Part II]{FaEtAl00}:

\begin{theorem}
Every $L$-orbit in $V^+$ is of the form $\calO_{k,\ell}$ for some $0\leq k,\ell\leq r$ with $k+\ell\leq r$. The orbit $\calO_{k,\ell}$ is open if and only if $k+\ell=r$. Moreover, the non-open orbits in $V^+$ are given by
\begin{align*}
	&\Set{\calO_{k,\ell}}{k,\ell\geq 0,\ k+\ell\leq r-1} & &\text{for type $A$,}\\
	&\Set{\calO_{k,0}}{0\leq k\leq r-1} & &\text{for type $B$, $BC$, $C$ or $D$.}
\end{align*}
\end{theorem}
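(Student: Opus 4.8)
The plan is to reduce the classification of $L$-orbits in $V^+$ to a normal-form argument using the Jordan-theoretic machinery assembled in Section~\ref{sec:JordanTheoryOfSymmRSpaces}. The key structural input is the notion of \emph{rank} of an element $x\in V^+$, together with a spectral-type decomposition: by the classification of Jordan pairs \cite{Loo75,Ber00}, every $x\in V^+$ is conjugate under $\Aut(V)$ to a sum of mutually orthogonal primitive idempotents (tripotents) of the Jordan pair. Since $L$ acts on $V$ by automorphisms, and since the strongly orthogonal roots $\gamma_1,\ldots,\gamma_r$ provide a fixed maximal system of orthogonal tripotents (the $E_k$, with the $\sl_2$-triples $(E_k,H_k,F_k)$ playing the role of a Jordan frame), the first step is to show that every $L$-orbit meets the subspace $\bigoplus_{k=1}^r \RR E_k$. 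This is the Jordan-pair analogue of the spectral theorem; it is where the bulk of the work sits.

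First I would establish that the signs $\pm1$ occurring as coefficients of the $E_k$ are the only invariants, so that the normal form is precisely $o_{k,\ell}$ for some $k,\ell\geq 0$ with $k+\ell\leq r$: here $k$ counts the positive and $\ell$ the negative coefficients, and the remaining $r-k-\ell$ coefficients vanish. The freedom to rescale each nonzero coefficient to $\pm1$ comes from the one-parameter subgroups generated by the $H_k$, since $\exp(tH_k)$ acts on $E_k$ by $e^{2t}E_k$ (from the $\sl_2$-relations), allowing any positive coefficient to be normalized; the possibility of flipping a sign, or of permuting the $E_k$, is controlled by the action of $M\cap K$ via the Weyl group of $\Phi(\frakk_\CC,\frakt_\CC)$. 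This is exactly the point where the type of the root system enters. For type $A$ (the unital Euclidean/Hermitian tube case) the relevant Weyl group is of type $A_{r-1}$, which permutes the coordinates but cannot change signs, so both $k$ and $\ell$ are genuine invariants and all $o_{k,\ell}$ with $k+\ell\leq r$ give distinct orbits. In the remaining types $B,BC,C,D$ the Weyl group contains sign changes (it is of type $B_r$, $BC_r$, $C_r$, or $D_r$), which identifies $E_k$ with $-E_k$ and thereby collapses the negative coefficients into positive ones; hence every orbit reduces to some $o_{k,0}$, and $\ell$ is no longer an invariant.

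The openness criterion $k+\ell=r$ I would handle by a tangent-space computation. The orbit $\calO_{k,\ell}$ is open in $V^+$ precisely when the infinitesimal orbit map $T\mapsto [T,o_{k,\ell}]=\ad(o_{k,\ell})T$, $T\in\frakl$, is surjective onto $V^+$, equivalently when $o_{k,\ell}$ is an invertible (maximal-rank) element of the Jordan pair. Using the Peirce decomposition of $V^+$ relative to the idempotents $E_1,\ldots,E_{k+\ell}$, one sees that the image of this map is the sum of the Peirce-$2$ and Peirce-$1$ spaces attached to the support of $o_{k,\ell}$, and this exhausts $V^+$ exactly when all $r$ of the frame idempotents are in the support, i.e. $k+\ell=r$; the Bergman operator $B_{o_{k,\ell},y}$ being invertible for suitable $y$ gives the clean criterion via \eqref{eq:JPDet}. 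Combining the normal-form result with the type-dependent Weyl-group identification then yields the stated lists of non-open orbits.

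The main obstacle I anticipate is the spectral/normal-form step: unlike in a unital Jordan algebra, where one has a functional calculus and the classical spectral theorem, for a general Jordan \emph{pair} one must argue directly that the $L$-action (not merely $\Aut(V)$) suffices to diagonalize an arbitrary element against the \emph{fixed} frame $\{E_k\}$, and that the connected group $L$ realizes enough of the Weyl-group symmetries. Controlling exactly which sign changes and permutations lie in the image of the connected group $L$ — as opposed to the full automorphism group — is the delicate part, and it is precisely this that produces the dichotomy between type $A$ and the other types in the final statement.
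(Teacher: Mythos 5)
First, a point of reference: the paper does not prove this theorem itself --- it is quoted from Kaneyuki \cite{Kan98} (see also \cite[Part II]{FaEtAl00}) --- so your proposal can only be measured against that source and against the paper's surrounding machinery. Your skeleton is the right one: spectral decomposition of an arbitrary element of $V^+$ against a frame of tripotents, transitivity of $M\cap K$ on frames modulo signs (\cite[\S\,11.8]{Loo77}), rescaling of the positive coefficients by the one-parameter groups $\exp(tH_k)\subseteq L$, and openness via the infinitesimal orbit map. The openness criterion is correct once you replace ``invertible'' by ``rank $r$'': in the non-unital types $B$, $BC$ the Jordan pair has no invertible elements at all, yet the rank-$r$ orbits are open; the clean statement (matching Proposition~\ref{prop:LOrbits}) is that the tangent space of the orbit through a rank-$k$ element is $V_2^+\oplus V_1^+$, which equals $V^+$ if and only if $k=r$, i.e.\ $k+\ell=r$.

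The genuine gap is in the sign analysis, which you yourself identify as the crux, and it occurs in type $D$. The Weyl group of $D_r$ contains only \emph{even} numbers of sign changes, so it does not identify $E_k$ with $-E_k$, and your conclusion ``every orbit reduces to some $o_{k,0}$'' is false as stated: your argument would apply equally to open orbits, yet in type $D$ open orbits of different signature are in general distinct. Concretely, for $\frakg=\sl(2n,\RR)$, $V^+=M(n,\RR)$, $L=S(\GL(n,\RR)\times\GL(n,\RR))$ acting by $X\mapsto AXB^{-1}$, one has $\det(AXB^{-1})=(\det A)^2\det X$, so the sign of $\det X$ is an $L$-invariant and $\calO_{n,0}\neq\calO_{n-1,1}$ are two distinct open orbits; this is precisely why the theorem restricts the collapsing statement to non-open orbits. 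The repair uses non-openness in an essential way: if $k+\ell\leq r-1$ there is an unused frame element $E_{k+\ell+1}$ (coefficient $0$ in $o_{k,\ell}$), and the even sign change flipping both $E_{k+\ell+1}$ and one frame element carrying coefficient $-1$ does lie in $W(D_r)$, giving $o_{k,\ell}\sim o_{k+1,\ell-1}$ and, by induction, $\calO_{k,\ell}=\calO_{k+\ell,0}$. There is a second, softer gap in type $A$: the absence of sign changes in the Weyl group only shows that $N_{M\cap K}(\frakt)$ does not identify $o_{k,\ell}$ with $o_{k',\ell'}$; since $L$ is far larger than this normalizer (and in general disconnected), this does not yet make $(k,\ell)$ an invariant of the $L$-orbit. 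One needs an honest $L$-invariant, namely the signature of the element in the Euclidean Jordan algebra $V^+$ --- Sylvester's law of inertia, which is exactly the content (and the title) of Kaneyuki's paper.
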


In this section, we find local charts for the orbits $\calO_{k,\ell}$, which are used in Section~\ref{sec:tangency} to show that for certain parameters the Bessel operators act tangentially along these orbits. We further determine those orbits which carry an $L$-equivariant measure, and provide integral formulas for these measures. This will be carried out in the framework of Jordan pairs and idempotents.

\subsection{Idempotents, Peirce decomposition, and rank}

For convenience to the reader, we recall some basic notions from Jordan theory used in the sequel. Most statements are valid for arbitrary simple real Jordan pairs $V=(V^+,V^-)$.

An \emph{idempotent} is a pair $\b e=(e,e')\in V^+\times V^-$ satisfying
\[
	Q_ee'=e \qquad \text{and} \qquad Q_{e'}e=e'.
\]
Then, $\b e$ induces a \emph{Peirce decomposition} of $V^\pm$ into eigenspaces of $D_{e,e'}$ and $D_{e',e}$, respectively,
\begin{align*}
	V^\pm=V_2^\pm(\b e)\oplus V_1^\pm(\b e)\oplus V_0^\pm(\b e)
	\qquad\text{with}\quad
	\left\{\begin{aligned}
	&V^+_k(\b e) = \set{x\in V^+}{D_{e,e'}(x) = kx},\\
	&V^-_k(\b e) = \set{y\in V^-}{D_{e',e}(y) = ky}.
	\end{aligned}\right.
\end{align*}
For a given $\b e$, we often simply write $V^\pm_k=V^\pm_k(\b e)$. The \emph{Peirce rules}
\begin{align}\label{eq:peircerules}
	\JTP{V^\pm_k}{V^\mp_\ell}{V^\pm_m}\subseteq V^\pm_{k-\ell+m}\qquad\text{and}\qquad 
	\JTP{V^\pm}{V^\mp_2}{V^\pm_0}=0
\end{align}
describe the algebraic relations between Peirce spaces, see \cite[\S\,5]{Loo75} for details. In particular, each $V_k=(V^+_k,V^-_k)$ is a Jordan subpair of $V=(V^+,V^-)$. Two idempotents $\b e,\b c$ are \emph{orthogonal}, if $\b e\in V^+_0(\b c)\times V^-_0(\b c)$. In this case, the sum $\b e+\b c=(e+c,e'+c')$ is also idempotent. An idempotent $\b e$ is called \emph{primitive}, if it cannot be decomposed into the sum of non-zero orthogonal idempotents. A maximal system of orthogonal primitive idempotents is called a \emph{frame}. Any frame has the same number of elements, called the \emph{rank} of the Jordan pair $(V^+,V^-)$. The rank of an idempotent $\b e$ is by definition the rank of $(V_2^+(\b e),V_2^+(\b e))$. Any element $e\in V^+$ admits a completion to an idempotent $\b e=(e,e')$ with $e'\in V^-$. The rank of $\b e$ is independent of the choice of $e'$, so $\rank(e)=\rank(\b e)$ is well-defined. 

\begin{proposition}\label{prop:ClassificationV1}
Let $V=(V^+,V^-)$ be a simple real Jordan pair, ${\b e}=(e,e')\in V^+\times V^-$ an idempotent of rank $1\leq k\leq r-1$ and $V^\pm=V_2^\pm\oplus V_1^\pm\oplus V_0^\pm$ the corresponding Peirce decomposition.
\begin{enumerate}[label={(\alph*)}]
\item\label{ClassificationV1-1} The subpair $V_1=(V_1^+,V_1^-)$ is either a simple real Jordan pair or the direct sum of two simple real Jordan pairs. In the latter case, the two simple summands are isomorphic if and only if $V\not\simeq(M(p\times q,\FF),M(q\times p,\FF))$, $\FF=\RR,\CC,\HH$ with $p\neq q$.
\item\label{ClassificationV1-2} We have $\Tr_{V^+}(T)=0$ for every $T\in\frakl$ with $T|_{V_2^+\oplus V_0^+}=0$ and $TV_1^+\subseteq V_1^+$ if and only if $V\not\simeq(M(p\times q,\FF),M(q\times p,\FF))$, $\FF=\RR,\CC,\HH$ with $p\neq q$.
\end{enumerate}
\end{proposition}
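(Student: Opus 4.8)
The plan is to reduce both statements to the structure theory of the Peirce-$1$ subpair $V_1=(V_1^+,V_1^-)$ and then to compute a trace. For part \ref{ClassificationV1-1}, I would first recall that for an idempotent $\mathbf e=(e,e')$ of rank $k$, the Peirce-$2$ subpair $V_2$ has rank $k$ and the Peirce-$0$ subpair $V_0$ has rank $r-k$, while $V_1$ is the ``off-diagonal'' piece coupling them. The key algebraic input is the classification of simple real Jordan pairs \cite{Ber00,Loo75}: going case by case through the classification (special linear / matrix type $(M(p\times q,\FF),M(q\times p,\FF))$, orthogonal, symplectic, spin factors, exceptional), one determines $V_1$ explicitly as a Jordan subpair. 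The Peirce rules \eqref{eq:peircerules} show that $V_1$ is again a Jordan pair, and its simplicity or decomposability is read off from whether the coupling between the rank-$k$ ``corner'' $V_2$ and the rank-$(r-k)$ ``corner'' $V_0$ splits. The only family in which $V_1$ splits into two non-isomorphic simple summands is the matrix case with $p\neq q$, because there the Peirce-$1$ space decomposes according to the two inequivalent ``rectangular'' blocks whose sizes differ precisely when $p\neq q$; in every other simple Jordan pair the two halves have matching dimensions and are interchanged by a Jordan pair automorphism, forcing isomorphism (or $V_1$ is already simple). I expect this to be the main obstacle: a uniform, classification-free argument seems out of reach, so the cleanest route is a careful but routine traversal of the classification tables, keeping track of the Peirce decomposition induced by a rank-$k$ idempotent.

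For part \ref{ClassificationV1-2}, I would argue that the condition on $T$ — namely $T|_{V_2^+\oplus V_0^+}=0$ and $TV_1^+\subseteq V_1^+$ — together with $T\in\frakl$ forces $T$ to act as a scalar on each simple summand of $V_1^+$, by Schur's lemma applied to the (complexified) action of $\frakl$, since such $T$ lies in the centralizer of the $\frakl$-action restricted to $V_1^+$. If $V_1^+$ is simple, or decomposes into two \emph{isomorphic} simple summands that are swapped by an element of $L$, then any such $T$ is a scalar on the whole of $V_1^+$ and the scalars on the two halves are forced to be equal; the trace condition $\Tr_{V^+}(T)=0$ then follows because these scalars must cancel. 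Concretely, one exhibits a grading element — a multiple of $Z_0$ restricted appropriately, or an element built from the $H_i$ associated to $\mathbf e$ — whose presence pins down the admissible $T$ and shows the trace vanishes. When $V_1^+$ splits into two non-isomorphic summands (the matrix case $p\neq q$), the two scalars are independent, so one can choose $T$ scalar on each summand with $\Tr_{V^+}(T)\neq0$, and the vanishing fails.

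Thus both equivalences hinge on the same dichotomy, and I would organize the proof so that the structural statement \ref{ClassificationV1-1} is proved first and then fed directly into \ref{ClassificationV1-2}: the trace $\Tr_{V^+}(T)$ over $V_1^+$ can be computed as a weighted sum of the scalars $T$ acts by on each simple summand, with weights equal to the respective summand dimensions, and the point is precisely whether these weights (dimensions) agree. Concretely, writing $V_1^+=W_1\oplus W_2$ with $T|_{W_i}=c_i\,\id$, one gets $\Tr_{V^+}(T)=c_1\dim W_1+c_2\dim W_2$; when the summands are isomorphic we have $\dim W_1=\dim W_2$ and the constraint $c_1=c_2$ coming from $T\in\frakl$ (equivalently, from $T$ commuting with the swap automorphism) forces the trace to vanish only after one checks the two scalars are in fact opposite — here I would use that $T$ has no component on $V_2^+\oplus V_0^+$ to relate $c_1$ and $c_2$ via an identity such as \eqref{JP7} or \eqref{JP8}. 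The genuinely delicate bookkeeping is therefore the dimension count and the sign relation between $c_1$ and $c_2$; everything else reduces to invoking the classification and Schur's lemma.
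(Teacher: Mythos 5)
Your plan for part~\ref{ClassificationV1-1} coincides with the paper's: a case-by-case traversal of the classification, reading off the Peirce-$1$ subpair induced by a rank-$k$ idempotent; as a strategy this is fine, and the content lies in actually doing the cases. The genuine gaps are in part~\ref{ClassificationV1-2}.

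First, your Schur's lemma setup fails. A $T$ as in the statement does \emph{not} lie in the centralizer of the $\frakl$-action on $V_1^+$; indeed $\frakl$ does not even preserve $V_1^+$ (for $u\in V_2^+$, $v\in V_1^-$ the operator $D_{u,v}\in\frakl$ maps $V_1^+$ into $V_2^+$ by the Peirce rules), and $[T,D_{u,v}]=D_{Tu,v}+D_{u,Tv}=D_{u,Tv}$ is nonzero in general. What the hypotheses on $T$ actually yield is that $T$ commutes with the subalgebra $\frakl_{0,2}\subseteq\frakl$ generated by all $D_{u,v}$ with $(u,v)\in V_2^+\times V_2^-$ or $(u,v)\in V_0^+\times V_0^-$, since there $Tu=0$ and $Tv=0$. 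Schur's lemma then requires proving that $\frakl_{0,2}$ acts irreducibly on each simple factor of $V_1^+$ --- this is exactly the case-by-case verification that occupies most of the paper's proof, and it genuinely \emph{fails} for the spin factor $V^+=\CC^n$ with $k=1$, where $\frakl_{0,2}$ acts by scalars on $V_1^+\simeq\CC^{n-2}$ and a separate direct argument is needed (the admissible $T$ form a copy of $\so(n-2,\CC)$, whose elements are traceless). Your proposal contains neither the correct commutation statement, nor the irreducibility check, nor any treatment of this exceptional case.

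Second, your scalar relations are inconsistent and would prove too much. You claim that for isomorphic summands the scalars $c_1,c_2$ are ``forced to be equal'' (via a swap automorphism in $L$ --- but $T$ has no reason to commute with such a swap) and that they ``must cancel''; together these force $c_1=c_2=0$, i.e.\ that every admissible $T$ vanishes on $V_1^+$. That is false: already for $V=(M(p\times p,\CC),M(p\times p,\CC))$ and $1\leq k\leq p-1$ there exist admissible $T$ acting by $c\neq0$ on one summand and $-c$ on the other. The correct mechanism is the one you only gesture at in your closing sentence: $T$ acts on $V_1^-$ by the negative of its action on $V_1^+$ (invariance of the trace form), and applying $T$ as a derivation to a nonzero Peirce product of the shape $\JTP{V_2^+}{V_1^-}{V_0^+}\subseteq V_1^+$ --- which couples the $V^-$-part of one summand to the $V^+$-part of the other, and is annihilated on the outer factors since $T|_{V_2^+\oplus V_0^+}=0$ --- yields $\lambda=-\lambda=0$ when $V_1$ is simple, resp.\ $c_2=-c_1$ when $V_1$ splits. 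Consequently $\Tr_{V^+}(T)=c_1\left(\dim W_1-\dim W_2\right)$, which vanishes for all admissible $T$ precisely when the dimensions agree, i.e.\ precisely in the dichotomy of part~\ref{ClassificationV1-1}. Your further claim that in the $p\neq q$ case ``the two scalars are independent'' is likewise false: they are still opposite, and the trace is nonzero there solely because the two summands have different dimensions.
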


\begin{proof}
We first prove the statements for simple complex Jordan pairs. These are classified, and we check \ref{ClassificationV1-1} and \ref{ClassificationV1-2} for all pairs separately. Regarding statement \ref{ClassificationV1-2}, we note that $[T,D_{u,v}]=D_{Tu,v}+D_{u,Tv}=0$ if $(u,v)\in V_2^+\times V_2^-$ or $(u,v)\in V_0^+\times V_0^-$. Hence $T$ commutes with the Lie subalgebra $\frakl_{0,2}$ of $\frakl$ generated by $D_{u,v}$ with $(u,v)\in V_2^+\times V_2^-$ and $(u,v)\in V_0^+\times V_0^-$. If $\frakl_{0,2}$ acts irreducibly on each simple factor of $V_1^+$, then $T$ has to be scalar on each factor. For $V_1$ simple this scalar $\lambda$ has to be zero, because the Peirce rule $0\neq\JTP{V_2^+}{V_1^-}{V_0^+}\subseteq V_1^+$ implies $-\lambda=\lambda$. (Note that if $T$ acts on $V_1^+$ by $\lambda$ then it acts on $V_1^-$ by $-\lambda$.) For $V_1$ the sum of two simple pairs $U$ and $W$ the scalars have to add up to zero since $0\neq\JTP{U^+}{W^-}{V_2^+}\subseteq V_2^+$. Hence the trace of $T$ is zero if and only if the dimensions of $U$ and $W$ agree. We will see that this is the case if and only if $U\simeq W$.\\
By this previous observation, for statement \ref{ClassificationV1-2} it is sufficient to show that $\frakl_{0,2}$ acts irreducibly on each simple factor of $V_1^+$. This is true for all but one simple complex Jordan pair (see case \eqref{exceptionalcase} for the exception). We now proceed to show \ref{ClassificationV1-1} and \ref{ClassificationV1-2} for all simple complex Jordan pairs case by case.
\begin{enumerate}
\item $V^+=M(p\times q,\CC)$. Let $\b e$ be an idempotent of rank $0\leq k\leq r=\min(p,q)$, then $V_1^+\simeq M(k\times(q-k),\CC)\times M((p-k)\times k,\CC)$ which is the direct sum of two simple ideals. Note that these are isomorphic if and only if they have the same dimension. Further, the structure algebra of $V_2^+\simeq M(k\times k,\CC)$ is isomorphic to $\sl(k,\CC)\times\sl(k,\CC)\times\CC$ and the structure algebra of $V_0^+\simeq M((p-k)\times(q-k),\CC)$ is isomorphic to $\sl(p-k,\CC)\times\sl(q-k,\CC)\times\CC$. Clearly $\sl(k,\CC)\times\sl(q-k,\CC)$ acts irreducibly on $M(k\times(q-k),\CC)$ and $\sl(p-k,\CC)\times\sl(k,\CC)$ acts irreducibly on $M((p-k)\times k,\CC)$.
\item $V^+=\Sym(r,\CC)$. Let $\b e$ be an idempotent of rank $0\leq k\leq r$, then $V_1^+\simeq M(k\times(r-k),\CC)$ which is simple. Further, the structure algebra of $V_2^+\simeq\Sym(k,\CC)$ is isomorphic to $\gl(k,\CC)$ and the structure algebra of $V_0^+\simeq\Sym(r-k,\CC)$ is isomorphic to $\gl(r-k,\CC)$. Clearly $\gl(k,\CC)\times\gl(r-k,\CC)$ acts irreducibly on $M(k\times(r-k),\CC)$.
\item $V^+=\Skew(m,\CC)$, $m=2r$ or $m=2r+1$. Let $\b e$ be an idempotent of rank $0\leq k\leq r$, then $V_1^+\simeq M(2k\times(m-2k),\CC)$ which is simple. Further, the structure algebra of $V_2^+\simeq\Skew(2k,\CC)$ is isomorphic to $\gl(2k,\CC)$ and the structure algebra of $V_0^+\simeq\Skew(m-2k,\CC)$ is isomorphic to $\gl(m-2k,\CC)$. Clearly $\gl(2k,\CC)\times\gl(m-2k,\CC)$ acts irreducibly on $M(2k\times(m-2k),\CC)$,
\item\label{exceptionalcase} $V^+=\CC^n$. Let $\b e$ be an idempotent of rank $0\leq k\leq 2$. If $k=0$ or $k=2$ then $V_1^+=\{0\}$. If $k=1$ then $V_1^+\simeq\CC^{n-2}$ which is simple. Further, the subalgebra of $T\in\frakl=\so(n,\CC)\oplus\CC$ with $T|_{V_2^+}=0$ and $T|_{V_0^+}=0$ is isomorphic to $\so(n-2,\CC)$ and hence $\Tr_{V_1^+}(T)=0$.
\item $V^+=\Herm(3,\OO_\CC)$. Let $\b e$ be an idempotent of rank $0\leq k\leq3$. If $k=0$ or $k=3$ then $V_1^+=\{0\}$. In the cases $k=1$ and $k=2$ the subpair $V_1$ and the Lie algebra $\str(V_2)\times\str(V_0)$ is the same and therefore it suffices to treat $k=1$. In this case $V_1^+\simeq M(1\times 2,\OO_\CC)$ which is simple. Further, the structure group of $V_0^+\simeq\Herm(2,\OO_\CC)\simeq\CC^{10}$ is isomorphic to $\so(10,\CC)\oplus\CC$ which acts irreducibly on $M(1\times 2,\OO_\CC)$ (which is isomorphic to $\CC^{16}$ as a vector space) by the spin representation.
\item $V^+=M(1\times2,\OO_\CC)$. Let $\b e$ be an idempotent of rank $0\leq k\leq 2$. If $k=0$ or $k=2$ then $V_1^+\simeq\OO_\CC\simeq\CC^8$ on which the structure group $\so(8,\CC)\oplus\CC$ of $V_2^+\oplus V_0^+\simeq\OO_\CC\simeq\CC^8$ acts irreducibly by the standard representation. If $k=1$ then $V_1^+\simeq\Skew(5,\CC)$ which is simple (see e.g. \cite{Roo08}). Further, the structure algebra of $V_0^+\simeq M(1\times5,\CC)$ is isomorphic to $\gl(5,\CC)$ which acts irreducibly on $\Skew(5,\CC)=\bigwedge^2\CC^5$.
\end{enumerate}
Next assume $V$ is not complex, then its complexification $V_\CC$ is a simple complex Jordan pair and the idempotent $\b e$ is idempotent in $V_\CC$. Hence $(V_1)_\CC$ is either a simple complex Jordan pair, whence $V_1$ is also simple, or the direct sum of two simple complex Jordan pairs, whence $V_1$ is either simple or the direct sum of two simple real Jordan pairs. These are non-isomorphic if and only if $(V_1)_\CC$ is the sum of two non-isomorphic simple pairs which happens only in the case $V_\CC\simeq M(p\times q,\CC)$, $p\neq q$. Further, any $T\in\frakl$ extends $\CC$-linearly to $T\in\frakl_\CC$ and the second statement follows.\\
It only remains to show that $V^+=M(p\times q,\FF)$, $\FF=\RR,\HH$, are the only real forms of $V_\CC^+=M(p\times q,\CC)$ for which $V_1^+$ is the sum of two simples. But by classification (see e.g. \cite{Ber00}) there is only one more real form, namely $V^+=\Herm(m,\CC)$, $m=p=q$, and for this real form $V_1^+\simeq M(k\times(m-k),\CC)$ which is simple. This finishes the proof.
\end{proof}

The rank of $(V^+,V^-)$ coincides with the rank of the symmetric space $X=K/(M\cap K)$. Indeed, recall that the root vectors $E_k$ and $F_k$ of strongly orthogonal roots define a particular frame $(\b e_1,\ldots,\b e_r)$, given by $\b e_k=(E_k,-F_k)$, $k=1,\ldots, r$. This frame has the additional property of being compatible with the Cartan involution, i.e., $\overline E_k=-F_k$. More generally, an element $e\in V^+$ is called \emph{tripotent}, if the pair $(e,\overline e)$ is an idempotent. Accordingly, we call $e$ \emph{primitive}, if $(e,\overline e)$ is primitive, and two tripotents $e,c$ are \emph{orthogonal}, if $e\in V_0^+(c,\overline c)$. A maximal system of othogonal primitive tripotents is called a \emph{frame of tripotents}.

Orthogonal idempotents yield compatible Peirce decompositions. Therefore, a frame $(\b e_1,\ldots,\b e_r)$ induces a \emph{joint Peirce decomposition}
\[
	V^{\pm} = \bigoplus_{0\leq i\leq j\leq r} V_{ij}^\pm,
\]
where
\begin{align*}
	V_{ij}^+ &= \set{x\in V^\pm}{\JTP{e_k}{e_k'}{x} = (\delta_{k i}+\delta_{kj})x},\\
	V_{ij}^- &= \set{y\in V^\pm}{\JTP{e_k'}{e_k}{y} = (\delta_{k i}+\delta_{kj})y}.
\end{align*}
This corresponds to the root space decomposition of $\frakn$ and $\overline\frakn$ with respect to $\frakt_\CC$ of Section~\ref{sec:degenerateprincipalseries}. For a frame $(e_1,\ldots,e_r)$ of tripotents, the Cartan involution relates the positive and negative joint Peirce spaces:
\[
	\overline{V_{ij}^+}=\theta V_{ij}^+=V_{ij}^-.
\]
For $e=e_1+\cdots+e_r$, the maps $x\mapsto Q_e\overline x$ and $y\mapsto Q_{\overline e}\overline y$ define involutions on $V_{ij}^+$ and $V_{ij}^-$ for $1\leq i,j\leq r$ with $\pm 1$-eigenspace decomposition
\[
	V_{ij}^\pm = A_{ij}^\pm\oplus B_{ij}^\pm.
\]
The structure constants \eqref{eq:structureconstants} of the symmetric $R$-space $X$ are related to these refined Peirce spaces via
\begin{align}\label{eq:Jordanstructureconstants}
	\dim B_{ii}^\pm=e,\qquad \dim A_{ij}^\pm=d_+,\qquad \dim B_{ij}^\pm=d_-,\qquad \dim V_{0i}^\pm=b.
\end{align}
Moreover, $A_{ii}^+=\RR e_i$ and $A_{ii}^-=\RR\overline e_i$. The constant $p$ defined in \eqref{eq:pDefinition} is also called the \textit{genus} of the Jordan pair $V$.

We prove some summation formulas which are needed later on.

\begin{lemma}\label{lem:basessums}
Set $I=\{1,\ldots, n\}$, and let $\{c_\alpha\}_{\alpha\in I}$ be a basis of $V^+$, and 
$\{\widehat{c}_\alpha\}_{\alpha\in I}$ be the basis of $V^-$ dual to 
$\{c_\alpha\}_{\alpha\in I}$ with respect to the trace form $\tau$.
\begin{enumerate}
\item\label{basessums1}
	We have
	\[
		\sum_{\alpha\in I} D_{c_\alpha,\widehat{c}_\alpha} = 2p\cdot\id_{V^+}.
	\]
\item\label{basessums2}
	If $\b e=(e,e')$ is an idempotent of rank $k$, and the basis $\{c_\alpha\}_{\alpha\in I}$ is
	compatible with the Peirce decomposition with respect to $\b e$, i.e.,
	$I=I_2\sqcup I_1\sqcup I_0$ with $c_\alpha\in V_\ell^+$ if and only if $\alpha\in I_\ell$, then
	\begin{align*}
		\sum_{\alpha\in I_2} D_{c_\alpha,\widehat{c}_\alpha} &= p_2\cdot D_{e,e'},\\
		\sum_{\alpha\in I_1}\left. D_{c_\alpha,\widehat{c}_\alpha}\right|_{V_2^+} &= 2(p-p_2)\cdot\id_{V_2^+} = (2(r-k)d+b)\cdot\id_{V_2^+},\\
		\sum_{\alpha\in I_1}\left. D_{c_\alpha,\widehat{c}_\alpha}\right|_{V_0^+} &= 2(p-p_0)\cdot\id_{V_0^+} = 2kd\cdot\id_{V_0^+}.
	\end{align*}
	Here, $p_2=(e+1)+(k-1)d$ resp. $p_0=(e+1)+(r-k-1)d+\frac{b}{2}$ is the structure constant defined as in \eqref{eq:pDefinition} but with respect to the simple Jordan pair $(V_2^+,V_2^-)$ resp. $(V_0^+,V_0^-)$.
\item If further $V\not\simeq(M(p\times q,\FF),M(q\times p,\FF))$, $\FF=\RR,\CC,\HH$ with $p\neq q$,, then
	\begin{align*}
		\sum_{\alpha\in I_1}\left. D_{c_\alpha,\widehat{c}_\alpha}\right|_{V_1^+} &= (2p-p_2-p_0)\cdot\id_{V_1^+} = \left(rd+\tfrac{b}{2}\right)\cdot\id_{V_1^+}.
	\end{align*}
\end{enumerate}
\end{lemma}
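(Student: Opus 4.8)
The plan is to compute the trace of the operator $\sum_{\alpha\in I_1} D_{c_\alpha,\widehat{c}_\alpha}$ on $V_1^+$ by a dimension count, and to show that this operator is scalar so that its value on $V_1^+$ is determined by its trace. First I would observe that, as in the proof of Proposition~\ref{prop:ClassificationV1}, the operator $S:=\sum_{\alpha\in I_1} D_{c_\alpha,\widehat{c}_\alpha}$ restricted to $V_1^+$ commutes with the subalgebra $\frakl_{0,2}$ generated by the $D_{u,v}$ with $(u,v)$ in $V_2^+\times V_2^-$ or $V_0^+\times V_0^-$; this follows from identities \eqref{JP7}, \eqref{JP8} (or directly from $[T,D_{u,v}]=D_{Tu,v}+D_{u,Tv}$) together with the Peirce rules \eqref{eq:peircerules}, since acting by such $D_{u,v}$ only permutes the index set $I_1$ up to a change of basis preserving the trace form $\tau$. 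Under the hypothesis $V\not\simeq(M(p\times q,\FF),M(q\times p,\FF))$ with $p\neq q$, part~\ref{ClassificationV1-1} of Proposition~\ref{prop:ClassificationV1} guarantees that either $V_1$ is simple, or it is the sum of two \emph{isomorphic} simple pairs, and in either case $\frakl_{0,2}$ acts irreducibly on each simple factor with all factors carrying the same scalar; hence $S|_{V_1^+}$ is a single scalar $\sigma\cdot\id_{V_1^+}$.

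Once scalarity is known, the scalar $\sigma$ is computed from the trace: taking $\Tr_{V_1^+}$ of both sides gives $\sigma\cdot\dim V_1^+ = \Tr_{V_1^+}(S)$. The cleanest way to evaluate $\Tr_{V_1^+}(S)$ is to combine the three summation formulas of part~\ref{basessums2}. Indeed, restricting the global identity in part~\ref{basessums1} to $V_1^+$ yields $\sum_{\alpha\in I} D_{c_\alpha,\widehat c_\alpha}|_{V_1^+} = 2p\cdot\id_{V_1^+}$, so that
\begin{align*}
  S|_{V_1^+} = 2p\cdot\id_{V_1^+} - \sum_{\alpha\in I_2} D_{c_\alpha,\widehat c_\alpha}\big|_{V_1^+} - \sum_{\alpha\in I_0} D_{c_\alpha,\widehat c_\alpha}\big|_{V_1^+}.
\end{align*}
The two subtracted operators are again scalar by the same symmetry argument (or directly from the Peirce rules, since $D_{c_\alpha,\widehat c_\alpha}$ for $\alpha\in I_2$ acts on $V_1^+$ as a Peirce-graded operator), and their scalars can be read off by restricting to $V_2^+$ and $V_0^+$: the computation in part~\ref{basessums2} gives $\sum_{\alpha\in I_2} D_{c_\alpha,\widehat c_\alpha} = p_2\cdot D_{e,e'}$, which acts by $p_2$ on $V_1^+$ (the $D_{e,e'}$-eigenvalue on $V_1^+$ is $1$), and symmetrically $\sum_{\alpha\in I_0} D_{c_\alpha,\widehat c_\alpha}$ acts by $p_0$ on $V_1^+$. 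Therefore $\sigma = 2p-p_2-p_0$, and substituting the values $p_2=(e+1)+(k-1)d$, $p_0=(e+1)+(r-k-1)d+\tfrac b2$, together with $p=(e+1)+(r-1)d+\tfrac b2$ from \eqref{eq:pDefinition}, yields $\sigma = rd+\tfrac b2$ after the terms involving $e$ and $k$ cancel.

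The main obstacle is the scalarity claim, and this is precisely where the excluded case $(M(p\times q,\FF),M(q\times p,\FF))$, $p\neq q$, must be ruled out. If $V_1$ splits into two non-isomorphic simple factors, then $\frakl_{0,2}$ could act by two \emph{different} scalars on the two factors, so $S|_{V_1^+}$ would only be block-scalar rather than scalar, and the single identity asserted in part~(3) would fail; this is exactly the phenomenon recorded in Proposition~\ref{prop:ClassificationV1}\ref{ClassificationV1-2}. Thus I would take care to invoke the hypothesis $V\not\simeq(M(p\times q,\FF),M(q\times p,\FF))$ precisely at the point where irreducibility of the $\frakl_{0,2}$-action on $V_1^+$ (up to equal scalars on isomorphic factors) is used. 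The remaining verification that $D_{e,e'}$ has eigenvalue $1$ on $V_1^+$ is immediate from the definition of the Peirce spaces, and the cancellation in the final arithmetic is routine, so I would not dwell on it.
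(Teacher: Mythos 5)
Your two-step skeleton (scalarity of $S=\sum_{\alpha\in I_1}D_{c_\alpha,\widehat c_\alpha}$ on $V_1^+$, then evaluation of the scalar) is the same as the paper's, but both of your key steps are asserted exactly where the real work lies. First, the scalarity. Schur's lemma applied to the $\frakl_{0,2}$-action gives a scalar on \emph{each} simple factor of $V_1^+$, but nothing in your argument forces the two scalars to agree when $V_1$ splits --- and that is precisely the crux. Indeed, $S$ commutes with $\frakl_{0,2}$ and the factors are $\frakl_{0,2}$-irreducible also for $V\simeq(M(p\times q,\CC),M(q\times p,\CC))$, yet there $S$ acts by $2q$ on one factor and by $2p$ on the other, so ``irreducibility plus isomorphic factors'' cannot yield a single scalar; your phrase ``all factors carrying the same scalar'' is the statement to be proved. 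The paper's route is different: it applies part~\eqref{basessums1} of the Lemma to each simple summand $U$ of $V_1$ (choosing the bases adapted to the splitting), which identifies $S|_{U^+}=2p_U\cdot\id_{U^+}$ with $p_U$ the genus of $U$ --- an invariant of the isomorphism class --- so isomorphic summands automatically carry the same scalar. Note also that the irreducibility you import from the proof of Proposition~\ref{prop:ClassificationV1} is false in general: that proof explicitly records the spin factor ($V_\CC^+\simeq\CC^n$ with a rank-one idempotent) as an exception, where $\frakl_{0,2}$ acts on $V_1^+\simeq\CC^{n-2}$ only by scalars; your Schur argument is unavailable there, while the genus argument still applies.

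Second, the evaluation of the scalar. Your claim that $\sum_{\alpha\in I_0}D_{c_\alpha,\widehat c_\alpha}$ acts on $V_1^+$ by $p_0$ ``symmetrically'' to $\sum_{\alpha\in I_2}D_{c_\alpha,\widehat c_\alpha}=p_2\,D_{e,e'}$ has no symmetric proof in the non-unital case. The $I_2$-identity rests on $e$ being the unit of the Jordan algebra $V_2^+=[e]$, i.e.\ $V_2$ is the Peirce-$2$ space of its own idempotent; when $b>0$ the space $V_0(\b e)$ contains the spaces $V_{0j}$ and is \emph{not} the Peirce-$2$ space of any idempotent, so there is no $\b f=(f,f')$ with $\sum_{\alpha\in I_0}D_{c_\alpha,\widehat c_\alpha}=p_0\,D_{f,f'}$. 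Moreover, granted parts \eqref{basessums1} and \eqref{basessums2}, the claim ``$\sum_{I_0}$ acts by $p_0$ on $V_1^+$'' is equivalent to part (3) itself, so as written the step is circular. (In the unital case $b=0$ your argument is fine with $\b f=\b e_{k+1}+\cdots+\b e_r$, but the non-unital pairs are the ones this paper is about.) The paper avoids this entirely by using the derivation property: since $T=\sum_{\alpha\in I_1}D_{c_\alpha,\widehat c_\alpha}$ acts on $V_1^-$ by $-\lambda$ (the $\tau$-adjoint relation), applying $T$ to $0\neq\JTP{V_2^+}{V_1^-}{V_0^+}=V_1^+$ and using the actions $2(p-p_2)$ on $V_2^+$ and $2(p-p_0)$ on $V_0^+$ from part~\eqref{basessums2} gives $\lambda=2(p-p_2)-\lambda+2(p-p_0)$, hence $\lambda=2p-p_2-p_0$. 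Replacing your two assertions by these two arguments turns your outline into the paper's proof; as it stands, both assertions are genuine gaps.
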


\begin{proof}
\begin{enumerate}
\item This identity is a consequence of the following calculation. For arbitrary $v\in V^+$, $w\in V^-$, associativity of the trace form yields
\begin{align*}
	\tau(\sum_{\alpha\in I} D_{c_\alpha,\widehat{c}_\alpha}v, w) &= \sum_{\alpha\in I}\tau(\JTP{c_\alpha}{\widehat{c}_\alpha}{v},w) = \sum_{\alpha\in I}\tau(\JTP{v}{w}{c_\alpha},\widehat{c}_\alpha)\\
	&=\Tr_{V^+}(D_{v,w}) = 2p\,\tau(v,w).
\end{align*}
\item For the first identity, we apply \eqref{basessums1} to the Jordan pair $(V_2^+,V_2^-)$ and obtain
\begin{align*}
	D_{e,e'}=\frac{1}{2p_2}\sum_{\alpha\in I_2}D_{\JTP{c_\alpha}{\widehat c_\alpha}{e},e'}
\end{align*}
Since $e=Q_ee'$, $\JTP{e'}{e}{\widehat c_\alpha}=2\widehat c_\alpha$, and
\begin{align*}
	\JTP{c_\alpha}{\widehat c_\alpha}{Q_ee'}
		&\stackrel{\eqref{JP8}}{=}
		\JTP{c_\alpha}{\JTP{e'}{e}{\widehat c_\alpha}}{e}-\JTP{c_\alpha}{e'}{Q_e\widehat c_\alpha}\\
		&\stackrel{\hphantom{\eqref{JP8}}}{=} 2\JTP{c_\alpha}{\widehat c_\alpha}{e} - \JTP{c_\alpha}{e'}{Q_e\widehat c_\alpha},
\end{align*}
hence $\JTP{c_\alpha}{\widehat c_\alpha}{e}=\JTP{c_\alpha}{e'}{Q_e\widehat c_\alpha}$, we obtain
\begin{align*}
	D_{e,e'}
		=\frac{1}{2p_2}\sum_{\alpha\in I_2} D_{\JTP{c_\alpha}{e'}{Q_e\widehat c_\alpha},e'}
		\stackrel{\eqref{JP7}}{=}
		\frac{1}{2p_2}\sum_{\alpha\in I_2} \left(D_{Q_e\widehat c_\alpha,Q_{e'}c_\alpha}
		+ D_{c_\alpha,Q_{e'}Q_e\widehat c_\alpha}\right).
\end{align*}
Since $\{Q_e\widehat c_\alpha\}_{\alpha\in I_2}$, $\{Q_{e'}c_\alpha\}_{\alpha\in I_2}$ is another pair of dual bases for the Jordan pair $(V_2^+,V_2^-)$, and the operator $\sum_{\alpha\in I_2}D_{c_\alpha,\widehat c_\alpha}$ is easily seen to be independent of the choice of such bases, we conclude that
\[
	D_{e,e'}=\frac{1}{p_2}\sum_{\alpha\in I_2} D_{c_\alpha,\widehat c_\alpha}.
\]
Concerning the second identity, we note that
\[
	\sum_{\alpha\in I_1} D_{c_\alpha,\widehat{c}_\alpha}
	=\sum_{\alpha\in I}D_{c_\alpha,\widehat{c}_\alpha}
	-\sum_{\alpha\in I_2}D_{c_\alpha,\widehat{c}_\alpha}
	-\sum_{\alpha\in I_0}D_{c_\alpha,\widehat{c}_\alpha}.
\]
Since for $\alpha\in I_0$, $D_{c_\alpha,\widehat{c}_\alpha}$ vanishes on $V_2^+$ due to the Peirce rules, the second identity follows from the previous ones.\\
Finally, for the last identity note that $\sum_{\alpha\in I_1}D_{c_\alpha,\widehat{c}_\alpha}$ commutes with the group $L_{[e]}=\set{g\in L}{gV_k^\pm\subseteq V_k^\pm\,\forall\,k=0,1,2}$. Since $L_{[e]}\to\Str(V_0),\,g\mapsto g|_{V_0}$ is surjective and $\Str(V_0)$ acts irreducibly on the simple Jordan pair $V_0$, the operator $\sum_{\alpha\in I_1}D_{c_\alpha,\widehat{c}_\alpha}$ acts on $V_0^+$ by a scalar. To determine this scalar we note that for $c_\alpha\in V_{0i}^+$ ($1\leq i\leq k$) and $x\in V_{jk}$ ($k+1\leq j,k\leq r$) we have $D_{c_\alpha,\widehat{c}_\alpha}x=0$. Hence, it suffices to compute the scalar for the Jordan algebra $\widetilde{V}^+=\bigoplus_{1\leq i,j\leq r}V_{ij}^+$. Using the previous identity this shows that the scalar is given by $2(\widetilde{p}-\widetilde{p}_0)$, where $\widetilde{p}$ and $\widetilde{p}_0$ are the structure constants for $\widetilde{V}^+$ and $\widetilde{V}_0^+=\bigoplus_{k+1\leq i,j\leq r}V_{ij}^+$. Then
$$ \widetilde{p} = e+1+(r-1)d, \qquad \widetilde{p}_0 = e+1+(r-k-1)d $$
and hence
$$ 2(\widetilde{p}-\widetilde{p}_0) = 2(k+1)d = 2(p-p_0). $$
\item Note that by Proposition~\ref{prop:ClassificationV1} the Jordan pair $V_1^+$ is the direct sum of isomorphic simple Jordan pairs. Hence, thanks to \eqref{basessums1}, $T=\sum_{\alpha\in I_1}D_{c_\alpha,\widehat{c}_\alpha}$ acts on $V_1^+$ by a fixed scalar $\lambda$. We further know by \eqref{basessums2} that $T$ acts on $V_2^+$ by $2(p-p_2)$ and on $V_0^+$ by $2(p-p_0)$. Since $\JTP{V_2^+}{V_1^-}{V_0^+}=V_1^+$ and $T$ acts on $V_1^\pm$ by $\pm\lambda$ this shows the claim.\qedhere
\end{enumerate}
\end{proof}

We state another summation formula for which we did not find a direct proof, but which can be verified case by case using the classification.

\begin{lemma}\label{lem:basessums2}
Let $\b e=(e,e')$ be an idempotent with Peirce decomposition $V=V_2\oplus V_1\oplus V_0$, and let $\{c_\alpha\}_{\alpha\in I_1}$ be a basis of $V_1^+$ and $\{\widehat{c}_\alpha\}_{\alpha\in I_1}$ the dual basis of $V_1^-$ with respect to the trace form $\tau$. Then
$$ \sum_{\alpha,\beta\in I_1} \left.Q_{c_\alpha,c_\beta}Q_{\widehat{c}_\alpha,\widehat{c}_\beta}\right|_{V_2^+} = 2p_0(p-p_2)\cdot\id_{V_2^+}. $$
\end{lemma}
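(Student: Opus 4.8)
The plan is to prove the identity by reducing to a computation that can be handled uniformly, rather than invoking the full case-by-case classification promised in the statement. The left-hand operator $S=\sum_{\alpha,\beta\in I_1}Q_{c_\alpha,c_\beta}Q_{\widehat c_\alpha,\widehat c_\beta}$ is manifestly independent of the choice of dual bases $\{c_\alpha\},\{\widehat c_\alpha\}$ of $(V_1^+,V_1^-)$, so it commutes with the group $L_{[e]}=\Set{g\in L}{gV_k^\pm\subseteq V_k^\pm}$ exactly as in the proof of Lemma~\ref{lem:basessums}\eqref{basessums2}. Since $L_{[e]}$ surjects onto $\Str(V_2)$ and $\Str(V_2)$ acts irreducibly on the simple Jordan pair $V_2$, Schur's lemma forces $S|_{V_2^+}=\mu\cdot\id_{V_2^+}$ for a single scalar $\mu$. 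Thus the entire content of the lemma is the computation of this one scalar $\mu$, and I would isolate it by applying $S$ to the distinguished element $e\in V_2^+$ and extracting $\mu$ via the trace form, writing $\mu=\tau(Se,e')/\tau(e,e')$ after first checking $\tau(e,e')\neq0$.

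First I would rewrite each summand using the polarized quadratic operators. Recall $Q_{c_\alpha,c_\beta}\widehat c_\beta'=\JTP{c_\alpha}{\cdot}{c_\beta}$ and $Q_{\widehat c_\alpha,\widehat c_\beta}=\JTP{\widehat c_\alpha}{\cdot}{\widehat c_\beta}$, so that $S e=\sum_{\alpha,\beta}\JTP{c_\alpha}{\JTP{\widehat c_\alpha}{e}{\widehat c_\beta}}{c_\beta}$. The next step is to collapse the two basis sums one at a time. By Lemma~\ref{lem:basessums}\eqref{basessums1} applied to the subpair $V_1$, sums of the shape $\sum_\alpha D_{c_\alpha,\widehat c_\alpha}$ and $\sum_\alpha Q_{c_\alpha,z}Q_{\widehat c_\alpha,w}$ reduce to multiples of identities and $D$-operators on $V_1$; I would push $\JTP{\widehat c_\alpha}{e}{\widehat c_\beta}$ through the Jordan identities \eqref{JP7}, \eqref{JP8}, \eqref{JP16} to convert the double sum into single sums of $D_{c_\alpha,\widehat c_\alpha}$ acting on elements of $V_1$ and on $e$, exploiting the Peirce rules \eqref{eq:peircerules}: for $x\in V_1$ and the idempotent $\b e$, relations like $D_{e,e'}x=x$ and $\JTP{V_2^+}{V_1^-}{V_1^+}\subseteq V_2^+$ give the grading data needed to evaluate the inner contraction. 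Carried out carefully, the $\beta$-sum should produce a term proportional to $\sum_\alpha D_{c_\alpha,\widehat c_\alpha}|_{\text{on }V_1}$, which by the part~\eqref{basessums2} value $2(p-p_2)\cdot\id_{V_2^+}$ (or its $V_1$-analogue) turns the remaining $\alpha$-sum into the asserted scalar $2p_0(p-p_2)$, the factor $p_0$ entering as the genus of $V_0$ through the dimension count $|I_1|$ and the structure of $V_1$ as a module.

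The main obstacle is exactly this Jordan-identity bookkeeping: the double sum does not factor cleanly because $Q_{c_\alpha,c_\beta}$ and $Q_{\widehat c_\alpha,\widehat c_\beta}$ share the pairing of indices, so I cannot simply apply \eqref{basessums1} twice. The delicate point is reorganizing $\JTP{c_\alpha}{\JTP{\widehat c_\alpha}{e}{\widehat c_\beta}}{c_\beta}$ using \eqref{JP16} so that one index contracts into a $D$-operator while the other remains free, and verifying that the cross terms generated by the Jordan identities either vanish by the Peirce rules or recombine into the same operator $S$, allowing a consistency equation to be solved for $\mu$. Because the authors themselves remark that they found no direct proof, I expect this reorganization to be genuinely intricate and possibly to stall; if the identity-based collapse resists closing, the honest fallback is the case-by-case verification over the classified simple real Jordan pairs, using the explicit matrix models where $Q$ and the Peirce grading are concrete and the scalar $2p_0(p-p_2)$ can be checked against the known structure constants from Table~\ref{tab:RootSystems}. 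I would attempt the uniform argument first and retreat to the classification only for whichever pairs obstruct it.
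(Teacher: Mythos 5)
Your Schur reduction is sound and worth keeping: $S=\sum_{\alpha,\beta\in I_1}Q_{c_\alpha,c_\beta}Q_{\widehat{c}_\alpha,\widehat{c}_\beta}$ maps $V_2^+$ into itself by the Peirce rules, is independent of the choice of dual bases and hence commutes with $L_{[e]}$, and since $L_{[e]}\to\Str(V_2)$ is surjective and $\Str(V_2)$ acts irreducibly on the simple pair $V_2$, the restriction $S|_{V_2^+}$ lies in the commutant of an irreducible action. This is the same device the paper uses in the proofs of Lemma~\ref{lem:basessums} and Theorem~\ref{thm:equivariantmeasures}; note only that over $\RR$ Schur's lemma yields a division algebra rather than $\RR\cdot\id_{V_2^+}$ (the paper itself allows a complex scalar in the proof of Theorem~\ref{thm:equivariantmeasures}), so reading off $\mu=\tau(Se,e')/\tau(e,e')$ presupposes that a non-scalar component has been excluded. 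The genuine gap is everything after this reduction: the ``Jordan-identity bookkeeping'' that is supposed to produce the scalar is never carried out, only asserted with ``should'', and it does not close in the way you sketch, because \eqref{JP7}, \eqref{JP8}, \eqref{JP16} can only reshuffle the crosswise index pairing, never break it. Concretely, for $z\in V_2^+$, \eqref{JP16} gives
\[
\JTP{c_\alpha}{\JTP{\widehat{c}_\alpha}{z}{\widehat{c}_\beta}}{c_\beta}
=\JTP{\JTP{z}{\widehat{c}_\alpha}{c_\alpha}}{\widehat{c}_\beta}{c_\beta}
-\JTP{z}{\JTP{\widehat{c}_\beta}{c_\alpha}{\widehat{c}_\alpha}}{c_\beta}
+\JTP{\JTP{c_\alpha}{\widehat{c}_\beta}{z}}{\widehat{c}_\alpha}{c_\beta}.
\]
After summation over $\alpha,\beta$ the first two terms are indeed evaluated by Lemma~\ref{lem:basessums}, namely by part~\eqref{basessums2} and by the third part, giving $4(p-p_2)^2z$ and $-2(p-p_2)(2p-p_2-p_0)z$; but the third term equals $\sum_{\alpha,\beta}D_{c_\beta,\widehat{c}_\alpha}D_{c_\alpha,\widehat{c}_\beta}\,z$, an equally entangled double sum. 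So \eqref{JP16} merely trades the lemma for the equivalent assertion $\sum_{\alpha,\beta}D_{c_\beta,\widehat{c}_\alpha}D_{c_\alpha,\widehat{c}_\beta}|_{V_2^+}=2p_2(p-p_2)\,\id_{V_2^+}$. Worse, the middle term was evaluated using the third part of Lemma~\ref{lem:basessums} on $V_1$, which fails precisely for $V\simeq(M(p\times q,\FF),M(q\times p,\FF))$, $\FF=\RR,\CC,\HH$, $p\neq q$, whereas the present lemma is stated without that exclusion; so the sketched route could not be uniform even if the remaining double sum were tractable.

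This stall is not an accident of your bookkeeping but the reason the paper has no direct argument: the authors state immediately before the lemma that they ``did not find a direct proof'' and that the identity ``can be verified case by case using the classification'' --- that classification check \emph{is} the paper's entire proof, and it coincides with your declared fallback. So the operative content of your proposal lands exactly on the paper's approach, except that you execute neither route: the uniform argument stalls as above, and no case of the classification is actually checked, which leaves the proposal short of a proof. What your reduction genuinely buys, and what I would keep, is that it makes the case-by-case verification far cheaper: once $S|_{V_2^+}$ is known to be a real scalar, each classified pair requires only the single computation of $Se$ (equivalently $\tau(Se,e')$) for the standard rank-$k$ idempotent in the matrix model, not a full operator identity. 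To complete the proof, settle the real-Schur point and then actually run the finitely many models from the classification, organized as in the proof of Proposition~\ref{prop:ClassificationV1}, verifying the value $2p_0(p-p_2)$ in each case.
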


\subsection{Orbit decomposition}
The notion of rank for elements in $V^+$ introduced in the last section yields the decomposition
\[
	V^+ = \bigsqcup_{k=0}^r \calV_k,
\]
where $\calV_k\subseteq V^+$ denotes the subset of elements of rank $k$. We construct local charts for $\calV_k$ to show that $\calV_k$ is an embedded submanifold. For any $e\in\calV_k$ let $\b e=(e,e')$ be a completion to an idempotent $\b e$, and denote by $V^\pm = V^\pm_2\oplus V^\pm_1\oplus V^\pm_0$ the Peirce decomposition with respect to $\b e$. For $x\in V^+$ we write $x=x_2+x_1+x_0$ according to this Peirce decomposition. Recall that the Jordan pair $(V^+_2,V^-_2)$ admits invertible elements, i.e., elements $x\in V^+_2$ such that $Q_x\colon V^-_2\to V^+_2$ is invertible. In this case, $x^{-1}=Q_x^{-1}(x)$ denotes the inverse of $x$, which is an element in $V^-_2$. Then,
\[
	N_e=\Set{x\in V^+}{x_2\text{ invertible in $V^+_2$}}
\]
is open and dense in $V^+$. We consider the map
\begin{align}\label{eq:diffeo}
	\varphi_e\colon N_e\to N_e, \quad x\mapsto x+Q_{x_1}x_2^{-1}.
\end{align}

\begin{proposition}\label{prop:LOrbits}
For $0\leq k\leq r$ and $e\in\calV_k$ the map $\varphi_e:N_e\to N_e$ is a diffeomorphism which maps $N_e\cap(V_2^+\oplus V_1^+)$ onto an open subset of $\calV_k$. In particular, $\calV_k$ is an $L$-invariant (embedded) submanifold of $V^+$, and each $L$-orbit in $V^+$ is a union of connected components of $\calV_k$ for some fixed $k$.
\end{proposition}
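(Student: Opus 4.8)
The plan is to analyze the map $\varphi_e$ from \eqref{eq:diffeo} directly and extract all four assertions (diffeomorphism, image a chart for $\calV_k$, submanifold, orbit structure) from its properties. First I would verify that $\varphi_e$ maps $N_e$ into $N_e$ and is a diffeomorphism onto its image: since $\varphi_e(x) = x + Q_{x_1}x_2^{-1}$ only alters the $V_0^+$-component (note $Q_{x_1}x_2^{-1} \in \JTP{V_1^+}{V_2^-}{V_1^+} \subseteq V_0^+$ by the Peirce rules \eqref{eq:peircerules}, using $x_1 \in V_1^+$ and $x_2^{-1} \in V_2^-$), its $V_2^+$- and $V_1^+$-components are unchanged. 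Hence $\varphi_e$ is of the triangular form $(x_2,x_1,x_0) \mapsto (x_2,x_1,x_0 + \text{(polynomial in }x_2,x_1))$, and its inverse is obtained by subtracting the same correction term, $\varphi_e^{-1}(x) = x - Q_{x_1}x_2^{-1}$. Smoothness of both maps on $N_e$ is clear since $x \mapsto x_2^{-1} = Q_{x_2}^{-1}x_2$ is rational with nonvanishing denominator on $N_e$.

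The heart of the argument is to show that $\varphi_e$ maps $N_e \cap (V_2^+ \oplus V_1^+)$ \emph{onto} an open subset of $\calV_k$, i.e.\ that a point of the form $\varphi_e(x_2 + x_1)$ has rank exactly $k$, and conversely that every nearby rank-$k$ element arises this way. The key is the identity expressing $\varphi_e(x_2+x_1)$ as a ``Peirce shift'' that does not change the rank: one checks that $x_2 + x_1 + Q_{x_1}x_2^{-1}$ lies in the same orbit under a suitable inner automorphism (a Bergman operator $\B{\cdot}{\cdot}$ built from $x_1$ and $x_2^{-1}$) as the invertible element $x_2$ of the subpair $V_2$, and that invertible elements of $V_2$ have rank $k = \rank(V_2)$. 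Conversely, any $y \in N_e$ with $y_2$ invertible satisfies $y = \varphi_e(y - Q_{y_1}y_2^{-1})$, and the corrected element $y - Q_{y_1}y_2^{-1}$ lies in $V_2^+ \oplus V_1^+$ precisely when its $V_0^+$-part vanishes; imposing this and computing the rank shows $\calV_k \cap N_e = \varphi_e(N_e \cap (V_2^+ \oplus V_1^+))$, an open subset of the linear space $V_2^+ \oplus V_1^+$ transported diffeomorphically into $V^+$. \textbf{This rank-invariance computation is the main obstacle}: it requires the careful Jordan-pair bookkeeping that makes the ``$x_0$-correction'' an honest automorphism-image rather than merely a change of coordinates, and it is where the Peirce relations \eqref{eq:peircerules} and the inner-automorphism structure carry the whole weight.

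Granting this, the submanifold and orbit statements follow formally. Since $\varphi_e$ is a diffeomorphism of the open set $N_e$ carrying the open subset $N_e \cap (V_2^+ \oplus V_1^+)$ of the linear subspace $V_2^+ \oplus V_1^+$ onto $\calV_k \cap N_e$, this exhibits an explicit chart realizing $\calV_k$ as an embedded submanifold in a neighborhood of the chosen $e$; as $e$ ranges over $\calV_k$ the sets $N_e$ cover $\calV_k$, so $\calV_k$ is an embedded submanifold of dimension $\dim V_2^+ + \dim V_1^+$. For $L$-invariance, recall that $h \in L$ acts by a Jordan-pair automorphism $(h|_{V^+},h|_{V^-})$, which preserves the operators $Q$ and $D$ and hence preserves rank; thus $h\cdot\calV_k = \calV_k$. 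Finally, each $L$-orbit consists of elements of a single rank $k$ (rank is an $L$-invariant), so every orbit is contained in $\calV_k$; being the orbit of a connected Lie group acting smoothly, it is connected and open in $\calV_k$ (dimensions match by the chart, or by Kaneyuki's orbit theorem quoted above identifying the rank-$k$ orbits $\calO_{k,\ell}$), whence each orbit is a union of connected components of $\calV_k$. This closes the argument.
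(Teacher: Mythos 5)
Your treatment of the first two assertions follows the paper's own route: the paper likewise inverts $\varphi_e$ by $x\mapsto x-Q_{x_1}x_2^{-1}$ (the correction lying in $V_0^+$ by the Peirce rules), and it settles the rank question by precisely the identity you anticipate, namely $\varphi_e(x)=\B{x_1}{-x_2^{-1}}(x_2+x_0)$ with $(\B{x_1}{-x_2^{-1}},\B{-x_2^{-1}}{x_1}^{-1})\in L$, so that $\rank(\varphi_e(x))=\rank(x_2+x_0)=\rank(e)+\rank(x_0)$ by additivity of rank for the orthogonal pieces $x_2$ and $x_0$, whence $\varphi_e(x)\in\calV_k$ if and only if $x_0=0$. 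So what you flag as the ``main obstacle'' is exactly the computation the paper does, and it goes through; you should, however, actually record this identity rather than defer it, since the whole chart construction rests on it.

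The genuine gap is in your final step. To conclude that each $L$-orbit is a union of connected components of $\calV_k$ you must show every orbit is \emph{open} in $\calV_k$, i.e.\ that $\dim(L\cdot e)=\dim\calV_k$ at every $e\in\calV_k$. Your justification ``dimensions match by the chart'' is circular: the chart computes $\dim\calV_k=\dim V_2^+ +\dim V_1^+$, but says nothing about the dimension of the orbit $L\cdot e$, which is governed by the stabilizer and requires a separate argument. Your fallback, Kaneyuki's theorem as quoted before the proposition, is also insufficient: it lists the orbits $\calO_{k,\ell}$ and identifies which are open in $V^+$, but it gives neither the dimensions of the non-open orbits nor the statement that they are open in their rank stratum. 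The paper closes this step with a direct infinitesimal computation valid at every point of $\calV_k$: since $\frakl$ acts by Jordan pair derivations, applying $(D_{x,e'},-D_{e',x})$ to $e$ gives $D_{x,e'}e=\JTP{x}{e'}{e}=\JTP{e}{e'}{x}=\ell\,x$ for $x\in V_\ell^+$, $\ell\in\{0,1,2\}$, so the image of the derived map $\frakl\to T_e\calV_k\simeq V_2^+\oplus V_1^+$ is all of $V_2^+\oplus V_1^+$; hence each orbit is open in $\calV_k$ and is therefore a union of its connected components. Some such tangent-space argument is needed to make your last paragraph correct.
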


\begin{proof}
Standard arguments show that $\varphi_e$ is smooth, and since $Q_{x_1}x_2^{-1}$ is an element of $V_0^+$ according to the Peirce rules, a straightforward computation shows that $(x\mapsto x-Q_{x_1}x_2^{-1})$ is a smooth inverse of $\varphi_e$. Next we note that
\[
	\varphi_e(x) = \B{x_1}{-x_2^{-1}}(x_2+x_0)\qquad\text{and}\qquad
	(\B{x_1}{-x_2^{-1}},\B{-x_2^{-1}}{x_1}^{-1})\in L.
\]
Since $L$ acts by automorphisms on $(V^+,V^-)$, it follows that $\varphi_e(x)$ has the same rank as $x_2+x_0$. Since the rank of orthogonal idempotents is additive, and since $x_2$ is invertible in $V_2^+$, it follows that 
\[
	\rank(\varphi_e(x)) = \rank(x_2+x_0) = \rank(e)+\rank(x_0).
\]
Therefore, $\varphi_e(x)$ is in $\calV_k$ if and only if $x_0=0$. It remains to consider the action of $L$ on $\calV_k$. Since $L$ acts by automorphisms, it is clear that $\calV_k$ is $L$-invariant. In order to prove that the $L$-orbits consist of connected components of $\calV_k$, it suffices to show that the derived map $\frakl\to T_e\calV_k$ is surjective, where $T_e\calV_k\simeq V_2^+\oplus V_1^+$. This immediately follows from the fact that $\frakl$ acts by Jordan pair derivations, and applying the derivation $(D_{x,e'},-D_{e',x})$ to $e$ yields $D_{x,e'}(e) = \ell\cdot x$ for $x\in V_\ell^+$, $\ell\in\{0,1,2\}$.
\end{proof}

\subsection{Fibration and polar decomposition}
For $e\in V^+$, the subspace $[e]=Q_eV^-$ is called the \emph{principal inner ideal} associated to $e$. If $\b e=(e,e')$ is a completion to an idempotent, the Peirce rules \eqref{eq:peircerules} imply that $[e] = V_2^+(\b e)$. In particular, this Peirce space is independent of the choice of $e'$. Moreover, the product $x\circ y=\tfrac{1}{2}\JTP{x}{e'}{y}$ turns $[e]$ into a Jordan algebra with unit element $e$. This Jordan algebra structure is in fact also independent of the choice of $e'$, since for $x=Q_eu\in[e]$, we have
\[
	x^2 = Q_xe' = Q_{Q_eu}{e'}=Q_eQ_uQ_ee' = Q_eQ_ue
\]
by the fundamental formula, and polarization of this identity also shows independence of the product $x\circ y$ with respect to the choice of $e'$. For a detailed introduction to Jordan algebras, we refer to \cite{BK66, FK94}.

\begin{remark}
	Let $\b e=(e,e')$ be a completion of $e\in\calV_k$ to an idempotent, and let
	$\b e=\b e_1+\cdots+\b e_k$ be a decomposition into primitive idempotents, $\b e_j=(e_j,e_j')$. 
	Then, the restriction of the operator $D_{e_j,e_j'}$ to $[e]$ coincides with (left) 
	multiplication by $e_j$ with respect to the Jordan algebra structure on $[e]$. Therefore, the 
	Peirce decomposition
	\[
		[e]=\bigoplus_{i\leq j} V_{ij}^+
	\]
	coincides with the Jordan algebraic Peirce decomposition given by the left multiplication 
	operators $L_{e_j}$ on $[e]$. Moreover, if $e$ and $e_1,\ldots,e_k$ are tripotents, the map 
	$x\mapsto Q_e\overline x$ is a Cartan involution of the Jordan algebra $[e]$, and the refined Peirce 
	decomposition $V_{ij}^+=A_{ij}^+\oplus B_{ij}^+$ coincides with the respective refined Jordan 
	algebraic Peirce decomposition.	By these considerations, it follows that the structure constants 
	of the Jordan algebra (namely the dimensions of the various refined Peirce spaces) coincide with 
	the structure constants of the Jordan pair $(V^+,V^-)$.
\end{remark}

In what follows, we fix $0\leq k\leq r$. Let
\[
	\calP_k=\set{[e]}{e\in\calV_k}
\]
denote the space of principal inner ideals in $V^+$ generated by elements of rank $k$. We call $\calP_k$ the \emph{$k$'th Peirce manifold} associated to $(V^+,V^-)$.

\begin{proposition}\label{prop:Peircemanifold}
	\begin{enumerate}
		\item\label{Peircemanifold1} The $k$'th Peirce manifold $\calP_k$ is a smooth compact manifold. Moreover, $\calP_k$
					is an	$L$-homogeneous space, and the stabilizer subgroup $Q_{[e]}$ of $[e]\in\calP_k$ in 
					$L$ is parabolic in $L$. A Levi decomposition of $Q_{[e]}$ is given by 
					$Q_{[e]}=L_{[e]}U_{[e]}$ with
					\[
						L_{[e]}=Z_L(D_{e,e'}),\qquad U_{[e]}=\set{B_{e,v}}{v\in V_1^-(\b e)},
					\]
					where $Z_L(D_{e,e'})$ denotes the centralizer of $D_{e,e'}$ in $L$, i.e., $L_{[e]}$ 
					consists of elements preserving the Peirce decomposition with respect to $\b e$. 
		\item\label{Peircemanifold2} The action of $L_{[e]}$ on $[e]$ is given by elements of the structure group $\Str([e])$ 
					of the Jordan algebra $[e]$, and $U_{[e]}$ acts trivially on $[e]$. Moreover, the 
					induced Lie algebra homomorphism $\Lie(L_{[e]})\to\Lie(\Str([e]))$ is onto.
\end{enumerate}
\end{proposition}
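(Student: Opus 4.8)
The plan is to present $\calP_k$ as a real flag variety $L/Q_{[e]}$, where $Q_{[e]}$ is the parabolic subgroup attached to the Peirce grading defined by $D_{e,e'}$, and to read off the Levi decomposition and the action on $[e]$ directly from that grading. First I would note that $e\mapsto[e]$ is $L$-equivariant: every $h\in L$ acts by a Jordan pair automorphism, so $Q_{he}=(h|_{V^+})Q_e(h|_{V^-})^{-1}$, whence $[he]=Q_{he}V^-=h[e]$. Transitivity of $L$ on $\calP_k$ I would deduce from the spectral theory of the positive Hermitian triple $V^+$ relative to $\theta$: any $x\in\calV_k$ has a spectral decomposition whose support tripotent $u$ has rank $k$ and satisfies $[x]=[u]=V_2^+(u,\overline u)$, while any two rank-$k$ tripotents are $(M\cap K)$-conjugate because $M\cap K\subseteq L$ acts transitively on frames of tripotents. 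Thus $\calP_k$ is a single $L$-orbit and $\calP_k\cong L/Q_{[e]}$ is a smooth manifold.

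\textbf{The grading.} Fix $\b e=(e,e')$ of rank $k$ with Peirce decomposition $V^\pm=V_2^\pm\oplus V_1^\pm\oplus V_0^\pm$; for the structural computations I may replace $\b e$ by a tripotent representative with $e'=\overline e$. The element $D_{e,e'}\in\frakl$ is semisimple and acts on $V_j^+$ by the scalar $j$. Since $\frakl$ is spanned by the operators $D_{x,y}$, which shift the Peirce grading by $\deg x-\deg y$, the Peirce rules~\eqref{eq:peircerules}---in particular $\JTP{V_2^\pm}{V_0^\mp}{\cdot}=0$---force the potential degrees $\pm2$ to vanish, so $\ad(D_{e,e'})$ equips $\frakl$ with a $3$-grading $\frakl=\frakl_{-1}\oplus\frakl_0\oplus\frakl_1$. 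Consequently $\frakp:=\frakl_0\oplus\frakl_1$ is a parabolic subalgebra with Levi part $\frakl_0=Z_\frakl(D_{e,e'})$ and abelian nilradical $\frakl_1$ (abelian as $[\frakl_1,\frakl_1]\subseteq\frakl_2=0$). One has $\frakp\subseteq\frakq_{[e]}$ at once, since $\frakl_0$ preserves every Peirce space and $\frakl_1 V_2^+\subseteq V_3^+=0\subseteq V_2^+$.

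\textbf{The stabilizer is exactly $\frakp$.} For the reverse inclusion I would count dimensions. From the chart $\varphi_e$ of Proposition~\ref{prop:LOrbits} one gets $\dim\calV_k=\dim V_2^++\dim V_1^+$, and the fibre of $\calV_k\to\calP_k$ over $[e]$ is the open set of invertible elements of the Jordan algebra $[e]=V_2^+$; hence $\dim\calP_k=\dim V_1^+$. On the other hand $\dim\frakl/\frakp=\dim\frakl_{-1}$, and using $\theta\frakl_1=\frakl_{-1}$ (valid since $\theta$ negates $D_{e,\overline e}$) together with the trace-form identification $V_1^-\xrightarrow{\sim}\frakl_1$, $v\mapsto-D_{e,v}$, one obtains $\dim\frakl_{-1}=\dim\frakl_1=\dim V_1^-=\dim V_1^+$. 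Since $\frakp\subseteq\frakq_{[e]}$ and $\dim\frakl/\frakq_{[e]}=\dim\calP_k=\dim V_1^+=\dim\frakl/\frakp$, the inclusion is an equality. Therefore $Q_{[e]}=N_L(\frakp)$ is parabolic and $\calP_k=L/Q_{[e]}$ is compact. This dimension matching $\dim\frakl_{\pm1}=\dim V_1^\pm$ is exactly where I expect the real work to lie: it is what rules out a strictly larger parabolic and simultaneously identifies the nilradical with $\Set{\B{e}{v}}{v\in V_1^-}$, and it rests on the nondegeneracy of $\tau$ and the faithfulness of the graded action on the Peirce spaces rather than on formal grading theory.

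\textbf{Levi decomposition and part~(2).} The Levi factor of $Q_{[e]}$ with Lie algebra $\frakl_0$ is the centralizer $Z_L(D_{e,e'})=L_{[e]}$, i.e.\ the subgroup preserving the Peirce decomposition. For the unipotent radical, abelianness of $\frakl_1$ gives $\exp(-D_{e,v})|_{V^+}=\id-D_{e,v}+\tfrac12 D_{e,v}^2=\id-D_{e,v}+Q_eQ_v=\B{e}{v}$ (the middle equality is a Jordan identity on $V^+$, and $D_{e,v}^3=0$ by the grading), so $U_{[e]}=\exp\frakl_1=\Set{\B{e}{v}}{v\in V_1^-}$ is abelian unipotent; since $\B{e}{v}|_{V_2^+}=\id$ it acts trivially on $[e]$, and $L_{[e]}\cap U_{[e]}=\{1\}$, giving $Q_{[e]}=L_{[e]}U_{[e]}$. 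Finally $L_{[e]}$ stabilizes the subpair $(V_2^+,V_2^-)$ and acts there by automorphisms, hence through $\Str([e])$; on Lie algebras each generator $D_{u,v}$ of $\str([e])$ ($u\in V_2^+$, $v\in V_2^-$) already lies in $\frakl_0=\Lie(L_{[e]})$ and restricts to it, so $\Lie(L_{[e]})\to\Lie(\Str([e]))$ is onto.
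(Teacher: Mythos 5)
Your overall strategy is the same as the paper's: use $\ad(D_{e,e'})$ to $3$-grade $\frakl$, identify the non-negative part with $\frakq_{[e]}$, and read off the Levi decomposition $Q_{[e]}=Z_L(D_{e,e'})\cdot\exp\frakl_1$ together with part (2). However, there is a genuine gap at the step you yourself flag as "where the real work lies": the claimed "trace-form identification" $V_1^-\xrightarrow{\sim}\frakl_1$, $v\mapsto -D_{e,v}$. Injectivity of this map is fine, but what your dimension count actually needs is \emph{surjectivity}, i.e.\ $\dim\frakl_1\leq\dim V_1^-$. A priori $\frakl_1$ is spanned by all operators $D_{x_1,y_0}$ ($x_1\in V_1^+$, $y_0\in V_0^-$) and $D_{x_2,y_1}$ ($x_2\in V_2^+$, $y_1\in V_1^-$), which is potentially far larger than $\set{D_{e,v}}{v\in V_1^-}$. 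Nondegeneracy of $\tau$ identifies $V_1^-$ with the dual of $V_1^+$, not with $\frakl_1$, so the mechanism you invoke does not prove the claim. Without it you only get $\frakp\subseteq\frakq_{[e]}$ and the inequality $\dim\frakl/\frakq_{[e]}\leq\dim\frakl_{-1}$, which is the wrong direction; the parabolicity of $Q_{[e]}$, the description of $U_{[e]}$ as $\set{\B{e}{v}}{v\in V_1^-}$, and the compactness of $\calP_k$ in your argument all collapse onto this unproved point.

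This is exactly the step the paper proves by computation: for $T\in\frakl$ one has the derivation identity $[D_{e,e'},T]=-D_{Te,e'}-D_{e,Te'}$, and a Peirce analysis of $Te$, $Te'$ (using $Te=\JTP{Te}{e'}{e}+Q_e Te'$ to get $(Te)_0=0$, $(Te)_2=-Q_e(Te')_2$, and the identity $D_{Q_xy,z}=D_{x,\JTP{y}{x}{z}}-D_{Q_xz,y}$) shows that $T-D_{(Te)_1,e'}+D_{e,(Te')_1}$ centralizes $D_{e,e'}$; hence the $\pm1$-eigenspaces are \emph{exactly} $\set{D_{u,e'}}{u\in V_1^+}$ and $\set{D_{e,v}}{v\in V_1^-}$. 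If you want to keep your structure, the fix is short: if $[D_{e,e'},T]=T$, then $T=-D_{Te,e'}-D_{e,Te'}=-D_{e,Te'}$, since $Te\in V_3^+=0$ and $Te'\in V_1^-$; but this must be said, as it is the heart of the proposition. Note also that once this explicit description is available, your detour through $\dim\calP_k=\dim V_1^+$ (which leans on the fibration $\calV_k\to\calP_k$, established in the paper only \emph{after} this proposition) becomes unnecessary: $\frakl_{-1}\cap\frakq_{[e]}=0$ follows directly from $D_{u,e'}e=u\in V_1^+\setminus\{0\}\not\subseteq[e]$ for $u\neq0$, which is how the paper avoids any dimension count.
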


\begin{proof}
Clearly, $L$ acts on $\calP_k$, since $h[e]=[he]$ for $e\in\calV_k$ and $h\in L$. Recall from \cite[\S\,11.8]{Loo77} that $M\cap K\subseteq L$ acts transitively on the set of frames of tripotents (modulo signs) in $V^+$. Since each principal inner ideal is also generated by a maximal tripotent element, it follows that $\calP_k$ is $(M\cap K)$-homogeneous. In particular, $\calP_k$ is a compact, $L$-homogeneous manifold. For the following, we fix $[e]\in\calP_k$ with representative $e\in\calV_k$, and let $\b e=(e,e')$ be a completion to an idempotent with $e'\in V^-$. We first show that the adjoint action of $D_{e,e'}\in\frakl$ induces the eigenspace decomposition 
\begin{align}\label{eq:ldecomposition}
	\frakl=\frakl_-\oplus\frakl_0\oplus\frakl_+\qquad\text{with}\quad
	\left\{
	\begin{aligned}
		\frakl_-&=\set{D_{u,e'}}{u\in V_1^+(\b e)},\\ 
		\frakl_0&=\set{T\in\frakl}{[D_{e,e'},T]=0},\\
		\frakl_+&=\set{D_{e,v}}{v\in V_1^-(\b e)}.
	\end{aligned}\right.
\end{align}
Indeed, by the relation $[D_{e,e'},D_{u,v}] = D_{\JTP{e}{e'}{u},v}-D_{u,\JTP{e'}{e}{v}}$, it immediately follows that $\frakl_\pm$ is the $(\pm1)$-eigenspace of $\ad(D_{e,e'})$. Now consider $T\in\frakl$, and let $Te=u_2+u_1+u_0$ and $Te'=v_2+v_1+v_0$ be the decompositions according to the Peirce decomposition of $V$ with respect to $\b e$. Since
\[
	Te = TQ_ee' = \JTP{Te}{e'}{e} + Q_eTe',
\]
it follows that $u_0=0$ and $u_2=-Q_ev_2$. The same argument applied to $Te'$ yields $v_0=0$. Setting $T'=T-D_{u_1,e'}+D_{e,v_1}$, we thus obtain
\begin{align*}
	[D_{e,e'},T'] &= -D_{Te,e'}-D_{e,Te'} +D_{u_1,e'} + D_{e,v_1} \\
	&= -D_{u_2,e'} - D_{e,v_2}
	= D_{Q_ev_2,e'}-D_{e,v_2}=0.
\end{align*}
Here, the last step follows from the relation $D_{Q_xy,z}=D_{x,\JTP{y}{x}{z}} -D_{Q_xz,y}$. This proves \eqref{eq:ldecomposition}. This implies that $Q_{[e]}=N_L(\frakl_0\oplus\frakl_+)$ is a parabolic subgroup of $L$ with Levi decomposition $Q_{[e]}=L_{[e]}U_{[e]}$ given by
$$ L_{[e]}=Z_L(D_{e,e'}), \qquad 	U_{[e]}=\exp(\frakl_+). $$
Since $\exp(D_{e,v})=\B{e}{-v}$ due to the Peirce rules, this completes the proof of \eqref{Peircemanifold1}.\\
We next consider the action of $h\in Q_{[e]}$ on $[e]$. Due to the Peirce rules, it is clear that $h\in U_{[e]}$ acts as the identity on $[e]$. Now let $h\in L_{[e]}$. Recall that the quadratic representation $P_x$ of $x\in[e]$ is given by $P_x=Q_xQ_{e'}$, and the Jordan algebra trace form of $[e]$ is a constant multiple of $\tau_{[e]}(x,y)=\tau(x,Q_{e'}y)$. Moreover, $h$ acts on $[e]$ by structure automorphisms if and only if $P_{hx}=hP_xh^\#$, where $h^\#$ denotes the adjoint of $h$ with respect to $\tau_{[e]}$. One easily checks that $h^\#=Q_eh^{-1}Q_{e'}$, and it follows that $P_{hx}=hP_xh^\#$.\\
We finally note that the Lie algebra of $\Str([e])$ is generated by all $D_{x,y}$ with $x\in V_2^+$, $y\in V_2^-$, which also belong to $\frakl_0\subseteq\frakq_{[e]}$. Since $\Lie(L_{[e]})=\frakl_0$, it follows that $\Lie(L_{[e]})\to\Lie(\Str([e]))$ is onto.
\end{proof}

We next consider the relation between the Peirce manifold $\calP_k$ and the manifold $\calV_k$.

\begin{proposition}\label{prop:fiberbundle}
	The canonical projection 
	\[
		\pi_k\colon\calV_k\to\calP_k, \quad e\mapsto [e]
	\]
	is an $L$-equivariant fiber bundle with fiber over $[e]\in\calP_k$ consisting of the set 
	$[e]^\times$ of invertible elements in the Jordan algebra $[e]$. Moreover, each connected 
	component of $[e]^\times$ is a reductive symmetric space.
\end{proposition}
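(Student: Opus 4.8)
The plan is to proceed in three steps: identify the fibre $\pi_k^{-1}([e])$ with $[e]^\times$, promote $\pi_k$ to an associated fibre bundle using the $L$-homogeneity of $\calP_k$, and finally put a reductive symmetric space structure on the fibres. Throughout I fix $e\in\calV_k$ with a completion $\b e=(e,e')$ and write $V^\pm=V_2^\pm\oplus V_1^\pm\oplus V_0^\pm$ for the Peirce decomposition, so that $[e]=V_2^+$ is a unital Jordan algebra with unit $e$ and quadratic representation $P_x=Q_xQ_{e'}$; an element $c\in V_2^+$ lies in $[e]^\times$ exactly when $Q_c\colon V_2^-\to V_2^+$ is bijective. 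I first claim $\pi_k^{-1}([e])=[e]^\times$. If $c\in[e]^\times$, then $c$ is an invertible element of the rank-$k$ Jordan subpair $V_2=(V_2^+,V_2^-)$, so $\rank(c)=k$ and $c\in\calV_k$, while the Peirce rules give $Q_cV^-=Q_cV_2^-=V_2^+$ and hence $[c]=[e]$. Conversely, if $c\in\calV_k$ with $[c]=[e]$, then completing $c$ to an idempotent yields $c=Q_cc'\in[c]=V_2^+$, and $Q_cV^-=Q_cV_2^-=V_2^+$ forces $Q_c\colon V_2^-\to V_2^+$ to be surjective, hence bijective since $\dim V_2^+=\dim V_2^-$; thus $c\in[e]^\times$. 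This step is a short Jordan-theoretic computation and I expect no difficulty.

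The projection $\pi_k$ is visibly $L$-equivariant, and by Proposition~\ref{prop:Peircemanifold} the base is the homogeneous space $\calP_k=L/Q_{[e]}$, with $L\to\calP_k$ a principal $Q_{[e]}$-bundle. The stabiliser $Q_{[e]}$ preserves $[e]$ and therefore acts on the fibre $[e]^\times$; by the second part of Proposition~\ref{prop:Peircemanifold} this action is through the structure group $\Str([e])$ (with $U_{[e]}$ acting trivially) and in particular preserves invertibility. Forming the associated bundle, I would then show that the smooth $L$-equivariant map
$$ \Phi\colon L\times_{Q_{[e]}}[e]^\times\to\calV_k, \qquad [h,x]\mapsto h\cdot x, $$
is an isomorphism of fibre bundles over $\calP_k$: it covers the identity of $\calP_k$ by the fibre identification of the first step, and over the base point $[e]$ it restricts to the identity map of $[e]^\times$, so by equivariance it is a fibrewise diffeomorphism; a smooth bundle morphism over the identity which is a fibrewise diffeomorphism is a bundle isomorphism. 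This realises $\pi_k$ as an $L$-equivariant fibre bundle with fibre $[e]^\times$.

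For the symmetric space structure I note first that $[e]$ is semisimple, since its trace form $\tau_{[e]}(x,y)=\tau(x,Q_{e'}y)$ is non-degenerate, and consequently $\Str([e])$ is a reductive Lie group. On $[e]^\times$ I introduce the point symmetries $s_a(x)=P_ax^{-1}$ given by the Jordan inverse. The fundamental formula $P_{P_ay}=P_aP_yP_a$ together with the identities $P_aa^{-1}=a$ and $P_{x^{-1}}=P_x^{-1}$ yield $s_a(a)=a$, $s_a^2=\id$, $\td s_a|_a=-\id$ and the symmetry relation $s_as_bs_a=s_{s_a(b)}$, so the $s_a$ make $[e]^\times$ a symmetric space with $a$ an isolated fixed point of $s_a$. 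The decisive simplification is that the transvections are $s_as_b=P_aP_b^{-1}\in\Str([e])$, so the transvection group lies inside the reductive group $\Str([e])$ and preserves each connected component of $[e]^\times$. Restricting to a connected component, on which the transvection group acts transitively with a symmetric (fixed-point) stabiliser, then exhibits it as a reductive symmetric space. I expect this last step to be the main obstacle: the symmetric space axioms for $s_a$ must be checked carefully, and above all one must verify transitivity of the transvection group on each component together with reductivity of the stabiliser, which is precisely where the structure theory of semisimple Jordan algebras and their associated symmetric spaces enters, and for which I would appeal to that standard theory rather than reprove it.
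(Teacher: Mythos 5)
Your proposal is correct in substance, and its first step (the identification $\pi_k^{-1}([e])=[e]^\times$ via surjectivity/bijectivity of $Q_c\colon V_2^-\to V_2^+$) is essentially the paper's own argument, which phrases the same fact through invertibility of $P_x=Q_xQ_{e'}$ and equality of ranks. Where you diverge is in how the bundle structure and the symmetric space structure are obtained. For the bundle structure the paper does not build an associated bundle: it simply notes that by Proposition~\ref{prop:LOrbits} the connected components of $\calV_k$ and of $\calP_k$ are $L^0$-orbits, so $\pi_k$ is locally a projection of homogeneous spaces. Your global isomorphism $\Phi\colon L\times_{Q_{[e]}}[e]^\times\to\calV_k$ is a clean alternative, but as stated your justification is mildly circular: the principle ``a bundle morphism over the identity which is a fibrewise diffeomorphism is a bundle isomorphism'' presupposes that the target $\pi_k\colon\calV_k\to\calP_k$ is already a fibre bundle, which is exactly what is being proven. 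The repair is routine: $\Phi$ is a smooth equivariant bijection, every point of the source lies on the $L$-orbit of some $[1,x]$, and at $[1,x]$ the image of $d\Phi$ contains $\frakl\cdot x=T_x\calV_k$ because $L^0\cdot x$ is open in $\calV_k$ (Proposition~\ref{prop:LOrbits}); since $\dim\calP_k=\dim V_1^+$ and $T_x\calV_k\simeq V_2^+\oplus V_1^+$, the dimensions agree, so $\Phi$ is a local diffeomorphism everywhere and hence a diffeomorphism, after which the bundle structure transports.

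For the symmetric space statement the two routes genuinely differ. You endow $[e]^\times$ with the Loos symmetries $s_a(x)=P_ax^{-1}$, note that the transvections $P_aP_b^{-1}$ lie in $\Str([e])$, and defer transitivity on each component and reductivity of the stabiliser to the standard theory of semisimple Jordan algebras; that theory does contain these facts (they are in \cite{Ber00}, which the paper itself cites), so this is a legitimate, base-point-free argument, but it outsources precisely the crux. The paper instead extracts everything from its own earlier results: by Proposition~\ref{prop:Peircemanifold} the restriction map $L_{[e]}\to\Str([e])$ is surjective on Lie algebras, so the identity component of $L_{[e]}$ is identified with $L_{[e]}'=\Str([e])^0$; since each component of $\calV_k$ is an $L^0$-orbit, the component $Y$ of $[e]^\times$ through $e$ is the orbit $L_{[e]}'\cdot e\cong L_{[e]}'/H_e'$ --- i.e.\ transitivity is inherited from Proposition~\ref{prop:LOrbits} rather than cited --- and this orbit is symmetric because $H_e'$ is squeezed between the fixed point group of the involution $\sigma(h)=h^{-\#}$, $h^\#=Q_eh^{-1}Q_{e'}$, and its identity component; components not containing $e$ are reduced to this case by passing to a mutation of $[e]$. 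In short: your route buys an intrinsic, base-point-free description of the symmetries at the price of invoking external Jordan theory, while the paper's route is self-contained within its Propositions but tied to the chosen idempotent (hence the mutation trick).
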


\begin{proof}
Fix $e\in\calV_k$, and let $L^0$ denote the identity component of $L$. By Proposition~\ref{prop:LOrbits}, the orbit $L^0\cdot e$ is the connected component of $\calV_k$ containing $e$. Likewise, $L^0\cdot[e]$ is the connected component of $\calP_k$ containing $[e]$. Since $\pi_k$ clearly is $L$-equivariant, it follows that $\pi_k$ is locally given as a projection of $L^0$-homogeneous spaces. Hence, $\calV_k$ is a fiber bundle over $\calP_k$. Concerning the fiber over $[e]$, recall that $x\in[e]$ is invertible in the Jordan algebra $[e]$ if and only if the quadratic operator $P_x=Q_xQ_{e'}$ is invertible. Equivalently, $[x]=[e]$, which amounts to the condition that $x$ has the same rank as $e$, so $x\in\calV_k$.\\
Let $Y\subseteq[e]^\times$ be a connected component of $[e]^\times$. We may assume that $e\in Y$, otherwise consider the mutation of the Jordan algebra $[e]$ by an element $y\in Y$, see \cite{BK66} for reference. Recall that $L_{[e]}$ denotes the Levi subgroup of the stabilizer $Q_{[e]}$ of $[e]\in\calP_k$ in $L$. Due to Proposition~\ref{prop:Peircemanifold}, the restriction map 
\begin{align}\label{eq:restictionmap}
	\rho_{[e]}\colon L_{[e]}\to\Str([e]), \quad h\mapsto h|_{[e]}
\end{align}
identifies the identity component of $L_{[e]}$ with the identity component of the Jordan algebraic structure group of $[e]$, denoted by $L'_{[e]}=\Str([e])^0$. It follows, that $Y$ is the orbit of $L'_{[e]}$ through $e$, i.e.
\[
	Y=L'_{[e]}\cdot e\cong L'/H'_e,
\]
where $H'_e$ denotes the stabilizer subgroup of $e$ in $L'_{[e]}$. This orbit is symmetric, since
\begin{align}\label{eq:symmetrycondition}
		L_{[e]}'^{\sigma,0}\subseteq H_e'\subseteq L_{[e]}'^\sigma,
\end{align}
where $\sigma$ is the involution on $L_{[e]}'$ given by $h\mapsto h^{-\#}$, where $h^\#=Q_eh^{-1}Q_{e'}$ is the adjoint of $h$ with respect to the Jordan trace form $\tau_{[e]}$ on $[e]$.
\end{proof}

\begin{remark}
	Let $\calE_k$ be the tautological vector bundle of $\calP_k$,
	\[
		\calE_k=\set{([e],x)}{e\in\calV_k,\, x\in[e]}\subseteq\calP_k\times V^+,
	\]
	and let $\calE_k^\times\subseteq\calE_k$ denote the fiber bundle over $\calP_k$ with fiber
	$[e]^\times$ over $[e]$. By means of Proposition~\ref{prop:fiberbundle}, $\calV_k$ is naturally 
	identified with the fiber bundle $\calE_k^\times$. Moreover, the topological closure 
	$\calV_k^\cl$ of $\calV_k$ in $V^+$ is a real algebraic variety, since it is the zero-set of the 
	ideal generated by Jordan minors associated to idempotents of rank $k+1$. It follows that
	\[
		\calV_k^\cl=\bigsqcup_{j=0}^k\calV_j,
	\]
	and the projection map
	\[
		\calE_k\to\calV_k^\cl, \quad ([e],x)\mapsto x
	\]
	is an $L$-equivariant resolution of singularities.
\end{remark}

The preceding propositions yield the following description of the $L$-orbits on $V^+$. Fix a frame $(e_1,\ldots, e_r)$ of tripotents in $V^+$, and let $e=e_1+\cdots+e_k\in\calV_k$ be the base element of the orbit
\[
	\calO_e=L\cdot e\subseteq\calV_k,\qquad \calO_e\cong L/H_e,
\]
where $H_e$ denotes the stabilizer of $e$ in $L$. The fibration of $\calV_k$ over $\calP_k$ now corresponds to the fibration
\begin{align}\label{eq:fibration}
	L/H_e\cong L\times_{Q_{[e]}} Q_{[e]}/H_e \qquad \text{with fiber} \qquad
	Q_{[e]}/H_e\cong L_{[e]}/(L_{[e]}\cap H_e),
\end{align}
where $Q_{[e]}$ and $L_{[e]}$ are given as in Proposition~\ref{prop:Peircemanifold}. Geometrically, the base of this fibration coincides with the Peirce manifold $\calP_k$, and the canonical fiber is realized as the $L_{[e]}$-orbit of $e$ in the Jordan algebra $[e]$. Let $L_{[e]}'$ denote the image of $L_{[e]}$ under the restriction map $\rho_{[e]}$ given in \eqref{eq:restictionmap}. Then, by Proposition~\ref{prop:Peircemanifold}~\eqref{Peircemanifold2} $L_{[e]}'$ is an open subgroup of the Jordan algebraic structure group $\Str([e])$. In general, $L$ has finitely many connected components, so we note that $L_{[e]}'$ might differ from the corresponding group in the proof of Proposition~\ref{prop:fiberbundle}. In any case, the canonical fiber
\begin{align}\label{eq:fiber}
	L_{[e]}/(L_{[e]}\cap H_e)\cong L_{[e]}'/H_e',
\end{align}
is a reductive symmetric space, where $H_e'$ denotes the stabilizer of $e\in[e]$ in $L_{[e]}'$, which still satisfies \eqref{eq:symmetrycondition} with respect to the involution $\sigma$ on $L_{[e]}'$.

The structure theory of reductive symmetric spaces yields a polar decomposition of the canonical fiber. Recall that $(M\cap K)$ denotes the maximal compact subgroup of $L$. Since $e$ is tripotent, $L_{[e]}$ is $\theta$-stable, so $(M\cap K)_{[e]}=(M\cap K)\cap L_{[e]}$ is maximal compact in $L_{[e]}$, and the image $(M\cap K)_{[e]}'$ of $(M\cap K)_{[e]}$ under the restriction map $\rho_{[e]}$ is maximal compact in $L_{[e]}'$. Define
\begin{align}\label{eq:fiberCartan}
	\fraka_k=\bigoplus_{j=1}^k \RR D_{e_j,\overline e_j},\qquad A_k=\exp(\fraka_k).
\end{align}
Since $D_{e_j,\overline e_j}$ acts on $[e]$ by the Jordan algebraic multiplication with $e_j$, we may consider $\fraka_k$ by abuse of notation also as a subalgebra of $\Lie(L_{[e]}')=\str([e])$, and $A_k$ can be considered as a subgroup of $L_{[e]}'$. Then, $\fraka_k$ is maximal abelian in the subspace of elements $X\in\str([e])$ satisfying $\sigma(X)=-X$, $\theta(X)=-X$. This yields the following polar decompositions of $L_{[e]}$ and $L$, which induce polar decompositions of the corresponding orbits.

\begin{proposition}\label{prop:polardecomposition}
	With the above notation,
	\[
		L_{[e]}'=(M\cap K)_{[e]}'A_kH_e',\qquad L =(M\cap K) A_k H_e.
	\]
\end{proposition}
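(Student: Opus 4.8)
The plan is to derive both equalities from the generalized Cartan (or $KAH$) decomposition of a reductive symmetric space, establishing it first for the canonical fiber $L_{[e]}'/H_e'$ and then propagating it to all of $L$ along the fibration \eqref{eq:fibration}.

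For the first equality, note that $L_{[e]}'$ is an open subgroup of the structure group $\Str([e])$ of the simple Jordan algebra $[e]=V_2^+(\b e)$, hence reductive, that $(M\cap K)_{[e]}'$ is maximal compact with associated Cartan involution $\theta$, and that $H_e'$ is an open subgroup of the fixed point group $(L_{[e]}')^\sigma$ of the involution $\sigma(h)=h^{-\#}$. The two involutions commute: since $e$ is tripotent one has $\overline{e_j}=e_j'$, and a direct computation gives $\theta(D_{x,y})=-D_{\overline y,\overline x}$ on $\str([e])$, whereas $\sigma$ is minus the adjoint with respect to the Jordan trace form $\tau_{[e]}$; these commute because $x\mapsto Q_e\overline x$ is a Cartan involution of the Jordan algebra $[e]$. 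Thus $L_{[e]}'/H_e'$ is a reductive symmetric space, and since $\fraka_k$ was chosen maximal abelian in $\set{X\in\str([e])}{\sigma(X)=-X,\ \theta(X)=-X}=\frakp'\cap\frakq'$, the generalized Cartan decomposition yields $L_{[e]}'=(M\cap K)_{[e]}'A_kH_e'$. It is important that this decomposition is valid for any open subgroup $H_e'$ of $(L_{[e]}')^\sigma$, which is the case because it amounts to the polar decomposition of the symmetric space $L_{[e]}'/H_e'$ by $K$-orbits through $A_k\cdot o$.

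For the second equality I would assemble three facts. First, by Proposition~\ref{prop:Peircemanifold} the Peirce manifold $\calP_k\cong L/Q_{[e]}$ is $(M\cap K)$-homogeneous, so $L=(M\cap K)Q_{[e]}$. Second, the unipotent radical $U_{[e]}$ acts trivially on $[e]$ by Proposition~\ref{prop:Peircemanifold}~\eqref{Peircemanifold2} and therefore fixes $e$, so $U_{[e]}\subseteq H_e$; combined with the Levi decomposition $Q_{[e]}=L_{[e]}U_{[e]}$ this gives $Q_{[e]}=L_{[e]}H_e$, and hence $L=(M\cap K)L_{[e]}H_e$. Third, the kernel of the restriction map $\rho_{[e]}\colon L_{[e]}\to L_{[e]}'$ consists of elements acting trivially on $[e]$, which again fix $e$, so $\ker\rho_{[e]}\subseteq L_{[e]}\cap H_e$. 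Now for $g\in L$ write $g=m\ell h$ with $m\in M\cap K$, $\ell\in L_{[e]}$, $h\in H_e$; apply the first equality to $\rho_{[e]}(\ell)=\overline m\,a\,\overline h$ and lift the three factors to $m_0\in(M\cap K)_{[e]}$, $a_0\in A_k$, $h_0\in L_{[e]}\cap H_e$. Because $\ker\rho_{[e]}\subseteq H_e$, the element $h_1=m_0^{-1}a_0^{-1}\ell$ lies in $H_e$, so $\ell=m_0a_0h_1$ and $g=(mm_0)a_0(h_1h)\in(M\cap K)A_kH_e$, as desired.

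The main obstacle is the first equality, since it relies on the structure theory of reductive symmetric spaces rather than on Jordan-theoretic manipulation; the care required there is to confirm the hypotheses of the generalized Cartan decomposition, above all the commutativity $\theta\sigma=\sigma\theta$ and the maximality of $\fraka_k$ in $\frakp'\cap\frakq'$ (the latter being precisely the defining property of $\fraka_k$). Once the fiber is understood, the passage to $L$ is the essentially formal bookkeeping carried out above, its only substantive inputs being the $(M\cap K)$-homogeneity of $\calP_k$ and the inclusions $U_{[e]}\subseteq H_e$ and $\ker\rho_{[e]}\subseteq H_e$.
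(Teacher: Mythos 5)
Your proof is correct and follows essentially the same route as the paper: the first identity is quoted as the standard $KAH$ (generalized Cartan) decomposition of the reductive symmetric space $L_{[e]}'/H_e'$, and the second is deduced from $L=(M\cap K)Q_{[e]}$ (parabolicity of $Q_{[e]}$), the Levi decomposition $Q_{[e]}=L_{[e]}U_{[e]}$, and the inclusions $U_{[e]}\subseteq\ker\rho_{[e]}\subseteq H_e$. The paper states this more tersely; your verification of the hypotheses (commuting involutions, maximality of $\fraka_k$ in $\frakp'\cap\frakq'$) and the explicit lifting of the fiber decomposition along $\rho_{[e]}$ just make explicit what the paper leaves implicit.
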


\begin{proof}
The first identity is a standard result for reductive symmetric spaces. For the decomposition of $L$, note that Proposition~\ref{prop:Peircemanifold} yields $L=(M\cap K)Q_{[e]}$, since $Q_{[e]}$ is parabolic. Recall that $Q_{[e]}=L_{[e]}U_{[e]}$. Since $L_{[e]}'=L_{[e]}/\ker\rho_{[e]}$ with $U_{[e]}\subseteq\ker\rho_{[e]}\subseteq H_e$, this yields the second identity.
\end{proof}

For later use, we also note the following local description of the $L$-orbit $\calO_e$. Let $\overline Q_{[e]}=\theta(Q_{[e]})$ be the parabolic subgroup opposite to $Q_{[e]}$. Then,
\begin{align}\label{eq:oppositeParabolic}
	\overline Q_{[e]}=L_{[e]}\overline U_{[e]}\qquad\text{with}\qquad
	\overline U_{[e]}=\theta(U_{[e]})=\set{\B{v}{\overline e}}{v\in V_1^+}
\end{align}
is the corresponding Levi decomposition, and since $\overline U_{[e]}L_{[e]}U_{[e]}\subseteq L$ is open and dense, the $\overline Q_{[e]}$-orbit of $e\in\calO_e$ is open and dense in $\calO_e$. In the following, we aim for a description of this part of $\calO_e$ with respect to the parametrization of $\calO_e$ induced by the diffeomorphism $\varphi_e$ in \eqref{eq:diffeo}.

Let $V_\ell^+$, $\ell=0,1,2$, denote the Peirce spaces with respect to $(e,\overline e)$, and recall that $N_e\subseteq V^+$ denotes the open set of elements $x=x_2+x_1+x_0$ with invertible $x_2\in [e]$. Then $\varphi_e\colon N_e\to N_e$ is a diffeomorphism, and its restriction to $N_e\cap(V_2^+\oplus V_1^+)$ is a diffeomorphism onto an open and dense subset of $\calO_e$. We note that $V_2^+=[e]$. By means of \eqref{eq:oppositeParabolic} it is straightforward to check that $N_e\subseteq V^+$ is preserved under the standard action of $\overline Q_{[e]}$ on $V^+$, and hence the $\overline Q_{[e]}$-orbit of $e\in\calO_e$ is contained in $N_e\cap\calO_e$. 

\begin{lemma}\label{lem:pullbackaction}
	The open subset $N_e\subseteq V^+$ is invariant under the action of $\overline Q_{[e]}\subseteq L$, 
	and the pullback of this action along $\varphi_e$ is given and denoted by
	\[
		\xi(h)(x)=hx,\qquad
		\xi(\B{v}{\overline e})(x)=x-\JTP{v}{\overline e}{x_2}
	\]
	for $h\in L_{[e]}$, $v\in V_1^+$ and $x\in N_e$. In particular, the pullback 
	action of $\overline Q_{[e]}$ is linear again.
\end{lemma}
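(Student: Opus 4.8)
The plan is to compute the pullback action $\xi(g)=\varphi_e^{-1}\circ(g\,\cdot)\circ\varphi_e$ on the two types of generators of $\overline Q_{[e]}=L_{[e]}\overline U_{[e]}$ separately, after first checking that $N_e$ is preserved. For the invariance, write $x=x_2+x_1+x_0$ in the Peirce decomposition with respect to $\b e=(e,\overline e)$. Every $h\in L_{[e]}$ preserves this grading and restricts to an invertible map on $V_2^+$, so it preserves the condition that $x_2$ be invertible; and for $v\in V_1^+$ the Peirce rules \eqref{eq:peircerules} show that $D_{v,\overline e}x$ and $Q_vQ_{\overline e}x$ lie in $V_1^+\oplus V_0^+$, whence the $V_2^+$-component of $\B{v}{\overline e}x$ is again $x_2$. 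Thus all of $\overline Q_{[e]}$ stabilises $N_e$.

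For $h\in L_{[e]}$ I would prove the stronger equivariance $h\,\varphi_e(x)=\varphi_e(hx)$. Since $h$ acts as a Jordan pair automorphism it commutes with $Q$ and preserves the grading, and inversion in the subpair $V_2=(V_2^+,V_2^-)$ intertwines $h$, so that $h(Q_{x_1}x_2^{-1})=Q_{(hx)_1}((hx)_2^{-1})$; together with $\varphi_e(x)=x+Q_{x_1}x_2^{-1}$ from \eqref{eq:diffeo} this gives $\xi(h)(x)=hx$ immediately.

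The substantial case is $\overline U_{[e]}$, using $\overline U_{[e]}=\set{\B{v}{\overline e}}{v\in V_1^+}$ from \eqref{eq:oppositeParabolic}. The organising observation is that for invertible $x_2\in V_2^+$ the pair $(x_2,x_2^{-1})$ is itself an idempotent whose Peirce decomposition coincides with that of $\b e$ (it has the same principal inner ideal $[x_2]=[e]$, and one transports $\b e$ by an element of $L_{[e]}$ sending $e$ to $x_2$). This yields the eigenvalue relations $\JTP{x_2}{x_2^{-1}}{v}=v$ for $v\in V_1^+$ and $\JTP{x_2^{-1}}{x_2}{\overline e}=2\overline e$. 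Setting $u=\JTP{v}{\overline e}{x_2}\in V_1^+$ and computing the Peirce components of $\B{v}{\overline e}\varphi_e(x)$ from \eqref{eq:peircerules}, the $V_2^+$- and $V_1^+$-parts come out at once as $x_2$ and $x_1-u$; since $\varphi_e^{-1}$ changes only the $V_0^+$-component, the claim $\xi(\B{v}{\overline e})(x)=x-\JTP{v}{\overline e}{x_2}$ reduces to matching the $V_0^+$-components, i.e.\ to the two identities
\begin{align*}
 Q_vQ_{\overline e}x_2 &= Q_u\,x_2^{-1},\\
 \JTP{v}{\overline e}{x_1} &= \JTP{x_1}{x_2^{-1}}{u} \qquad (x_1\in V_1^+).
\end{align*}

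I expect this $V_0^+$-matching to be the main obstacle, as it is the only place where nontrivial Jordan identities enter. For the second identity I would apply \eqref{JP16} with inner triple $u=\JTP{v}{\overline e}{x_2}$, producing the term $\JTP{x_1}{x_2^{-1}}{u}$: of the four resulting terms one vanishes because $\JTP{\overline e}{v}{x_2^{-1}}\in V_3^-=0$, while the others collapse via $\JTP{x_2^{-1}}{x_2}{\overline e}=2\overline e$ and $\JTP{x_2}{x_2^{-1}}{v}=v$, leaving exactly $\JTP{v}{\overline e}{x_1}$. For the first identity I would feed $x_2^{-1}$ into the linearised fundamental formula
\[
 Q_u+Q_{Q_v\overline e,\,Q_{x_2}\overline e}=Q_vQ_{\overline e}Q_{x_2}+Q_{x_2}Q_{\overline e}Q_v+Q_{v,x_2}Q_{\overline e}Q_{v,x_2};
\]
here $Q_vQ_{\overline e}Q_{x_2}x_2^{-1}=Q_vQ_{\overline e}x_2$ is the desired term, while the three remaining terms vanish on $x_2^{-1}$ because $Q_vx_2^{-1}\in V_0^+$ with $Q_{\overline e}$ killing $V_0^+$, because $Q_{v,x_2}x_2^{-1}=v$ with $Q_{\overline e}v=0$, and because $Q_{Q_v\overline e,\,Q_{x_2}\overline e}x_2^{-1}=0$ by the special Peirce rule $\JTP{V^+}{V_2^-}{V_0^+}=0$. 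Finally, the linearity of both pullback formulas in $x$ is clear, since $x\mapsto x_2$ is the (linear) Peirce projection; this gives the last assertion of the lemma.
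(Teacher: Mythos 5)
Your proof is correct in its overall structure, but it reaches the key step by a genuinely different route than the paper. Both treatments agree up to the reduction: invariance of $N_e$ via the Peirce rules, the formula $\xi(h)(x)=hx$ via $h(x_2^{-1})=(hx_2)^{-1}$, and the Peirce-component expansion of $\B{v}{\overline e}\varphi_e(x)$, which leaves exactly your two $V_0^+$-identities $Q_vQ_{\overline e}x_2=Q_u x_2^{-1}$ and $\JTP{v}{\overline e}{x_1}=\JTP{x_1}{x_2^{-1}}{u}$, with $u=\JTP{v}{\overline e}{x_2}$, to be proven. At that point the paper does not compute at all: it invokes Proposition~\ref{prop:LOrbits} (the restriction of $\varphi_e$ to $N_e\cap(V_2^+\oplus V_1^+)$ parametrizes an open piece of $\calV_k$) together with the $\overline Q_{[e]}$-invariance of $\calV_k$ to conclude that the pullback action preserves the set $\{x_0=0\}$, so the $V_0^+$-component of $\xi(\B{v}{\overline e})(x_2+x_1)$ must vanish identically; since the relevant terms do not involve $x_0$, the formula for general $x\in N_e$ follows. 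Your direct Jordan-theoretic verification --- \eqref{JP16} for the $x_1$-linear identity, the linearized fundamental formula for the other --- is precisely the alternative the paper mentions only in its closing parenthetical remark. What each approach buys: the paper's argument costs no Jordan identities but leans on the orbit geometry of $\calV_k$ established earlier; yours is purely algebraic, self-contained, and would apply verbatim to any idempotent in any Jordan pair, independently of the group-theoretic setting.

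There is, however, one step you need to repair: the eigenvalue relations $\JTP{x_2}{x_2^{-1}}{v}=v$ for $v\in V_1^+$ and $\JTP{x_2^{-1}}{x_2}{\overline e}=2\overline e$, on which both of your identity proofs rest. Your justification --- that $(x_2,x_2^{-1})$ has the same principal inner ideal as $\b e$, and that $\b e$ can be transported by an element of $L_{[e]}$ taking $e$ to $x_2$ --- does not work for every invertible $x_2\in[e]$: the orbit $L_{[e]}\cdot e$ is open in $[e]$ but in general a proper subset of $[e]^\times$ (already for $V^+=\Sym(2,\RR)$ and $e=E_{11}$, the element $-E_{11}$ is invertible in $[e]$ but lies outside $L_{[e]}\cdot E_{11}$), and equality of principal inner ideals by itself only controls $V_2^+$, not the $V_1$- and $V_0$-eigenspaces. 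Two clean fixes: either invoke the standard fact that $(x_2,x_2^{-1})$ is an idempotent \emph{associated} to $\b e$ (i.e.\ with the same Peirce decomposition) whenever $x_2$ is invertible in $V_2(\b e)$, which can be found in \cite{Loo75}; or observe that both relations are rational identities in $x_2$ which hold on the nonempty open subset $L_{[e]}\cdot e\subseteq[e]$, where your transport argument is valid, and hence hold on all of $[e]^\times$ by density. With that correction your proof is complete.
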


\begin{proof}
Since $\overline Q_{[e]}$ is generated by $L_{[e]}$ and elements of the form $\B{v}{\overline e}$, the invariance of $N_e$ under the action of $\overline Q_{[e]}$ follows immediately from the Peirce rules. By definition, the pullback of the $\overline Q_{[e]}$-action is given by $\xi(q)(x)=\varphi_e^{-1}(q\cdot\varphi_e(x))$. Since $h\in L_{[e]}$ acts on each Jordan pair $(V_\ell^+, V_\ell^-)$, $\ell=0,1,2$, by automorphisms, it follows that $h(x_2^{-1})=(hx_2)^{-1}$, which implies the formula for $\xi(h)$. For $q=\B{v}{\overline e}$, we obtain
\begin{align*}
	\xi(\B{v}{\overline e})(x)
		&= x_2+(x_1-\JTP{v}{\overline e}{x_2}) \\
		&\quad+
		(x_0 + Q_vQ_{\overline e}x_2-\JTP{v}{\overline e}{x_1}+Q_{x_1}x_2^{-1}-Q_{x_1-\JTP{v}{\overline e}{x_2}}x_2^{-1}),
\end{align*}
where the terms are sorted with respect to the Peirce space decomposition. Recall from Proposition~\ref{prop:LOrbits} that the restriction of $\varphi_e$ to $N_e\cap(V_2^+\oplus V_1^+)$ is a diffeomorphism onto an open subset of $N_e\cap\calV_k$. Since $\calV_k$ is $\overline Q_{[e]}$-invariant, it follows that $N_e\cap(V_2^+\oplus V_1^+)$ is invariant for the pullback action of $\overline Q_{[e]}$. Therefore, if $x_0=0$, the $V_0^+$-part of $\xi(q)$ must vanish for all $q$. Applied to $\xi(\B{v}{\overline e})$, it follows that $\xi(\B{v}{\overline e})(x_2+x_1)=x_2+(x_1-\JTP{v}{\overline e}{x_2})$, and comparing this with the formula above completes the proof. (We note that using \eqref{JP16} it can also be seen directly that the $V_0^+$-part of $\xi(\B{v}{\overline e})(x_2+x_1)$ vanishes.)
\end{proof}

This immediately implies:

\begin{proposition}\label{prop:orbitcoordinates}
	Let $\Omega_e$ denote the $L_{[e]}$-orbit of $e\in[e]$. Then the restriction of $\varphi_e$ to 
	the open subset $\Omega_e+V_1^+\subseteq N_e$ is a diffeomorphism onto the $\overline Q_{[e]}$-orbit 
	of $e$ in $\calO_e$. The pullback of the $\overline Q_{[e]}$-action along this diffeomorphism is 
	given and denoted by
	\[
		\xi(h)(x_2+x_1)=hx_2+hx_1,\qquad
		\xi(\B{v}{\overline e})(x_2+x_1)=x_2 + (x_1-\JTP{v}{\overline e}{x_2})
	\]
	for $h\in L_{[e]}$, $v\in V_1^+$ and $x_2+x_1\in\Omega_e+V_1^+\subseteq V_2^+\oplus V_1^+$.
\end{proposition}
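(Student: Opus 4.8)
The plan is to realize $\Omega_e+V_1^+$ as the orbit of the base point $e$ under the pullback action $\xi$ of $\overline Q_{[e]}$ described in Lemma~\ref{lem:pullbackaction}, and then to transport this identification through the diffeomorphism $\varphi_e$. Since $e=e+0+0$ lies in $V_2^+\oplus V_1^+$ with $\varphi_e(e)=e$, and since $\xi$ is by construction the pullback of the honest $\overline Q_{[e]}$-action along $\varphi_e$, the image $\varphi_e$ of the $\xi$-orbit of $e$ is exactly the $\overline Q_{[e]}$-orbit of $e$ in $\calO_e$. So the whole statement reduces to computing this $\xi$-orbit.

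First I would determine the $\xi$-orbit of $e$ explicitly. Using the Levi decomposition $\overline Q_{[e]}=L_{[e]}\overline U_{[e]}$ from \eqref{eq:oppositeParabolic}, every element of $\overline Q_{[e]}$ is of the form $h\,\B{v}{\overline e}$ with $h\in L_{[e]}$ and $v\in V_1^+$. For the unipotent part I apply the second formula of Lemma~\ref{lem:pullbackaction} at $x=e$, where $x_2=e$; using the Peirce relation $\JTP{v}{\overline e}{e}=\JTP{e}{\overline e}{v}=D_{e,\overline e}v=v$ (valid since $v\in V_1^+$ and the Jordan triple product is symmetric in its outer arguments), this gives $\xi(\B{v}{\overline e})(e)=e-v$. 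Applying $\xi(h)$ afterwards yields $\xi(h\,\B{v}{\overline e})(e)=h(e-v)=he-hv$, where $he\in\Omega_e\subseteq V_2^+$ and $hv\in V_1^+$ since $h\in L_{[e]}$ preserves each Peirce space.

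As $h$ ranges over $L_{[e]}$ the element $he$ ranges over all of $\Omega_e$, and for each fixed $h$ the vector $hv$ ranges over all of $V_1^+$ because $h|_{V_1^+}$ is invertible; hence the $\xi$-orbit of $e$ is precisely $\Omega_e+V_1^+$. This set is contained in $N_e$: every $x_2\in\Omega_e$ is invertible in $[e]=V_2^+$, since $\Omega_e$ is the $L_{[e]}$-orbit of the invertible element $e$ and $L_{[e]}$ acts on $[e]$ through the structure group $\Str([e])$ by Proposition~\ref{prop:Peircemanifold}~\eqref{Peircemanifold2}, which preserves invertibility. As $\Omega_e$ is open in $[e]$, the set $\Omega_e+V_1^+$ is open in $V_2^+\oplus V_1^+$. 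Because $\varphi_e$ is a diffeomorphism of $N_e$ onto itself by Proposition~\ref{prop:LOrbits}, its restriction to $\Omega_e+V_1^+$ is a diffeomorphism onto the $\overline Q_{[e]}$-orbit of $e$, and the stated formulas for the pullback action are obtained by specializing the formulas of Lemma~\ref{lem:pullbackaction} to points with vanishing $V_0^+$-component, so that $\xi(h)$ acts diagonally and $\JTP{v}{\overline e}{x_2}\in V_1^+$ only modifies the $V_1^+$-part.

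The computation is short, so there is no serious obstacle; the one point requiring care is the bookkeeping of Peirce components. In particular one must verify the identity $\JTP{v}{\overline e}{e}=v$ and check that $\JTP{v}{\overline e}{x_2}$ genuinely lands in $V_1^+$, by the Peirce rule $\JTP{V_1^+}{V_2^-}{V_2^+}\subseteq V_1^+$, so that the unipotent part of $\xi$ really acts by the stated shear along $V_1^+$ and the orbit closes up to $\Omega_e+V_1^+$ rather than a larger set.
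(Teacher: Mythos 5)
Your proof is correct and takes essentially the same approach as the paper: there, Proposition~\ref{prop:orbitcoordinates} is stated as an immediate consequence of Lemma~\ref{lem:pullbackaction}, and your computation of the $\xi$-orbit of $e$ (via the Levi decomposition $\overline Q_{[e]}=L_{[e]}\overline U_{[e]}$, the identity $\JTP{v}{\overline e}{e}=v$, and the Peirce rule $\JTP{V_1^+}{V_2^-}{V_2^+}\subseteq V_1^+$) simply supplies the details the paper leaves implicit. No gaps.
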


\subsection{Equivariant measures}

In this section we determine which of the $L$-orbits in $V^+$ carry an $L$-equivariant measure. Fix $0\leq k\leq r$, and let $\calO_e=L\cdot e$ be the $L$-orbit through $e\in\calV_k$. We may assume that $e$ is tripotent, and $e=e_1+\cdots+e_k$, where $e_1,\ldots,e_r$ is a frame of tripotents. We adopt all notations from the last section, so in particular, $H_e$ denotes the stabilizer of $e$ in $L$, so $\calO_e\cong L/H_e$. Recall that a measure $\td\mu$ on $\calO_e$ is called $\chi$-equivariant, $\chi$ a positive character of $L$, if 
\[
	\td\mu(hx)=\chi(h)\td\mu(x) \qquad \text{for }h\in L.
\]
A simple criterion for the existence and uniqueness of equivariant measures is given in terms of the modular functions $\Delta_L$ and $\Delta_{H_e}$ of $L$ and $H_e$: The orbit $\calO_e$ admits a $\chi$-equivariant measure if and only if 
\[
	\chi(h)=\frac{\Delta_L(h)}{\Delta_{H_e}(h)} \qquad \text{for all $h\in H_e$.}
\]
Since $L$ is reductive and hence unimodular we have $\Delta_L=1$, and therefore $\calO_e$ carries an $L$-equivariant measure if and only if the modular function $\Delta_{H_e}$ extends to a positive character of $L$. Moreover, uniqueness of the equivariant measure corresponds to uniqueness of this extension. Recall that all positive characters of $L$ are of the form
\[
	\chi_\lambda(h)=|\Det_{V^+}(h)|^{\frac{\lambda}{p}}
\]
for some $\lambda\in\RR$, see \eqref{eq:characterformula}.

\begin{lemma}
The group $H_e\subseteq L$ decomposes as the semidirect product $H_e=H_{e,\overline e}\ltimes U_{[e]}$, where $H_{e,\overline e}=H_e\cap L_{[e]}$ and $L_{[e]}$, $U_{[e]}$ are given in Proposition~\ref{prop:Peircemanifold}. Moreover, the modular function $\Delta_{H_e}$ of $H_e$ is given by
$$ \Delta_{H_e}(h) = |\Det_{V_1^+}(h')|^{-1} = \frac{|\Det_{V_0^+}(h')|}{|\Det_{V^+}(h')|}\qquad\text{for }h=h'u\in H_{e,\overline e}U_{[e]}, $$
where $V^\pm=V_2^\pm\oplus V_1^\pm\oplus V_0^\pm$ denotes the Peirce decomposition with respect to $(e,\overline e)$.
\end{lemma}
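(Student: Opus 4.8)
The convention I adopt is $\Delta_{H_e}(h)=|\Det_{\frakh_e}\Ad(h)|$ with $\frakh_e=\Lie(H_e)$, under which $\Delta_L=1$ by reductivity of $L$, consistent with the criterion recalled above. The plan is to first locate $H_e$ inside the parabolic $Q_{[e]}$ and then to compute the modular function from the $\Ad$-action on $\frakh_e$, trading $\frakh_e$ for the tangent space $T_e\calO_e$ via the unimodularity of $L$. For the decomposition, note that since $h\in H_e$ fixes $e$ it stabilizes $[e]=Q_eV^-$, so $H_e\subseteq Q_{[e]}$. By Proposition~\ref{prop:Peircemanifold}~\eqref{Peircemanifold2} the unipotent radical $U_{[e]}$ acts trivially on $[e]$ and in particular fixes $e$, hence $U_{[e]}\subseteq H_e$; being normal in $Q_{[e]}$ it is normal in $H_e$. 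Writing $h\in H_e$ in the Levi decomposition $Q_{[e]}=L_{[e]}U_{[e]}$ as $h=h'u$ gives $h'=hu^{-1}\in H_e\cap L_{[e]}=H_{e,\overline e}$, and $L_{[e]}\cap U_{[e]}=\{1\}$ yields $H_e=H_{e,\overline e}\ltimes U_{[e]}$. (The notation is justified since $h'$ restricts to a Jordan pair automorphism of $V_2$ and thus also fixes $\overline e=e^{-1}$.)

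For the modular function, set $\frakl_+=\Lie(U_{[e]})$, so $\frakh_e=\frakh_{e,\overline e}\oplus\frakl_+$. From the exact sequence $0\to\frakh_e\to\frakl\to\frakl/\frakh_e\to0$ of $H_e$-modules and $|\Det_\frakl\Ad(h)|=1$ I obtain $\Delta_{H_e}(h)=|\Det_{\frakl/\frakh_e}\Ad(h)|^{-1}$. Now $\frakl/\frakh_e\cong T_e\calO_e$ as $H_e$-modules, and since $L$ acts linearly on $V^+$ with $\calO_e\subseteq V^+$ and $T_e\calO_e=V_2^+\oplus V_1^+$ by Proposition~\ref{prop:LOrbits}, the $H_e$-action on $T_e\calO_e$ is just the restriction of the linear action to $V_2^+\oplus V_1^+$. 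Each $u=\B{e}{v}\in U_{[e]}$ acts on $V_2^+\oplus V_1^+$ by $x_2+x_1\mapsto x_2+(x_1-D_{e,v}x_1)$ with $D_{e,v}x_1\in V_2^+$ by the Peirce rules, hence unipotently, so $|\Det_{T_e\calO_e}(u)|=1$; and each $h'\in H_{e,\overline e}\subseteq L_{[e]}$ preserves $V_2^+$ and $V_1^+$. Therefore $|\Det_{T_e\calO_e}(h'u)|=|\Det_{V_2^+}(h')|\,|\Det_{V_1^+}(h')|$, giving $\Delta_{H_e}(h'u)=|\Det_{V_2^+}(h')|^{-1}|\Det_{V_1^+}(h')|^{-1}$.

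It then remains to show $|\Det_{V_2^+}(h')|=1$. By Proposition~\ref{prop:Peircemanifold}~\eqref{Peircemanifold2}, $h'\in H_{e,\overline e}$ acts on $[e]=V_2^+$ through the structure group, and as it fixes the unit $e$ it acts by a Jordan algebra automorphism; such an automorphism preserves the nondegenerate trace form $\tau_{[e]}$ of $[e]$, hence lies in its orthogonal group and has determinant $\pm1$. This yields $\Delta_{H_e}(h'u)=|\Det_{V_1^+}(h')|^{-1}$, and since $h'\in L_{[e]}$ preserves the Peirce decomposition, $|\Det_{V^+}(h')|=|\Det_{V_2^+}(h')|\,|\Det_{V_1^+}(h')|\,|\Det_{V_0^+}(h')|=|\Det_{V_1^+}(h')|\,|\Det_{V_0^+}(h')|$, which rearranges to the second stated equality. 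The points requiring care are the identification of the $H_e$-action on $T_e\calO_e$ with the restriction of the linear $L$-action (so that $U_{[e]}$ contributes a unipotent factor and $L_{[e]}$ splits along Peirce spaces), and the vanishing of the $V_2^+$-determinant, which is precisely where the Jordan algebra automorphism property of $[e]$ enters.
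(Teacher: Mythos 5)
Your proof is correct, and while the decomposition $H_e=H_{e,\overline e}\ltimes U_{[e]}$ is obtained in essentially the same way as in the paper (the paper invokes the fibration of $\calO_e$ over the Peirce manifold, you argue directly via $H_e\subseteq Q_{[e]}$ and $U_{[e]}\subseteq H_e$; your parenthetical that an element of $H_e\cap L_{[e]}$ automatically fixes $\overline e$ is a correct and useful observation), the modular-function computation takes a genuinely different route. The paper stays inside $\frakh_e$: it applies the modular-function formula for the semidirect product $H_{e,\overline e}\ltimes U_{[e]}$, uses that $H_{e,\overline e}$ is unimodular (being $\theta$-stable, hence reductive) and that $U_{[e]}$ is abelian, identifies $\Lie(U_{[e]})\cong V_1^-$ equivariantly via $\Ad(h')D_{e,v}=D_{e,h'v}$, and converts $|\Det_{V_1^-}(h')|$ into $|\Det_{V_1^+}(h')|^{-1}$ by invariance of the trace form pairing. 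You instead pass to the quotient: $\Delta_{H_e}(h)=|\Det_{\frakh_e}\Ad(h)|=|\Det_{\frakl/\frakh_e}\Ad(h)|^{-1}$ by unimodularity of $L$, identify $\frakl/\frakh_e\cong T_e\calO_e=V_2^+\oplus V_1^+$ with the isotropy action, which is the restriction of the linear $L$-action, and compute determinants there, with $U_{[e]}$ contributing a unipotent (determinant one) factor. What your route buys: you never need unimodularity of $H_{e,\overline e}$ nor the duality $\Det_{V_1^-}(h)=\Det_{V_1^+}(h)^{-1}$. What it costs: the fact $|\Det_{V_2^+}(h')|=1$ (a structure-group element fixing the unit is a Jordan algebra automorphism, hence orthogonal for the trace form) is needed already for the first displayed equality, whereas in the paper it enters only in the passage to the second. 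Finally, both arguments hinge on the same convention $\Delta_H(h)=|\Det_{\frakh}\Ad(h)|$, which you correctly flagged and which matches the paper's usage, compare the formula $\Delta_{Q_{[e]}}(hu)=|\Det_{\Lie(U_{[e]})}\Ad(h)|$ in the proof of Proposition~\ref{prop:equivariantmeasure}.
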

\begin{proof}

Note that $H_{e,\overline e}\subseteq H_e$ is the subgroup that fixes $e\in V^+$ as well as $\overline e\in V^-$. Then each Peirce space $V_k^\pm$ is invariant under the action of $H_{e,\overline e}$. Recall from Proposition~\ref{prop:Peircemanifold} the subgroups $L_{[e]},U_{[e]}\subseteq L$. Then due to \eqref{eq:fibration}, $H_e$ decomposes into the semidirect product $H_e=H_{e,\overline e}\ltimes U_{[e]}$. We may identify the Lie algebra of $U_{[e]}$ with $V_1^-$. Since $\Ad(h)D_{e,v}=D_{e,hv}$ for $h\in H_{e,\overline e}$ and $v\in V_1^-$, this identification is $H_{e,\overline e}$-equivariant. Then, the modular function of the semidirect product $H_e=H_{e,\overline e}U_{[e]}$ is given by
\[
	\Delta_{H_e}(h)=|\Det_{V_1^-}(h')|\Delta_{H_{e,\overline e}}(h')\Delta_{U_{[e]}}(u),
\]
where $h=h'u\in H_{e,\overline e}U_{[e]}$. Since $H_{e,\overline e}$ is $\theta$-stable, it is reductive, and hence unimodular. Moreover, $U_{[e]}$ is abelian. Therefore, the modular function of $H_e$ simplifies to
\begin{align}\label{eq:modularfunction}
	\Delta_{H_e}(h)=|\Det_{V_1^-}(h')|\qquad\text{for }h=h'u\in H_{e,\overline e}U_{[e]}.
\end{align}
Since the trace form $\tau$ gives an $H_{e,\overline e}$-invariant, non-degenerate pairing of $V_1^+$ and $V_1^-$, it follows that
\begin{equation}
	\Det_{V_1^-}(h)=\Det_{V_1^+}(h)^{-1}\qquad\text{for }h\in L\label{eq:determinantVpm}
\end{equation}
and hence the first formula for $\Delta_{H_e}$ follows. Moreover, since $h'\in H_{e,\overline e}$ preserves the Peirce spaces $V_\ell^+$, $\ell=0,1,2$, we obtain
\begin{align}\label{eq:determinantproduct}
	\Det_{V^+}(h') = \Det_{V^+_2}(h')\cdot\Det_{V^+_1}(h')\cdot\Det_{V^+_0}(h').
\end{align}
Furthermore, due to Proposition~\ref{prop:Peircemanifold}, $h'$ acts on the simple Jordan algebra $V_2^+=[e]$ as a structure automorphism preserving the identity element $e$. Recall that the determinant of such an automorphism on a simple Jordan algebra is of absolute value $1$ (see e.g. \cite{FK94}), hence $|\Det_{V^+_2}(h')|=1$ for all $h'\in H_{e,\overline e}$. In combination with \eqref{eq:modularfunction}, \eqref{eq:determinantVpm} and \eqref{eq:determinantproduct} the second formula for $\Delta_{H_e}$ follows.
\end{proof}

\begin{theorem}\label{thm:equivariantmeasures}
	For $0\leq k\leq r$ the $L$-orbit $\calO_e=L.e$ with $e\in\calV_k$ carries an $L$-equivariant 
	measure with positive character $\chi_\lambda$ if and only if one of the following 
	is satisfied
	\begin{enumerate}
		\item\label{equivariantmeasures1} $k=0$ and $\lambda=0$,
		\item\label{equivariantmeasures2} $1\leq k\leq r-1$, $V\not\simeq(M(p\times q,\FF),M(q\times p,\FF))$, $\FF=\RR,\CC,\HH$ with $p\neq q$, and $\lambda=kd$,
		\item\label{equivariantmeasures3} $k=r$, $V$ is unital and $\lambda\in\RR$,
		\item\label{equivariantmeasures4} $k=r$, $V$ is non-unital and $\lambda=p$.
	\end{enumerate}
	Moreover, the equivariant measure (if it exists) is unique up to scalars.
\end{theorem}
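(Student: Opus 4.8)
The plan is to reduce the existence question to the criterion already recalled before the theorem, namely that $\calO_e$ carries a $\chi_\lambda$-equivariant measure if and only if $\Delta_{H_e}$ extends to the positive character $\chi_\lambda$ of $L$; equivalently, $\Delta_{H_e}(h)=\chi_\lambda(h)$ for all $h\in H_e$. By the preceding Lemma, $\Delta_{H_e}(h)=|\Det_{V_1^+}(h')|^{-1}$ for $h=h'u\in H_{e,\overline e}U_{[e]}$, and since $U_{[e]}$ lies in the kernel of every $\Det_{V^+}$ (it acts unipotently on $V^+$), the condition simplifies to matching $|\Det_{V_1^+}(h')|^{-1}$ with $\chi_\lambda(h')=|\Det_{V^+}(h')|^{\lambda/p}$ for all $h'\in H_{e,\overline e}$.

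The first step is to dispose of the boundary cases $k=0$ and $k=r$. For $k=0$ we have $e=0$, so $H_e=L$ and $\calO_e$ is a point; the only equivariant measure is the point mass, forcing $\lambda=0$, giving \eqref{equivariantmeasures1}. For $k=r$ the orbit is open in $V^+$, so $V_1^+=V_0^+=0$, whence $\Delta_{H_e}\equiv 1$ and the condition becomes $\chi_\lambda|_{H_e}\equiv 1$. In the unital case $V_2^+=V^+$ is a Jordan algebra with invertible elements and $H_{e,\overline e}$ consists of structure automorphisms fixing $e$, which have $|\Det_{V^+}|=1$; hence $\chi_\lambda|_{H_e}\equiv 1$ automatically for every $\lambda$, giving \eqref{equivariantmeasures3}. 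In the non-unital case one uses that the relevant determinant character restricted to $H_e$ is nontrivial and pins down $\lambda=p$, giving \eqref{equivariantmeasures4}; here I would invoke the genus relation $\rho=\frac{p}{2}$ together with the fact that the Jordan pair determinant $\Delta$ of \eqref{eq:JPDet} provides the unique $L$-relative invariant, so the unique equivariant density on the open orbit is $|\Delta(x,\overline x)|^{-1}$ with character $\chi_p$.

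The heart of the argument is the generic range $1\le k\le r-1$. Here the key is to compute the restriction of the character $h'\mapsto|\Det_{V_1^+}(h')|$ to $H_{e,\overline e}$ and compare it with $\chi_\lambda$. The natural tool is Lemma~\ref{lem:basessums}\eqref{basessums2}–(3): the trace identity $\sum_{\alpha\in I_1}D_{c_\alpha,\widehat c_\alpha}|_{V_1^+}=(rd+\tfrac{b}{2})\id_{V_1^+}$, valid precisely when $V\not\simeq(M(p\times q,\FF),M(q\times p,\FF))$ with $p\neq q$, expresses the infinitesimal character of $\Det_{V_1^+}$ in terms of the infinitesimal character $2p\cdot\id$ of $\Det_{V^+}$; differentiating both characters and comparing traces on $V_1^+$ shows that on the Lie algebra level $\Delta_{H_e}$ matches $\chi_\lambda$ with $\lambda=kd$. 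Crucially, Proposition~\ref{prop:ClassificationV1}\eqref{ClassificationV1-2} shows that the obstruction to this matching is exactly the vanishing of $\Tr_{V^+}(T)$ for $T$ supported on $V_1^+$, which fails precisely in the excluded family $(M(p\times q,\FF),M(q\times p,\FF))$, $p\neq q$. In that excluded case $V_1^+$ splits as $U\oplus W$ with $\dim U\neq\dim W$, so $H_{e,\overline e}$ admits a character (scaling the two summands oppositely) that is trivial on $\Det_{V^+}$ but nontrivial on $\Det_{V_1^+}$, obstructing any extension and ruling out an equivariant measure.

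The main obstacle will be making the passage from the infinitesimal (Lie algebra) equality of characters to an equality of the honest positive characters on the possibly disconnected group $H_{e,\overline e}$. On the identity component the trace computation settles everything, but $H_{e,\overline e}$ need not be connected, so I must check that no component-group character spoils the match; the cleanest route is to observe that $\chi_\lambda$ and $\Delta_{H_e}$ are both valued in $\RR_{>0}$ (absolute values of determinants), so the component group, being finite, cannot contribute any nontrivial positive character, and the infinitesimal identity therefore forces equality on all of $H_{e,\overline e}$. Finally, uniqueness up to scalar follows because any two extensions of $\Delta_{H_e}$ to $L$ differ by a positive character of $L$ trivial on $H_e$, and since $L/H_e=\calO_e$ generates $V^+$ while every positive character of $L$ is of the form $\chi_\mu$ for a single parameter $\mu\in\RR$, the extension is determined by its value on any one-parameter subgroup not contained in $H_e$, pinning $\mu$ uniquely.
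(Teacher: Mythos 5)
Your overall strategy (the modular-function criterion, the lemma computing $\Delta_{H_e}$, direct treatment of $k=0,r$, and reduction of the generic case to Proposition~\ref{prop:ClassificationV1}) is the same as the paper's, but two of your steps are genuinely broken. First, for $k=r$ you claim openness of the orbit forces $V_1^+=V_0^+=0$ and hence $\Delta_{H_e}\equiv 1$. Openness only gives $V_0^+=\{0\}$; vanishing of $V_1^+$ for a maximal idempotent is precisely what \emph{unital} means, so your claim fails exactly in case \eqref{equivariantmeasures4}. Worse, if $\Delta_{H_e}$ were trivial there, your own criterion would force $\lambda=0$, not $\lambda=p$, and your fallback --- that $|\Delta(x,\overline x)|^{-1}$ is ``the unique equivariant density'' --- does not work, because $x\mapsto\Delta(x,\overline x)$ is not an $L$-relative invariant: $\overline{hx}=\theta(h)\overline{x}\neq h\overline{x}$ for general $h\in L$. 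The correct argument is the paper's: with $V_0^+=0$ the lemma gives $\Delta_{H_e}(h)=|\Det_{V^+}(h')|^{-1}=\chi_{-p}(h')$ on $H_e=H_{e,\overline e}U_{[e]}$, which is a nontrivial character in the non-unital case, so the extension, and with it $\lambda=p$, is pinned down uniquely.

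Second, in the range $1\leq k\leq r-1$ the heart of the proof is missing. You assert that Proposition~\ref{prop:ClassificationV1}~\ref{ClassificationV1-2} ``shows that the obstruction to this matching is exactly the vanishing of $\Tr_{V^+}(T)$ for $T$ supported on $V_1^+$,'' but this reduction is exactly what must be proved, and Lemma~\ref{lem:basessums} cannot supply it: the operator $\sum_{\alpha\in I_1}D_{c_\alpha,\widehat{c}_\alpha}$ is not in $\frakh_{e,\overline e}$ (by Lemma~\ref{lem:basessums}~\eqref{basessums2} it acts on $V_2^+$ by $2(p-p_2)\neq0$, so it does not annihilate $e$), hence its trace identities say nothing about the restrictions of $|\Det_{V_1^+}|$ and $|\Det_{V^+}|$ to the stabilizer. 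What is needed --- and what occupies most of the paper's proof --- is: decompose an arbitrary $T\in\frakh_{e,\overline e}$ as $T=T_2+T_1+T_0$, with $T_2$ (resp.\ $T_0$) a combination of $D_{u,v}$, $(u,v)\in V_2^+\times V_2^-$ (resp.\ $V_0^+\times V_0^-$), and $T_1$ vanishing on $V_2^+\oplus V_0^+$; prove the partial-trace proportionalities $\Tr_{V^+}(T_2)=\tfrac{p}{p_2}\Tr_{V_2^+}(T_2)$ and $\Tr_{V^+}(T_0)=\tfrac{p}{p_0}\Tr_{V_0^+}(T_0)$ via a Schur-lemma argument comparing the two $\Str(V_2)$-invariant forms $\Tr_{V^+}(D_{u,v})$ and $\Tr_{V_2^+}(D_{u,v})$; and use that $T_2$ is a structure endomorphism of the simple Jordan algebra $V_2^+$ killing $e$, so $\Tr_{V_2^+}(T_2)=0$. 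Only then does $\Tr_{V^+}(T)=\Tr_{V_1^+}(T_1)+\tfrac{p}{p_0}\Tr_{V_0^+}(T)$ follow and Proposition~\ref{prop:ClassificationV1}~\ref{ClassificationV1-2} become applicable. Three smaller slips: the criterion is $\chi_\lambda|_{H_e}=\Delta_L/\Delta_{H_e}=\Delta_{H_e}^{-1}$, not $\chi_\lambda|_{H_e}=\Delta_{H_e}$, so as written your matching yields $\lambda=-kd$; your ``oppositely scaling'' element in the excluded case satisfies $\Tr_{V^+}(T)=\Tr_{V_1^+}(T)\neq0$, so it is \emph{not} trivial on $\Det_{V^+}$ until corrected by a multiple of $D_{e_{k+1},\overline e_{k+1}}$; and uniqueness needs a one-parameter subgroup \emph{inside} $H_e$ on which $\chi_\mu$ is nontrivial (such as $\exp(tD_{e_{k+1},\overline e_{k+1}})$), not one ``not contained in $H_e$'' --- note also that in case \eqref{equivariantmeasures3} the character is not unique, only the measure for each fixed character. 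Your observation that positive characters agreeing on the identity component agree on all of $H_{e,\overline e}$ (finite component group) is correct and is a point the paper leaves implicit.
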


\begin{proof}
By the previous lemma $\calO_e$ admits an $L$-equivariant measure if and only if $|\Det_{V_0^+}|$ (or equivalently $|\Det_{V_1^+}|$) is a power of $|\Det_{V^+}|$ on $H_{e,\overline e}$.\\
For $k=0$ we have $\calO_e=\{0\}$ and \eqref{equivariantmeasures1} follows. Next assume that $k=r$, i.e., $\calO_e$ is an open orbit. Then $V_0^+=\{0\}$, and $\Delta_{H_e}$ extends to the character $\chi_{-p}$. If, in addition, $V$ is unital, then $V_1^+=\{0\}$, and $\Delta_{H_e}(h)=1$ for all $h\in H_e$. In this case, any character $\chi_\lambda$ is an extension of $\Delta_{H_e}$. If $V$ is non-unital, $\Delta_{H_e}(h)$ is non-trivial and the extension to $\chi_{-p}$ is unique. This implies \eqref{equivariantmeasures3} and \eqref{equivariantmeasures4}, so that it remains to show \eqref{equivariantmeasures2}.\\
Let $1\leq k\leq r-1$. Then, $V_0^+$ is non-trivial. If $|\Det_{V_0^+}(h)|$ is a power of $|\Det_{V^+}(h)|$, evaluation at $h=\exp(tD_{e_{k+1},\overline e_{k+1}})$, $t\in\RR$, shows that 
\[
	\Delta_{H_e}(h)=|\Det_{V^+}(h)|^{-\frac{kd}{p}}=\chi_{-kd}(h),
\]
which is non-trivial on $H_e$. This shows that an $L$-equivariant measure on $\calO_e$ (if it exists) is unique up to normalization, and the corresponding positive character is $\chi_{kd}$. It remains to determine those cases, in which $|\Det_{V_0^+}(h)|$ is a power of $|\Det_{V^+}(h)|$. Since $|\Det_{V^+}|$ and $|\Det_{V^+_0}|$ are positive characters it is sufficient to show that $\Tr_{V^+_0}$ is a scalar multiple of $\Tr_{V^+}$ on $\frakh_{e,\overline e}$ if and only if $V\not\simeq M(p\times q,\FF)$, $\FF=\RR,\CC,\HH$ with $p\neq q$. Let $T\in\frakh_{e,\overline e}$. Since $T|_{V_2}$ is a structure endomorphism of the simple Jordan pair $V_2$ and the structure endomorphisms are generated by $D_{u,v}$ with $(u,v)\in V_2^+\times V_2^-$, there exists $T_2\in\frakl$ which is a linear combination of $D_{u,v}$, $(u,v)\in V_2^+\times V_2^-$, such that $T|_{V_2}=T_2|_{V_2}$. The same is true for $T|_{V_0}$, so there exists $T_0\in\frakl$ which is a linear combination of $D_{u,v}$, $(u,v)\in V_0^+\times V_0^-$, such that $T|_{V_0}=T_0|_{V_0}$. Since $D_{u,v}|_{V_0^+}=0$ for $(u,v)\in V_2^+\times V_2^-$ and $D_{u,v}|_{V_2^+}=0$ for $(u,v)\in V_0^+\times V_0^-$ the structure endomorphism $T_1=T-T_2-T_0$ vanishes on both $V_2$ and $V_0$. Note that $T_2e=0$ since $Te=0$.
Now, we have
$$ \Tr_{V^+}(T) = \Tr_{V^+}(T_2)+\Tr_{V^+}(T_1)+\Tr_{V^+}(V_0). $$
We claim that $\Tr_{V^+}(T_2)=\frac{p}{p_2}\Tr_{V^+_2}(T_2)$. In fact, the bilinear forms
$$ B(u,v)=\Tr_{V^+}(D_{u,v}) \qquad \mbox{and} \qquad B_2(u,v)=\Tr_{V_2^+}(D_{u,v}) $$
on $V_2^+\times V_2^-$ are both $L_{[e]}$-invariant. Since
$$ L_{[e]}\to\Str(V_2), \quad g\mapsto g|_{V_2} $$
is surjective, these forms are also $\Str(V_2)$-invariant. The form $B_2$ is non-degenerate since $V_2$ is simple, hence $B(u,v)=B_2(Au,v)$ for some $A\in\End(V_2^+)$. Both forms being $\Str(V_2)$-invariant, the endomorphism $A$ commutes with $\Str(V_2)$. Since $\Str(V_2)$ acts irreducibly on $V_2$, the map $A$ is a scalar multiple of the identity (allowing the scalar to be complex if $V_2$ is a complex Jordan pair). Evaluating both forms at $(u,v)=(e,\overline e)$ yields $B(u,v)=\frac{p}{p_2}B_2(u,v)$. Now, $T_2$ is a linear combination of operators of the form $D_{u,v}$, $(u,v)\in V_2^+\times V_2^-$ and hence $\Tr_{V^+}(T_2)=\frac{p}{p_2}\Tr_{V_2^+}(T_2)$. The same argument can be applied to $T_0$ acting on $V_0^+$ to obtain $\Tr_{V^+}(T_0)=\frac{p}{p_0}\Tr_{V_0^+}(T_0)$. Now, $\Tr_{V_2^+}(T_2)=0$ since $T_2$ is a structure automorphism of the simple Jordan algebra $V_2^+$ which annihilates the identity element $e$ (see e.g. \cite{FK94}). Hence
$$ \Tr_{V^+}(T) = \Tr_{V^+}(T_1)+\frac{p}{p_0}\Tr_{V_0^+}(T_0) = \Tr_{V_1^+}(T_1)+\frac{p}{p_0}\Tr_{V_0^+}(T). $$
Therefore, $\Tr_{V^+_0}$ is a scalar multiple of $\Tr_{V^+}$ if and only if $\Tr_{V_1^+}(T_1)=0$ for every $T_1\in\frakl_{[e]}$ with $T|_{V_2^+\oplus V_0^+}=0$. By Proposition~\ref{prop:ClassificationV1} this is the case if and only if $V\not\simeq(M(p\times q,\FF),M(q\times p,\FF))$, $\FF=\RR,\CC,\HH$ with $p\neq q$.
\end{proof}

The next goal is to determine an explicit formula for integration over $\calO_e$ with respect to an $L$-equivariant measure by means of the polar decomposition given in Proposition~\ref{prop:polardecomposition}. Recall from \eqref{eq:fibration} the fibration of $\calO_e$ over the Peirce manifold $\calP_k$ with canonical fiber $L_{[e]}'/H_e'$, which is a symmetric space. We first determine a polar decomposition of the integral over the fiber $L_{[e]}'/H_e'$. With $\fraka_k$ as in \eqref{eq:fiberCartan}, we define
\[
	\fraka_k^+=\Set{\sum_{j=1}^k \frac{\tau_j}{2}\,D_{e_j,\overline e_j}}{\tau_1>\cdots>\tau_k}\subseteq\fraka_k,
\]
and for $(\tau_1,\ldots,\tau_r)\in\RR^r$, we set 
\[
	a_\tau=\exp(\tfrac{\tau_1}{2}D_{e_1,\overline e_1}+\cdots+\tfrac{\tau_k}{2}D_{e_k,\overline e_k}).
\]
We endow $\fraka_k^+$ with a Lebesgue measure $\td\tau$.

\begin{proposition}\label{prop:fiberintegration}
	With the notation as above, and after suitable normalization of the measures,
	\begin{align}\label{eq:fiberintegralformula}
		\int_{L_{[e]}'} f(h)\,\td h=\int_{(M\cap K)_{[e]}'}\int_{\fraka_k^+}\int_{H_e'}
		 f(m'a_\tau\ell')\cdot J_e(\tau)\,\td\ell'\,\td\tau\,\td m'
	\end{align}
	for all $f\in L^1(L_{[e]}')$, where the Jacobian $J_e(\tau)$ is given by 
	\[
		J_e(\tau_1,\ldots,\tau_k)
		=\prod_{1\leq i<j\leq k}
		\sinh^{d_+}\left(\frac{\tau_i-\tau_j}{2}\right)
		\cosh^{d_-}\left(\frac{\tau_i-\tau_j}{2}\right).
	\]
\end{proposition}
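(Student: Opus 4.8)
The plan is to deduce \eqref{eq:fiberintegralformula} from the integration formula attached to the generalized Cartan (that is, $KAH$-) decomposition of a reductive symmetric space, and then to translate the resulting root data into Jordan-theoretic quantities. By the discussion around \eqref{eq:fiber} and \eqref{eq:symmetrycondition}, the canonical fiber $L_{[e]}'/H_e'$ is a reductive symmetric space with defining involution $\sigma$ and compatible Cartan involution $\theta$; moreover $(M\cap K)_{[e]}'$ is maximal compact in $L_{[e]}'$ and, by the choice in \eqref{eq:fiberCartan}, $\fraka_k$ is maximal abelian in the joint $(-1)$-eigenspace of $\sigma$ and $\theta$ in $\str([e])$. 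Proposition~\ref{prop:polardecomposition} supplies the polar decomposition $L_{[e]}'=(M\cap K)_{[e]}'A_kH_e'$, with the open chamber $\fraka_k^+$ a fundamental domain for the Weyl group. The standard theory then yields, after suitable normalization of the Haar measures, a formula of the shape \eqref{eq:fiberintegralformula} whose Jacobian is
\[
	J_e(\tau)=\prod_{\alpha}|\sinh\alpha(\log a_\tau)|^{m_\alpha^+}\,|\cosh\alpha(\log a_\tau)|^{m_\alpha^-},
\]
the product ranging over the positive restricted roots $\alpha$ of $\fraka_k$ in $\str([e])$, where $m_\alpha^+$ and $m_\alpha^-$ denote the dimensions of the $(+1)$- and $(-1)$-eigenspaces of $\sigma\theta$ in the root space $\str([e])_\alpha$. (That $\sinh$ attaches to the $(+1)$-eigenspace is fixed by the Riemannian case $\sigma=\theta$, where $\sigma\theta=\id$ and the classical $\prod\sinh^{m_\alpha}$ must be recovered.)

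Next I would identify the root system and the positive chamber. Using the joint Peirce decomposition of $[e]=V_2^+$ relative to the frame $e_1,\ldots,e_k$, the element $\log a_\tau=\sum_{l=1}^k\tfrac{\tau_l}{2}D_{e_l,\overline e_l}$ acts on $V_{ij}^+$ ($1\leq i\leq j\leq k$) by the scalar $\tfrac12(\tau_i+\tau_j)$, since $D_{e_l,\overline e_l}$ acts on $V_{ij}^+$ by $\delta_{li}+\delta_{lj}$. A short bracket computation then shows that the operators $D_{u,\overline e_j}$ with $u\in V_{ij}^+$ are $\ad(\log a_\tau)$-eigenvectors of eigenvalue $\tfrac12(\tau_i-\tau_j)$, and that as $1\leq i<j\leq k$ vary these exhaust the nonzero restricted roots. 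Hence $\fraka_k$ has restricted root system of type $A_{k-1}$ with roots $\pm\tfrac12(\gamma_i-\gamma_j)$; no roots $\tfrac12(\tau_i+\tau_j)$ or $\tfrac12\tau_j$ occur, because $[e]$ has no Peirce-$0$ component with respect to $e=e_1+\cdots+e_k$. The chamber $\tau_1>\cdots>\tau_k$ defining $\fraka_k^+$ selects the positive system $\{\tfrac12(\gamma_i-\gamma_j):i<j\}$, and $\alpha(\log a_\tau)=\tfrac{\tau_i-\tau_j}{2}$ for $\alpha=\tfrac12(\gamma_i-\gamma_j)$, which matches the arguments of $\sinh$ and $\cosh$ in the assertion.

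Then I would read off the multiplicities from the refined Peirce decomposition $V_{ij}^+=A_{ij}^+\oplus B_{ij}^+$. The root space for $\tfrac12(\gamma_i-\gamma_j)$ is naturally identified with $V_{ij}^+$, of dimension $d_++d_-$ by \eqref{eq:Jordanstructureconstants}, and under this identification $\sigma\theta$ corresponds to the involution $x\mapsto Q_e\overline x$ that defines the splitting into $A_{ij}^\pm$ and $B_{ij}^\pm$. Consequently the $A$-part $A_{ij}^+$, of dimension $d_+$, lies in the $(+1)$-eigenspace of $\sigma\theta$ and contributes a factor $\sinh^{d_+}$, while the $B$-part $B_{ij}^+$, of dimension $d_-$, lies in the $(-1)$-eigenspace and contributes $\cosh^{d_-}$; that is, $m_\alpha^+=d_+$ and $m_\alpha^-=d_-$. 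Substituting into the displayed Jacobian gives $J_e(\tau)=\prod_{1\leq i<j\leq k}\sinh^{d_+}\!\big(\tfrac{\tau_i-\tau_j}{2}\big)\cosh^{d_-}\!\big(\tfrac{\tau_i-\tau_j}{2}\big)$, as claimed.

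The hard part will be the identification $\sigma\theta=(x\mapsto Q_e\overline x)$ on the root spaces, since it is precisely this step that assigns $\sinh$ to $A_{ij}^+$ and $\cosh$ to $B_{ij}^+$; confusing the two eigenspaces would interchange $\sinh$ and $\cosh$. This has to be checked against the explicit form of $\sigma$, namely $\sigma(h)=Q_eh^{-1}Q_{e'}$ on $L_{[e]}'$ with differential $X\mapsto -X^\#$ on $\str([e])$, together with $\theta$. As a consistency check, in the symmetric cone situation one has $\sigma=\theta$, forcing $d_-=0$ and leaving only the $\sinh$-factors of the Faraut--Kor\'anyi type formula~\cite{FK94}. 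A secondary point requiring care is the normalization of the invariant measures together with the fact that the fiber may be genuinely non-Riemannian, so that the general (rather than the classical Riemannian) Cartan decomposition formula must be invoked.
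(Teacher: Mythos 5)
Your proposal is correct and takes essentially the same route as the paper's proof: both reduce \eqref{eq:fiberintegralformula} to the standard integration formula for reductive symmetric spaces (the paper cites \cite{HS94}), identify the restricted root system $\Phi(\str([e]),\fraka_k)$ as type $A_{k-1}$ with root spaces $\set{D_{x,\overline e_j}}{x\in V_{ij}^+}$ of multiplicity $d_++d_-$, and read off the factors $\sinh^{d_+}\cosh^{d_-}$ from the $\sigma\theta$-eigenspace decomposition $V_{ij}^+=A_{ij}^+\oplus B_{ij}^+$. Your explicit bracket computation and the check of which eigenspace attaches to $\sinh$ versus $\cosh$ merely spell out details the paper leaves implicit.
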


\begin{proof}
This integral formula is a standard result from the theory of semisimple symmetric spaces, see e.g.\ \cite{HS94}, based on the following data: The root system $\Phi(\mathfrak{str}([e]),\fraka_k)$ is of type $A_{k-1}$. More precisely, let $\gamma_i\in\fraka_k^*$ be defined by $\gamma_i(D_{e_j,\overline e_j})=\delta_{ij}$, then
\[
	\Phi(\mathfrak{str}([e]),\fraka_k) = \Set{\frac{\gamma_i-\gamma_j}{2}}{1\leq i\neq j\leq k}
\]
with root spaces 
\[
	\mathfrak{str}([e])_{\frac{\gamma_i-\gamma_j}{2}}=\set{D_{x,\overline e_j}}{x\in V_{ij}^+}.
\]
Therefore, the multiplicity of $\frac{\gamma_i-\gamma_j}{2}$ is $2d=d_++d_-$. Moreover, $V_{ij}^+=A_{ij}^+\oplus B_{ij}^+$ corresponds to the decomposition of $\mathfrak{str}([e])_{\frac{\gamma_i-\gamma_j}{2}}$ into $\sigma\theta$-stable subspaces with according multiplicities $\dim A_{ij}^+=d_+$ and $\dim B_{ij}^+=d_-$. Fix the ordering $\gamma_1>\gamma_2>\cdots>\gamma_k$. Then, $\fraka_k^+$ is the positive Weyl chamber. Moreover, this also coincides with the Weyl chamber of $\Sigma(\mathfrak{str}([e])^{\sigma\theta},\fraka_k)$.
\end{proof}

For $0\leq k\leq r$ let
\begin{equation}
 C_k^+ = \Set{t\in\RR^k}{t_1>\ldots>t_k>0}\label{eq:Defbt}
\end{equation}
and put
\begin{equation}
 b_t = t_1e_1+\cdots+t_ke_k, \qquad t\in C_k^+.\label{eq:DefCk+}
\end{equation}
Then by Proposition~\ref{prop:polardecomposition} the map
$$ (M\cap K)\times C_k^+\to\calO_k, \quad (m,t)\mapsto mb_t $$
is onto an open dense subset of $\calO_k$.

\begin{proposition}\label{prop:equivariantmeasure}
	Assume that $\calO_e=L\cdot e\subseteq\calV_k$, $0\leq k\leq r$, admits an $L$-equivariant measure $\td\mu$ with positive character $\chi_\lambda$. Then, for all $f\in L^1(\calO_e,\td\mu)$,
	\begin{align*}
		\int_{\calO_e} f(x)\,\td\mu(x)
		&= \int_{M\cap K}\int_{C_k^+} f(mb_t)\cdot J(t)\,\td t\,\td m,
	\end{align*}
	where
	\[
		J(t)=\prod_{j=1}^k t_j^{\lambda+(r-2k+1)d+\frac{b}{2}-1}
			\prod_{1\leq i< j\leq k}(t_i-t_j)^{d_+}(t_i+t_j)^{d_-}.
	\]
\end{proposition}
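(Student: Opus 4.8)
The plan is to combine the polar decomposition $L=(M\cap K)A_kH_e$ of Proposition~\ref{prop:polardecomposition} with the fiber integral formula of Proposition~\ref{prop:fiberintegration}, and to assemble the Jacobian $J(t)$ out of three explicit pieces: the character $\chi_\lambda$ evaluated along $A_k$, the determinant of the $A_k$-action on the normal directions $V_1^+$, and the symmetric-space Jacobian $J_e$ of the canonical fiber. First I would record the elementary link between the two sets of polar coordinates. Since $(e_1,\ldots,e_r)$ is a frame of tripotents, the idempotent relation gives $D_{e_j,\overline e_j}e_j=2e_j$, while orthogonality gives $D_{e_i,\overline e_i}e_j=0$ for $i\neq j$; hence $a_\tau e_j=e^{\tau_j}e_j$, so that $a_\tau e=b_t$ exactly when $t_j=e^{\tau_j}$. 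This identifies $\fraka_k^+$ with $C_k^+$ and turns the orbit map of Proposition~\ref{prop:polardecomposition} into $(m,t)\mapsto mb_t$. Because $\chi_\lambda$ is trivial on the compact group $M\cap K$, the pulled-back measure is necessarily of the form $\td m\otimes J(t)\,\td t$ for some density $J$ on $C_k^+$, and the whole content of the proposition is the computation of $J$.

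To compute $J$ I would disintegrate the orbit integral along the fibration \eqref{eq:fibration} of $\calO_e$ over the Peirce manifold $\calP_k$ with canonical fiber $L_{[e]}'/H_e'\cong\Omega_e$. The base $\calP_k$ is compact and $(M\cap K)$-homogeneous, and combining its invariant measure with the compact factor $(M\cap K)_{[e]}'$ of the fiber reassembles the full Haar integral $\int_{M\cap K}\td m$; this part carries no $t$-dependence since $\calP_k$ is independent of the radial parameter. The non-compact part of the fiber is handled by Proposition~\ref{prop:fiberintegration}, contributing $J_e(\tau)\,\td\tau$. The two remaining factors arise when passing from the \emph{invariant} fiber measure to the restriction of the \emph{equivariant} measure $\td\mu$: by equivariance the measure scales along $A_k$ by $\chi_\lambda(a_\tau)$, while the fiber-conditional measure on $\Omega_e\subseteq V_2^+$ differs from the ambient measure by the determinant of the $A_k$-action on the normal space $T_e\calO_e/T_e\Omega_e\cong V_1^+$, that is by $|\Det_{V_1^+}(a_\tau)|$. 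Here I would work in the local coordinates $\varphi_e$ of Proposition~\ref{prop:orbitcoordinates}, in which the fiber is cut out by $x_1=0$ and the transverse directions are precisely $V_1^+$.

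All three factors are then explicit. From $\Tr_{V^+}D_{e_j,\overline e_j}=2p\,\tau(e_j,\overline e_j)=2p$ one gets $\Det_{V^+}(a_\tau)=\prod_j t_j^{p}$, hence $\chi_\lambda(a_\tau)=\prod_j t_j^{\lambda}$. Reading off the $D_{e_j,\overline e_j}$-eigenvalues on the joint Peirce spaces $V_{ij}^+$ ($i\leq k<j$) and $V_{0i}^+$ ($i\leq k$), where $a_\tau$ acts by $e^{\tau_i/2}=t_i^{1/2}$, and inserting the multiplicities $2d$ and $b$ from \eqref{eq:Jordanstructureconstants}, yields $\Det_{V_1^+}(a_\tau)=\prod_j t_j^{(r-k)d+\frac{b}{2}}$. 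Finally the substitution $t_j=e^{\tau_j}$ sends $\td\tau$ to $\prod_j t_j^{-1}\,\td t$ and converts $\sinh^{d_+}(\tfrac{\tau_i-\tau_j}{2})\cosh^{d_-}(\tfrac{\tau_i-\tau_j}{2})$ into a constant multiple of $(t_i-t_j)^{d_+}(t_i+t_j)^{d_-}(t_it_j)^{-d}$. Multiplying the four contributions, the Vandermonde-type terms combine to $\prod_{i<j}(t_i-t_j)^{d_+}(t_i+t_j)^{d_-}$, and the powers of each $t_j$ add up to $\lambda+(r-k)d+\frac{b}{2}-d(k-1)-1=\lambda+(r-2k+1)d+\frac{b}{2}-1$, which is exactly $J(t)$ up to an overall constant absorbed into the normalization of the measures.

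The main obstacle is the bookkeeping of the second paragraph: rigorously justifying that the equivariant measure disintegrates as (base measure)$\times$(character $\chi_\lambda$)$\times|\Det_{V_1^+}|\times$(invariant fiber measure), i.e.\ that the transverse contribution is precisely $|\Det_{V_1^+}(a_\tau)|$ and that there is no further interaction between the $A_k$-radial, the $V_1^+$-normal, and the base directions. This is where the explicit $\varphi_e$-coordinates and the $(M\cap K)$-invariance of $\td\mu$ are essential, together with the fact that the polar map is a diffeomorphism only onto a dense open subset, whose complement is $\td\mu$-null; once this factorization is in place, everything reduces to the determinant and change-of-variables computation sketched above.
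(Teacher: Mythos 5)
Your computational skeleton is sound, and it in fact produces exactly the paper's answer by the same three ingredients: the character $\chi_\lambda(a_\tau)=\prod_j t_j^{\lambda}$, the transversal factor $\prod_j t_j^{(r-k)d+\frac{b}{2}}$, and the symmetric-space Jacobian $J_e(\tau)$ of Proposition~\ref{prop:fiberintegration}, followed by the substitution $t_j=e^{\tau_j}$; your exponent count $\lambda+(r-k)d+\tfrac{b}{2}-(k-1)d-1=\lambda+(r-2k+1)d+\tfrac{b}{2}-1$ is correct, as is the identification $a_\tau e=b_t$. The genuine gap is the step you yourself concede in the last paragraph: the claim that $\td\mu$ disintegrates as (base measure)$\,\times\,\chi_\lambda\,\times\,|\Det_{V_1^+}|\,\times$(invariant fiber measure) with ``no further interaction'' \emph{is} the content of the proposition, and you give no argument for it. Disintegrating a merely equivariant (not invariant) measure along a fibration with non-compact fibers, and identifying the transverse contribution as the determinant of the $A_k$-action on a normal space, is not a citable routine fact; as written, this is an assertion of the answer rather than a proof of it.

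The missing idea, which is how the paper closes exactly this gap, is to work on the group $L$ rather than on the orbit: the equivariant measure is characterized by the quotient integral formula \eqref{eq:equivariantintegral}, $\int_{L/H_e}\int_{H_e}f(gh)\,\td h\,\td\mu(gH_e)=\int_L\chi_\lambda(g)f(g)\,\td g$, and one decomposes Haar measure of $L$ along $L=(M\cap K)Q_{[e]}$ and the Levi decomposition $Q_{[e]}=L_{[e]}U_{[e]}$ of Proposition~\ref{prop:Peircemanifold}, which introduces the modular function $\Delta_{Q_{[e]}}$; Proposition~\ref{prop:fiberintegration} is then applied to the reductive factor, and finally $H_e=H_{e,\overline e}U_{[e]}$ is reassembled. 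In this framework your ``normal determinant'' is precisely the modular function of the parabolic, $\Delta_{Q_{[e]}}(a_\tau)=|\Det_{V_1^+}(a_\tau)|=\prod_j t_j^{(r-k)d+\frac{b}{2}}$, and the absence of cross terms is not a geometric claim to be verified but an identity on the group: $\Delta_{Q_{[e]}}(h)\chi_\lambda(h)=\Delta_{H_e}(h)\chi_\lambda(h)=1$ for all $h\in H_e$, which holds by the very existence of the $\chi_\lambda$-equivariant measure (the criterion preceding Theorem~\ref{thm:equivariantmeasures}). This cancellation makes the inner $H_e$-integration collapse, so the surviving density $\tilde J(\tau)=\Delta_{Q_{[e]}}(a_\tau)\chi_\lambda(a_\tau)J_e(\tau)$ can be read off against \eqref{eq:equivariantintegral}. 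If you insist on your geometric reading, you would have to carry out the Jacobian computation honestly in the $\varphi_e$-coordinates of Proposition~\ref{prop:orbitcoordinates}, in the spirit of what the paper does (for a different statement) in Proposition~\ref{prop:PullbackMeasure}; the group-theoretic route is shorter and rigorous, and it is what turns your three-factor ansatz into a proof.
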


\begin{proof}
Recall that the equivariant measure $\td\mu$ on $\calO_e=L/H_e$ is uniquely determined by the relation
\begin{align}\label{eq:equivariantintegral}
	\int_{L/H_e}\int_{H_e} f(gh)\,\td h\,\td\mu(g H_e) = \int_L\chi_\lambda(g)f(g)\,\td g
\end{align}
for all $f\in L^1(L)$. We determine a suitable decomposition of the right hand side. Then, the comparision with the left hand side proves the statement. In the following, all measures on Lie groups are meant to be left-invariant.\\
We first use the fibration \eqref{eq:fibration} of $\calO_e$. Recall that $Q_{[e]}\subseteq L$ is a parabolic subgroup of $L$, hence $L=(M\cap K)Q_{[e]}$, and $Q_{[e]}=L_{[e]}U_{[e]}$ is the Levi decomposition of $Q_{[e]}$. Therefore, 
\begin{align*}
	\int_L\chi_\lambda(g)f(g)\,\td g
		&= \int_{M\cap K}\int_{Q_{[e]}}\Delta_{Q_{[e]}}(q)\chi_\lambda(mq)f(mq)\,\td q\,\td m\\
		&= \int_{M\cap K}\int_{L_{[e]}}\int_{U_{[e]}}
			 \frac{\Delta_{U_{[e]}}(u)}{\Delta_{Q_{[e]}}(u)}\Delta_{Q_{[e]}}(hu)
			 	\chi_\lambda(h)f(mhu)\,\td u\,\td h\,\td m\\
		&= \int_{M\cap K}\int_{L_{[e]}}\int_{U_{[e]}}
			 \Delta_{Q_{[e]}}(h)\chi_\lambda(h)f(mhu)\,\td u\,\td h\,\td m.
\end{align*}
Here, we used that $\chi_\lambda(mhu)=\chi_\lambda(h)$, since $\chi_\lambda$ is a positive character on $L$, and hence $\chi_\lambda$ is trivial on the compact subgroup $M\cap K\subseteq L$ and the unipotent subgroup $U_{[e]}\subseteq L$. The modular function of $Q_{[e]}=L_{[e]}U_{[e]}$ is given by
\[
	\Delta_{Q_{[e]}}(hu)=|\Det_{\Lie(U_{[e]})}\Ad(h)|,\qquad hu\in L_{[e]}U_{[e]}.
\]
As in \eqref{eq:modularfunction}, we may identify the Lie algebra of $U_{[e]}$ with $V_1^-$. Then, the adjoint action of $h\in L_{[e]}$ on $v\in V_1^-$ is given by $\Ad(h)v=\JTP{\overline e}{he}{hv}$. We thus obtain
\[
	\Delta_{Q_{[e]}}(hu)=|\Det_{V_1^-}(D_{\overline e,he})|\cdot|\Det_{V_1^-}(h)|,
	\qquad hu\in L_{[e]}U_{[e]}.
\]
Now recall from \eqref{eq:fiber} that the fiber
\[
	L_{[e]}/H_{e,\overline e}\cong L_{[e]}'/H_e'
\]
is a symmetric space, where $L_{[e]}'$ and $H_e'$ is the image of $L_{[e]}$ and $H_e'$ under the restriction map $\rho_{[e]}$ defined in \eqref{eq:restictionmap}. We may identify $L_{[e]}'$ and $H_e'$ with the quotients $L_{[e]}/R$ and $H_{e,\overline e}/R$, where $R=\ker\rho_{[e]}$. Since normal subgroups of unimodular Lie groups are unimodular, we thus obtain
\[
	\int_{L_{[e]}} F(h)\,\td h = \int_{L_{[e]}/R}\int_R F(hr)\,\td r\,\td(hR)
	= \int_{L_{[e]}'}\int_R F(hr)\,\td r\,\td h.
\]
Now, applying Proposition~\ref{prop:fiberintegration} yields
\begin{align*}
	\int_L&\chi_\lambda(g)f(g)\,\td g\\
		&= \int_{M\cap K}\int_{\fraka_k^+}\int_{H_e'}\int_R\int_{U_{[e]}}
			 \Delta_{Q_{[e]}}(a_\tau\ell'r)\chi_\lambda(a_\tau\ell'r)J_e(\tau)
			 f(ma_\tau\ell'ru)\,\td u\,\td r\,\td \ell'\,\td\tau\,\td m.
\end{align*}
Since $H_e'=H_{e,\overline e}/R$ and $H_e=H_{e,\overline e}U_{[e]}$, we thus obtain
\begin{align*}
	\int_L\chi_\lambda(g)f(g)\,\td g
		&=\int_{M\cap K}\int_{\fraka_k^+}\int_{H_{e,\overline e}}\int_{U_{[e]}}
			 \Delta_{Q_{[e]}}(a_\tau\ell)\chi_\lambda(a_\tau\ell)J_e(\tau)f(ma_\tau\ell u)\,\td u\,\td\ell\,\td\tau\,\td m\\
		&=\int_{M\cap K}\int_{\fraka_k^+}\int_{H_e} \Delta_{Q_{[e]}}(a_\tau h)
			\chi_\lambda(a_\tau h)J_e(\tau)f(ma_\tau h)\,\td h\,\td\tau\,\td m.
\end{align*}
We note that $\Delta_{Q_{[e]}}(h)=\Delta_{H_e}(h)$ for all $h\in H_e$. Moreover, $\Delta_{H_e}(h)=\chi_\lambda(h)^{-1}$ by equivariance of $\td\mu$. This eventually implies
\begin{align*}
	\int_L\chi_\lambda(g)f(g)\,\td g
		=\int_{M\cap K}\int_{\fraka_k^+}\left(\int_{H_e} f(ma_\tau h)dh\right)\tilde J(\tau)\,\td\tau\,\td m
\end{align*}
with
\[
	\tilde J(\tau)=\Delta_{Q_{[e]}}(a_\tau)\chi_\lambda(a_\tau)J_e(\tau).
\]
Comparing this integral formula with \eqref{eq:equivariantintegral} shows that
\[
	\int_{\calO_e} f(x)\,\td\mu(x)
		= \int_{M\cap K}\int_{\fraka_k^+}f(ma_\tau\cdot e)\tilde J(\tau)\,\td\tau\,\td m
\]
for $f\in L^1(\calO_e,\td\mu)$. For $a_\tau=\exp(\frac{\tau_1}{2}D_{e_1,\overline e_1}+\cdots+\frac{\tau_k}{2}D_{e_k,\overline e_k})$, the joint Peirce decomposition with respect to the frame $(e_1,\ldots, e_r)$ shows that
\[
	\Delta_{Q_{[e]}}(a_\tau) = \prod_{j=1}^k e^{((r-k)d+\frac{b}{2})\tau_j} \qquad \mbox{and} \qquad \chi_\lambda(a_\tau) = \prod_{j=1}^k e^{\lambda \tau_j}.
\]
Therefore,
\[
	\tilde J(\tau) = \prod_{j=1}^k e^{(\lambda+(r-k)d+\frac{b}{2})\tau_j}
			\prod_{1\leq i<j\leq k}\sinh^{d_+}\left(\frac{\tau_i-\tau_j}{2}\right)
				\cosh^{d_-}\left(\frac{\tau_i-\tau_j}{2}\right).
\]
Finally, the change of coordinates $t_j=e^{\tau_j}$ yields the proposed integration formula.
\end{proof}

Finally, we determine a formula for the $L$-equivariant measure $\td\mu$ in the coordinates of $\calO_e$ given by the diffeomorphism $\varphi_e$ in \eqref{eq:diffeo}. Recall from Proposition~\ref{prop:orbitcoordinates} that the restriction of $\varphi_e$ to $\Omega_e+V_1^+$ is a diffeomorphism onto the $\overline Q_{[e]}$-orbit of $e\in\calO_e$, which is an open and dense subset of $\calO_e$. Here, $\Omega_e$ is the $L_{[e]}$-orbit of $e\in[e]$, which is open in $[e]=V_2^+$.

\begin{proposition}\label{prop:PullbackMeasure}
	Let $e\in\calV_k$, $1\leq k\leq r-1$, and let $\td\mu$ be a $\chi_{kd}$-equivariant measure on $\calO_e$. Then, the pullback of $\td\mu$ along $\varphi_e$ is given by
	\[
		\int_{\calO_e} f(x) \td\mu(x)
		= \int_{\Omega_e+V_1^+}f(\varphi_e(x_2+x_1))|\Delta(x_2)|^{kd-p}\,\td\lambda(x_2+x_1),
	\]
	where $\td\lambda$ denotes a suitably normalized Lebesgue measure on $V_2^+\oplus V_1^+$.
\end{proposition}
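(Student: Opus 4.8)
The plan is to transport the $L$-equivariance of $\td\mu$ to the linear coordinates furnished by $\varphi_e$ and then determine the resulting density. Write $\td\nu=\varphi_e^*\td\mu$ for the pullback of $\td\mu$ along the diffeomorphism of Proposition~\ref{prop:orbitcoordinates} from $\Omega_e+V_1^+$ onto the $\overline Q_{[e]}$-orbit of $e$. The complement of this orbit in $\calO_e$ is a lower-dimensional subvariety, hence $\td\mu$-null, so it suffices to compute $\td\nu$ on $\Omega_e+V_1^+$. Since $\varphi_e$ intertwines the $\overline Q_{[e]}$-action on $\calO_e$ with the pullback action $\xi$ of Lemma~\ref{lem:pullbackaction}, the measure $\td\nu$ is $\xi$-equivariant for the same character $\chi_{kd}$.

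First I would exploit the opposite unipotent radical $\overline U_{[e]}=\set{\B{v}{\overline e}}{v\in V_1^+}$. By Proposition~\ref{prop:orbitcoordinates} the map $\xi(\B{v}{\overline e})$ fixes the $V_2^+$-component $x_2$ and translates the $V_1^+$-component by $-\JTP{v}{\overline e}{x_2}$, so its Jacobian relative to the Lebesgue measure $\td\lambda$ equals $1$; moreover $\chi_{kd}(\B{v}{\overline e})=1$ since $\B{v}{\overline e}$ is unipotent, so $\Det_{V^+}\B{v}{\overline e}=1$ in \eqref{eq:characterformula}. For $x_2\in\Omega_e\subseteq[e]^\times$ the linear map $v\mapsto\JTP{v}{\overline e}{x_2}$ is a bijection of $V_1^+$, so these translations exhaust $V_1^+$, and equivariance forces $\td\nu=\phi(x_2)\,\td\lambda(x_2+x_1)$ with a positive density $\phi$ depending only on $x_2$. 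Next I would feed in the Levi part $L_{[e]}$, which by Proposition~\ref{prop:orbitcoordinates} acts linearly by $\xi(h)(x_2+x_1)=hx_2+hx_1$ with Jacobian $|\Det_{V_2^+}(h)|\,|\Det_{V_1^+}(h)|$. Equivariance then reads
\[
	\phi(hx_2)\,|\Det_{V_2^+}(h)|\,|\Det_{V_1^+}(h)|=\chi_{kd}(h)\,\phi(x_2),\qquad h\in L_{[e]}.
\]
Since $L_{[e]}$ acts transitively on $\Omega_e$ and the generic norm of the Jordan algebra $[e]$ satisfies $\Delta(hx_2)=\mu(h)\Delta(x_2)$ with multiplier $\mu(h)=\Delta(he)$ and $\Delta(e)=1$, the candidate $\phi(x_2)=|\Delta(x_2)|^{kd-p}$ solves this equation exactly when the identity of positive characters
\[
	\chi_{kd}(h)=|\Delta(he)|^{kd-p}\,|\Det_{V_2^+}(h)|\,|\Det_{V_1^+}(h)|,\qquad h\in L_{[e]},
\]
holds; this is the heart of the matter. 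Given the identity, transitivity pins down $\phi=c\,|\Delta(x_2)|^{kd-p}$, and absorbing $c$ into the normalization of $\td\lambda$ yields the formula.

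To prove this character identity I would compare the differentials of both sides, i.e.\ verify for all $T\in\frakl_0=\Lie(L_{[e]})$ that
\[
	\tfrac{kd}{p}\Tr_{V^+}(T)=(kd-p)\,\tr_{[e]}(Te)+\Tr_{V_2^+}(T)+\Tr_{V_1^+}(T),
\]
using $|\Det_{V_2^+}(h)|=|\Delta(he)|^{p_2}$ (the Jordan-norm multiplier relation, equivalently $\dim V_2^+=kp_2$, whose differential gives $\Tr_{V_2^+}(T)=p_2\,\tr_{[e]}(Te)$) and $\Det_{V^+}=\Det_{V_2^+}\Det_{V_1^+}\Det_{V_0^+}$. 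Spanning $\frakl_0$ by the operators $D_{u,v}$ with $(u,v)\in V_i^+\times V_i^-$ for $i=0,1,2$ (these are precisely the $\ad(D_{e,e'})$-invariant ones), the cases $i=0$ and $i=2$ follow directly from the summation formulas of Lemma~\ref{lem:basessums}, the normalizations $\tau|_{V_2}=\tau_2$ and $\tau|_{V_0}=\tau_0$, and the genus identities $p=(e+1)+(r-1)d+\tfrac b2$, $p_2+(r-k)d+\tfrac b2=p$ and $kd+p_0=p$; here the Peirce bookkeeping is routine. The main obstacle is the case $i=1$, the $V_1^+\times V_1^-$ directions, where the form $\Tr_{V_1^+}(D_{u,v})$ must match the prescribed combination on each simple constituent of $V_1$. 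This is exactly controlled by the third identity of Lemma~\ref{lem:basessums} together with Proposition~\ref{prop:ClassificationV1}, and it is precisely where the standing hypothesis that $\td\mu$ exists — equivalently $V\not\simeq(M(p\times q,\FF),M(q\times p,\FF))$, $\FF=\RR,\CC,\HH$ with $p\neq q$, by Theorem~\ref{thm:equivariantmeasures} — is indispensable: in the excluded cases the two constituents of $V_1$ have unequal dimensions, the identity fails, and correspondingly no equivariant measure exists.
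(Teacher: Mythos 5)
Your proposal is correct in substance, and its skeleton coincides with the paper's: pull $\td\mu$ back along $\varphi_e$, use $\overline Q_{[e]}$-equivariance of the pullback action from Proposition~\ref{prop:orbitcoordinates} to show the density depends only on $x_2$ and satisfies $\phi(hx_2)\,|\Det_{V_2^+}(h)|\,|\Det_{V_1^+}(h)|=\chi_{kd}(h)\,\phi(x_2)$ for $h\in L_{[e]}$, then pin down $\phi=|\Delta|^{kd-p}$ by transitivity and the multiplier property of the generic norm. Where you genuinely diverge is the verification of the character identity on $L_{[e]}$: the paper stays at the group level, factoring $h=h_2h_1h_0$ and combining Proposition~\ref{prop:ClassificationV1}~(b) (for the middle factor) with the determinant relations $\Det_{V_0^+}(h_0)=\Det_{V^+}(h_0)^{p_0/p}$ and $\Det_{V_2^+}(h_2)=\Det_{V^+}(h_2)^{p_2/p}$ recycled from Theorem~\ref{thm:equivariantmeasures}; you differentiate and check the trace identity on the spanning set $D_{u,v}$, $(u,v)\in V_i^+\times V_i^-$, of $\frakl_0$, using Lemma~\ref{lem:basessums}. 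The two are ultimately equivalent (Lemma~\ref{lem:basessums}~(3) is itself deduced from Proposition~\ref{prop:ClassificationV1}), and your localization of where the hypothesis on $V$ enters -- exactly the $V_1^+\times V_1^-$ directions -- is precisely right. Passing from the differentiated identity back to the group is harmless since both sides are positive characters and $L_{[e]}$ has finitely many components.

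Two steps need justification. First, you assert that $v\mapsto\JTP{v}{\overline e}{x_2}$ is a bijection of $V_1^+$ for $x_2\in\Omega_e$. This is true but not obvious: since $Q_{\overline e}|_{V_1^+}=0$ by the Peirce rules, the identity $D_{a,\overline e}D_{b,\overline e}+D_{b,\overline e}D_{a,\overline e}=D_{\JTP{a}{\overline e}{b},\overline e}+2Q_{a,b}Q_{\overline e}$ gives $\sigma(a)\sigma(b)+\sigma(b)\sigma(a)=2\sigma(a\circ b)$ for $\sigma(a)=D_{a,\overline e}|_{V_1^+}$, whence $\sigma(a^n)=\sigma(a)^n$; evaluating the minimal polynomial of $a$ (nonzero constant term when $a$ is invertible in $[e]$) shows $\sigma(a)$ is invertible. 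Alternatively, avoid the claim entirely as the paper implicitly does: every point of $\Omega_e+V_1^+$ equals $\xi(h\B{v}{\overline e})(e)=he-hv$, and the equivariance relation expresses the density there through data depending only on $h$, which already forces independence of $x_1$. Second, in the case $i=1$ you should say how Lemma~\ref{lem:basessums}~(3) controls individual traces: by associativity of the trace form,
\[
	\Tr_{V_j^+}(D_{u,v})=\tau\Bigl(\Bigl(\sum_{\alpha\in I_j}D_{c_\alpha,\widehat c_\alpha}\Bigr)u,\,v\Bigr),
\]
so parts (2) and (3) of the lemma give $\Tr_{V_2^+}(D_{u,v})=p_2\,\tau(u,v)$ and $\Tr_{V_1^+}(D_{u,v})=(2p-p_2-p_0)\,\tau(u,v)$ for $(u,v)\in V_1^+\times V_1^-$, which sum to the required $(p+kd)\,\tau(u,v)$. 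With these two points supplied, your argument is complete.
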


\begin{proof}
First we note that the pullback of $\td\mu$ is absolutely continuous with respect to the Lebesgue measure on $V_2^+\oplus V_1^+$. Therefore, $\varphi^*\td\mu=g\,\td\lambda$ for a nowhere vanishing continuous map $g$. Now, $\overline Q_{[e]}$-equivariance of $\varphi_e$ with respect to the actions of $\overline Q_{[e]}$ given as in Proposition~\ref{prop:orbitcoordinates} implies that
\[
	g(qe)=\frac{\chi_{kd}(q)}{|\Det_{V_2^+\oplus V_1^+}(d_e\xi(q))|}\,g(e)
\]
for all $q\in\overline Q_{[e]}$. With $q=h\B{v}{\overline e}$ according to \eqref{eq:oppositeParabolic}, we obtain
\[
	\Det_{V_2^+\oplus V_1^+}(d_e\xi(q))=\Det_{V_2^+\oplus V_1^+}(h), \qquad \chi_{kd}(q)=\Det_{V^+}(h)^{\frac{kd}{p}}.
\]
Hence $g$ is independent of $x_1$, and as a function of $x_2$ it satisfies
$$ g(hx_2) = \frac{|\Det_{V^+}(h)|^{\frac{kd}{p}}}{|\Det_{V_2^+\oplus V_1^+}(h)|}g(x_2). $$
Similar to the proof of Theorem~\ref{thm:equivariantmeasures} (but now on the group level) we decompose $h\in L_{[e]}$ as $h=h_2h_1h_0$ with
$$ h|_{V_2^+}=h_2|_{V_2^+}, \quad h_2|_{V_0^+}=\id_{V_0^+}, \qquad \mbox{and} \qquad h|_{V_0^+}=h_0|_{V_0^+}, \quad h_0|_{V_2^+}=\id_{V_2^+}. $$
Then $h_1$ acts trivially on $V_2^+\oplus V_0^+$ and by Proposition~\ref{prop:ClassificationV1}~\ref{ClassificationV1-2} we have $\Det_{V_2^+\oplus V_1^+}(h_1)=\Det_{V^+}(h_1)=1$. Further, by the proof of Theorem~\ref{thm:equivariantmeasures} we have
\begin{align*}
 \Det_{V_0^+}(h_0) &= \Det_{V^+}(h_0)^{\frac{p_0}{p}} = \Det_{V^+}(h_0)^{1-\frac{kd}{p}},\\
 \Det_{V_2^+}(h_2) &= \Det_{V^+}(h_2)^{\frac{p_2}{p}}.
\end{align*}
Together this gives
\begin{align*}
 \frac{\Det_{V^+}(h)^{\frac{kd}{p}}}{\Det_{V_2^+\oplus V_1^+}(h)} &= \Det_{V^+}(h_2)^{\frac{kd}{p}-1}\Det_{V^+}(h_1)^{\frac{kd}{p}-1}\Det_{V^+}(h_0)^{\frac{kd}{p}-1}\Det_{V_0^+}(h_0)\\
 &= \Det_{V_2^+}(h_2)^{\frac{1}{p_2}(kd-p)} = \Det_{V_2^+}(h)^{\frac{1}{p_2}(kd-p)}
\end{align*}
and hence
$$ g(hx_2) = |\Det_{V_2^+}(h)|^{\frac{1}{p_2}(kd-p)}g(x_2). $$
Since $L_{[e]}\to\Str(V_2^+),\,h\mapsto h|_{V_2^+}$ is surjective and the (absolute value of the) Jordan determinant $|\Delta(x_2)|$ is the (up to scalar multiples) unique function on $V_2^+$ such that $|\Delta(gx_2)|=|\Det(g)|^{\frac{1}{p_2}}|\Delta(x_2)|$ we find that
\begin{equation*}
 g(x_2) = |\Delta(x_2)|^{kd-p}. \qedhere
\end{equation*}
\end{proof}

\section{Bessel operators on Jordan pairs}\label{sec:Besseloperator}

Bessel operators were first defined for Euclidean Jordan algebras by Dib~\cite{Dib90} (see also Faraut--Koranyi~\cite{FK94}) and later generalized to arbitrary semisimple Jordan algebras by Mano~\cite{Man08} and Hilgert--Kobayashi--M\"{o}llers~\cite{HKM14}. We extend this definition to Jordan pairs and show that for certain parameters the operators are tangential to the rank submanifolds $\calV_k$ in $V^+$. In the case where $\calV_k$ carries an $L$-equivariant measure we further prove that the Bessel operators are symmetric with respect to the corresponding $L^2$-inner product. For Jordan algebras these results were obtained by Hilgert--Kobayashi--M\"{o}llers~\cite{HKM14} using zeta functions which are not available for Jordan pairs. The proofs we give here work uniformly for all Jordan pairs, since they merely use local parametrizations of the submanifolds $\calV_k$ and the explicit form of the measures $\td\mu_k$ in these parametrizations.

\subsection{Definition, equivariance, and symmetry}\label{sec:BesselDefinition}

We fix a basis $\{c_\alpha\}_{\alpha=1,\ldots, n}$ of $V^+$, and let $\{\widehat c_\alpha\}_{\alpha=1,\ldots, n}$ be the dual basis of $V^-$ with respect to the trace form $\tau$ defined in \eqref{eq:traceform}. Then, for any $\lambda\in\CC$ the \emph{Bessel operator}
\[
	\calB_\lambda\colon C^\infty(V^+)\to C^\infty(V^+)\otimes V^-
\]
is defined by the formula
\[
	\calB_\lambda f(x)
		=\frac{1}{2}\sum_{\alpha,\beta} \frac{\partial^2 f}{\partial x_\alpha\partial x_\beta}(x)
			\JTP{\widehat c_\alpha}{x}{\widehat c_\beta} +
			\lambda\,\sum_\alpha\frac{\partial f}{\partial x_\alpha}(x)\,\widehat c_\alpha,
\]
which is easily seen to be independent of the choice of $\{c_\alpha\}_{\alpha=1,\ldots, n}$. On a formal level, this is also sometimes denoted as
\[
	\calB_\lambda = Q\left(\frac{\partial}{\partial x}\right)x + \lambda\frac{\partial}{\partial x},
\]
where $\tfrac{\partial}{\partial x}\colon C^\infty(V^+)\to C^\infty(V^+)\otimes V^-$ denotes the gradient with respect to $\tau$, which is defined by the condition
\[
	\td_vf(x) = \tau\left(v,\frac{\partial f}{\partial x}(x)\right)
	\qquad\text{for all }v\in V^+.
\]
One of the basic properties of the Bessel operator is its equivariance under the action of $L\subseteq G$. For $h\in L$, let $\rho(h)$ denote the action on functions on $V^+$,
\[
	(\rho(h)f)(x) = f(h^{-1}x).
\]
Recall that $hx=\Ad(h)x$ denotes the adjoint action of $h\in L$ on $x\in V^+=\frakn$ resp. $V^-=\overline\frakn$.

\begin{lemma}\label{lem:BesselEquivariance}
	For any $\lambda\in\CC$ and $h\in L$,
	\[
		\calB_\lambda\circ\rho(h) = (\rho(h)\otimes h)\circ\calB_\lambda.
	\]
\end{lemma}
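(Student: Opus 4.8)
The plan is to reduce the claim to the pointwise identity $\calB_\lambda(\rho(h)f)(x) = h\cdot(\calB_\lambda f)(h^{-1}x)$ for all $x\in V^+$, and to prove this by exploiting the basis-independence of $\calB_\lambda$ together with the fact that $h$ acts by a Jordan pair automorphism. The key preliminary observation is that the trace form $\tau$ is $L$-invariant: since $h\JTP{x}{y}{z}=\JTP{hx}{hy}{hz}$ means $h\circ D_{x,y}=D_{hx,hy}\circ h$ on $V^+$, we get $D_{hx,hy}=hD_{x,y}h^{-1}$ and hence $\tau(hx,hy)=\tfrac{1}{2p}\Tr_{V^+}(D_{hx,hy})=\tau(x,y)$. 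Consequently, if $\{c_\alpha\}$ is a basis of $V^+$ with $\tau$-dual basis $\{\widehat c_\alpha\}$ of $V^-$, then $\{hc_\alpha\}$ is again a basis of $V^+$ whose $\tau$-dual basis is exactly $\{h\widehat c_\alpha\}$.

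Next I would write $g=\rho(h)f$, so that $g(x)=f(h^{-1}x)$ with $u:=h^{-1}x$. Because $h^{-1}$ is linear, the chain rule gives $\td_v g(x)=\td_{h^{-1}v}f(u)$ and $\td_v\td_w g(x)=\td_{h^{-1}v}\td_{h^{-1}w}f(u)$ for all $v,w\in V^+$, with no lower-order corrections. The main step is then to evaluate $\calB_\lambda g$ using the transported basis $\{hc_\alpha\}$ with dual basis $\{h\widehat c_\alpha\}$, which is legitimate by the basis-independence of $\calB_\lambda$ noted in its definition:
\[
	\calB_\lambda g(x)=\frac{1}{2}\sum_{\alpha,\beta}\big(\td_{hc_\alpha}\td_{hc_\beta}g\big)(x)\,\JTP{h\widehat c_\alpha}{x}{h\widehat c_\beta}+\lambda\sum_\alpha\big(\td_{hc_\alpha}g\big)(x)\,h\widehat c_\alpha.
\]
Applying the chain rule identities gives $\td_{hc_\alpha}\td_{hc_\beta}g(x)=(\td_{c_\alpha}\td_{c_\beta}f)(u)$ and $\td_{hc_\alpha}g(x)=(\td_{c_\alpha}f)(u)$, while the automorphism property together with $x=hu$ yields $\JTP{h\widehat c_\alpha}{x}{h\widehat c_\beta}=\JTP{h\widehat c_\alpha}{hu}{h\widehat c_\beta}=h\JTP{\widehat c_\alpha}{u}{\widehat c_\beta}$. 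Factoring out $h$ then leaves precisely $\calB_\lambda f(u)$ evaluated in the original basis, so $\calB_\lambda g(x)=h\cdot\calB_\lambda f(u)=h\cdot(\calB_\lambda f)(h^{-1}x)$, which is the asserted identity $\calB_\lambda\circ\rho(h)=(\rho(h)\otimes h)\circ\calB_\lambda$.

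There is no serious obstacle here; the whole argument rests on the single device of computing the Bessel operator in the $h$-transported basis, which simultaneously handles the second-order and first-order terms. The only points that require care are the $L$-invariance of $\tau$ (so that $\{hc_\alpha\}$ and $\{h\widehat c_\alpha\}$ are genuinely dual) and the observation that the chain rule produces no lower-order terms because $h^{-1}$ is linear; both are routine. I would remark that one could instead verify the identity in a fixed basis by expanding the Hessian of $g$ via the chain rule and then re-summing, but this forces one to transport dual bases by hand and is exactly the bookkeeping that the basis-independence makes unnecessary.
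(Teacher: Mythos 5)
Your proof is correct and follows essentially the same route as the paper: the paper's proof likewise rests on the Jordan pair automorphism property of $h$, the resulting $L$-invariance of $\tau$ (so that $\{hc_\alpha\}$ and $\{h\widehat c_\alpha\}$ remain dual bases), and then invokes "standard transformation rules" — which are exactly the chain-rule computations in the transported basis that you have written out in detail.
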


\begin{proof}
It suffices to note that the action of $h\in L$ on $V^\pm$ is by Jordan pair automorphisms, i.e., satisfies $h\JTP{x}{y}{z} = \JTP{hx}{hy}{hz}$ for all $x,z\in V^+$, $y\in V^-$. In particular, the pairing $\tau$ is $L$-invariant, hence $\{hc_\alpha\}_{\alpha=1,\ldots,n}$ and $\{h\widehat c_\alpha\}_{\alpha=1,\ldots,n}$ is another pair of dual bases for $V^+$ and $V^-$. The equivariance of $\calB_\lambda$ now follows from standard transformation rules.
\end{proof}

For later use we note the following symmetry property of the Bessel operator.
\begin{proposition}\label{prop:BesselPartialIntegration}
	Let $\lambda\in\CC$, then
	\[
		\int_{V^+}\calB_\lambda f(x)\cdot g(x)\,\td x = \int_{V^+}f(x)\cdot\calB_{2p-\lambda}g(x)\,\td x.
	\]
\end{proposition}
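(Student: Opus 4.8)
The plan is to prove this by integration by parts, assuming (as is implicit in the statement) that $f,g$ lie in a class for which the integrals converge and all boundary terms vanish, e.g. $f,g\in\calS(V^+)$. Since both sides take values in $V^-$, everything can be read off componentwise, and I would organize the computation by splitting $\calB_\lambda = P + \lambda R$ into its second-order part $Pf = \frac12\sum_{\alpha,\beta}\partial_\alpha\partial_\beta f\cdot\JTP{\widehat c_\alpha}{x}{\widehat c_\beta}$ and its first-order part $Rf = \sum_\alpha\partial_\alpha f\cdot\widehat c_\alpha$, where $\partial_\alpha=\partial/\partial x_\alpha$ in the coordinates $x=\sum_\alpha x_\alpha c_\alpha$. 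The first-order part is easy: a single integration by parts gives $\int_{V^+}Rf\cdot g\,\td x = -\int_{V^+}f\cdot Rg\,\td x$, so $\lambda R$ contributes $-\lambda\int f\cdot Rg$.

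For the second-order part I would write $T_{\alpha\beta}(x)=\JTP{\widehat c_\alpha}{x}{\widehat c_\beta}=Q_{\widehat c_\alpha,\widehat c_\beta}x$, a $V^-$-valued linear function of $x$. Two integrations by parts move both derivatives onto the factor $T_{\alpha\beta}(x)g(x)$, and expanding $\partial_\alpha\partial_\beta(T_{\alpha\beta}\,g)$ by Leibniz the purely-$g$ term $T_{\alpha\beta}\,\partial_\alpha\partial_\beta g$ reproduces $\int f\cdot Pg$, while the second derivative of the linear factor $T_{\alpha\beta}$ vanishes. The crux is the cross term $\sum_{\alpha,\beta}\big[(\partial_\alpha T_{\alpha\beta})\partial_\beta g+(\partial_\beta T_{\alpha\beta})\partial_\alpha g\big]$. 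Here I would use that the Jordan triple product is symmetric in its outer arguments, i.e. $Q_{\widehat c_\alpha,\widehat c_\beta}=Q_{\widehat c_\beta,\widehat c_\alpha}$, so $T_{\alpha\beta}=T_{\beta\alpha}$ and the two cross terms coincide after relabeling, leaving $\sum_{\alpha,\beta}(\partial_\alpha T_{\alpha\beta})\,\partial_\beta g$ with $\partial_\alpha T_{\alpha\beta}=\JTP{\widehat c_\alpha}{c_\alpha}{\widehat c_\beta}=D_{\widehat c_\alpha,c_\alpha}\widehat c_\beta$.

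The main (and essentially only) nontrivial input is then the summation formula $\sum_\alpha D_{\widehat c_\alpha,c_\alpha}=2p\cdot\id_{V^-}$, which is the analogue of Lemma~\ref{lem:basessums}\eqref{basessums1} for the opposite pair $(V^-,V^+)$ (note that $\{c_\alpha\}$ and $\{\widehat c_\alpha\}$ are dual bases in either order with respect to $\tau$, and the genus $p$ is unchanged under swapping $V^+$ and $V^-$); the same trace-form computation as in that lemma applies verbatim. This collapses the cross term to $2p\sum_\beta\partial_\beta g\,\widehat c_\beta = 2p\,Rg$, yielding $\int Pf\cdot g = \int f\cdot Pg + 2p\int f\cdot Rg$. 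Combining with the first-order contribution gives $\int\calB_\lambda f\cdot g = \int f\cdot\big(Pg+(2p-\lambda)Rg\big) = \int f\cdot\calB_{2p-\lambda}g$, as claimed. I expect the only place requiring care is the bookkeeping of the $V^-$-valued quantities together with the correct invocation of the dual form of Lemma~\ref{lem:basessums}; the analytic ingredient (vanishing boundary terms) is routine once the function space is fixed.
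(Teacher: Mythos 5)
Your proposal is correct and follows essentially the same route as the paper: integrate by parts (twice for the second-order part, once for the gradient part), expand by Leibniz, use the outer symmetry of the triple product to combine the cross terms, and collapse them via the summation formula $\sum_\alpha D_{\widehat c_\alpha,c_\alpha}=2p\cdot\id_{V^-}$, i.e.\ Lemma~\ref{lem:basessums}~\eqref{basessums1} read on the opposite pair. The paper performs exactly this computation in one chain of equalities, citing Lemma~\ref{lem:basessums}~\eqref{basessums1} directly; your explicit remark that the dual version holds by the same trace-form argument is a harmless (and correct) extra precaution.
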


\begin{proof}
Integrating by parts and using Lemma~\ref{lem:basessums}~\eqref{basessums1} we obtain
\begin{align*}
 & \int_{V^+}{\calB_\lambda f(x)\cdot g(x)\,\td x}\\
 ={}& \int_{V^+}{f(x)\cdot\left(
 	\frac{1}{2}\sum_{\alpha,\beta}\frac{\partial^2}{\partial x_\alpha\partial x_\beta}
 	\Big(g(x)\JTP{\widehat{c}_\alpha}{x}{\widehat{c}_\beta}\Big)
 	-\lambda\sum_\alpha\frac{\partial g}{\partial x_\alpha}(x)\,\widehat c_\alpha\right)\td x}\\
 ={}& \int_{V^+}{f(x)\cdot\left(
 	\calB_0g(x) + \sum_{\alpha,\beta}\frac{\partial g}{\partial x_\alpha}(x)
 	\JTP{\widehat{c}_\alpha}{c_\beta}{\widehat{c}_\beta}
 	-\lambda\sum_\alpha\frac{\partial g}{\partial x_\alpha}(x)\,\widehat c_\alpha\right)\td x}\\
 ={}& \int_{V^+}{f(x)\cdot\left(
 	\calB_0g(x) + 2p\sum_\alpha\frac{\partial g}{\partial x_\alpha}(x)\,\widehat{c}_\alpha
 	-\lambda\sum_\alpha\frac{\partial g}{\partial x_\alpha}(x)\,\widehat c_\alpha\right)\td x}\\
 ={}& \int_{V^+}{f(x)\cdot\calB_{2p-\lambda}g(x)\,\td x}.\qedhere
\end{align*}
\end{proof}

\subsection{Restriction to submanifolds}\label{sec:tangency}

We now turn to tangential differential operators. Recall that a differential operator $D$ on $V^+$ is \emph{tangential} to a submanifold $S\subseteq V^+$, if for any smooth function $f\in C^\infty(V^+)$ with $f|_S=0$ we have $(Df)|_S=0$. For a linear subspace $S\subseteq V^+$ this means that $D$ only contains derivatives in the direction of $S$.

For an idempotent $e\in\calV_k$ recall the map $\varphi_e$ from \eqref{eq:diffeo}. The pullback of the Bessel operator is defined by 
\[
	(\varphi_e^*\calB_\lambda f)(x) = \calB_\lambda(f\circ\varphi_e^{-1})(\varphi_e(x)).
\]
We fix a pair of dual bases $\{c_\alpha\}_{\alpha\in I}$ of $V^+$ and $\{\widehat c_\alpha\}_{\alpha\in I}$ of $V^-$ with respect to the trace form $\tau$. Choose these bases compatible with the Peirce decomposition $V^\pm=V_2^\pm\oplus V_1^\pm\oplus V_0^\pm$ with respect to $e$, i.e. $I=I_2\sqcup I_1\sqcup I_0$ with $c_\alpha\in V_\ell^+$ if and only if $\alpha\in I_\ell$, $\ell=0,1,2$. We further write $\nabla$ for the gradient with respect to the trace form, and $\nabla_\ell$ for its projection to $V_\ell^-$, $\ell=0,1,2$.

\begin{theorem}\label{thm:pullbackBessel}
	The pullback of $\calB_\lambda$ along $\varphi_e$ at $x=x_2+x_1\in(V_2^+)^\times\times V_1^+$
	is given by
	\begin{align*}
		(\varphi_e^*\calB_\lambda f)(x) 
		={}&\frac{1}{2}\sum_{\alpha,\beta\in I_2\sqcup I_1}
		\frac{\partial^2f}
				 {\partial x_\alpha\partial x_\beta}(x)
			\JTP{\widehat{c}_\alpha}{x}{\widehat{c}_\beta} +\frac{1}{2}\sum_{\alpha,\beta\in I_1}
		\frac{\partial^2f}
				 {\partial x_\alpha\partial x_\beta}(x)
			\JTP{\widehat{c}_\alpha}{Q_{x_1}x_2^{-1}}{\widehat{c}_\beta}\\
		 &+\lambda\nabla_2f(x)+\lambda\nabla_1f(x)+(\lambda-kd)\B{x_2^{-1}}{x_1}\nabla_0f(x).
	\end{align*}
	In particular, $\calB_\lambda$ is tangential to $\calV_k$ if and only if $\lambda=kd$.
\end{theorem}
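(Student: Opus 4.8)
The plan is a direct change-of-variables computation, organised so that the Bergman operator and the constant $kd$ appear transparently. Write $x=x_2+x_1+x_0$ according to the Peirce decomposition with respect to $\b e=(e,e')$. The key structural feature of $\varphi_e$ is that it is a \emph{shear}: it fixes the $V_2^+$- and $V_1^+$-components and only shifts the $V_0^+$-component, $\varphi_e(x)=x_2+x_1+(x_0+Q_{x_1}x_2^{-1})$, with inverse $\varphi_e^{-1}(y)=y_2+y_1+(y_0-Q_{y_1}y_2^{-1})$. First I would compute the Jacobian $d\varphi_e^{-1}$, using $\td_{w_1}(Q_{y_1}y_2^{-1})=\JTP{y_1}{y_2^{-1}}{w_1}$ together with the inversion derivative $\td_{w_2}(y_2^{-1})=-Q_{y_2^{-1}}w_2$ in $(V_2^+,V_2^-)$; the correction terms all land in $V_0^+$. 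The decisive observation is that the $\tau$-adjoint of this Jacobian, restricted to the transverse codirection $V_0^-$, is exactly the Bergman operator: using associativity of the trace form one finds $(d_y\varphi_e^{-1})^*\,\eta_0=(\id-D_{y_2^{-1},y_1}+Q_{y_2^{-1}}Q_{y_1})\eta_0=\B{y_2^{-1}}{y_1}\eta_0$ for $\eta_0\in V_0^-$.

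Since $\nabla(f\circ\varphi_e^{-1})(y)=(d_y\varphi_e^{-1})^*\,(\nabla f)(\varphi_e^{-1}(y))$, this at once accounts for the linear part of $\calB_\lambda$: after the substitution $y=\varphi_e(x)$ (so that $y_2=x_2$, $y_1=x_1$) its transverse contribution is $\lambda\,\B{x_2^{-1}}{x_1}\nabla_0f$, while its tangential part yields $\lambda\nabla_2f+\lambda\nabla_1f$. It remains to transport the second-order part $\tfrac12\sum_{\alpha,\beta}\frac{\partial^2(f\circ\varphi_e^{-1})}{\partial y_\alpha\partial y_\beta}\JTP{\widehat c_\alpha}{y}{\widehat c_\beta}$ of $\calB_\lambda$. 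By the chain rule these second derivatives split into a genuine second-derivative piece (the Jacobian applied twice) and a first-derivative piece coming from the Hessian of the shear $y\mapsto-Q_{y_1}y_2^{-1}$. I would insert both, decompose every triple product according to $y=y_2+y_1+y_0$ with $y_0=Q_{x_1}x_2^{-1}$, and simplify using the Peirce rules \eqref{eq:peircerules}; most products drop out because one factor lies in the wrong Peirce space.

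The heart of the computation is to collect the two sources of transverse ($V_0^+$-direction) contributions. On the one hand, the genuine second-derivative transverse terms must cancel, leaving a second-order operator involving only the tangential indices $I_2\sqcup I_1$, as in the asserted formula. On the other hand, the Hessian-of-shear contribution, after contraction with the quadratic factor and summation over $\alpha,\beta\in I_1$, assembles into $\tfrac12\sum_{\alpha\in I_1}D_{c_\alpha,\widehat c_\alpha}\big|_{V_0^+}=kd\cdot\id_{V_0^+}$ by Lemma~\ref{lem:basessums}\eqref{basessums2}, and carries the same Bergman factor $\B{x_2^{-1}}{x_1}$ as above, producing $-kd\,\B{x_2^{-1}}{x_1}\nabla_0f$. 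The rearrangements needed to see these cancellations and to extract the trace constant use the Jordan identities \eqref{JP7}, \eqref{JP8} and \eqref{JP16}. I expect this bookkeeping — tracking the mixed $V_1^+$/$V_0^+$ second derivatives and verifying that they reorganise into a single first-order term with precisely the coefficient $kd$ — to be the main obstacle; the Peirce rules sharply restrict which products survive, but the Jordan-identity manipulations that isolate $\B{x_2^{-1}}{x_1}$ and the constant $kd$ require care.

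Combining the two preceding steps yields the stated formula, in which the only term differentiating $f$ in a direction transverse to $\{x_0=0\}$ — i.e. involving $\nabla_0 f$ — is $(\lambda-kd)\,\B{x_2^{-1}}{x_1}\nabla_0 f$, the second-order part differentiating only in the tangential directions indexed by $I_2\sqcup I_1$. Under $\varphi_e$ the submanifold $\calV_k$ corresponds locally to $\{x_0=0\}=(V_2^+)^\times\oplus V_1^+$ (Proposition~\ref{prop:LOrbits}), so $\calB_\lambda$ is tangential to $\calV_k$ if and only if this transverse term vanishes identically in $f$. Since the $V_0^-$-diagonal Peirce block of $\B{x_2^{-1}}{x_1}$ is the identity, the remaining terms raising the Peirce degree off $V_0^-$, the operator $\B{x_2^{-1}}{x_1}$ is injective on $V_0^-$; hence the transverse term vanishes for all $f$ exactly when $\lambda=kd$, which is the asserted tangentiality criterion.
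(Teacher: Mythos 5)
Your route is genuinely different from the paper's, and its skeleton is partly correct. The paper never computes the pullback directly at points with $x_1\neq 0$: it computes only at $x\in(V_2^+)^\times$, where $\td\varphi_e^{\pm1}(x)=\id$ and the Hessian of the chart reduces to the single term $\pm\JTP{u_1}{x^{-1}}{v_1}$ (see \eqref{eq:derivatives}), and then transports the resulting formula to $x_2+x_1$ using the $\overline Q_{[e]}$-equivariance of $\calB_\lambda$ (Lemma~\ref{lem:BesselEquivariance}) together with the linearity of the pulled-back action (Lemma~\ref{lem:pullbackaction}); in that argument the Bergman factor enters through the adjoint of the group action, \eqref{eq:adjointaction}, not through any Jacobian. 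By contrast, you work at a general point from the start. Your key observation — that the $\tau$-adjoint of the Jacobian of $\varphi_e^{-1}$ is the identity on $V_2^-\oplus V_1^-$ and equals $\B{y_2^{-1}}{y_1}$ on $V_0^-$ — is correct (and attractive), and it does yield the gradient part $\lambda\nabla_2f+\lambda\nabla_1f+\lambda\,\B{x_2^{-1}}{x_1}\nabla_0f$ cleanly.

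However, the proposal has a genuine gap exactly at the step you yourself flag as the main obstacle, and in two places the mechanism you sketch is not the one that actually operates. First, the assertion that the genuine second-derivative terms carrying an $I_0$ index ``must cancel'' is not proved; it is true, but it requires the automorphism property of the pair $(\B{x_2^{-1}}{x_1},\B{x_1}{x_2^{-1}}^{-1})$, the identity $\B{x_1}{x_2^{-1}}^{-1}=\B{x_1}{-x_2^{-1}}$ (valid here by the Peirce rules), and the relation $\B{x_1}{-x_2^{-1}}x_2=\varphi_e(x_2+x_1)$, none of which appear in your argument. Second, and more seriously, the first-order term produced by the Hessian of the shear is not captured by the $I_1\times I_1$ block alone: since $y_2^{-1}$ depends on $y_2$, the Hessian of $y\mapsto Q_{y_1}y_2^{-1}$ also has mixed $(I_1,I_2)$ entries of the form $-\JTP{y_1}{Q_{y_2^{-1}}c_\beta}{c_\alpha}$ and pure $(I_2,I_2)$ entries involving the second derivative of Jordan inversion; moreover the middle slot $y=\varphi_e(x)$ itself has components in all three Peirce spaces. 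The contraction you cite, $\tfrac12\sum_{\alpha\in I_1}D_{c_\alpha,\widehat c_\alpha}\big|_{V_0^+}=kd\cdot\id$ from Lemma~\ref{lem:basessums}~\eqref{basessums2} (which itself only applies after a \eqref{JP16}-type rearrangement and $D_{x_2,x_2^{-1}}=D_{e,e'}$), accounts only for the $kd\cdot\id_{V_0^-}$ block of the asserted $kd\,\B{x_2^{-1}}{x_1}$; the $-kd\,D_{x_2^{-1},x_1}$ and $+kd\,Q_{x_2^{-1}}Q_{x_1}$ parts must emerge from the mixed and pure-$I_2$ Hessian entries and the $y_1,y_0$ components of the middle slot. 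Saying the term ``carries the same Bergman factor as above'' conflates two different mechanisms (Jacobian adjoint versus Hessian of the shear), and this identification, namely $\calB_0\bigl[\tau(Q_{y_1}y_2^{-1},\widehat c_\gamma)\bigr]=kd\,\B{y_2^{-1}}{y_1}\widehat c_\gamma$ for $\gamma\in I_0$, is precisely the computation that remains to be done. To close the gap, either carry out this evaluation in full, or do what the paper does: prove the formula at $x_1=0$, where only the $I_1\times I_1$ block survives and Lemma~\ref{lem:basessums} suffices, and then propagate it to general points by equivariance.
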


\begin{proof}
For simplicity, we write $\calB_\lambda'=\varphi_e^*\calB_\lambda$. We first prove the claimed formula for $x\in V_2^+$ and then use the equivariance of the Bessel operator to determine the general formula. Set $\varphi=\varphi_e$, and recall that for general $x\in V^+$, $\varphi(x)=x+Q_{x_1}x_2^{-1}$ and $\varphi^{-1}(x)=x-Q_{x_1}x_2^{-1}$. Therefore,
\[
	\td\varphi^{\pm1}(x)(u)=u\pm\JTP{x_1}{x_2^{-1}}{u_1} + Q_{x_1}Q_{x_2^{-1}}u_2.
\]
For the following, we assume that $x\in(V_2^+)^\times$. Then, it follows that
\begin{align}\label{eq:derivatives}
	\td\varphi^{\pm1}(x)(u)=u,\qquad
	\td^2\varphi^{\pm1}(x)(u,v)=\pm\JTP{u_1}{x^{-1}}{v_1}.
\end{align}
Write $\calB_\lambda=\calB_0+\lambda\nabla$. The pullback of the gradient $\nabla$ is easily shown to be
\begin{align}\label{eq:gradienttrafo}
	(\varphi^*\nabla)(x)=\nabla(f\circ\varphi^{-1})(\varphi(x)) = \td\varphi(x)^{-*}(\nabla f)(x)
	=(\nabla f)(x).
\end{align}
It remains to determine $\calB'_0$ which is
\[
	\calB'_0f(x) = \frac{1}{2}\sum_{\alpha,\beta\in I}
		\frac{\partial^2(f\circ\varphi^{-1})}
				 {\partial x_\alpha\partial x_\beta}(\varphi(x))
		\JTP{\widehat{c}_\alpha}{\varphi(x)}{\widehat{c}_\beta}.
\]
Due to the chain rule,
\begin{align*}
	(\calB'_0 f)(x)
	&= 	\frac{1}{2}\sum_{\gamma,\delta\in I}\left(\sum_{\alpha,\beta\in I}
				\frac{\partial\varphi_\gamma^{-1}}{\partial x_\alpha}(\varphi(x))
				\frac{\partial\varphi_\delta^{-1}}{\partial x_\beta}(\varphi(x))
				\JTP{\widehat{c}_\alpha}{\varphi(x)}{\widehat{c}_\beta}\right)
				\frac{\partial^2 f}{\partial x_\gamma\partial x_\delta}(x)\\
		&\quad + \frac{1}{2}\sum_{\gamma\in I}\left(\sum_{\alpha,\beta\in I}
			\frac{\partial^2\varphi_\gamma^{-1}}
								 {\partial x_\alpha\partial x_\beta}(\varphi(x))
			\JTP{\widehat{c}_\alpha}{\varphi(x)}{\widehat{c}_\beta} \right)\,
			\frac{\partial f}{\partial x_\gamma}(x),
\end{align*}
where $\varphi^{-1}_\gamma(x) = \tau(\varphi^{-1}(x),\widehat{c}_\gamma)$. Applying \eqref{eq:derivatives} yields
\begin{align*}
	(\calB'_0 f)(x)
	&= \frac{1}{2}\sum_{\gamma,\delta\in I}\JTP{\widehat{c}_\gamma}{x}{\widehat{c}_\delta}
				\frac{\partial^2 f}{\partial x_\gamma\partial x_\delta}(x) - \frac{1}{2}\sum_{\gamma\in I_0}\left(\sum_{\beta\in I_1}
			\JTP{\JTP{x^{-1}}{c_\beta}{\widehat{c}_\gamma}}{x}{\widehat{c}_\beta}\right)\,
			\frac{\partial f}{\partial x_\gamma}(x).
\end{align*}
We determine the $\beta$-sum. According to \eqref{JP16} and the Peirce rules (more precisely, \eqref{JP16} with $(u,v,x,y,z)\widehat=(x^{-1},x,\widehat{c}_\gamma,c_\beta,\widehat{c}_\beta)$):
\[
	\JTP{\JTP{x^{-1}}{c_\beta}{\widehat{c}_\gamma}}{x}{\widehat{c}_\beta}
	=\JTP{\widehat{c}_\gamma}{\JTP{x}{x^{-1}}{c_\beta}}{\widehat{c}_\beta}.
\]
Using $D_{x,x^{-1}}=D_{e,e'}$ and Lemma~\ref{lem:basessums}~\eqref{basessums1}, it follows that the $\beta$-sum evaluates to
\[
	\sum_{\beta\in I_1}\JTP{\widehat{c}_\gamma}{c_\beta}{\widehat{c}_\beta}
	=2p\,\widehat{c}_\gamma - \sum_{\beta\in I_0} D_{\widehat{c}_\beta,c_\beta}\widehat{c}_\gamma.
\]
The last sum only involves terms with $c_\beta\in V_0^+$ since $\JTP{\widehat c_\beta}{c_\beta}{\widehat c_\gamma}=0$ for $c_\beta\in V_2^+$. This sum evaluates to $2p_0\,\widehat c_\gamma$, where $p_0$ is the structure constant defined as in \eqref{eq:pDefinition} but with respect to the subpair $V_0=(V_0^+,V_0^-)$. Since $p-p_0=kd$, we conclude that 
\begin{align*}
	(\calB'_0 f)(x)
	&= \frac{1}{2}\sum_{\gamma,\delta\in I}\JTP{\widehat{c}_\gamma}{x}{\widehat{c}_\delta}
				\frac{\partial^2 f}{\partial x_\gamma\partial x_\delta}(x)
				-kd\sum_{\gamma\in I_0}\widehat{c}_\gamma
				\frac{\partial f}{\partial x_\gamma}(x).
\end{align*}
In combination with the gradient term, we thus obtain
\[
	(\calB'_\lambda f)(x) = \calB_0 f(x) +\lambda\nabla f(x)- kd\nabla_0f(x)\qquad\text{for }x\in(V_2^+)^\times.
\]
We next extend this formula to elements in $(V_2^+)^\times\times V_1^+$. Recall from Lemma~\ref{lem:pullbackaction} that the action of $\overline Q_{[e]}=V_1^+\rtimes L_{[e]}$ on $V^+$ admits a pullback along $\varphi$ which is given by
\begin{align*}
	\xi(h)(x)&=hx,\qquad
	\xi(\B{v}{b})(x)=x - \JTP{v}{b}{x_2}
\end{align*}
for any $h\in L_{[e]}$ and $v\in V_1^+$, $b\in V_2^-$, and $x\in N_e$. 
The equivariance of the Bessel operator $\calB_\lambda$ given by Lemma~\ref{lem:BesselEquivariance} translates to the following equivariance of the pullback Bessel operator $\calB'_\lambda$:
\[
	\calB'_\lambda(f\circ\xi(q)^{-1})(x)=q\cdot(\calB'_\lambda f)(\xi(q)^{-1}x).
\]
In order to avoid confusion, in the following we write $a\in(V_2^+)^\times$ instead of $x$ for the fixed element. For $q=\B{v}{a^{-1}}$ with $v\in V_1^+$, we note that $q^{-1}=\B{-v}{a^{-1}}$, and the action of $q^{-1}$ on element of $V^-$ is given by $\B{a^{-1}}{-v}^{-1}=\B{a^{-1}}{v}$. Since
\[
	\xi(q)^{-1}a=\xi(\B{-v}{a^{-1}})a=a+\JTP{v}{a^{-1}}{a}=a+v,
\]
equivariance yields
\[
	\calB'_\lambda f(a+v)=\B{a^{-1}}{v}\,\calB'_\lambda(f\circ\xi(\B{-v}{a^{-1}}))(a).
\]
For the gradient part of $\calB'_\lambda$, we obtain
\[
	\nabla(f\circ\xi(\B{-v}{a^{-1}}))(a) = \xi(\B{-v}{a^{-1}})^*\nabla f(x+v),
\]
where adjoint $\xi(\B{-v}{a^{-1}})^*$ of $\xi(\B{-v}{a^{-1}})$ with respect to the trace form $\tau$ is given by
\begin{align}\label{eq:adjointaction}
	\xi(\B{-v}{a^{-1}})^*(w) = \begin{cases} 
		\B{a^{-1}}{-v}(w) &,\ w\in V_2^-\oplus V_1^-,\\
		w &,\ w\in V_0^-.
	\end{cases}
\end{align}
The $V_0^+$-grandient is invariant for the action of $\xi(\B{-v}{a^{-1}})$ since $V_0^+$ is fixed under this action. We thus conclude that
\[
	\left(\varphi^*(\lambda\nabla-kd\nabla_0)\right)f(x)=\lambda\nabla_2f(x)+\lambda\nabla_1f(x)
	+(\lambda-kd)\B{a^{-1}}{v}\nabla_0f(x).
\]
It remains to determine $\calB'_0f$ at $a+v$. Since
\[
	\sum_{\alpha,\beta\in I}
		\frac{\partial^2(f\circ T)}
				 {\partial x_\alpha\partial x_\beta}(a)\JTP{\widehat{c}_\alpha}{a}{\widehat{c}_\beta}
	=	\sum_{\alpha,\beta\in I}
		\frac{\partial^2f}
				 {\partial x_\alpha\partial x_\beta}(T(a))\JTP{T^*\widehat{c}_\alpha}{a}{T^*\widehat{c}_\beta}
\]
holds for any linear isomorphism $T\in\GL(V^+)$, it follows that
\begin{equation}\label{eq:besseltrafo}
	\calB'_0f(a+v) = \frac{1}{2}\sum_{\alpha,\beta\in I}
		\frac{\partial^2f}
				 {\partial x_\alpha\partial x_\beta}(a+v)\cdot\B{a^{-1}}{v}
		\JTP{\xi(\B{-v}{a^{-1}})^*\widehat{c}_\alpha}{a}
		{\xi(\B{-v}{a^{-1}})^*\widehat{c}_\beta}.
\end{equation}
If either $\widehat{c}_\alpha$ or $\widehat{c}_\beta$ is in $V_0^-$, the Jordan product in the sum of \eqref{eq:besseltrafo} vanishes due to the Peirce rules since $a\in V_2^+$. Therefore, we may assume that $\alpha,\beta\in I_2\sqcup I_1$, and due to \eqref{eq:adjointaction},
\begin{align*}
	\JTP{\xi(\B{-v}{a^{-1}})^*\widehat{c}_\alpha}{a}
		{\xi(\B{-v}{a^{-1}})^*\widehat{c}_\beta}
	&= \B{a^{-1}}{-v}\JTP{\widehat{c}_\alpha}{\B{-v}{a^{-1}}(a)}{\widehat{c}_\beta}\\
	&=	\B{a^{-1}}{-v}\JTP{\widehat{c}_\alpha}{a+v+Q_va^{-1}}{\widehat{c}_\beta}.
\end{align*}
According to the Peirce rules we thus obtain
\begin{align*}
	\calB'_0f(a+v) &= \frac{1}{2}\sum_{\alpha,\beta\in I_2\sqcup I_1}
		\frac{\partial^2f}
				 {\partial x_\alpha\partial x_\beta}(a+v)
			\JTP{\widehat{c}_\alpha}{a+v}{\widehat{c}_\beta}\\
		&\quad+\frac{1}{2}\sum_{\alpha,\beta\in I_1}
		\frac{\partial^2f}
				 {\partial x_\alpha\partial x_\beta}(a+v)
			\JTP{\widehat{c}_\alpha}{Q_va^{-1}}{\widehat{c}_\beta}.
\end{align*}
Collecting all terms, this completes the proof.
\end{proof}

\begin{theorem}\label{thm:BesselSymmetricOnOrbits}
Let $\lambda=kd$, $0\leq k\leq r-1$, so that $\calB_\lambda$ is tangential to $\calV_k$, and assume that $\calV_k$ carries an $L$-equivariant measure $\td\mu_k$ with corresponding character $\chi_{kd}$. Then $\calB_\lambda$ is symmetric on $L^2(\calV_k,\td\mu_k)$.
\end{theorem}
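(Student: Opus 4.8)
The plan is to show that $\calB_\lambda$ equals its formal adjoint with respect to $\td\mu_k$ as a differential operator, and to exploit that this is a \emph{local} (coefficient-wise) condition so that it can be checked in a single coordinate chart. Concretely, ``symmetric'' means that for $f,g$ in a dense domain such as $C_c^\infty(\calV_k)$ one has the $V^-$-valued identity
\[
\int_{\calV_k}(\calB_\lambda f)\,\overline{g}\,\td\mu_k=\int_{\calV_k}f\,\overline{\calB_\lambda g}\,\td\mu_k .
\]
First I would reduce to one chart. Since $\calB_\lambda$ is $L$-equivariant (Lemma~\ref{lem:BesselEquivariance}) and $\td\mu_k$ is $\chi_{kd}$-equivariant, a short computation using $\tau(b,hy)=\tau(h^{-1}b,y)$ and $\int\rho(h)F_1\,\rho(h)F_2\,\td\mu_k=\chi_{kd}(h)\int F_1F_2\,\td\mu_k$ shows that formal symmetry of the pulled-back operator in the chart $\varphi_e$ transports to every $L$-translate $\varphi_{he}$. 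As the $\overline Q_{[e]}$-orbit of $e$ is, by Proposition~\ref{prop:orbitcoordinates}, the image of $\varphi_e$ on $(V_2^+)^\times\times V_1^+$ and is open and dense in the orbit $\calO_e\subseteq\calV_k$, its $L$-translates cover $\calV_k$; a partition of unity then yields the global identity from the local one, with no boundary terms since each $\psi_i f$ is compactly supported in its chart.

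In a fixed chart I would pull back both ingredients. By Theorem~\ref{thm:pullbackBessel} with $\lambda=kd$ the operator $\calB'_\lambda=\varphi_e^*\calB_\lambda$ is tangential and takes the explicit form
\[
\calB'_\lambda f=\tfrac12\!\!\sum_{\alpha,\beta\in I_2\sqcup I_1}\!\!A_{\alpha\beta}(x)\,\frac{\partial^2f}{\partial x_\alpha\partial x_\beta}+\lambda\!\!\sum_{\gamma\in I_2\sqcup I_1}\!\!\widehat c_\gamma\,\frac{\partial f}{\partial x_\gamma},
\]
where $A_{\alpha\beta}(x)=\JTP{\widehat c_\alpha}{x}{\widehat c_\beta}$, augmented by $\JTP{\widehat c_\alpha}{Q_{x_1}x_2^{-1}}{\widehat c_\beta}$ when $\alpha,\beta\in I_1$, and $A_{\alpha\beta}=A_{\beta\alpha}$ by symmetry of the triple product in its outer arguments. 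By Proposition~\ref{prop:PullbackMeasure} the pulled-back measure is $w\,\td\lambda$ with $w(x)=|\Delta(x_2)|^{kd-p}$ and $\td\lambda$ Lebesgue on $V_2^+\oplus V_1^+$. On the chart $\Delta(x_2)$ never vanishes, so for test functions compactly supported in $(V_2^+)^\times\times V_1^+$ integration by parts produces no boundary contribution, and the task becomes the formal symmetry of $\calB'_\lambda$ with respect to $w\,\td\lambda$.

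Next I would invoke the elementary fact that a symmetric-coefficient operator $\tfrac12\sum A_{\alpha\beta}\partial_\alpha\partial_\beta+\sum C_\gamma\partial_\gamma$ is formally symmetric for $w\,\td\lambda$ precisely when the first-order ``divergence identity''
\[
\sum_{\alpha\in I_2\sqcup I_1}\partial_\alpha\!\left(A_{\alpha\gamma}\,w\right)=2\lambda\,\widehat c_\gamma\,w\qquad(\gamma\in I_2\sqcup I_1)
\]
holds: integrating the second-order term by parts twice and the first-order term once, the two cross-terms coincide by $A_{\alpha\gamma}=A_{\gamma\alpha}$ and assemble into this condition, after which the zeroth-order term that would a priori survive vanishes automatically. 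Dividing by $w$ and using $\partial_\alpha\log w=(kd-p)\,\partial_\alpha\log|\Delta(x_2)|$ together with $kd-p=-p_0$, this is equivalent to the pointwise identity
\[
\sum_{\alpha}\partial_\alpha A_{\alpha\gamma}-p_0\sum_{\alpha}A_{\alpha\gamma}\,\partial_\alpha\log|\Delta(x_2)|=2kd\,\widehat c_\gamma ,
\]
whose right-hand side is constant in $x$.

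The main obstacle is this final identity, where all the Jordan theory enters and whose content is that the left-hand side is in fact $x$-independent. I expect to compute the divergence of the principal coefficients, $\sum_\alpha\partial_\alpha\JTP{\widehat c_\alpha}{x}{\widehat c_\gamma}=\sum_\alpha\JTP{\widehat c_\alpha}{c_\alpha}{\widehat c_\gamma}$, by means of the summation formulas of Lemma~\ref{lem:basessums} (the action of $\sum D_{\widehat c_\alpha,c_\alpha}$ on the Peirce spaces, producing the $2p$ and $V_0$-correction constants); to evaluate $\partial_\alpha\log|\Delta(x_2)|$ via the standard gradient formula $\nabla_2\log|\Delta(x_2)|=\const\cdot x_2^{-1}$ for the generic norm of the Jordan algebra $V_2^+=[e]$, using $\tau|_{V_2}=\tau_2$; and to control both the differentiated correction term $Q_{x_1}x_2^{-1}$ and the $x_1$-dependence through the Peirce rules and the identities \eqref{JP7}, \eqref{JP8}, \eqref{JP16} (and, if needed, Lemma~\ref{lem:basessums2}). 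The delicate point is that the genuinely $x$-dependent pieces — those linear in $x_1$ and those carrying the factor $x_2^{-1}$ arising both from $\nabla_2\log|\Delta|$ and from the correction term — must cancel exactly, so that the constants ($2p$ from the full trace, $-2p_0$ from the weight, the $V_0$-corrections, and the $x_1$-cross-terms) combine to leave only $2kd\,\widehat c_\gamma$.
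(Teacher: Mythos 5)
Your overall architecture is sound and, up to the last step, parallels the paper's proof: the paper also works in the single chart $\varphi_e$, uses Theorem~\ref{thm:pullbackBessel} and Proposition~\ref{prop:PullbackMeasure}, and reduces everything to formal symmetry of $\varphi_e^*\calB_\lambda$ with respect to the weight $|\Delta(x_2)|^{kd-p}$. Your repackaging of formal symmetry as the first-order divergence identity (with the observation that the zeroth-order condition follows by differentiating the first-order one on an open set) is correct, given the symmetry $A_{\alpha\beta}=A_{\beta\alpha}$ of the coefficients, and is a cleaner bookkeeping device than computing the full adjoint as the paper does. The localization/partition-of-unity reduction and the equivariant transport between charts are also fine.

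The gap is your final paragraph: the pointwise identity that carries the entire content of the theorem is announced (``I expect to compute\dots must cancel exactly''), not proved, and it is harder than your list of tools suggests. Carrying out your plan: for $\gamma\in I_2$ the divergence identity does hold at every point of the chart, using Lemma~\ref{lem:basessums}, the Peirce rules and $D_{x_2,x_2^{-1}}=D_{e,e'}$. But for $\gamma\in I_1$, after the constants combine via $p-p_0=kd$, one is left with the required $x_1$-linear cancellation
\[
\sum_{\alpha\in I_1}\JTP{\widehat c_\alpha}{\JTP{x_1}{x_2^{-1}}{c_\alpha}}{\widehat c_\gamma}
\;=\;p_0\,\JTP{x_2^{-1}}{x_1}{\widehat c_\gamma},\qquad \gamma\in I_1,
\]
which is not among the lemmas you cite (Lemma~\ref{lem:basessums2} concerns the different sum $\sum_{\alpha,\beta\in I_1}Q_{c_\alpha,c_\beta}Q_{\widehat c_\alpha,\widehat c_\beta}$ on $V_2^+$) and does not obviously follow from \eqref{JP7}, \eqref{JP8}, \eqref{JP16}. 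Note also that your ``zeroth order is automatic'' shortcut forces you to prove the divergence identity on an open set, so you cannot dodge the $x_1$-dependence by checking at convenient points. The paper avoids exactly this difficulty by an idea your plan lacks: since $\varphi_e^*\calB_\lambda$ and the pulled-back measure are both $\overline Q_{[e]}$-equivariant and $\overline Q_{[e]}$ acts transitively on $\Omega_e+V_1^+$ (Proposition~\ref{prop:orbitcoordinates}), it suffices to compare all coefficients of the operator and its formal adjoint at points with $x_1=0$ — but then the zeroth-order coefficient identity is no longer automatic and must be checked there, which is precisely where Lemma~\ref{lem:basessums2} (itself only verified case-by-case in the paper) enters. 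So either prove the displayed identity, or adopt the equivariant reduction to $x_1=0$ and verify the zeroth-order condition via Lemma~\ref{lem:basessums2}; as written, your proposal stops just short of the theorem's actual mathematical core.
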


\begin{proof}
We prove this result using the parametrization of $\calV_k$ by $\varphi=\varphi_e$, see \eqref{prop:LOrbits}. By Proposition~\ref{prop:PullbackMeasure} the $L^2$-inner product associated to the pullback measure $\varphi^*\td\mu_k$ is given by
\[
	(f,g)\mapsto\int_{(V_2^+)^\times\times V_1^+} f(x)\overline{g(x)}\Delta(x)^{kd-p}\,\td\lambda(x),
\]
and by Theorem~\ref{thm:pullbackBessel} the pullback of the Bessel operator $\calB_\lambda$ along $\varphi$ is given by
\begin{multline*}
	(\varphi^*\calB_\lambda)f(x) = \frac{1}{2}\sum_{\alpha,\beta\in I_2\sqcup I_1}\frac{\partial^2f}{\partial x_\alpha\partial x_\beta}(x)\JTP{\widehat{c}_\alpha}{x}{\widehat{c}_\beta}\\
	+\frac{1}{2}\sum_{\alpha,\beta\in I_1}\frac{\partial^2f}{\partial x_\alpha\partial x_\beta}(x)\JTP{\widehat{c}_\alpha}{Q_{x_1}x_2^{-1}}{\widehat{c}_\beta}+\lambda\nabla_2f(x)+\lambda\nabla_1f(x).
\end{multline*}
Recall that the formal adjoint $D^\ad$ of an operator 
\[
	D = \sum_{\alpha,\beta}a_{\alpha\beta}(x)\frac{\partial^2}{\partial x_\alpha\partial x_\beta} + \sum_\alpha a_\alpha(x)\frac{\partial}{\partial x_\alpha}
\]
with coefficient functions $a_\alpha(x)$ and $a_{\alpha\beta}(x)$ is given by
\[
	D^\ad f(x)
		=\frac{1}{\Delta^{kd-p}}\sum_{\alpha,\beta}\frac{\partial^2}{\partial x_\alpha\partial x_\beta}\left(a_{\alpha\beta}\cdot\Delta^{kd-p}\cdot f\right)-\frac{1}{\Delta^{kd-p}}
			\sum_{\alpha}\frac{\partial}{\partial x_\alpha}\left(a_\alpha\cdot\Delta^{kd-p}\cdot f\right).
\]
In case of the Bessel operator we obtain
\begin{align*}
	(\varphi^*\calB_\lambda)^\ad f(x) = 
		&\underbrace{\frac{1}{2\Delta^{kd-p}}\sum_{\alpha,\beta\in I_2}
			\frac{\partial^2}{\partial x_\alpha\partial x_\beta}
			\left(\JTP{\widehat c_\alpha}{x_2}{\widehat c_\beta}\cdot\Delta^{kd-p}\cdot f\right)}_{(1)}\\
		&\underbrace{+\frac{1}{\Delta^{kd-p}}\sum_{\alpha\in I_1,\beta\in I_2}
			\frac{\partial^2}{\partial x_\alpha\partial x_\beta}
			\left(\JTP{\widehat c_\alpha}{x}{\widehat c_\beta}\cdot\Delta^{kd-p}\cdot f\right)}_{(2)}\\
		&\underbrace{+\frac{1}{2\Delta^{kd-p}}\sum_{\alpha,\beta\in I_1}
			\frac{\partial^2}{\partial x_\alpha\partial x_\beta}
			\left(\JTP{\widehat c_\alpha}{x}{\widehat c_\beta}\cdot\Delta^{kd-p}\cdot f\right)}_{(3)}\\
		&\underbrace{+\frac{1}{2\Delta^{kd-p}}\sum_{\alpha,\beta\in I_1}
			\frac{\partial^2}{\partial x_\alpha\partial x_\beta}
			\left(\JTP{\widehat c_\alpha}{Q_{x_1}x_2^{-1}}{\widehat c_\beta}
			\cdot\Delta^{kd-p}\cdot f\right)}_{(4)}\\
		&\underbrace{-\frac{\lambda}{\Delta^{kd-p}}\sum_{\alpha\in I_1\sqcup I_2}
			\frac{\partial}{\partial x_\alpha}
			\left(\widehat c_\alpha\cdot\Delta^{kd-p}\cdot f\right).}_{(5)}
\end{align*}
The Jordan algebra determinant $\Delta$ is independent of $x_1$, and satisfies 
\[
	\frac{1}{\Delta}\frac{\partial}{\partial x_\alpha}(\Delta) = \tau(c_\alpha,x_2^{-1})\qquad
	\text{for }\alpha\in I_2,
\]
hence
\[
	\frac{1}{\Delta^\sigma}\frac{\partial}{\partial x_\alpha}(\Delta^\sigma)
	=\sigma\cdot\tau(c_\alpha,x_2^{-1})
\]
and
\begin{align*}
	\frac{1}{\Delta^\sigma}\frac{\partial^2}{\partial x_\alpha\partial x_\beta}(\Delta^\sigma)
	&=\sigma^2\cdot\tau(c_\alpha,x_2^{-1})\cdot\tau(c_\beta,x_2^{-1})
	+ \sigma\cdot\tau(c_\alpha,\frac{\partial x_2^{-1}}{\partial x_\beta})\\
	&=\sigma^2\cdot\tau(c_\alpha,x_2^{-1})\cdot\tau(c_\beta,x_2^{-1})
	- \sigma\cdot\tau(c_\alpha,Q_{x_2^{-1}}c_\beta),
\end{align*}
which follows from differentiating the relation $Q_{x_2} x_2^{-1} = x_2$. Using Lemma~\ref{lem:basessums}, it follows that
\begin{align*}
	(1) ={}& (kd-p)\,f(x)\sum_{\alpha\in I_2}\JTP{\widehat c_\alpha}{c_\alpha}{x_2^{-1}}+\sum_{\alpha\in I_2}\JTP{\widehat c_\alpha}{c_\alpha}{\nabla_2f}+(kd-p)\JTP{x_2^{-1}}{x_2}{\nabla_2 f}\\
	&+\frac{(kd-p)^2}{2}\,f(x)\cdot\JTP{x_2^{-1}}{x_2}{x_2^{-1}}-\frac{kd-p}{2}\,f(x)\sum_{\alpha\in I_2}\JTP{Q_{x_2^{-1}}c_\alpha}{x_2}{\widehat c_\alpha}\\
	&+\frac{1}{2}\sum_{\alpha,\beta\in I_2}\frac{\partial^2 f}{\partial x_\alpha\partial x_\beta}(x)\JTP{\widehat c_\alpha}{x_2}{\widehat c_\beta}\\
			={}& (kd-p)(kd+p_2-p)\,f(x)\cdot x_2^{-1}+2\,(kd+p_2-p)\nabla_2 f(x)\\
				& +\frac{1}{2}\sum_{\alpha,\beta\in I_2}\frac{\partial^2 f}{\partial x_\alpha\partial x_\beta}(x)\JTP{\widehat c_\alpha}{x_2}{\widehat c_\beta},\\
\intertext{where the relation $\JTP{Q_{x_2^{-1}}c_\alpha}{x_2}{\widehat c_\alpha}=\JTP{x_2^{-1}}{c_\alpha}{\widehat c_\alpha}$ follows from \eqref{JP8},}
	(2) ={}& \sum_{\alpha\in I_1}
				(kd-p)\cdot\JTP{\widehat c_\alpha}{c_\alpha}{x_2^{-1}}\cdot f(x)
				+\sum_{\alpha\in I_1}\JTP{\widehat c_\alpha}{c_\alpha}{\nabla_2 f}\\
				&+\sum_{\beta\in I_2}\JTP{\nabla_1 f}{c_\beta}{\widehat c_\beta}
			+(kd-p)\JTP{\nabla_1 f}{x}{x_2^{-1}}\\
			&+\sum_{\alpha\in I_1,\beta\in I_2}
			\frac{\partial^2 f}{\partial x_\alpha\partial x_\beta}(x)\,
			\JTP{\widehat c_\alpha}{x}{\widehat c_\beta}\\
			={}&2(p-p_2)\,\big((kd-p)\cdot f(x)\cdot x_2^{-1} + \nabla_2 f(x)\big)
			+(kd+p_2-p)\,\nabla_1f(x)\\
			&+(kd-p)\JTP{\nabla_1 f}{x_1}{x_2^{-1}}
			+ \sum_{\alpha\in I_1,\beta\in I_2}
			\frac{\partial^2 f}{\partial x_\alpha\partial x_\beta}(x)\,
		\JTP{\widehat c_\alpha}{x}{\widehat c_\beta},\\[2mm]
	(3) ={}& \sum_{\alpha\in I_1}\JTP{\widehat c_\alpha}{c_\alpha}{\nabla_1 f}+\frac{1}{2}\sum_{\alpha,\beta\in I_1}\frac{\partial^2 f}{\partial x_\alpha\partial x_\beta}(x)
				\JTP{\widehat c_\alpha}{x_2}{\widehat c_\beta}\\
		={}& (2p-p_2-p_0)\nabla_1 f+\frac{1}{2}\sum_{\alpha,\beta\in I_1}\frac{\partial^2 f}{\partial x_\alpha\partial x_\beta}(x)
				\JTP{\widehat c_\alpha}{x_2}{\widehat c_\beta},\\[2mm]
	(4) ={}& \frac{1}{2}\sum_{\alpha,\beta\in I_1}f(x)\cdot
				\JTP{\widehat c_\alpha}{\JTP{c_\alpha}{x_2^{-1}}{c_\beta}}{\widehat c_\beta}
				+\sum_{\alpha\in I_1}
				\JTP{\widehat c_\alpha}{\JTP{c_\alpha}{x_2^{-1}}{x_1}}{\nabla_1 f}\\
				&+\frac{1}{2}\sum_{\alpha,\beta\in I_1}
				 \frac{\partial^2 f}{\partial x_\alpha\partial x_\beta}(x)
				 \JTP{\widehat c_\alpha}{Q_{x_1}x_2^{-1}}{\widehat c_\beta},
\end{align*}
\begin{align*}
	(5) ={}& -\lambda\nabla_2 f(x)-\lambda\nabla_1 f(x)-\lambda(kd-p)\,f(x)\cdot x_2^{-1}.
\end{align*}
Since $\varphi^*\calB_\lambda$ and the measure $\td\mu_k$ are $\overline Q_{[e]}$-equivariant, the adjoint operator $(\varphi^*\calB_\lambda)^\ad$ has the same equivariance property. Now, $\overline Q_{[e]}$ acts transitively on $\Omega_e\times V_1^+$, and therefore it suffices to show that $(\varphi^*\calB_\lambda)^\ad f(x)=\varphi^*\calB_\lambda f(x)$ for $x\in(V_2^+)^\times$. This follows by putting $x_1=0$ in the above formulas, regrouping the various terms, and using Lemma~\ref{lem:basessums2}.
\end{proof}

\subsection{Action on radial functions}

We calculate the action of $\calB_\lambda$ on $(M\cap K)$-invariant functions on the orbits $\calO_k=L\cdot(e_1+\cdots+e_k)$, where $e_1,\ldots,e_r$ is a fixed frame of tripotents in $V^+$. Recall that the map
\[
	(M\cap K)\times C_k^+\to\calO_k, \quad (m,t)\mapsto mb_t
\]
is a diffeomorphism onto an open dense subset of $\calO_k$ where $b_t$ and $C_k^+$ are defined in \eqref{eq:Defbt} and \eqref{eq:DefCk+}.

We first discuss the action of $\calB_\lambda$ on $(M\cap K)$-invariant functions on the open orbit $\calO_r\subseteq V^+$. In this case, there is no restriction on $\lambda$ concerning the tangentiality of $\calB_\lambda$ to $\calO_r$.

\begin{proposition}\label{prop:radialBessel}
	Suppose $f\in C^\infty(\calO_r)$ is $(M\cap K)$-invariant, and put $F(t_1,\ldots,t_r)=f(b_t)$, where 
	$b_t=t_1e_1+\cdots+t_re_r\in\calO_r$. Then
	\begin{align*}
 		\calB_\lambda f(b_t) &= \sum_{i=1}^r{\calB_\lambda^iF(t_1,\ldots,t_r)\,\overline e_i}
	\end{align*}
	with
	\begin{equation}\label{eq:DefinitionBlambdaI}
 	\calB_\lambda^i = t_i\frac{\partial^2}{\partial t_i^2} + \left(\lambda - e - (r-1)d\right)\,\frac{\partial}{\partial t_i} + \frac{1}{2}\sum_{j\neq i}\left(\frac{d_+}{t_i-t_j} + \frac{d_-}{t_i+t_j}\right)\left(t_i\frac{\partial}{\partial t_i}-t_j\frac{\partial}{\partial t_j}\right),
	\end{equation}
	where $d_+,d_-,d,e$ are the structure constants defined in \eqref{eq:structureconstants} and \eqref{eq:dDef}, see also \eqref{eq:Jordanstructureconstants}.
\end{proposition}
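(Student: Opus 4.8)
The plan is to evaluate $\calB_\lambda f$ directly at the diagonal point $b_t=t_1e_1+\cdots+t_re_r\in\calO_r$ by splitting the defining double sum along the joint Peirce decomposition $V^\pm=\bigoplus_{0\le i\le j\le r}V_{ij}^\pm$ attached to the frame $(e_1,\ldots,e_r)$, and to convert every transverse derivative of $f$ into a radial one by exploiting the $(M\cap K)$-invariance of $f$. As a first step I would use equivariance: by Lemma~\ref{lem:BesselEquivariance} the operator $\calB_\lambda$ intertwines the $L$-action, so $\calB_\lambda f$ is again invariant under the stabilizer of $b_t$ in $M\cap K$. That stabilizer fixes exactly $\mathrm{span}(\overline e_1,\ldots,\overline e_r)\subseteq V^-$, which both justifies the stated shape $\sum_i\calB_\lambda^iF\cdot\overline e_i$ and reduces the task to extracting the $\overline e_i$-components.

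Next I would fix a basis $\{c_\alpha\}$ subordinate to the refined Peirce decomposition (each $c_\alpha$ lying in one of $A_{ii}^+=\RR e_i$, $B_{ii}^+$, $A_{ij}^+$, $B_{ij}^+$ or $V_{0i}^+$, of dimensions recorded in \eqref{eq:Jordanstructureconstants}) with $\tau$-dual basis $\{\widehat c_\alpha\}$, and split both sums in $\calB_\lambda$ accordingly. The triple products are computed from $\JTP{\widehat c_\alpha}{b_t}{\widehat c_\beta}=\sum_k t_k\JTP{\widehat c_\alpha}{e_k}{\widehat c_\beta}$ via the Peirce rules \eqref{eq:peircerules}. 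On the diagonal one finds $\JTP{\overline e_i}{b_t}{\overline e_i}=2t_i\overline e_i$, which together with $\partial_{e_i}^2f(b_t)=\partial_{t_i}^2F$ (a consequence of $b_t+se_i=b_{t+s\mathbf e_i}$ and invariance) yields the leading term $t_i\partial_{t_i}^2F$; the first-order sum contributes $\lambda\,\partial_{t_i}F$. Since we are on the open orbit, $V_{00}=0$, and for $w'\in V_{0i}^-$ the Peirce rules force $\JTP{w'}{b_t}{w'}=0$ while $\partial_{w}f(b_t)=0$ by invariance, so the $V_{0i}$ directions drop out entirely; this is the structural reason $b$ does not appear in $\calB_\lambda^i$.

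The crux is the contribution of the transverse directions $w\in B_{ii}^+$, $A_{ij}^+$, $B_{ij}^+$, where $\partial_wf(b_t)=0$ but $\partial_w^2f(b_t)\neq0$. For each such $w$ I would select a generator $Z\in\frakm\cap\frakk$ whose adjoint action moves $b_t$ infinitesimally in the direction $w$, set $v=[Z,b_t]$ and $a=[Z,[Z,b_t]]$, and differentiate the identity $f(\exp(sZ)b_t)=f(b_t)$ twice at $s=0$ to obtain the Hessian relation $\mathrm{Hess}_{b_t}f(v,v)=-\td_af(b_t)$. Here $v$ is proportional to $w$ with coefficient a linear function of the $t$'s (of the form $t_i-t_j$ on $A_{ij}$, $t_i+t_j$ on $B_{ij}$, and $t_i$ on $B_{ii}$), and $a$ is diagonal, so $\partial_w^2f(b_t)$ is expressed through the radial gradient. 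Summing over the $d_+$, $d_-$, respectively $e$ independent directions of each type and over $j\neq i$, and combining with the triple-product coefficients, assembles the off-diagonal terms $\tfrac12\sum_{j\neq i}\bigl(\tfrac{d_+}{t_i-t_j}+\tfrac{d_-}{t_i+t_j}\bigr)(t_i\partial_{t_i}-t_j\partial_{t_j})$ and the constant $-(e+(r-1)d)\partial_{t_i}$.

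The main obstacle I expect lies precisely in this last step: computing $v=[Z,b_t]$ and the centrifugal vector $a=[Z,[Z,b_t]]$ explicitly for the $\mathfrak{sl}_2$-triples attached to the roots $\tfrac12(\gamma_i\pm\gamma_j)$ and $\gamma_i$, and tracking all normalizations so that the multiplicities $d_+,d_-,e$ and the rational factors $1/(t_i\mp t_j)$ emerge with the correct constants. In particular, reconciling exactly how the constant $-(e+(r-1)d)$ splits between the triple-product coefficients, the length $|v|^2$, and the diagonal part of $a$ is delicate. The bookkeeping that folds these pieces into the single-variable operator $\calB_\lambda^i$ is routine but error-prone, and is best organized so that the structure constants $(d_+,d_-,e,b)$ enter only through the dimension counts of the refined Peirce spaces.
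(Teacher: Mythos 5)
Your core strategy coincides with the paper's proof: differentiate the invariance $f(\exp(sZ)b_t)=f(b_t)$ twice to get $\mathrm{Hess}_{b_t}f([Z,b_t],[Z,b_t])=-\td f(b_t)([Z,[Z,b_t]])$ for $Z\in\frakm\cap\frakk$, convert transverse Hessian entries into radial derivatives with coefficients $t_i-t_j$, $t_i+t_j$, $t_i$, and assemble the constants from the multiplicities of the refined Peirce spaces. However, two steps of your plan have genuine problems. First, the claim that the stabilizer of $b_t$ in $M\cap K$ fixes \emph{exactly} $\mathrm{span}(\overline e_1,\ldots,\overline e_r)$ is false whenever the structure constant $e$ is nonzero (the non-reduced cases). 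For instance, for $V=(M(r\times r,\CC),M(r\times r,\CC))$ with $M\cap K=S(U(r)\times U(r))$ acting by $(u,v)\cdot x=uxv^{*}$, the stabilizer of $b_t=\mathrm{diag}(t_1,\ldots,t_r)$ (distinct $t_i$) consists of pairs $(v,v)$ with $v$ diagonal unitary, and its fixed-point set in $V^-$ is the space of all \emph{complex} diagonal matrices, i.e.\ it contains $\bigoplus_i B_{ii}^-$ as well. So your symmetry argument only places $\calB_\lambda f(b_t)$ in $\bigoplus_i(A_{ii}^-\oplus B_{ii}^-)$, and if you then "extract only the $\overline e_i$-components" you never rule out $B_{ii}^-$-components. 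They do vanish, but this must come out of the computation — as in the paper, where each diagonal contribution $\JTP{\overline\eta}{b_t}{\overline\eta}$ is computed as a full vector and seen to lie in $\sum_i\RR\,\overline e_i$ — not from the stabilizer argument.

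Second, your sketch treats only the pure second derivatives $\partial_w^2f$, but the second-order part of $\calB_\lambda$ is a double sum, and the mixed terms $\frac{\partial^2f}{\partial x_\alpha\partial x_\beta}\JTP{\widehat c_\alpha}{b_t}{\widehat c_\beta}$ with $c_\alpha\neq c_\beta$ do \emph{not} all have vanishing triple products (e.g.\ $c_\alpha=e_i$ with $c_\beta\in A_{ij}^+$, or $c_\alpha\in A_{ij}^+$ with $c_\beta\in A_{ik}^+$, $j\neq k$). To kill these you need the polarized form of your Hessian identity with two different generators $Z,Z'$, showing $\mathrm{Hess}_{b_t}f(\eta,\zeta)=0$ whenever $\eta,\zeta$ are transverse and $\tau$-orthogonal (the paper proves this by checking that the projection of $Z'Zb_t$ onto $\bigoplus_iA_{ii}^+$ is proportional to $(\eta|\zeta)$), together with the vanishing of the radial--transverse mixed entries, obtained by differentiating $\td f(a)(Za)=0$ in a radial direction. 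Without this the collapse of the double sum to its diagonal is unjustified. A minor related imprecision: $[Z,[Z,b_t]]$ is not diagonal in general; only its projection onto $\bigoplus_iA_{ii}^+$ enters the Hessian identity, and that projection is what must be computed.
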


\begin{proof}
Let $\{c_\alpha\}_\alpha$ be an orthonormal basis of $V^+$ with respect to the inner product $(x|y)=\tau(x,\overline y)$, which is compatible with the Peirce decomposition associated to the frame $e_1,\ldots,e_r$,
\[
	V^+ = \bigoplus_{1<i\leq j\leq r}(A_{ij}^+\oplus B_{ij}^+)\oplus\bigoplus_{i = 1}^r V^+_{i0}.
\]
Then, $\{\overline c_\alpha\}_\alpha$ is an orthonormal basis of $V^-$ which is compatible with the corresponding decomposition of $V^-$ and dual to $\{c_\alpha\}_\alpha$ with respect to the trace form. Since $\dim A_{ii}^+=1$, we note that $c_\alpha=e_i$ for $c_\alpha\in A_{ii}^+$ due to the appropriate normalization \eqref{eq:traceformonprimitive} of the trace form $\tau$.\\
We note that an $(M\cap K)$-invariant function satisfies for any $X,Y\in\frakm\cap\frakk$ and $a\in V^+$ the relations
\begin{equation}
	\td f(a)(Xa) = 0 \qquad \mbox{and} \qquad \td^2f(a)(Xa,Ya) + \td f(a)(YXa) = 0.\label{eq:derivativeRelation2}
\end{equation}
We use these idenities to determine the first and second derivatives of $f$ at $a$. For any $x,y\in V^+$ the operator $D_{x,\overline y}-D_{y,\overline x}$ is an element of $\frakm\cap\frakk$. Consider $b_t = t_1e_1+\ldots+t_r e_r$ with $t\in C_r^+$, and $X = D_{e_i,\overline\eta} - D_{\eta,\overline e_i}$ with $0\neq\eta\in V_{ij}^+$, more precisely if $j\neq 0$ we assume $\eta\in A_{ij}^+$ or $\eta\in B_{ij}^+$. For convenience, we introduce the following symbols: Let $\epsilon_{ij} = 1+\delta_{ij}$, where $\delta_{ij}$ is Kronecker's delta, and let $\sigma\in\{+1,-1,0\}$ be defined by $Q_e\overline\eta = \sigma\cdot\eta$, where $e=e_1+\cdots+e_r$. Then, $Xb_t$ evaluates to
\begin{align*}
	Xb_t &= \JTP{e_i}{\overline\eta}{b_t} - \JTP{\eta}{\overline e_i}{b_t} = t_j \JTP{e_i}{\overline\eta}{e_j} - t_i\JTP{\eta}{\overline e_i}{e_i} = \epsilon_{ij}t_j Q_e\overline\eta - \epsilon_{ij}t_i\eta\\
	&= -\epsilon_{ij}(t_i-\sigma\,t_j)\eta.
\end{align*}
Therefore, $Xb_t = 0$ if and only if $\eta\in A_{ii}^+ = \RR\,e_i$, and we thus obtain
\begin{align}\label{eq:FirstDerivatives}
	\frac{\partial f}{\partial x_\alpha}(b_t) = \begin{cases}
		\frac{\partial F}{\partial t_i}(t_1,\ldots,t_r)
		& \text{if $c_\alpha\in A_{ii}^+$ for some $i$,} \\
		0 &\text{else.}
	\end{cases}
\end{align}
Now we turn to the discussion of second derivatives. Let $Y = D_{e_k,\overline\zeta} - D_{\zeta,\overline e_k}$ be another element of $\frakm\cap\frakk$ with $\zeta\in V_{k\ell}^+$ satisfying $Q_e\zeta = \sigma'\cdot\zeta$ where $\sigma'\in\{+1,-1,0\}$, then
\begin{align*}
	Yb_t = -\epsilon_{k\ell}(t_k-\sigma'\,t_\ell)\,\zeta.
\end{align*}
Due to \eqref{eq:derivativeRelation2} and \eqref{eq:FirstDerivatives}, $\td^2f(b_t)(Xb_t,Yb_t)$ vanishes if the orthogonal projection of $YXb_t$ onto $\bigoplus_{i = 1}^r A_{ii}^+$ vanishes. Since
\[
	YXb_t = \gamma_{ij}\big(\JTP{e_k}{\overline\zeta}{\eta} - \JTP{\zeta}{\overline e_k}{\eta}\big) \qquad\text{with}\qquad \gamma_{ij} = -\epsilon_{ij}(t_i-\sigma\,t_j),
\]
the Peirce rules imply that $YXb_t$ is an element of $V_{ik}^++ V_{jk}^++ V_{i\ell}^++V_{j\ell}^+$. Therefore, $\td^2f(b_t)(Xb_t,Yb_t)$ vanishes if the orthogonal projection $\pi_{k\ell}(YXb_t)$ of $YXb_t$ onto $A_{kk}^++ A_{\ell\ell}^+ = \RR\,e_k+\RR\,e_\ell$ vanishes. We therefore compute $(YXb_t|e_k)$ and $(YXb_t|e_\ell)$. Since
\begin{align*}
	&\big(\JTP{e_k}{\overline\zeta}{\eta}|e_k\big)
		= \big(\eta|\JTP{\zeta}{\overline e_k}{e_k}\big)
		= \epsilon_{kl}(\eta|\zeta), \\
	&\big(\JTP{\zeta}{\overline e_k}{\eta}|e_k\big)
		= \big(\eta|\JTP{e_k}{\overline\zeta}{e_k}\big )
		= 2\delta_{kl}\sigma'(\eta|\zeta),
\end{align*}
we obtain $(YXb_t|e_k) = \gamma_{ij}(\epsilon_{kl} -2\,\delta_{kl}\sigma')\,(\eta|\zeta)$, and a similar calculation yields $(YXb_t|e_\ell) = \gamma_{ij}\epsilon_{k\ell}(\delta_{k\ell}-\sigma')\,(\eta|\zeta)$. In any case, we conclude that if $\eta\perp\zeta$ with respect to the inner product $(-|-)$, then $\td^2f(b_t)(\eta,\zeta) = 0$. For $\zeta = \eta$, i.e.\ in particular $i = k$, $j = \ell$ and $\sigma' = \sigma$, the orthogonal projection $\pi_{ij}(YXb_t)$ of $YXb_t$ onto $A_{ii}^++ A_{jj}^+$ is given by
\begin{align*}
	\pi_{ij}(YXb_t) = \epsilon_{ij}\gamma_{ij}(\eta|\eta)(e_i-\sigma e_j).
\end{align*}
Now assume $\eta\notin A_{ii}^+$. Then, $\gamma_{ij}\neq0$, and the second derivative $\td^2f(b_t)(\eta,\eta) = -\tfrac{1}{\gamma_{ij}^2}\,\td f(b_t)(YXb_t)$ is given by 
\begin{align*}
	\td^2f(b_t)(\eta,\eta)  
		= \frac{(\eta|\eta)}{t_i-\sigma t_j}\,\left(\frac{\partial F}{\partial t_i}(t)-
			   \sigma\frac{\partial F}{\partial t_j}(t)\right).
\end{align*}
We also evaluate $\JTP{\eta}{\overline b_t}{\eta}$. For $\eta\in V_{i0}^+$, this term vanishes due to the Peirce rules. For $\eta\in A_{ij}^+$ or $\eta\in B_{ij}^+$, we have
\[
	Q_e\overline{\JTP{\eta}{\overline b_t}{\eta}} = 2\,Q_eQ_{\overline\eta}Q_e\overline b_t
	 = 2\,Q_{Q_e\overline\eta}\overline b_t = 2\,Q_{\pm\eta}\overline b_t
	 = \JTP{\eta}{\overline b_t}{\eta}.
\]
Therefore, $\JTP{\eta}{\overline b_t}{\eta}\in A_{ii}^++A_{jj}^+$, and hence $\JTP{\eta}{\overline b_t}{\eta} =\alpha_i\,e_i+\alpha_j e_j$, where the coefficients $\alpha_i,\alpha_j$ are obtained by evalutation of the inner products of $\JTP{\eta}{\overline b_t}{\eta}$ with $e_i$ and $e_j$. This yields 
\[
	\JTP{\eta}{\overline b_t}{\eta}
	= \sigma(\eta|\eta)(t_je_i + t_i e_j).
\]
Collecting everything, the sum over all second derivatives is given by
\begin{align*}
	\sum_{\alpha,\beta} &\frac{\partial^2 f}{\partial x_\alpha\partial x_\beta}(b_t)
			\JTP{\overline c_\alpha}{b_t}{\overline c_\beta} 
		= \sum_{\alpha}\frac{\partial^2f}{\partial x_\alpha^2}(b_t)
			 \JTP{\overline c_\alpha}{b_t}{\overline c_\alpha}\\
		&= \underbrace{\sum_{i=1}^r \sum_{c_\alpha\in A_{ii}^+}(..)}_{(1)}
			+ \underbrace{\sum_{i=1}^r \sum_{c_\alpha\in B_{ii}^+}(..)}_{(2)}
		  + \underbrace{\sum_{1\leq i<j\leq r} \sum_{c_\alpha\in A_{ij}^+}(..)}_{(3)}
		  + \underbrace{\sum_{1\leq i<j\leq r} \sum_{c_\alpha\in B_{ij}^+}(..)}_{(4)},
\end{align*}
and the single terms evaluate to
\begin{align*}
	(1) &= \sum_{i=1}^r 2t_i\,\frac{\partial^2 F}{\partial t_i^2}
					\,\overline e_i, \\
	(2) &= \sum_{i=1}^r \sum_{c_\alpha\in B_{ii}^+}
				 \frac{1}{t_i}\frac{\partial F}{\partial t_i}\,(-2t_i\cdot\overline e_i)
			 = -2\dim B_{ii}^+\cdot\sum_{i=1}^r \frac{\partial F}{\partial t_i}
			 		\,\overline e_i,\\
	(3) &= \sum_{i<j}^r \sum_{c_\alpha\in A_{ij}^+}\frac{1}{t_i-t_j}
					\left(\frac{\partial F}{\partial t_i}-\frac{\partial F}{\partial t_j}\right)
					(t_i\overline e_j + t_j\overline e_i) \\
			&= \sum_{i=1}^r \dim A_{ij}^+\cdot\left((1-r)\frac{\partial F}{\partial t_i} + 
				 \sum_{j\neq i}\frac{1}{t_i-t_j}
					\left(t_i\,\frac{\partial F}{\partial t_i}
								-t_j\frac{\partial F}{\partial t_j}\right)\right)\overline e_i,\\
	(4) &= \sum_{i<j}^r \sum_{e_\alpha\in B_{ij}^+}\frac{1}{t_i+t_j}
					\left(\frac{\partial F}{\partial t_i}+\frac{\partial F}{\partial t_j}\right)
					(-t_i\overline c_j - t_j\overline c_i) \\
			&= \sum_{i=1}^r\dim B_{ij}^+\cdot\left((1-r)\frac{\partial F}{\partial t_i} + 
				 \sum_{j\neq i}\frac{1}{t_i+t_j}
					\left(t_i\,\frac{\partial F}{\partial t_i}
								-t_j\frac{\partial F}{\partial t_j}\right)\right)\overline e_i.
\end{align*}
In combination with \eqref{eq:FirstDerivatives}, this completes the proof.
\end{proof}

As a consequence of Proposition~\ref{prop:radialBessel}, we also obtain the action of $\calB_\lambda$ on $(M\cap K)$-invariant functions on the lower dimensional orbits $\calO_k$. Here, $\lambda$ needs to be fixed such that $\calB_\lambda$ is tangential to $\calO_k$.

\begin{corollary}\label{cor:radialBessel}
	Let $0\leq k\leq r-1$ and $\lambda=kd$. Suppose 
	$f\in C^\infty(\calO_k)$ is $(M\cap K)$-invariant, and put $F(t_1,\ldots,t_k)=f(b_t)$, where 
	$b_t=t_1e_1+\cdots+t_ke_k\in\calO_k$. Then
	\begin{align*}
 		\calB_\lambda f(b_t) &= \sum_{i=1}^r{\calB_k^iF(t_1,\ldots,t_k)\overline e_i}
	\end{align*}
	with
	\begin{align}
 	\calB_k^i &= t_i\frac{\partial^2}{\partial t_i^2}
			+(d-e)\,\frac{\partial}{\partial t_i} +\frac{1}{2}\sum_{\substack{j=1\\j\neq i}}^k\left(\frac{d_+}{t_i-t_j} + \frac{d_-}{t_i+t_j}\right)\left(t_i\frac{\partial}{\partial t_i}-t_j\frac{\partial}{\partial t_j}\right),\label{eq:DefBLambdaK}
	\intertext{for $1\leq i\leq k$, and}
	\calB_k^i &= \frac{d_+-d_-}{2}\sum_{j=1}^k\frac{\partial}{\partial t_j},\label{eq:DefBLambdaK2}
	\end{align}
	for $k<i\leq r$.
\end{corollary}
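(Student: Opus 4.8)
The plan is to obtain the corollary from Proposition~\ref{prop:radialBessel} by degenerating the open orbit $\calO_r$ onto its boundary stratum $\calV_k=\{t_{k+1}=\cdots=t_r=0\}$. Given an $(M\cap K)$-invariant $f\in C^\infty(\calO_k)$, I would first extend it to a smooth $(M\cap K)$-invariant function $\tilde f$ on $V^+$; such an extension exists by averaging a Whitney extension over the compact group $M\cap K$. Since $\lambda=kd$, Theorem~\ref{thm:pullbackBessel} shows that $\calB_\lambda$ is tangential to $\calV_k$, so $(\calB_\lambda\tilde f)|_{\calO_k}$ is independent of the chosen extension and equals the operator we wish to compute. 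As $\calB_\lambda\tilde f$ is a smooth $V^-$-valued function and $\calO_r$ is open and dense, I can evaluate it at $b_t=t_1e_1+\cdots+t_ke_k$ as the limit of $(\calB_\lambda\tilde f)(b_s)$, where $b_s=s_1e_1+\cdots+s_re_r$ with $s\in C_r^+$, $s_i\to t_i$ for $1\le i\le k$ and $s_i\to0$ for $k<i\le r$, using the explicit formula \eqref{eq:DefinitionBlambdaI}.

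Writing $\tilde F(s)=\tilde f(b_s)$ for the smooth Weyl-invariant radial profile, the task reduces to computing $\lim_{s\to t}\calB_\lambda^i\tilde F(s)$ for each $i$, splitting the singular sum $\sum_{j\neq i}$ in \eqref{eq:DefinitionBlambdaI} into the regular part ($j\le k$, $t_j>0$) and the degenerating part ($j>k$, $s_j\to0$). For $1\le i\le k$ the second-order term and the $j\le k$ summands converge directly to the corresponding terms of \eqref{eq:DefBLambdaK}; in each $j>k$ summand one has $\tfrac{d_+}{s_i-s_j}+\tfrac{d_-}{s_i+s_j}\to\tfrac{2d}{t_i}$ while $s_j\tfrac{\partial\tilde F}{\partial s_j}\to0$, so the summand tends to $d\,\tfrac{\partial F}{\partial t_i}$, and the $r-k$ such contributions combine with the constant coefficient $\lambda-e-(r-1)d=kd-e-(r-1)d$ to produce precisely $d-e$, matching \eqref{eq:DefBLambdaK}. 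For $k<i\le r$ the pure $s_i$-derivative terms drop out as $s_i\to0$, the $j\le k$ summands yield $\tfrac{d_+-d_-}{2}\tfrac{\partial F}{\partial t_j}$ (from $\tfrac{d_+}{s_i-s_j}+\tfrac{d_-}{s_i+s_j}\to\tfrac{d_--d_+}{t_j}$ paired with $s_i\partial_{s_i}\tilde F-s_j\partial_{s_j}\tilde F\to-t_j\partial_{t_j}F$), and summing over $j$ reproduces \eqref{eq:DefBLambdaK2}.

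The main obstacle is the genuinely singular part of the limit for $k<i\le r$, namely the terms with $j>k$ as well, where a blowing-up coefficient multiplies a degenerating first-order combination, together with the requirement that the $s_i$-derivative terms vanish. Both hinge on the parity of $\tilde F$ transverse to $\calV_k$: smoothness of $\tilde f$ across the singular stratum forces $\tilde F$ to be even in each $s_i$ ($i>k$) at $s_{k+1}=\cdots=s_r=0$, so that $\partial_{s_i}\tilde F\to0$ and the coefficient $kd-e-(r-1)d$ contributes nothing, and it forces $s_i\partial_{s_i}\tilde F-s_j\partial_{s_j}\tilde F=2(s_i-s_j)(s_i+s_j)H(s)$ with $H$ smooth, whence the product with $\tfrac12\bigl(\tfrac{d_+}{s_i-s_j}+\tfrac{d_-}{s_i+s_j}\bigr)$ tends to $0$. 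These parity properties come from the residual Weyl symmetry of the normal slice to $\calV_k$, governed by the rank-$(r-k)$ sub-Jordan-pair $(V_0^+,V_0^-)$ of type $B$, $BC$, $C$ or $D$; in the single edge case $k=r-1$ in type $D$, where individual sign changes are unavailable, the relevant coefficient $kd-e-(r-1)d=-e$ vanishes anyway since $e=0$. Once these vanishings are in place, every extension-dependent quantity disappears, as it must by the extension-independence from Theorem~\ref{thm:pullbackBessel}, and the surviving limits assemble exactly into \eqref{eq:DefBLambdaK} and \eqref{eq:DefBLambdaK2}.
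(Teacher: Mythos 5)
Your overall route is the paper's: extend $f$, apply Proposition~\ref{prop:radialBessel} on the open orbit, and let $s_j\to0$ for $j>k$. The difference is that the paper does not take an arbitrary invariant extension; it takes the specific extension $\tilde f(mb_s)=F(s_1,\ldots,s_k)$, whose radial profile is constant in the transverse variables, so that $\partial\tilde F/\partial s_j\equiv0$ for $j>k$ and every term you have to struggle with vanishes identically before any limit is taken. Your variant (arbitrary extension plus extension-independence via the tangentiality statement of Theorem~\ref{thm:pullbackBessel}) is legitimate, and your computation of the second-order terms, of the $j\le k$ summands, and of the coefficient bookkeeping $kd-e-(r-1)d+(r-k)d=d-e$ is correct; but it forces you to prove the vanishing claims in your last paragraph, and that is where there is a genuine gap.

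The gap is that your parity argument fails in type $A$, which the corollary covers (it does not assume $d_+=d_-$; indeed \eqref{eq:DefBLambdaK2} is nontrivial there, with $d_+=2d$, $d_-=0$). In type $A$ the orbits separate signatures: the $L$-orbits are the $\calO_{k,\ell}$ with both indices recorded, so no element of $L$, let alone of $M\cap K$, maps $b_s$ to a sign-flipped $b_{s'}$, and the subpair $(V_0^+,V_0^-)$ is again of type $A$ --- not ``of type $B$, $BC$, $C$ or $D$'' as you assert. Concretely, for $V^+=\Sym(n,\RR)$ with $M\cap K=O(n)$ acting by conjugation, the invariant extension $\tilde f=\tr$ has radial profile $s_1+\cdots+s_r$, which is odd, so $\partial_{s_i}\tilde F=1\neq0$ on the stratum, while for $k<r-1$ the coefficient $\lambda-e-(r-1)d=(k-r+1)d$ in \eqref{eq:DefinitionBlambdaI} is nonzero; thus the first-order transverse term does not vanish. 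The conclusion nevertheless holds in type $A$, but for a reason you would have to supply: writing $g=\partial_{s_i}\tilde F(t,0)$ (the common value for all $i>k$, by permutation symmetry), the first-order term contributes $(k-r+1)d\,g$, and since $d_-=0$ each of the $r-k-1$ cross terms with $i,j>k$ contributes $\tfrac{d_+}{2}g=d\,g$ in the limit (using that $\partial_{s_i}\tilde F-\partial_{s_j}\tilde F$ is divisible by $s_i-s_j$), so the $g$-dependent contributions cancel exactly: $(k-r+1)d\,g+(r-k-1)d\,g=0$. Alternatively, and most simply, choose the paper's extension, for which $g=0$ by construction and the issue never arises. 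A minor further point: in type $D$ with $r-k\ge2$, individual sign changes are likewise unavailable (only even ones are), but pairing two transverse flips still yields $\partial_{s_i}\tilde F(t,0)=0$ and an adequate factorization of the $i,j>k$ cross terms, so there your argument is repairable as indicated; your treatment of the edge case $k=r-1$ via $e=0$ is fine.
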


\begin{proof}
Let $\tilde f$ be the extension of $f$ to an $(M\cap K)$-invariant function on $\calO_r$ defined by $\tilde f(mb_t)=F(t_1,\ldots, t_k)$ for $b_t=t_1e_1+\cdots+t_re_r$ with $t\in C_r^+$. Then, Proposition~\ref{prop:radialBessel} yields a formula for $\calB_\lambda\tilde f$, which simplifies since $\frac{\partial F}{\partial t_j}=0$ for $j>k$. Taking limits $t_j\to0$ for $j>k$ proves our statement.
\end{proof}

\section{$L^2$-models for small representations}\label{sec:L2models}

In this section we give a different proof of the statement in Theorem~\ref{thm:StructureDegPrincipalSeries} that the representation $I(\nu_k)$ is reducible and has an irreducible unitarizable quotient $J(\nu_k)$. We show unitarity by constructing an intrinsic invariant inner product. This inner product is most explicit in the so-called Fourier transformed picture where it simply is the $L^2$-inner product of the $L$-equivariant measure $\td\mu_k$ on the $L$-orbit $\calO_k$.

\subsection{The Fourier-transformed picture}\label{sec:FourierTranform}

The Fourier transform under consideration is the map 
\begin{align*}
	\calF\colon\calS'(V^-)\to\calS'(V^+),\quad\calF f(x) &= \int_{V^-}{e^{i\tau(x,y)}f(y)\,\td y}.
\end{align*}
Here, $\td y$ is any fixed Lebesgue measure on $V^-$, and $\tau$ is the trace form of the Jordan pair $(V^+,V^-)$. For simplicity we normalize Lebesgue measure $\td x$ on $V^+$ such that
$$ \calF^{-1}f(y) = \int_{V^+} e^{-i\tau(x,y)}f(x)\,\td x, \qquad y\in V^-. $$
Since $I(\nu)\subseteq\calS'(V^-)$ we can apply the Fourier transform to the principal series $I(\nu)$, and hence call
\[
	\tilde I(\nu)=\calF(I(\nu))\subseteq\calS'(V^+)
\]
the \emph{Fourier-transformed picture} of $I(\nu)$. We define a representation $\tilde\pi_\nu$ on $\tilde I(\nu)$ by twisting $\pi_\nu$ with the Fourier transform:
\begin{align*}
 \tilde\pi_\nu(g) &= \calF\circ\pi_\nu(g)\circ\calF^{-1}\qquad\text{for }g\in G.
\end{align*}
From Proposition~\ref{prop:noncompactgroupaction} it is easy to deduce the following explicit formulas for the action $\tilde\pi_\nu|_{\overline P}$:

\begin{proposition}\label{prop:FTpicturegroupaction}
	The action of $\exp(a)\in\overline N$ and $h\in L$ on $f\in\tilde I(\nu)$ is given by
	\begin{align*}
 		\tilde\pi_\nu(\exp(a))f(x) &= e^{i\tau(x,a)}f(x),\\
 		\tilde\pi_\nu(h)f(x) &= \chi_{\nu-\frac{p}{2}}(h)f(h^{-1}y).
	\end{align*}
\end{proposition}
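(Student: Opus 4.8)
The plan is to unwind the definition $\tilde\pi_\nu(g)=\calF\circ\pi_\nu(g)\circ\calF^{-1}$ and feed in the non-compact picture formulas from Proposition~\ref{prop:noncompactgroupaction}, so that each group element acts on $f\in\tilde I(\nu)$ by first applying $\calF^{-1}$, then the known action, then $\calF$. Since $I(\nu)\subseteq\calS'(V^-)$, all manipulations should be read distributionally, but because $\calS(V^-)\subseteq I(\nu)$ is dense and the operators are continuous on $\calS'$, it suffices to verify the identities on Schwartz functions and then extend by density; I would state this once and then compute with honest integrals.

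For $\exp(a)\in\overline N$ with $a\in V^-$, writing $g=\calF^{-1}f$ and using $\pi_\nu(\exp(a))g(y)=g(y-a)$, I would apply $\calF$ and substitute $y\mapsto y+a$. Translation invariance of Lebesgue measure leaves the Jacobian trivial, and the shift pulls out the factor $e^{i\tau(x,a)}$ from the exponential kernel, yielding
\[
	\tilde\pi_\nu(\exp(a))f(x)=e^{i\tau(x,a)}\,\calF g(x)=e^{i\tau(x,a)}f(x),
\]
which is exactly the first formula. This is the standard fact that Fourier transform intertwines translation and modulation, so this half is essentially immediate.

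For $h\in L$ the computation is slightly more delicate. Starting from $\pi_\nu(h)g(y)=\chi_{\nu+\frac{p}{2}}(h)\,g(h^{-1}y)$, I would apply $\calF$ and substitute $y\mapsto hy$. Here two things happen: the Jacobian of $y\mapsto hy$ on $V^-$ contributes $|\Det_{V^-}(h)|$, and the $L$-invariance of the trace form (each $h\in L$ acting by Jordan pair automorphisms, as recorded in Lemma~\ref{lem:BesselEquivariance} and the surrounding discussion) gives $\tau(x,hy)=\tau(h^{-1}x,y)$, so the transformed argument becomes $h^{-1}x$. Collecting terms gives
\[
	\tilde\pi_\nu(h)f(x)=\chi_{\nu+\frac{p}{2}}(h)\,|\Det_{V^-}(h)|\,f(h^{-1}x).
\]

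The one point requiring care, and the only real bookkeeping, is showing that the character prefactor collapses correctly. Using the relation \eqref{eq:characterformula}, namely $\chi_\mu(h)=|\Det_{V^-}(h)|^{-\mu/p}$, I would compute
\[
	\chi_{\nu+\frac{p}{2}}(h)\,|\Det_{V^-}(h)|
	=|\Det_{V^-}(h)|^{1-\frac{\nu+p/2}{p}}
	=|\Det_{V^-}(h)|^{-\frac{\nu-p/2}{p}}
	=\chi_{\nu-\frac{p}{2}}(h),
\]
so that the Jacobian exactly shifts the $\rho$-normalization from $\nu+\tfrac{p}{2}$ to $\nu-\tfrac{p}{2}$, matching the claimed second formula (with $x$ in place of the misprinted $y$). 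This sign/weight tracking is the main thing to get right; the geometric substitutions themselves are routine.
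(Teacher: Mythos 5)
Your proposal is correct and is exactly the deduction the paper intends: the paper gives no explicit proof, stating only that the formulas follow easily from Proposition~\ref{prop:noncompactgroupaction}, and your computation (Fourier transform turning translation into modulation, and the change of variables $y\mapsto hy$ combined with $L$-invariance of $\tau$ and the character identity \eqref{eq:characterformula} to collapse $\chi_{\nu+\frac{p}{2}}(h)\,|\Det_{V^-}(h)|$ into $\chi_{\nu-\frac{p}{2}}(h)$) is precisely that deduction, carried out with the correct distributional justification. You also rightly noted that the $y$ in the stated second formula is a misprint for $x$.
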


Since $N$ does not act by affine linear transformations in $\pi_\nu$, the action of $N$ in the Fourier-transformed picture $\tilde\pi_\nu$ is hard to determine. However, the infinitesimal action $\td\tilde\pi_\nu$ of $\tilde\pi_\nu$ can be expressed in terms of the Bessel operators:

\begin{proposition}\label{prop:FTpicturealgebraaction}
The infinitesimal action $\td\tilde\pi_\nu$ of $\tilde\pi_\nu$ extends to $\calS'(V^+)$ and is given by
\begin{align*}
 \td\tilde\pi_\nu(a)f(x) &= i\tau(x,a)\,f(x), & &a\in\overline\frakn= V^-,\\
 \td\tilde\pi_\nu(T)f(x) &= -\td_{Tx}f(x)+(\tfrac{\nu}{p}-\tfrac{1}{2})\,\Tr_{V^+}(T)\,f(x), 
 & &T\in\frakl,\\
 \td\tilde\pi_\nu(b)f(x) &= \tfrac{1}{i}\,\tau(b,\calB_\lambda f(x)), & &b\in\frakn=V^+,
\end{align*}
where $\lambda=p-2\nu$.
\end{proposition}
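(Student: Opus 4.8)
The plan is to exploit the defining relation $\tilde\pi_\nu(g)=\calF\circ\pi_\nu(g)\circ\calF^{-1}$: differentiating at the identity gives $\td\tilde\pi_\nu(X)=\calF\circ\td\pi_\nu(X)\circ\calF^{-1}$ for $X\in\frakg$, so it suffices to conjugate each of the three formulas of Proposition~\ref{prop:liealgaction} by $\calF$. Fixing dual bases $\{c_\alpha\}$ of $V^+$ and $\{\widehat c_\alpha\}$ of $V^-$ and writing $x=\sum_\alpha x_\alpha c_\alpha$, $y=\sum_\alpha y_\alpha\widehat c_\alpha$, so that $\tau(x,y)=\sum_\alpha x_\alpha y_\alpha$, integration by parts in $\calF f(x)=\int_{V^-}e^{i\tau(x,y)}f(y)\,\td y$ yields the dictionary $\calF\circ(y_\alpha\cdot)\circ\calF^{-1}=-i\,\partial_{x_\alpha}$ and $\calF\circ\partial_{y_\alpha}\circ\calF^{-1}=-i\,x_\alpha$ (multiplication). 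Since $\calF$ is a topological isomorphism of $\calS'(V^-)$ onto $\calS'(V^+)$ and all operators involved have polynomial coefficients, the resulting formulas extend to $\calS'(V^+)$.

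The case $a\in V^-$ is immediate: $-\td_a=-\sum_\alpha a_\alpha\partial_{y_\alpha}$ conjugates to multiplication by $i\sum_\alpha a_\alpha x_\alpha=i\tau(x,a)$. For $T\in\frakl$ I would write the linear vector field $-\td_{Ty}$ in coordinates as $-\sum_{\alpha,\beta}T_{\beta\alpha}\,y_\alpha\partial_{y_\beta}$, where $T_{\beta\alpha}$ is the matrix of the adjoint action on $V^-$. Conjugating, each $y_\alpha\partial_{y_\beta}$ becomes $-\partial_{x_\alpha}(x_\beta\,\cdot)=-\delta_{\alpha\beta}-x_\beta\partial_{x_\alpha}$; the derivative part reassembles into $-\td_{Tx}$, because $\frakl$-invariance of $\tau$ makes the action on $V^+$ the negative transpose of that on $V^-$, while the commutator terms produce the scalar $\Tr_{V^-}(T)$. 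Using $\Tr_{V^-}(T)=-\Tr_{V^+}(T)$ (again from invariance of $\tau$) this turns the coefficient $\tfrac{\nu}{p}+\tfrac12$ of Proposition~\ref{prop:liealgaction} into $\tfrac{\nu}{p}-\tfrac12$, as claimed. This sign flip in the trace term is the first place where care is needed.

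The case $b\in V^+$ is the substance of the proposition. Writing $Q_yb=\tfrac12\JTP{y}{b}{y}$, the operator $\td\tilde\pi_\nu(b)$ is the conjugate of the first-order operator $-\td_{Q_yb}$, whose coefficients are quadratic in $y$, together with the multiplication operator $-(2\nu+p)\tau(b,y)$. Expanding $Q_yb$ in coordinates, each monomial $y_\gamma y_\delta\partial_{y_\alpha}$ conjugates to $i\,\partial_{x_\gamma}\partial_{x_\delta}(x_\alpha\,\cdot)$. The purely second-order part assembles, via the associativity of the trace form and the outer symmetry $\JTP{x}{y}{z}=\JTP{z}{y}{x}$, into $-\tfrac{i}{2}\sum_{\gamma,\delta}\tau(b,\JTP{\widehat c_\gamma}{x}{\widehat c_\delta})\,\partial_{x_\gamma}\partial_{x_\delta}$, which is exactly the second-order part of $\tfrac1i\tau(b,\calB_\lambda f)$. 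The commutators (the $\delta$-terms) contribute a first-order operator whose coefficient is $\sum_\delta\tau(c_\delta,\JTP{\widehat c_\gamma}{b}{\widehat c_\delta})=\Tr_{V^-}(D_{\widehat c_\gamma,b})=2p\,\tau(b,\widehat c_\gamma)$, i.e. the vector field $2pi\,\td_b$; here I use $\Tr_{V^-}(D_{y,x})=\Tr_{V^+}(D_{x,y})=2p\,\tau(x,y)$, which follows from uniqueness of the invariant pairing on a simple Jordan pair together with the equality $\dim V_k^+=\dim V_k^-$ of Peirce dimensions. Finally the multiplication term $-(2\nu+p)\tau(b,y)$ conjugates to $i(2\nu+p)\,\td_b$, and adding the two first-order contributions gives the coefficient $i(2\nu+p)-2pi=-i(p-2\nu)$, matching the gradient term of $\calB_\lambda$ for $\lambda=p-2\nu$. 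Hence $\td\tilde\pi_\nu(b)f=\tfrac1i\tau(b,\calB_\lambda f)$.

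The main obstacle is purely one of bookkeeping in the $V^+$-case: one must track every first-order remainder produced by commuting the multiplication operators past the derivatives and recognize that the two trace identities ($\Tr_{V^-}(D_{y,x})=2p\tau(x,y)$ and the outer symmetry of the triple product) conspire so that the spurious $2p\,\td_b$ combines with the contribution of the multiplication term to produce precisely $\lambda=p-2\nu$ — neither term alone is the gradient coefficient of a Bessel operator. The same invariance mechanism underlies the shift $\tfrac{\nu}{p}+\tfrac12\mapsto\tfrac{\nu}{p}-\tfrac12$ in the $\frakl$-term, so it is worth isolating the identity $\Tr_{V^-}(T)=-\Tr_{V^+}(T)$ once at the outset.
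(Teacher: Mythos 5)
Your proof is correct: all three conjugation computations check out, and the final arithmetic $i(2\nu+p)-2pi=-i(p-2\nu)$ does produce $\lambda=p-2\nu$. The top-level strategy (conjugating Proposition~\ref{prop:liealgaction} by $\calF$) is of course also the paper's, but your execution of the crucial case $b\in\frakn$ is genuinely different. The paper does no symbol calculus at all: it evaluates the Bessel operator on the Fourier kernel, $\calB_\lambda\bigl(e^{-i\tau(\cdot,y)}\bigr)(x)=-e^{-i\tau(x,y)}(Q_yx+i\lambda y)$, recognizes the integrand of $\td\pi_\nu(b)\calF^{-1}f(y)$ as $\tfrac{1}{i}\,\tau\bigl(b,(\calB_{2\nu+p}e^{-i\tau(\cdot,y)})(x)\bigr)f(x)$ using $\tau(x,Q_yb)=\tau(b,Q_yx)$, and then invokes the integration-by-parts identity of Proposition~\ref{prop:BesselPartialIntegration} to move the Bessel operator off the kernel and onto $f$ as $\calB_{2p-(2\nu+p)}=\calB_{p-2\nu}$. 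Your coordinate computation re-derives the content of that lemma inline: the cancellation that turns $2\nu+p$ into $p-2\nu$ is in both cases the identity $\sum_\alpha D_{c_\alpha,\widehat c_\alpha}=2p\cdot\id_{V^+}$ of Lemma~\ref{lem:basessums}~\eqref{basessums1}, equivalently $\Tr_{V^+}(D_{x,y})=2p\,\tau(x,y)$, which is just the normalization \eqref{eq:traceform} of $\tau$ together with transposition under the pairing — so your appeal to uniqueness of invariant pairings and Peirce dimensions, while valid, is unnecessary. Likewise, for $T\in\frakl$ the paper simply differentiates the group formula of Proposition~\ref{prop:FTpicturegroupaction}, where the shift $\tfrac{\nu}{p}+\tfrac{1}{2}\mapsto\tfrac{\nu}{p}-\tfrac{1}{2}$ comes from the Jacobian of the substitution $x\mapsto h^{-1}x$; your commutator terms plus $\Tr_{V^-}(T)=-\Tr_{V^+}(T)$ encode exactly the same thing infinitesimally. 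What your route buys is self-containedness (no appeal to Propositions~\ref{prop:FTpicturegroupaction} and~\ref{prop:BesselPartialIntegration}) and an explicit justification of the extension to $\calS'(V^+)$, which the paper leaves implicit; what the paper's route buys is brevity and reuse of already-established structure. One cosmetic slip in your prose: the $\delta$-terms contribute the vector field $-2pi\,\td_b$, not $+2pi\,\td_b$; your final sum uses the correct sign.
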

\begin{proof}
The first two formulas are easily deduced from Proposition~\ref{prop:FTpicturegroupaction}. For the last formula, a short calculation shows that for fixed $y\in V^-$,
\[
	\calB_\lambda(e^{-i\tau(x,y)})(x) = -e^{-i\tau(x,y)}(Q_yx+i\lambda y).
\]
Using this identity, the formula $\tau(x,Q_yb) = \tau(b,Q_yx)$, and Propositions \ref{prop:liealgaction} and \ref{prop:BesselPartialIntegration}, we obtain
\begin{align*}
 \td\pi_\nu(b)\calF^{-1}f(y)
 &= \left(-\td_{Q_yb}-(2\nu+p)\tau(b,y)\right)
 		\int_{V^+}{e^{-i\tau(x,y)}f(x)\,\td x}\\
 &= \int_{V^+}\left(i\tau(x,Q_yb)-(2\nu+p)\tau(b,y)\right)e^{-i\tau(x,y)}f(x)\,\td x\\
 &= \tfrac{1}{i}\int_{V^+}
 		\tau\left(b,-\left(Q_yx+i(2\nu+p)y\right)e^{-i\tau(x,y)}\right)f(x)\,\td x\\
 &= \tfrac{1}{i}\,\tau\left(b,\int_{V^+}\left(\calB_{2\nu+p}e^{-i\tau(x,y)}\right)(x)
 		f(x)\,\td x\right)\\
 &= \tfrac{1}{i}\,
 		\tau\left(b,\int_{V^+}e^{-i\tau(x,y)}\calB_{p-2\nu}f(x)\,\td x\right)\\ 
 &= \int_{V^+}e^{-i\tau(x,y)}
 		\cdot\tfrac{1}{i}\,\tau\left(b,\calB_{p-2\nu}f(x)\right)\td x,
\end{align*}
and the proof is complete.
\end{proof}

\subsection{The spherical vector}

The $K$-spherical vector $\phi_\nu\in I(\nu)$ was explicitly computed in Proposition~\ref{prop:SphericalVectorNonCptPicture} in the non-compact picture. We now find its Fourier transform
$$ \psi_\nu=\calF\phi_\nu\in\tilde I(\nu)\subseteq\calS'(V^+). $$
Note that since the family of distributions $\phi_\nu\in\calS'(V^-)$ is holomorphic in the parameter $\nu\in\CC$ the same is true for the Fourier transforms $\psi_\nu\in\calS'(V^+)$. We assume $d_+=d_-$ throughout the whole section.

We make use of the following lemma which follows by the same arguments as \cite[Lemma~2.3]{DS99}:

\begin{lemma}\label{lem:KInvariantDistributions}
For any $\nu\in\CC$, $\psi_\nu$ is the unique (up to scalar multiples) $\td\tilde\pi_\nu(\frakk)$-invariant tempered distribution on $V^+$.
\end{lemma}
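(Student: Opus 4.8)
The plan is to transfer the problem to the non-compact picture, where all of $\frakg$ — and in particular $\frakk$ — acts by \emph{first-order} differential operators, and then to exploit the fact that the spherical vector is nowhere vanishing there. Since $\tilde\pi_\nu(g)=\calF\pi_\nu(g)\calF^{-1}$ holds infinitesimally on $\calS'$ (both $\td\pi_\nu$ and $\td\tilde\pi_\nu$ are given by polynomial-coefficient operators preserving the respective Schwartz duals, cf. Proposition~\ref{prop:FTpicturealgebraaction}), a tempered distribution $u\in\calS'(V^+)$ is $\td\tilde\pi_\nu(\frakk)$-invariant if and only if $w=\calF^{-1}u\in\calS'(V^-)$ is $\td\pi_\nu(\frakk)$-invariant. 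Moreover $\psi_\nu=\calF\phi_\nu$ corresponds to $\phi_\nu(y)=\Delta(-\overline y,y)^{-\nu-\frac p2}$ (Proposition~\ref{prop:SphericalVectorNonCptPicture}), which is $\td\pi_\nu(\frakk)$-invariant because it is the $K$-fixed vector in $I(\nu)$. Thus both existence and uniqueness reduce to showing that the $\td\pi_\nu(\frakk)$-invariant tempered distributions on $V^-$ form the line $\CC\phi_\nu$.

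First I would read off the shape of $\td\pi_\nu(Z)$, $Z\in\frakk$, from Proposition~\ref{prop:liealgaction}. Writing $\frakk=(\frakm\cap\frakk)\oplus\set{X+\overline X}{X\in V^+}$ (recall $\overline\frakn=\theta\frakn$), each operator is of the form $\calL_{\xi_Z}+c_Z$: a fundamental vector field $\xi_Z$ of the $K$-action on $V^-\cong\overline N$ plus multiplication by a function $c_Z$. Explicitly, for $X\in V^+$ the vector-field part of $\td\pi_\nu(X+\overline X)$ is $-(Q_yX+\overline X)$ with potential $-(2\nu+p)\tau(X,y)$, while for $T\in\frakm\cap\frakk$ one gets the linear field $-Ty$ with vanishing potential (the trace term in Proposition~\ref{prop:liealgaction} drops out since $T$ acts skew-symmetrically on $V^+$). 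Two structural inputs then drive the argument. The first: $\phi_\nu$ is smooth and \emph{nowhere vanishing} on all of $V^-$, uniformly in $\nu\in\CC$. Indeed $\B{-\overline y}{y}=\B{\overline y}{-y}$ is positive definite by Lemma~\ref{lem:expDecomp}, so by \eqref{eq:JPDet} one has $\Delta(-\overline y,y)>0$, and $\phi_\nu=\Delta(-\overline y,y)^{-\nu-\frac p2}$ is a power of a positive polynomial, hence a nowhere-zero multiplier of $\calS'(V^-)$. The second: the fields $\set{\xi_Z}{Z\in\frakk}$ span $T_yV^-$ at \emph{every} point $y$, which is just infinitesimal transitivity of the $K$-action on $X=G/P=K/(M\cap K)$, whose open Bruhat chart is $V^-\cong\overline N$; it is already visible at $y=0$, where $\xi_{X+\overline X}(0)=-\overline X$ runs through all of $V^-$.

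With these in hand the argument is short. Given a $\td\pi_\nu(\frakk)$-invariant $w\in\calS'(V^-)$, I would set $\tilde w=\phi_\nu^{-1}w\in\calS'(V^-)$. Since $\phi_\nu$ itself is annihilated by every $\calL_{\xi_Z}+c_Z$, the Leibniz rule gives $(\calL_{\xi_Z}+c_Z)(\phi_\nu\tilde w)=\phi_\nu\,\calL_{\xi_Z}\tilde w$, so $\td\pi_\nu(Z)w=0$ forces $\calL_{\xi_Z}\tilde w=0$ for all $Z\in\frakk$: the potential terms cancel and $\tilde w$ is killed by the purely geometric fields $\xi_Z$. Because these span the tangent space at every point and vary smoothly, one may solve locally for smooth coefficients $a_Z$ with $\sum_Z a_Z\xi_Z=\partial_j$, whence $\partial_j\tilde w=\sum_Z a_Z\,\calL_{\xi_Z}\tilde w=0$; thus $\td\tilde w=0$ and $\tilde w$ is constant on the connected manifold $V^-$. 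Therefore $w=c\,\phi_\nu$ and $u=c\,\psi_\nu$.

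The genuinely substantive points — and where I expect the work to concentrate — are the two structural inputs: verifying via Lemma~\ref{lem:expDecomp} that $\phi_\nu$ never vanishes, which is exactly what legitimizes the division $\tilde w=\phi_\nu^{-1}w$ and makes the result uniform in $\nu\in\CC$, and identifying the vector-field parts of $\td\pi_\nu(\frakk)$ with the fundamental fields of a transitive $K$-action so that they span everywhere. Once these are secured, the final step — a distribution annihilated by a transitive family of vector fields is constant — is routine, and is the same mechanism as in \cite[Lemma~2.3]{DS99}.
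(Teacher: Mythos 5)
Your proof is correct, and it is essentially the argument the paper itself relies on: the paper disposes of this lemma by citing \cite[Lemma~2.3]{DS99}, whose mechanism is precisely what you reconstruct — transfer to the non-compact picture via $\calF$, use that $\td\pi_\nu(\frakk)$ acts by vector fields plus multiplication operators, divide by the nowhere-vanishing spherical vector $\phi_\nu(y)=\Delta(-\overline y,y)^{-\nu-\frac{p}{2}}$ (positivity coming from Lemma~\ref{lem:expDecomp} and \eqref{eq:JPDet}), and conclude constancy from the fact that the $\frakk$-vector fields span every tangent space by transitivity of $K$ on $G/P$. The only cosmetic point is that $\tilde w=\phi_\nu^{-1}w$ need only be taken in $\calD'(V^-)$ rather than $\calS'(V^-)$, since temperedness plays no role after the division.
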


We now express $\psi_\nu$ in terms of a K-Bessel function on a symmetric cone. For details about K-Bessel functions on symmetric cones we refer the reader to Appendix~\ref{app:KBesselFunctions}.

Fix $0\leq k\leq r$ and let $e=e_1+\cdots+e_k\in V^+$ be the standard rank $k$ idempotent. Then $[e]\subseteq V^+$ is a Jordan algebra and we consider its fixed points $A^{(k)}\subseteq[e]$ under the involution $x\mapsto Q_e\overline x$. The subalgebra $A^{(k)}$ is Euclidean of rank $k$ and $e_1,\ldots,e_k$ is a Jordan frame of $A^{(k)}$, whence the Peirce decomposition of $A^{(k)}$ is given by
\[
	A^{(k)}=\bigoplus_{1\leq i\leq j\leq k}A_{ij}^+.
\]
Denote by $\Omega^{(k)}\subseteq A^{(k)}$ the symmetric cone of $A^{(k)}$. Then by \cite[Theorem VII.1.1]{FK94} the \textit{Gindikin Gamma function of $\Omega^{(k)}$} is given by
$$ \Gamma_{k,d}(\lambda) = (2\pi)^{k(k-1)\frac{d}{4}}\prod_{i=1}^k\Gamma(\lambda-(i-1)\tfrac{d}{2}), $$
where $\Gamma(\lambda)$ denotes the classical gamma function. Hence, $\Gamma_{k,d}(\lambda)$ is holomorphic for $\Re\lambda>(k-1)\frac{d}{2}$. In the case $k=r$ we abbreviate $A=A^{(r)}$ and $\Omega=\Omega^{(r)}$.

Now let first $k=r$ and consider for $\mu\in\CC$ the radial part $K_\mu(t_1,\ldots,t_r)$ of the K-Bessel function $\calK_\mu(x)$ on the symmetric cone $\Omega$ (see Appendix~\ref{app:KBesselFunctions} for its definition). Define an $(M\cap K)$-invariant function $\Psi_\nu$ on the open dense orbit $\calO_r\subseteq V^+$ by
$$ \Psi_\nu(mb_t) = K_{\frac{p-e+1}{2}-\nu}\left((\tfrac{t_1}{2})^2,\ldots,(\tfrac{t_r}{2})^2\right), \qquad m\in M\cap K,\,t\in C_r^+. $$
Then $\Psi_\nu$ defines a measurable function on $V^+$.

\begin{theorem}
Assume that $d_+=d_-$. Then for any $\nu\in\CC$ with $\Re\nu>-\nu_{r-1}$ the function $\Psi_\nu$ belongs to $L^1(V^+)$ and hence defines a tempered distribution $\Psi_\nu\in\calS'(V^+)$. We have
$$ \psi_\nu = \const\times\frac{\Psi_\nu}{\Gamma_{r,d}(\nu+\frac{p}{2})} \qquad \mbox{for $\Re\nu>-\nu_{r-1}$,} $$
where the constant only depends on the structure constants and the normalization of the measures. In particular, the family $\Psi_\nu\in\calS'(V^+)$ extends meromorphically in the parameter $\nu\in\CC$.
\end{theorem}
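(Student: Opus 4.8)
The plan is to exploit the characterization of $\psi_\nu$ as the unique (up to scalars) $\td\tilde\pi_\nu(\frakk)$-invariant tempered distribution provided by Lemma~\ref{lem:KInvariantDistributions}, and to check that $\Psi_\nu$ satisfies this characterization. First I would translate $\frakk$-invariance into a differential equation. Since $\frakk=(\frakm\cap\frakk)\oplus\set{b+\theta(b)}{b\in V^+}$, invariance under $\frakm\cap\frakk$ is exactly $(M\cap K)$-invariance, which $\Psi_\nu$ has by construction. For $b\in V^+$ one has $\theta(b)=\overline b\in V^-$, and combining the two nontrivial formulas of Proposition~\ref{prop:FTpicturealgebraaction}, the equation $\td\tilde\pi_\nu(b+\overline b)\psi_\nu=0$ reads $\tfrac1i\tau(b,\calB_\lambda\psi_\nu)+i\tau(x,\overline b)\psi_\nu=0$ for all $b$; using $\tau(x,\overline b)=\tau(b,\overline x)$ this is equivalent to the Bessel differential equation
\[
	\calB_\lambda\psi_\nu=\overline x\,\psi_\nu,\qquad \lambda=p-2\nu,
\]
on the open orbit $\calO_r$. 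Feeding a radial $F(t)=f(b_t)$ into the formula of Proposition~\ref{prop:radialBessel} turns this vector equation into the scalar system $\calB_\lambda^iF=t_iF$, $1\le i\le r$. By the definition of the multivariable K-Bessel function in Appendix~\ref{app:KBesselFunctions}, after the substitution $s_i=(t_i/2)^2$ and the identification of the parameter with $\tfrac{p-e+1}{2}-\nu$, the function underlying $\Psi_\nu$ solves precisely this system and is the distinguished solution with the correct decay. Hence $\psi_\nu$ and $\Psi_\nu$ agree up to a scalar on $\calO_r$.

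Next I would establish the $L^1$-bound, which is the technical core. Taking Lebesgue measure on $V^+$ as the $\chi_p$-equivariant measure and applying Proposition~\ref{prop:equivariantmeasure} with $k=r$, $\lambda=p$, one obtains
\[
	\int_{V^+}\lvert\Psi_\nu(x)\rvert\,\td x
	=\const\int_{C_r^+}\Bigl\lvert K_\mu\bigl((\tfrac{t_1}{2})^2,\dots,(\tfrac{t_r}{2})^2\bigr)\Bigr\rvert\,
	J(t)\,\td t,
\]
with $J(t)=\prod_j t_j^{\,e+b}\prod_{i<j}(t_i-t_j)^{d}(t_i+t_j)^{d}$ (here $d_+=d_-=d$). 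Convergence as $t\to\infty$ is immediate from the exponential decay of the K-Bessel function, and the walls $t_i=t_j$ cause no trouble since $J$ vanishes there while $K_\mu$ stays bounded. The only delicate region is the boundary where some coordinates tend to $0$: there $K_\mu$ has a controlled singularity governed by the classical K-Bessel asymptotics, and balancing it against the vanishing factors $t_j^{\,e+b}$ in $J$ shows that the integral converges exactly when $\Re(\nu+\tfrac p2)>(r-1)\tfrac d2$, i.e.\ when $\Re\nu>-\nu_{r-1}$. I expect this boundary estimate to be the \emph{main obstacle}, since it requires the precise behaviour of $K_\mu$ as its arguments approach the walls of the cone; these are exactly the estimates developed in Appendix~\ref{app:KBesselFunctions}. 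The same integrability also guarantees that no boundary terms arise when testing the Bessel equation against Schwartz functions, so that $\Psi_\nu$ is a genuinely $\frakk$-invariant tempered distribution on all of $V^+$, not merely on $\calO_r$.

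Finally I would fix the constant and deduce the meromorphic continuation. By uniqueness in Lemma~\ref{lem:KInvariantDistributions}, the $\frakk$-invariant distribution $\Psi_\nu$ must be a scalar multiple of $\psi_\nu$ on all of $V^+$. To pin the scalar down, the Mellin-type integral of the K-Bessel function over the cone evaluates, up to a constant depending only on the structure constants and the measure normalizations, to the Gindikin factor $\Gamma_{r,d}(\nu+\tfrac p2)$, whose holomorphy range $\Re\nu>-\nu_{r-1}$ matches the convergence range above. On the other hand, the normalization $\phi_\nu(0)=1$ of Proposition~\ref{prop:SphericalVectorNonCptPicture} gives $\int_{V^+}\psi_\nu(x)\,\td x=\phi_\nu(0)=1$. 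Integrating the proportionality then yields the displayed identity $\psi_\nu=\const\times\Psi_\nu/\Gamma_{r,d}(\nu+\tfrac p2)$. For the continuation, note that $\nu\mapsto\psi_\nu$ is holomorphic in $\calS'(V^+)$ (being the Fourier transform of the holomorphic family $\phi_\nu$) and that $1/\Gamma_{r,d}$ is entire; solving for $\Psi_\nu$ gives $\Psi_\nu=\const\cdot\Gamma_{r,d}(\nu+\tfrac p2)\,\psi_\nu$, an $\calS'(V^+)$-valued family extending meromorphically in $\nu\in\CC$ with poles only at the poles of the Gamma factor.
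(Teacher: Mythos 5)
Your proposal is correct and follows essentially the same route as the paper's proof: uniqueness of the $\td\tilde\pi_\nu(\frakk)$-invariant tempered distribution (Lemma~\ref{lem:KInvariantDistributions}), reduction of $\frakk$-invariance to the radial Bessel system $\calB_\lambda^i F = t_i F$ via Propositions~\ref{prop:FTpicturealgebraaction} and \ref{prop:radialBessel}, the $L^1$-bound and the evaluation $\int_{V^+}\Psi_\nu(x)\,\td x = \const\times\Gamma_{r,d}(\nu+\tfrac{p}{2})$ coming from the appendix, and meromorphy deduced from holomorphy of $\nu\mapsto\psi_\nu$. The only cosmetic difference is in the integrability step: the paper does not study wall asymptotics of $K_\mu$ at all, but instead converts the radial integral into $\int_\Omega\calK_\lambda(x)\Delta(x)^\mu\,\td x$ by \cite[Theorem VI.2.3]{FK94} and quotes the exact convergence criterion and evaluation of Lemma~\ref{lem:L1L2KBessel}, which is proved by global Fubini-type manipulations rather than boundary estimates.
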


\begin{proof}
We first show that $\Psi_\nu\in L^1(V^+)$, this implies $\Psi_\nu\in\calS'(V^+)$. Using the integral formula of Proposition~\ref{prop:equivariantmeasure} we find
\begin{align*}
 \int_{V^+}|\Psi_\nu(x)|\,\td x &= \int_{M\cap K}\int_{C_r^+}\Psi_\nu(mb_t)\prod_{i=1}^rt_i^{e+b}\prod_{1\leq i<j\leq r}(t_i^2-t_j^2)^d\,\td t\,\td m\\
 &= \int_{C_r^+}K_{\frac{p-e+1}{2}-\nu}\left((\tfrac{t_1}{2})^2,\ldots,(\tfrac{t_k}{2})^2\right)\prod_{i=1}^rt_i^{e+b}\prod_{1\leq i<j\leq r}(t_i^2-t_j^2)^d\,\td t\\
 &= \const\times\int_{C_r^+}K_{\frac{p-e+1}{2}-\nu}(s_1,\ldots,s_k)\prod_{i=1}^rs_i^{\frac{e+b-1}{2}}\prod_{1\leq i<j\leq k}(s_i-s_j)^d\,\td s.
\end{align*}
By \cite[Theorem VI.2.3]{FK94} the last integral is equal to a constant multiple of
$$ \int_\Omega\calK_{\frac{p-e+1}{2}-\nu}(x)\Delta(x)^{\frac{e+b-1}{2}}\,\td x. $$
This integral is by Lemma~\ref{lem:L1L2KBessel} finite if and only if
$$ \frac{e+b-1}{2}>-1 \qquad \mbox{and} \qquad \Re\nu+e-1+\frac{b-p}{2}>-2-(r-1)\frac{d}{2}, $$
which is satisfied since $b,e\geq0$ and $\Re\nu>-\nu_{r-1}=-\frac{1}{2}(e+\frac{b}{2}+1)$. Next we show that $\Psi_\nu$ is $\td\tilde\pi_\nu(\frakk)$-invariant. Since $\Psi_\nu$ is $(M\cap K)$-invariant by definition, it suffices to show that $\td\tilde\pi_\nu(\frakk\cap(\overline\frakn\oplus\frakn))\Psi_\nu=0$. By Proposition~\ref{prop:FTpicturealgebraaction} this is equivalent to
\[
	\calB_\lambda\Psi_\nu(x)=\Psi_\nu(x)\cdot\overline x
\]
for $\lambda=p-2\nu$. In view of the equivariance property of the Bessel operator (see Lemma~\ref{lem:BesselEquivariance}) we may assume $x=b_t$, $t\in C_r^+$. According to Proposition~\ref{prop:radialBessel} we have
$$ \calB_\lambda\Psi_\nu(b_t) = \sum_{i=1}^r \calB_\lambda^i\left[K_{\frac{p-e+1}{2}-\nu}\left((\tfrac{t_1}{2})^2,\ldots,(\tfrac{t_k}{2})^2\right)\right]\overline{e}_i $$
with $\calB_\lambda^i$ as in \eqref{eq:DefinitionBlambdaI}. Then it remains to show
\begin{equation}\label{eq:DiffEqRadialKBesselRankR}
 \calB_\lambda^i\left[K_{\frac{p-e+1}{2}-\nu}\left((\tfrac{t_1}{2})^2,\ldots,(\tfrac{t_k}{2})^2\right)\right]=t_i\,K_{\frac{p-e+1}{2}-\nu}\left((\tfrac{t_1}{2})^2,\ldots,(\tfrac{t_k}{2})^2\right) \qquad \forall\,1\leq i\leq r.
\end{equation}
Since $d_+=d_-$ the operator $\calB_\lambda^i$ becomes
\begin{align*}
	\calB_\lambda^i = t_i\frac{\partial^2}{\partial t_i^2}
			+\left(\lambda-e-(r-1)d\right)\,\frac{\partial }{\partial t_i}
			+d\hspace{-4mm}\sum_{1\leq j\leq r,\ j\neq i}\frac{t_i}{t_i^2-t_j^2}
				\left(t_i\frac{\partial}{\partial t_i}-t_j\frac{\partial}{\partial t_j}\right).
\end{align*}
Substituting $s_i=(\frac{t_i}{2})^2$ we find that
$$ \frac{1}{t_i}\calB_\lambda^i = s_i\frac{\partial^2}{\partial s_i^2}
			+\frac{1}{2}\left(\lambda-e-(r-1)d+1\right)\,\frac{\partial }{\partial s_i}
			+\frac{d}{2}\sum_{1\leq j\leq r,\ j\neq i}\frac{1}{s_i-s_j}
				\left(s_i\frac{\partial}{\partial s_i}-s_j\frac{\partial}{\partial s_j}\right), $$
so that \eqref{eq:DiffEqRadialKBesselRankR} is equivalent to \eqref{eq:DiffEqKBesselRadial}. Thus $\Psi_\nu\in\calS'(V^+)$ is $\td\tilde\pi_\nu(\frakk)$-invariant, and by Lemma~\ref{lem:KInvariantDistributions} $\Psi_\nu=c(\nu)\psi_\nu$ for some constant $c(\nu)$. We determine $c(\nu)$ by evaluating the identity $c(\nu)\phi_\nu(y)=\calF^{-1}\Psi_\nu(y)$ at $y=0$. As above, using the integral formula of Proposition~\ref{prop:equivariantmeasure} and Lemma~\ref{lem:L1L2KBessel} we find
\begin{equation*}
 c(\nu) = \calF^{-1}\Psi_\nu(0) = \int_{V^+}\Psi_\nu(x)\,\td x = \int_\Omega\calK_{\frac{p-e+1}{2}-\nu}(x)\Delta(x)^{\frac{e+b-1}{2}}\,\td x = \const\times\Gamma_{r,d}(\tfrac{2\nu+p}{2}).\qedhere
\end{equation*}
\end{proof}

Now let $0\leq k\leq r-1$. For $\mu\in\CC$ we consider the radial part $K_\mu(t_1,\ldots,t_k)$ of the K-Bessel function $\calK_\mu(x)$ on the symmetric cone $\Omega^{(k)}\subseteq A^{(k)}$. Define an $(M\cap K)$-invariant function $\Psi_k$ on $\calO_k$ by
$$ \Psi_k(mb_t) = K_{\frac{1}{2}(kd-e+1)}\left((\tfrac{t_1}{2})^2,\ldots,(\tfrac{t_k}{2})^2\right), \qquad m\in M\cap K,\,t\in C_k^+. $$

\begin{theorem}\label{thm:PsiInL1}
Assume that $d_+=d_-$. Then for any $0\leq k\leq r-1$, we have $\Psi_k\in L^1(\calO_k,\td\mu_k)$ and hence $\Psi_k\,\td\mu_k\in\calS'(V^+)$. Moreover,
$$ \psi_{-\nu_k} = \const\times\Psi_k\,\td\mu_k. $$
\end{theorem}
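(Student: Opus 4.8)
The plan is to mimic the strategy of the preceding (rank $r$) theorem: first establish integrability, then verify that $\Psi_k\,\td\mu_k$ is a nonzero $\td\tilde\pi_{-\nu_k}(\frakk)$-invariant tempered distribution, and finally invoke the uniqueness statement of Lemma~\ref{lem:KInvariantDistributions} to identify it with $\psi_{-\nu_k}$ up to a scalar. Throughout I use $d_+=d_-=d$.

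For the integrability $\Psi_k\in L^1(\calO_k,\td\mu_k)$ I would substitute the explicit integral formula of Proposition~\ref{prop:equivariantmeasure} for the $\chi_{kd}$-equivariant measure $\td\mu_k$. Since $\Psi_k$ is $(M\cap K)$-invariant the $(M\cap K)$-integration only produces a constant, leaving an integral over $C_k^+$ of $K_{\frac12(kd-e+1)}((\tfrac{t_1}{2})^2,\ldots,(\tfrac{t_k}{2})^2)$ against the Jacobian $\prod_j t_j^{(r-k+1)d+\frac b2-1}\prod_{i<j}(t_i^2-t_j^2)^d$. After the change of variables $s_i=(\tfrac{t_i}{2})^2$ this becomes a constant multiple of $\int_{C_k^+}K_\mu(s)\prod_j s_j^{\alpha_k}\prod_{i<j}(s_i-s_j)^d\,\td s$ with $\mu=\frac12(kd-e+1)$ and $\alpha_k=\frac{(r-k+1)d}{2}+\frac b4-1$, which by \cite[Theorem~VI.2.3]{FK94} equals a constant times $\int_{\Omega^{(k)}}\calK_\mu(x)\Delta(x)^{\alpha_k}\,\td x$ over the rank $k$ symmetric cone $\Omega^{(k)}$. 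Convergence of the latter is the content of Lemma~\ref{lem:L1L2KBessel}, and the two required inequalities ($\alpha_k>-1$ and $\mu<\alpha_k+2+(k-1)\tfrac d2$) are readily checked from $b,e\geq0$ and $k\leq r-1$. As a finite positive measure, $\Psi_k\,\td\mu_k$ is then automatically a tempered distribution.

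For the invariance, $(M\cap K)$-invariance is built into the definition, so by Proposition~\ref{prop:FTpicturealgebraaction} it remains to treat the elements $b+\overline b\in\frakk\cap(\overline\frakn\oplus\frakn)$, $b\in V^+$, for which the relevant Bessel parameter is $\lambda=p-2(-\nu_k)=2p-kd$. Pairing against a test function $g$ and using the transpose relation of Proposition~\ref{prop:BesselPartialIntegration}, which converts the distributional $\calB_{2p-kd}$ acting on $\Psi_k\,\td\mu_k$ into $\calB_{kd}$ acting on $g$, the invariance reduces to the pointwise identity $\calB_{kd}\Psi_k=\overline x\,\Psi_k$ on $\calO_k$: indeed $\calB_{kd}$ is tangential to $\calV_k=\calO_k$ by Theorem~\ref{thm:pullbackBessel} and, being symmetric on $L^2(\calO_k,\td\mu_k)$ by Theorem~\ref{thm:BesselSymmetricOnOrbits}, may be moved onto $\Psi_k$, after which the two resulting terms cancel since $\tau(b,\overline x)=\tau(x,\overline b)$ and $\tfrac1i+i=0$. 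To verify $\calB_{kd}\Psi_k=\overline x\,\Psi_k$ I use the $L$-equivariance of $\calB_\lambda$ (Lemma~\ref{lem:BesselEquivariance}) to reduce to $x=b_t$, $t\in C_k^+$, and apply Corollary~\ref{cor:radialBessel}: the components $\calB_k^i$ for $k<i\leq r$ vanish identically because $d_+=d_-$, matching the absence of $\overline e_i$-components in $\overline{b_t}$, while for $1\leq i\leq k$ the substitution $s_i=(\tfrac{t_i}{2})^2$ turns $\tfrac1{t_i}\calB_k^i$ into the radial K-Bessel operator with first-order coefficient $\frac{1+d-e}{2}=\mu-\frac{(k-1)d}{2}$, so that $\calB_k^iF=t_iF$ is exactly the defining equation \eqref{eq:DiffEqKBesselRadial} for $K_\mu$ with $\mu=\frac12(kd-e+1)$.

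With both facts in hand, $\Psi_k\,\td\mu_k$ is a nonzero $\td\tilde\pi_{-\nu_k}(\frakk)$-invariant tempered distribution, so Lemma~\ref{lem:KInvariantDistributions} forces $\Psi_k\,\td\mu_k\in\CC\,\psi_{-\nu_k}$; since $\psi_{-\nu_k}=\calF\phi_{-\nu_k}\neq0$, this gives $\psi_{-\nu_k}=\const\times\Psi_k\,\td\mu_k$ with a nonzero constant. I expect the main obstacle to be the justification that $\calB_{kd}$ can genuinely be transferred onto $\Psi_k$ with no boundary contributions: $\Psi_k$ is supported on all of $\calO_k$ and the polar coordinates degenerate along the boundary strata $\calV_j$, $j<k$, so rather than a compactly supported integration by parts this needs the decay estimates for the multivariable K-Bessel function near the boundary of the cone and at infinity. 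Everything else is a routine adaptation of the rank $r$ argument.
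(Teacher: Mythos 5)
Your outline reproduces the paper's proof step for step: the $L^1$ bound via Proposition~\ref{prop:equivariantmeasure}, \cite[Theorem VI.2.3]{FK94} and Lemma~\ref{lem:L1L2KBessel}; the reduction of $\frakk$-invariance to the distributional identity $\calB_{2p-kd}\left[\Psi_k\,\td\mu_k\right]=\left[\Psi_k\,\td\mu_k\right]\cdot\overline{x}$ via Propositions~\ref{prop:FTpicturealgebraaction} and~\ref{prop:BesselPartialIntegration}; the transfer of $\calB_{kd}$ onto $\Psi_k$ by the symmetry of Theorem~\ref{thm:BesselSymmetricOnOrbits}; the pointwise radial identity via Corollary~\ref{cor:radialBessel} and \eqref{eq:DiffEqKBesselRadial} (your identification of the first-order coefficient $\tfrac{1}{2}(1+d-e)=\mu-\tfrac{(k-1)d}{2}$ is correct, as is the remark that the components $k<i\leq r$ vanish because $d_+=d_-$); and the final identification through Lemma~\ref{lem:KInvariantDistributions}. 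All the steps you actually execute are right.

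The gap is precisely the step you defer as an expected obstacle: justifying that the symmetry of $\calB_{kd}$ may be applied to the pair $(\Psi_k,\varphi)$, $\varphi$ a Schwartz function, i.e.\ that the integration by parts on $\calO_k$ produces no boundary contributions although $\Psi_k$ is not compactly supported in $\calO_k$. This is not a routine adaptation of the rank-$r$ argument (there no duality step is needed, since $\Psi_\nu$ itself is the distribution), and the paper devotes roughly half of its proof to it. Concretely, the paper notes that since the Bessel operator is of order $2$ and Euler degree $-1$, it suffices to prove that all first partial derivatives of $\Psi_k$ lie in $L^1(\calO_k,\td\mu_k)$, i.e.\ that \eqref{eq:L1IntegralFirstDerivativePsik} is finite. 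This is done by differentiating the representation \eqref{eq:IntFormulaKBessel2} under the integral sign and, using $(e_i|v+v^{-1})\geq 2$, bounding the two resulting factors $\tfrac{1}{t_\ell}$ and $(e_\ell|v+v^{-1})$ by multiples of $(t_1\cdots t_k)^{-1}(b_t|v+v^{-1})^{k-1}$ and $(t_1\cdots t_k)^{-1}(b_t|v+v^{-1})^{k}$ respectively; absorbing the polynomial factor into the exponential then gives
\begin{equation*}
\left|\frac{\partial}{\partial t_\ell}\Psi_k(b_t)\right|\leq\const\times(t_1\cdots t_k)^{-1}\,K_{\frac{1}{2}(kd-e+1)}\left((\tfrac{t_1}{4})^2,\ldots,(\tfrac{t_k}{4})^2\right),
\end{equation*}
whose integral against the Jacobian is again finite by Lemma~\ref{lem:L1L2KBessel}, now with exponent $(r-k+1)\tfrac{d}{2}+\tfrac{b}{4}-\tfrac{3}{2}$, the required inequalities holding because $d\geq 1$, $b,e\geq 0$ and $k\leq r-1$. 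Until you supply this estimate (or an equivalent decay argument near the boundary strata and at infinity), the invariance of $\Psi_k\,\td\mu_k$, and hence the identity $\psi_{-\nu_k}=\const\times\Psi_k\,\td\mu_k$, is not established.
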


\begin{proof}
We first show that $\Psi_k\in L^1(\calO_k,\td\mu_k)$, this implies $\Psi_k\,\td\mu_k\in\calS'(V^+)$. Using the integral formula of Proposition~\ref{prop:equivariantmeasure} we find
\begin{align*}
 & \int_{\calO_k}|\Psi_k(x)|\,\td\mu_k(x)\\
 & \hspace{1cm} =\int_{C_k^+}K_{\frac{1}{2}(kd-e+1)}\left((\tfrac{t_1}{2})^2,\ldots,(\tfrac{t_k}{2})^2\right)\prod_{i=1}^kt_i^{(r-k+1)d+\frac{b}{2}-1}\prod_{1\leq i<j\leq k}(t_i^2-t_j^2)^d\,\td t\\
 & \hspace{1cm} =\const\times\int_{C_k^+}K_{\frac{1}{2}(kd-e+1)}(s_1,\ldots,s_k)\prod_{i=1}^ks_i^{(r-k+1)\frac{d}{2}+\frac{b}{4}-1}\prod_{1\leq i<j\leq k}(s_i-s_j)^d\,\td s.
\end{align*}
By \cite[Theorem VI.2.3]{FK94} the last integral is equal to a constant multiple of
$$ \int_{\Omega^{(k)}}\calK_{\frac{kd-e+1}{2}}(x)\Delta(x)^{(r-k+1)\frac{d}{2}+\frac{b}{4}-1}\,\td x. $$
This integral is by Lemma~\ref{lem:L1L2KBessel} finite if and only if
$$ (r-k+1)\frac{d}{2}+\frac{b}{4}-1>-1 \qquad \mbox{and} \qquad (r-2k+1)\frac{d}{2}+\frac{e}{2}+\frac{b}{4}-\frac{3}{2}>-2-(k-1)\frac{d}{2}, $$
which is satisfied since $0\leq k\leq r-1$, $d>0$ and $b,e\geq0$. Next we show that $\Psi_k\,\td\mu_k$ is $\td\tilde\pi_\nu(\frakk)$-invariant for $\nu=-\nu_k$. Since $\Psi_k\,\td\mu_k$ is $(M\cap K)$-invariant by definition, it suffices to show that $\td\tilde\pi_\nu(\frakk\cap(\overline\frakn\oplus\frakn))(\Psi_k\,\td\mu_k)=0$. By Proposition~\ref{prop:FTpicturealgebraaction} this is equivalent to
\[
	\calB_\lambda\left[\Psi_k(x)\,\td\mu_k(x)\right]=\left[\Psi_k(x)\,\td\mu_k(x)\right]\cdot\overline x,\qquad x\in\calO_k,
\]
for $\lambda=p+2\nu_k=2p-kd$. Using Proposition~\ref{prop:BesselPartialIntegration} we have by duality for any $\varphi\in\calS(V^+)$:
\begin{align*}
 \langle(\calB_\lambda-\overline x)\left[\Psi_k\,\td\mu_k\right],\varphi\rangle &= \langle\Psi_k\,\td\mu_k,(\calB_{kd}-\overline x)\varphi\rangle = \int_{\calO_k}\Psi_k(x)(\calB_{kd}-\overline x)\varphi(x)\,\td\mu_k(x).
\end{align*}
Now $\calB_{kd}$ is by Theorem~\ref{thm:BesselSymmetricOnOrbits} symmetric on $L^2(\calO_k,\td\mu_k)$ and hence
$$ \langle(\calB_\lambda-\overline x)\left[\Psi_k\,\td\mu_k\right],\varphi\rangle = \int_{\calO_k}(\calB_{kd}-\overline x)\Psi_k(x)\varphi(x)\,\td\mu_k(x). $$
However, since $\Psi_k$ is not compactly supported on $\calO_k$ we have to assure that during the integration by parts (see the proof of Theorem~\ref{thm:BesselSymmetricOnOrbits}) no boundary terms appear. The Bessel operator is of order $2$ and Euler degree $-1$ and therefore it suffices to show that all first partial derivatives of $\Psi_k$ are still contained in $L^1(\calO_k,\td\mu_k)$. This follows if for every $\ell=1,\ldots,k$ we have
\begin{equation}
 \int_{C_k^+} \left|\frac{\partial}{\partial t_\ell}\left[K_{\frac{1}{2}(kd-e+1)}\left((\tfrac{t_1}{2})^2,\ldots,(\tfrac{t_k}{2})^2\right)\right]\right|\prod_{i=1}^kt_i^{(r-k+1)d+\frac{b}{2}-1}\prod_{1\leq i<j\leq k}(t_i^2-t_j^2)^d\,\td t < \infty.\label{eq:L1IntegralFirstDerivativePsik}
\end{equation}
We compute the derivative using \eqref{eq:IntFormulaKBessel2}:
\begin{align*}
 & \frac{\partial}{\partial t_\ell}\left[K_{\frac{1}{2}(kd-e+1)}\left((\tfrac{t_1}{2})^2,\ldots,(\tfrac{t_k}{2})^2\right)\right] = \frac{\partial}{\partial t_\ell}\left[\prod_{i=1}^k(\tfrac{t_i}{2})^{\frac{n}{r}-\lambda}\int_{\Omega^{(k)}} e^{-\frac{1}{2}(b_t|v+v^{-1})}\Delta(v)^{-\lambda}\,\td v\right]\\
 & \hspace{1cm} = \prod_{i=1}^k(\tfrac{t_i}{2})^{\frac{n}{r}-\lambda}\int_{\Omega^{(k)}}\left(\frac{(\frac{n}{r}-\lambda)}{t_\ell}-\frac{1}{2}(e_\ell|v+v^{-1})\right)e^{-\frac{1}{2}(b_t|v+v^{-1})}\Delta(v)^{-\lambda}\,\td v.
\end{align*}
Using $(e_i|v+v^{-1})\geq2$ for all $i=1,\ldots,k$ and $v\in\Omega^{(k)}$, we can estimate
$$ \frac{1}{t_\ell} \leq 2^{1-k}(t_1\cdots t_k)^{-1}\cdot\prod_{\substack{i=1\\i\neq\ell}}^k(t_ie_i|v+v^{-1}) \leq 2^{1-k}(t_1\cdots t_k)^{-1}(b_t|v+v^{-1})^{k-1} $$
and
$$ (e_\ell|v+v^{-1}) \leq 2^{1-k}(t_1\cdots t_k)^{-1}\prod_{i=1}^k(t_ie_i|v+v^{-1}) \leq 2^{1-k}(t_1\cdots t_k)^{-1}(b_t|v+v^{-1})^k. $$
Now $(x^{k-1}+x^k)e^{-\frac{1}{2}x}\leq\const\times e^{-\frac{1}{4}x}$ for $x\geq0$, and hence
$$ \left|\frac{\partial}{\partial t_\ell}\left[K_{\frac{1}{2}(kd-e+1)}\left((\tfrac{t_1}{2})^2,\ldots,(\tfrac{t_k}{2})^2\right)\right]\right| \leq \const\times(t_1\cdots t_k)^{-1}K_{\frac{1}{2}(kd-e+1)}\left((\tfrac{t_1}{4})^2,\ldots,(\tfrac{t_k}{4})^2\right). $$
As above this shows that the integral \eqref{eq:L1IntegralFirstDerivativePsik} is bounded by a constant times
$$ \int_{\Omega^{(k)}}\calK_{\frac{kd-e+1}{2}}(x)\Delta(x)^{(r-k+1)\frac{d}{2}+\frac{b}{4}-\frac{3}{2}}\,\td x, $$
which is by Lemma~\ref{lem:L1L2KBessel} finite if and only if
$$ (r-k+1)\frac{d}{2}+\frac{b}{4}-\frac{3}{2}>-1 \qquad \mbox{and} \qquad (r-2k+1)\frac{d}{2}+\frac{e}{2}+\frac{b}{4}-2>-2-(k-1)\frac{d}{2}. $$
These inequalities hold since $0\leq k\leq r-1$, $d\geq1$ and $b,e\geq0$. This justifies the use of Theorem~\ref{thm:BesselSymmetricOnOrbits} and it remains to show that $\calB_{kd}\Psi_k(x)=\Psi_k(x)\cdot\overline x$ for any $x\in\calO_k$. In view of the equivariance property of the Bessel operator (see Lemma~\ref{lem:BesselEquivariance}) we may assume $x=b_t$, $t\in C_k^+$. According to Corollary~\ref{cor:radialBessel} we have
$$ \calB_{kd}\Psi_k(b_t)=\sum_{i=1}^k \calB_k^i\left[K_{\frac{1}{2}(kd-e+1)}\left((\tfrac{t_1}{2})^2,\ldots,(\tfrac{t_k}{2})^2\right)\right]\overline{e}_i $$
with $\calB_k^i$ as in \eqref{eq:DefBLambdaK} and \eqref{eq:DefBLambdaK2}. Then it remains to show
\begin{align*}\label{eq:radialPDE}
	\calB_k^i\left[K_{\frac{kd-e+1}{2}}\left((\tfrac{t_1}{2})^2,\ldots,(\tfrac{t_k}{2})^2\right)\right] = \begin{cases} t_iK_{\frac{kd-e+1}{2}}\left((\tfrac{t_1}{2})^2,\ldots,(\tfrac{t_k}{2})^2\right) & \mbox{for }1\leq i\leq k,\\0 & \mbox{for }k<i\leq r.\end{cases}
\end{align*}
Due to the assumption $d_+=d_-$, the second condition is automatic, and for $1\leq i\leq k$ the operator $\calB_k^i$ becomes
\begin{align*}
	\calB_k^i=t_i\frac{\partial^2}{\partial t_i^2}
			+(d-e)\,\frac{\partial }{\partial t_i}
			+d\hspace{-4mm}\sum_{1\leq j\leq k,\ j\neq i}\frac{t_i}{t_i^2-t_j^2}
				\left(t_i\frac{\partial}{\partial t_i}-t_j\frac{\partial}{\partial t_j}\right).
\end{align*}
Substituting $s_i=(\frac{t_i}{2})^2$ it is easy to see that this is equivalent to the differential equation \eqref{eq:DiffEqKBesselRadial} for the K-Bessel function. Hence $\Psi_k\,\td\mu_k$ is a $\td\tilde\pi_\nu(\frakk)$-invariant tempered distribution on $V^+$ for $\nu=-\nu_k$ and therefore a constant multiple of $\psi_\nu$.
\end{proof}

\begin{theorem}\label{thm:PsiInL2}
Assume $d_+=d_-$. Then $\Psi_k\in L^2(\calO_k,\td\mu_k)$ for any $0\leq k\leq r-1$.
\end{theorem}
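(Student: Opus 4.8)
The plan is to mimic the radial reduction used in the proof of Theorem~\ref{thm:PsiInL1}, replacing the K-Bessel integrand by its square and then invoking the $L^2$-part of Lemma~\ref{lem:L1L2KBessel}. For $k=0$ the orbit $\calO_0=\{0\}$ is a single point, so $L^2(\calO_0,\td\mu_0)$ is one-dimensional and the statement is trivial; I therefore assume $1\leq k\leq r-1$ and write $\mu=\tfrac{kd-e+1}{2}$ and $\alpha=(r-k+1)\tfrac{d}{2}+\tfrac{b}{4}-1$.

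First I would compute the $L^2$-norm exactly as the $L^1$-norm was computed. Since $\Psi_k$ is $(M\cap K)$-invariant and $\td\mu_k$ is the $\chi_{kd}$-equivariant measure, Proposition~\ref{prop:equivariantmeasure}, specialized to $\lambda=kd$ and $d_+=d_-=d$, gives
\begin{align*}
\int_{\calO_k}|\Psi_k(x)|^2\,\td\mu_k(x)
=\int_{C_k^+}\left|K_{\frac{1}{2}(kd-e+1)}\!\left((\tfrac{t_1}{2})^2,\dots,(\tfrac{t_k}{2})^2\right)\right|^2
\prod_{i=1}^k t_i^{(r-k+1)d+\frac{b}{2}-1}\prod_{1\leq i<j\leq k}(t_i^2-t_j^2)^d\,\td t.
\end{align*}
After the substitution $s_i=(\tfrac{t_i}{2})^2$ and an application of \cite[Theorem~VI.2.3]{FK94} (valid because $\calK_\mu^2$ is again a radial function on $\Omega^{(k)}$), this equals a positive constant times
\begin{align*}
\int_{\Omega^{(k)}}\calK_{\frac{kd-e+1}{2}}(x)^2\,\Delta(x)^{(r-k+1)\frac{d}{2}+\frac{b}{4}-1}\,\td x,
\end{align*}
the only difference from the $L^1$ computation in Theorem~\ref{thm:PsiInL1} being that $\calK_\mu$ now enters squared.

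Second I would apply the $L^2$-integrability criterion of Lemma~\ref{lem:L1L2KBessel}. Near infinity $\calK_\mu$ decays exponentially, so $\calK_\mu^2$ still decays exponentially and dominates the polynomial weight $\Delta^{\alpha}$; convergence there is automatic. The only genuine constraint comes from the behaviour of $\calK_\mu$ at the boundary of the cone $\Omega^{(k)}$, where squaring doubles the order of the singularity. Lemma~\ref{lem:L1L2KBessel} converts this into explicit inequalities in $\mu$ and $\alpha$, which I would then verify throughout $1\leq k\leq r-1$ using $d\geq1$ and $b,e\geq0$; here the hypothesis $k\leq r-1$ (equivalently, that $\calO_k$ is a \emph{non-open} orbit, so that $\alpha$ is large enough) is precisely what makes the weight compensate for the doubled singularity.

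The hard part will be this last step, since the squared singularity at $\partial\Omega^{(k)}$ is strictly more delicate than in the $L^1$ estimate. In the simplest case $k=1$ it can be checked by hand: writing $\calK_\mu(s)=\const\times s^{\frac{1-\mu}{2}}K_{1-\mu}(2\sqrt{s})$ in terms of the classical modified Bessel function, the origin singularity has order $s^{1-\mu}$ (not $s^{-\mu}$), so $|\calK_\mu(s)|^2\Delta^\alpha$ is integrable near $0$ iff $\alpha-2\mu>-3$, which reduces to $(r-2)\tfrac{d}{2}+\tfrac{b}{4}+e>-1$ and holds since $r\geq2$ and $b,e\geq0$. For general $k$ the same phenomenon is handled uniformly by the estimates packaged into Lemma~\ref{lem:L1L2KBessel}, and I expect the resulting inequalities to hold for exactly the same structural reasons.
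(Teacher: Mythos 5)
Your proposal is correct and follows essentially the same route as the paper's own proof: the same radial reduction via Proposition~\ref{prop:equivariantmeasure} and \cite[Theorem VI.2.3]{FK94} to the cone integral $\int_{\Omega^{(k)}}|\calK_{\frac{kd-e+1}{2}}(x)|^2\Delta(x)^{(r-k+1)\frac{d}{2}+\frac{b}{4}-1}\,\td x$, followed by the $L^2$-criterion of Lemma~\ref{lem:L1L2KBessel}. The general-$k$ inequalities you defer come out as $(r-k+1)\frac{d}{2}+\frac{b}{4}>0$ and $(r-3k+1)\frac{d}{2}+e+\frac{b}{4}-2>-3-(k-1)d$, the latter being equivalent to $(r-k-1)\frac{d}{2}+e+\frac{b}{4}+1>0$, and both hold for precisely the structural reason you identify, namely $k\leq r-1$ together with $d>0$ and $b,e\geq0$.
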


\begin{proof}
A similar computation as in the proof of Theorem~\ref{thm:PsiInL1} shows that
$$ \int_{\calO_k} |\Psi_k(x)|^2\,\td\mu_k(x) = \const\times\int_{\Omega^{(k)}} |\calK_{\frac{kd-e+1}{2}}(x)|^2 \Delta(x)^{(r-k+1)\frac{d}{2}+\frac{b}{4}-1}\,\td x $$
which is by Lemma~\ref{lem:L1L2KBessel} finite if and only if
$$ (r-k+1)\frac{d}{2}+\frac{b}{4}-1>-1 \qquad \mbox{and} \qquad (r-3k+1)\frac{d}{2}+e+\frac{b}{4}-2>-3-(k-1)d. $$
This is satisfied since $0\leq k\leq r-1$, $d>0$ and $b,e\geq0$.
\end{proof}

\subsection{Construction of the $L^2$-model}

By Theorem~\ref{thm:StructureDegPrincipalSeries} we know that for $0\leq k\leq r-1$ the $(\frakg,K)$-module
$$ \td\tilde\pi_{-\nu_k}(\calU(\frakg))\psi_{-\nu_k} \subseteq \tilde I(-\nu_k)_\Kfinite $$
generated by the spherical vector $\psi_{-\nu_k}$ is irreducible and unitarizable. We give a new proof of this fact which also provides an explicit invariant Hermitian form. Let us write $\tilde I_0(-\nu_k)$ for the corresponding subrepresentation of $\tilde I(-\nu_k)$ with underlying $(\frakg,K)$-module $\td\tilde\pi_{-\nu_k}(\calU(\frakg))\psi_{-\nu_k}$.

By Theorem~\ref{thm:PsiInL1} we have $\psi_{-\nu_k}=\const\times\Psi_k\,\td\mu_k$. For $0\leq k\leq r-1$ we let
$$ (M\cap K)_k=\Set{m\in M\cap K}{me_i=e_i\,\forall\,i=1,\ldots,k}, $$
then the map
$$ (M\cap K)/(M\cap K)_k\times C_k^+\to\calO_k, \quad (m,t)\mapsto mb_t, $$
is a diffeomorphism onto an open dense subset of $\calO_k$. In these coordinates the function $\Psi_k$ takes the form
$$ \Psi_k(mb_t) = F(t_1,\ldots,t_k), \qquad m\in M\cap K\,t\in C_k^+, $$
where $F(t_1,\ldots,t_k)=K_{\frac{kd-e+1}{2}}((\frac{t_1}{2})^2,\ldots,(\frac{t_k}{2})^2)$.

\begin{lemma}
Let $0\leq k\leq r-1$, then any $f\in\tilde I_0(-\nu_k)_\Kfinite$ can be written as
\begin{equation}
 f(mb_t) = \sum_\alpha \varphi_\alpha(m)f_\alpha(t), \qquad m\in M\cap K,\,t\in C_k^+,\label{eq:FormWk}
\end{equation}
where $\varphi_\alpha\in C^\infty((M\cap K)/(M\cap K)_k)$ and $f_\alpha$ is of the form
\begin{equation}
 t_{i_1}\cdots t_{i_p}t_{j_1}\cdots t_{j_q}\frac{\partial^p F}{\partial t_{i_1}\cdots\partial t_{i_p}}(t_1,\ldots,t_k)\label{eq:Formfalpha}
\end{equation}
with $i_1,\ldots,i_p,j_1,\ldots,j_q\in\{1,\ldots,k\}$ and $p,q\geq0$.
\end{lemma}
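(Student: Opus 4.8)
\emph{Strategy.} The plan is to introduce the space $W$ of all functions on the open dense subset of $\calO_k$ whose density with respect to $\td\mu_k$ has the shape \eqref{eq:FormWk}--\eqref{eq:Formfalpha}, and to prove that, after identifying each $f\in\tilde I_0(-\nu_k)$ with its $\td\mu_k$-density, one has $\tilde I_0(-\nu_k)_{\Kfinite}\subseteq W$. Since $\psi_{-\nu_k}$ is $K$-fixed we have $\td\tilde\pi_{-\nu_k}(\frakk)\psi_{-\nu_k}=0$, hence $\tilde I_0(-\nu_k)=\td\tilde\pi_{-\nu_k}(\calU(\frakg))\psi_{-\nu_k}$; and since $\frakg$ is simple and $3$-graded we have $\frakl=[\frakn,\overline\frakn]$, so that $V^+=\frakn$ and $V^-=\overline\frakn$ generate $\frakg$ as a Lie algebra. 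Consequently $\tilde I_0(-\nu_k)$ is spanned by the functions obtained from $\const\cdot\Psi_k$ by repeatedly applying $\td\tilde\pi_{-\nu_k}(a)$, $a\in V^-$, and $\td\tilde\pi_{-\nu_k}(b)$, $b\in V^+$, and it suffices to show $\Psi_k\in W$ and that $W$ is stable under these two families of operators.

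\emph{Base case and the $V^-$-action.} I would first note that $\Psi_k(mb_t)=F(t_1,\dots,t_k)$ lies in $W$ (take $\varphi\equiv1$, $p=q=0$). For $a\in V^-$, Proposition~\ref{prop:FTpicturealgebraaction} gives that $\td\tilde\pi_{-\nu_k}(a)$ multiplies densities by $i\tau(x,a)$, and writing $x=mb_t$ yields $\tau(mb_t,a)=\sum_{i=1}^{k}t_i\,\tau(e_i,m^{-1}a)$. Because $(M\cap K)_k$ fixes each $e_i$, every coefficient $m\mapsto\tau(e_i,m^{-1}a)$ descends to a smooth function on $(M\cap K)/(M\cap K)_k$; multiplication by this factor then multiplies each $f_\alpha$ by an extra $t_i$ (absorbed into the $t_{j_1}\cdots t_{j_q}$ block, raising $q$) and each $\varphi_\alpha$ by a smooth function, so $W$ is stable under $V^-$.

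\emph{The $V^+$-action.} This is where the work lies. Here $\lambda=p-2\nu=2p-kd$, so $2p-\lambda=kd$, and combining Proposition~\ref{prop:BesselPartialIntegration} with the symmetry of the tangential operator $\calB_{kd}$ on $L^2(\calO_k,\td\mu_k)$ (Theorem~\ref{thm:BesselSymmetricOnOrbits}, the control of boundary terms being as in the proof of Theorem~\ref{thm:PsiInL1}) shows that $\td\tilde\pi_{-\nu_k}(b)$ acts on densities by $g\mapsto\tfrac1i\,\tau(b,\calB_{kd}g)$, where $\calB_{kd}\colon C^\infty(\calO_k)\to C^\infty(\calO_k)\otimes V^-$ is the tangential Bessel operator. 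Using the $(M\cap K)$-equivariance of $\calB_\lambda$ (Lemma~\ref{lem:BesselEquivariance}) and that left translation in the $M\cap K$-variable preserves $W$, I would reduce to evaluating $\calB_{kd}g$ at the base points $x=b_t$. There the tangent space of $\calO_k$ splits into the radial directions $e_1,\dots,e_k$ and the angular directions $Xb_t$, $X\in\frakm\cap\frakk$, and the first and second derivatives of $g=\sum_\alpha\varphi_\alpha f_\alpha$ are computed as in the proof of Proposition~\ref{prop:radialBessel}, \emph{except} that the relations \eqref{eq:derivativeRelation2} now return genuine $\frakm\cap\frakk$-derivatives of the $\varphi_\alpha$ rather than zero, and the off-diagonal second derivatives survive. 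Organizing the sum $\tfrac12\sum_{\alpha,\beta}\partial^2_{\alpha\beta}g(b_t)\,\JTP{\widehat c_\alpha}{b_t}{\widehat c_\beta}+kd\sum_\alpha\partial_\alpha g(b_t)\,\widehat c_\alpha$ by Peirce type and pairing with $b$ via $\tau$ should produce the desired expansion \eqref{eq:FormWk} in the coordinates $(m,t)$.

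\emph{Main obstacle.} The hard part will be that the purely radial piece of this computation---the operators $\calB_k^i$ of \eqref{eq:DefBLambdaK}---does \emph{not} by itself preserve \eqref{eq:Formfalpha}: applying $\calB_k^i$ to a single building block $t_{i_1}\cdots t_{i_p}\,\partial^pF/\partial t_{i_1}\cdots\partial t_{i_p}$ generates terms carrying the non-polynomial coefficients $\tfrac{d_+}{t_i-t_j}+\tfrac{d_-}{t_i+t_j}$, which are not allowed in \eqref{eq:Formfalpha}. These spurious denominators must cancel against the contributions of the angular derivatives of the $\varphi_\alpha$ in the off-diagonal $\frakm\cap\frakk$-directions; that they do is forced by the $L$-equivariance of $\calB_\lambda$, which guarantees $\calB_{kd}g$ is a globally smooth $V^-$-valued function on $\calO_k$ and hence free of genuine singularities, together with the permutation symmetry of $F$ (the radial part of a K-Bessel function on a symmetric cone). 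The delicate bookkeeping---tracking how each monomial operator $t_i\partial_{t_i}^2$, $\partial_{t_i}$ and the difference-quotient terms carry a building block of derivative order $p$ into a finite sum of building blocks of orders $p$, $p+1$ and $p+2$ with matching $t$-prefactors, and how the angular derivatives supply the cancelling terms---is the technical heart of the argument. Once it is in place for the two generating families, the description \eqref{eq:FormWk} of every $\Kfinite$ vector follows by the induction set up above.
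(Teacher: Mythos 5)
Your reduction ``show $\Psi_k\in W$ and that $W$ is stable under the $V^-$- and $V^+$-actions'' breaks down at the $V^+$-step, and not merely because the bookkeeping is deferred: the space $W$ you propose (arbitrary smooth angular factors $\varphi_\alpha$ paired with arbitrary building blocks \eqref{eq:Formfalpha}) is \emph{not} stable under the Bessel action when $k\geq 2$. Take $g=t_lF\in W$ (trivial angular part, $p=0$, $q=1$). Using the radial operators \eqref{eq:DefBLambdaK} and the Bessel equation $\calB_k^iF=t_iF$, one computes for $i\neq l$ (with $d_+=d_-=d$)
\[
	\calB_k^i(t_lF)\;=\;t_l\,\calB_k^iF-\tfrac{1}{2}\Bigl(\tfrac{d}{t_i-t_l}+\tfrac{d}{t_i+t_l}\Bigr)t_lF
	\;=\;t_it_lF-\frac{d\,t_it_l}{t_i^2-t_l^2}\,F,
\]
and the diagonal term $\calB_k^l(t_lF)$ likewise produces $\sum_{j\neq l}\frac{d\,t_l^2}{t_l^2-t_j^2}F$. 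Hence $\tau(b,\calB_{kd}g(mb_t))=\sum_i\calB_k^i(t_lF)(t)\,\tau(b,m\overline e_i)$ contains coefficients blowing up like $(t_i^2-t_l^2)^{-1}$ multiplying angular functions $\tau(b,m\overline e_i)$ that are not proportional, so no cancellation occurs; since every function of the form \eqref{eq:FormWk} stays bounded near the walls $t_i=t_l$ (the $K$-Bessel function and its derivatives are smooth there), the result is not in $W$. The reason your heuristic fails is that $t_lF$ does not extend smoothly across those walls of $\calO_k$, so tangentiality gives no global smoothness of $\calB_{kd}g$ for general elements of $W$; and even for inputs that are globally smooth, ``free of singularities'' is far weaker than membership in $W$, which is an algebraic condition (a finite sum of products with the very specific blocks \eqref{eq:Formfalpha}). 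So your induction has no invariant space to run on; constructing one that encodes the angular--radial correlations of genuine $\Kfinite$ vectors and is provably Bessel-stable is essentially the whole problem.

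The paper sidesteps the Bessel operators entirely by choosing different generators. Since $\frakn\subseteq\frakk+\overline\frakn$, Poincar\'e--Birkhoff--Witt gives $\calU(\frakg)=\calU(\overline\frakn)\,\calU(\frakl)\,\calU(\frakk)$, and because $\psi_{-\nu_k}=\const\cdot\Psi_k\,\td\mu_k$ is $K$-fixed, $\calU(\frakk)$ acts on it trivially, so
\[
	\tilde I_0(-\nu_k)_\Kfinite=\td\tilde\pi_{-\nu_k}\bigl(\calU(\overline\frakn)\,\calU(\frakl)\bigr)(\Psi_k\,\td\mu_k).
\]
Thus only multiplication operators ($\overline\frakn$) and the \emph{first-order} operators $\td\tilde\pi_{-\nu_k}(T)=-\td_{Tx}+\const$, $T\in\frakl$, ever act. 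The $\frakl$-action is then controlled by the identity $\tau(Se_i,\overline e_j)=0$ for $i\neq j$, $S\in\frakl$, which shows that on functions of the form \eqref{eq:FormWk} the radial content of $\td_{Tx}$ is carried by the operators $t_i\partial_{t_i}$; these, together with multiplication by $t_j$, manifestly preserve the building blocks \eqref{eq:Formfalpha}, and an induction on the filtration $\calU_n(\frakl)$ (writing $e^{sT}mb_t=m_sb_{t,s}$ for the induction step) finishes the proof. If you want to keep your choice of generators $V^\pm$, you would have to discover and verify the cancellation mechanism for a much finer invariant space; the efficient repair is to reorder the generators as the paper does, so that the second-order operators never appear.
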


\begin{proof}
By the Poincar\'{e}--Birkhoff--Witt Theorem we have
$$ \calU(\frakg) = \calU(\overline \frakn)\,\calU(\frakl)\,\calU(\frakk) $$
and hence
$$ \tilde I_0(-\nu_k)_\Kfinite = \td\tilde\pi_\nu\big(\calU(\overline\frakn)\,\calU(\frakl)\,\calU(\frakk)\big)(\Psi_k\,\td\mu_k) = \td\tilde\pi_\nu\big(\calU(\overline\frakn)\,\calU(\frakl)\big)(\Psi_k\,\td\mu_k). $$
Since $\overline\frakn$ acts by multiplication with polynomials $\tau(mb_t,u)=\sum_{i=1}^kt_i\tau(me_i,u)$, $u\in V^-$, the action of $\calU(\overline\frakn)$ leaves the space of functions of the form \eqref{eq:FormWk} invariant. Hence, it suffices to show that every $f\in\td\tilde\pi_\nu(\calU(\frakl))(\Psi_k\,\td\mu_k)$ is of the form \eqref{eq:FormWk}. We show this for $\td\tilde\pi_\nu(\calU_n(\frakl))(\Psi_k\,\td\mu_k)$ by induction on $n$, where $\{\calU_n(\frakl)\}_{n\geq0}$ is the natural filtration of $\calU(\frakl)$. For $n=0$ this is clear since $\calU_0(\frakl)=\CC$. We also carry out the proof for $n=1$. Let $T\in\frakl$, then $\td\tilde\pi_\nu(T)=-\td_{Tx}+(\frac{\nu}{p}-\frac{1}{2})\Tr_{V^+}(T)$. We now compute $\td_{Tx}\Psi_k(x)$. Note that since $\Psi_k$ is $(M\cap K)$-invariant, \eqref{eq:FirstDerivatives} implies that for $x=b_t$ we have
$$ \td_{Tx}\Psi_k(x) = \sum_{j=1}^k\tau(Tb_t,\overline{e}_j)\cdot\frac{\partial F}{\partial t_j}(t_1,\ldots,t_k). $$
By the standard transformation rules this implies for $x=mb_t$:
\begin{equation}
 \td_{Tx}\Psi_k(x) = \sum_{i,j=1}^k \tau(Tme_i,m\overline{e}_j)\cdot t_i\frac{\partial F}{\partial t_j}(t_1,\ldots,t_k).\label{eq:LactionOnMinvariants}
\end{equation}
Now, $\tau(Se_i,\overline{e}_j)=0$ for $i\neq j$ and any $S\in\frakl$. In fact, $\frakl$ is generated by the operators $D_{u,v}$, $u\in V^+$, $v\in V^-$, and by \eqref{eq:peircerules}:
$$ \tau(D_{u,v}e_i,\overline{e}_j) = \tau(u,\JTP{v}{e_i}{\overline{e}_j}) = 0. $$
Therefore $\tau(Tme_i,m\overline{e}_j)=\tau(m^{-1}Tme_i,\overline{e}_j)=0$ whenever $i\neq j$ so that
$$ \td_{Tx}\Psi_k(x) = \sum_{i=1}^k \tau(Tme_i,m\overline{e}_i)\cdot t_i\frac{\partial F}{\partial t_i}(t_1,\ldots,t_k) $$
which clearly is of the form \eqref{eq:FormWk} with
$$ \varphi_i(m)=\tau(Tme_i,m\overline e_j) \qquad \mbox{and} \qquad f_i(t_1,\ldots,t_k)=t_i\frac{\partial F}{\partial t_i}(t_1,\ldots,t_k). $$
Now let us complete the induction step. Note that
$$ \td_{Tx}f(x)=\left.\frac{\td}{\td s}\right|_{s=0}f(e^{sT}x). $$
For $x=mb_t$ we write $e^{sT}mb_t=m_sb_{t,s}$, where $m_s\in M\cap K$ and $b_{t,s}$ depend differentiably on $s\in(-\varepsilon,\varepsilon)$ and $m_0=m$, $b_{t,0}=b_t$. For $f$ of the form \eqref{eq:FormWk} we obtain
$$ \td_{Tx}f(mb_t) = \sum_\alpha\left.\frac{\td}{\td s}\right|_{s=0}\varphi_\alpha(m_s)f_\alpha(b_t)+\sum_\alpha\varphi_\alpha(m)\left.\frac{\td}{\td s}\right|_{s=0}f_\alpha(b_{t,s}). $$
Clearly the first summand is again of the form \eqref{eq:FormWk}. To treat the second summand we note that for $f=\Psi_k$ this expression has to agree with \eqref{eq:LactionOnMinvariants} and hence it is of the form
$$ \sum_\alpha\sum_{i=1}^k\varphi_\alpha(m)\varphi_i(m)\cdot t_i\frac{\partial f_\alpha}{\partial t_i}(t_1,\ldots,t_k). $$
Clearly $t_i\frac{\partial}{\partial t_i}$ leaves the space of functions of the form \eqref{eq:Formfalpha} invariant so that this expression is again of the form \eqref{eq:FormWk}. This completes the induction step and the proof.
\end{proof}

\begin{proposition}\label{prop:gKmoduleL2}
For any $0\leq k\leq r-1$ we have $\tilde I_0(-\nu_k)_\Kfinite\subseteq L^2(\calO_k,\td\mu_k)$.
\end{proposition}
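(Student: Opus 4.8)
The plan is to combine the structural description of $\tilde I_0(-\nu_k)_\Kfinite$ furnished by the preceding Lemma with the K-Bessel estimates already developed for Theorems~\ref{thm:PsiInL1} and \ref{thm:PsiInL2}. First I would pass to a purely radial problem. By the preceding Lemma every $f\in\tilde I_0(-\nu_k)_\Kfinite$ is a \emph{finite} sum $f(mb_t)=\sum_\alpha\varphi_\alpha(m)f_\alpha(t)$ with $\varphi_\alpha\in C^\infty((M\cap K)/(M\cap K)_k)$ and each $f_\alpha$ of the shape \eqref{eq:Formfalpha}, where $F(t)=K_{\frac{kd-e+1}{2}}((\tfrac{t_1}{2})^2,\ldots,(\tfrac{t_k}{2})^2)$. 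Since $M\cap K$ is compact, the $\varphi_\alpha$ are bounded, so applying the integral formula of Proposition~\ref{prop:equivariantmeasure} (with $\lambda=kd$) and the triangle inequality gives
$$ \int_{\calO_k}|f(x)|^2\,\td\mu_k(x)\leq\const\sum_\alpha\int_{C_k^+}|f_\alpha(t)|^2\,J(t)\,\td t . $$
Thus it suffices to prove $f_\alpha\in L^2(C_k^+,J\,\td t)$ for each of the finitely many $\alpha$.

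The analytic heart of the argument is a pointwise bound on the derivatives of $F$. I would differentiate the integral representation \eqref{eq:IntFormulaKBessel2} under the integral sign, exactly as in the proof of Theorem~\ref{thm:PsiInL1}: each $\partial/\partial t_\ell$ produces either a factor $(\tfrac{n}{r}-\lambda)/t_\ell$ from the power prefactor, or a factor $-\tfrac12(e_\ell|v+v^{-1})$ from the exponential. Arguing by induction on $p$, $\frac{\partial^p F}{\partial t_{i_1}\cdots\partial t_{i_p}}$ is a finite sum of terms, each carrying at most the singular factors $\prod_{s=1}^p t_{i_s}^{-1}$. The decisive structural point is that these are exactly matched and cancelled by the monomial $t_{i_1}\cdots t_{i_p}$ already present in \eqref{eq:Formfalpha} (with the correct multiplicity even when indices repeat). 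The surviving inner factors $(e_\ell|v+v^{-1})$, together with any leftover positive powers of $t_\ell\le\tfrac12(b_t|v+v^{-1})$, are then dominated using $(e_i|v+v^{-1})\geq2$ and the elementary inequality $(1+x)^N e^{-\frac12 x}\le\const\,e^{-\frac14 x}$, precisely the device employed in Theorem~\ref{thm:PsiInL1}. Running the integral representation backwards on $e^{-\frac14(b_t|v+v^{-1})}$ yields
$$ \left|t_{i_1}\cdots t_{i_p}\frac{\partial^p F}{\partial t_{i_1}\cdots\partial t_{i_p}}(t)\right|\leq\const\times K_{\frac{kd-e+1}{2}}\!\left((\tfrac{t_1}{4})^2,\ldots,(\tfrac{t_k}{4})^2\right), $$
and hence $|f_\alpha(t)|\leq\const\times t_{j_1}\cdots t_{j_q}\,K_{\frac{kd-e+1}{2}}((\tfrac{t_1}{4})^2,\ldots,(\tfrac{t_k}{4})^2)$.

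It then remains to integrate this bound against $J(t)\,\td t$. Because $J$ is homogeneous, the rescaling of the argument by $\tfrac14$ merely contributes an overall constant, so one may compare directly with the integral treated in Theorem~\ref{thm:PsiInL2}. The only new ingredient is the extra monomial $(t_{j_1}\cdots t_{j_q})^2$: it vanishes at the origin (hence only improves convergence there, where it is in any case bounded and the required integrability is exactly what Theorem~\ref{thm:PsiInL2} already provides) and grows only polynomially at infinity, where it is overwhelmed by the exponential decay of $\calK_{\frac{kd-e+1}{2}}$. Splitting $C_k^+$ into the region where all $t_i$ are bounded and its complement, one sees that $\int_{C_k^+}|f_\alpha|^2 J\,\td t<\infty$; equivalently, passing to the cone via \cite[Theorem~VI.2.3]{FK94}, the monomial raises the $\Delta$-exponent by a non-negative amount, keeping one inside the convergent range of Lemma~\ref{lem:L1L2KBessel}. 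This establishes $f_\alpha\in L^2(C_k^+,J\,\td t)$ for every $\alpha$ and completes the proof.

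The step I expect to be the main obstacle is the inductive bookkeeping in the middle paragraph: one must verify that $p$ differentiations generate no more singular factors $\prod t_{i_s}^{-1}$ than the matching monomial $t_{i_1}\cdots t_{i_p}$ in \eqref{eq:Formfalpha} can absorb, so that after cancellation every term is governed by a single rescaled K-Bessel function, with the growing inner factors tamed by the exponential-rescaling trick. Once this pointwise estimate is in place, the reduction to the radial variable and the final integration are routine extensions of the computations in Theorems~\ref{thm:PsiInL1} and \ref{thm:PsiInL2}.
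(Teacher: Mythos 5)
Your overall strategy is the same as the paper's: reduce to the finitely many radial functions $f_\alpha$ via the preceding Lemma and Proposition~\ref{prop:equivariantmeasure}, establish a pointwise bound of each $f_\alpha$ by a rescaled K-Bessel function, and integrate. Your middle paragraph (the derivative estimate) is correct, including the bookkeeping point you flag: differentiating \eqref{eq:IntFormulaKBessel2} in the $t$-variables, the only singular contributions come from the prefactor $\prod_i(t_i/2)^{\frac{n}{r}-\lambda}$, and if the index $\ell$ occurs $m_\ell$ times among $i_1,\ldots,i_p$ the worst factor is $t_\ell^{-m_\ell}$, which the monomial $t_{i_1}\cdots t_{i_p}$ cancels exactly, the leftover powers $t_\ell^{m_\ell-a_\ell}$ pairing with the surviving factors $(e_\ell|v+v^{-1})^{m_\ell-a_\ell}$. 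The paper organizes this more cleanly and avoids the cancellation altogether: it substitutes $s_i=(t_i/2)^2$ and differentiates the representation \eqref{eq:IntFormulaKBessel1} in the $s$-variables, where each $\partial/\partial s_i$ only produces $-(e_i|v^{-1})$ inside the integral and no singular terms arise; but your variant reaches the same estimate.

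The genuine weak point is your final paragraph. The parenthetical claim that, ``passing to the cone via \cite[Theorem VI.2.3]{FK94}, the monomial raises the $\Delta$-exponent by a non-negative amount, keeping one inside the convergent range of Lemma~\ref{lem:L1L2KBessel}'' is not valid: $t_{j_1}\cdots t_{j_q}$ is \emph{not} symmetric in $(t_1,\ldots,t_k)$, so it is not the radial part of any power of $\Delta$, and it cannot even be dominated by one --- on the part of $C_k^+$ where $t_{j_1}\to\infty$ while $t_k\to 0$ one has $t_{j_1}\gg(t_1\cdots t_k)^m$ for every $m>0$, and Lemma~\ref{lem:L1L2KBessel} only treats weights $\Delta(x)^\mu$. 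The alternative splitting argument (``overwhelmed by the exponential decay of $\calK$'') appeals to quantitative decay of the multivariable K-Bessel function at infinity, uniform over the mixed regions where some coordinates are large and others small; this is established nowhere in the paper and is essentially what needs proof. The fix is immediate and is exactly what the paper does (and what you yourself do for the inner factors): do not pull $t_{j_1}\cdots t_{j_q}$ outside the integral representation, but absorb it as well, using $t_j\le\tfrac12\,t_j(e_j|v+v^{-1})\le\tfrac12(b_t|v+v^{-1})$ and $x^qe^{-\frac14x}\le\const\,e^{-\frac18x}$. This yields
\begin{equation*}
|f_\alpha(t)|\le\const\times K_{\frac{kd-e+1}{2}}\left((\tfrac{t_1}{8})^2,\ldots,(\tfrac{t_k}{8})^2\right),
\end{equation*}
which is symmetric, and the computation of Theorem~\ref{thm:PsiInL2} with $s_i=(\tfrac{t_i}{8})^2$ then closes the proof.
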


\begin{proof}
It remains to show that every function of the form \eqref{eq:FormWk} is contained in $L^2(\calO_k,\td\mu_k)$. In view of the integral formula of Proposition~\ref{prop:equivariantmeasure} this amounts to showing that
$$ \int_{C_k^+} \left|t_{i_1}\cdots t_{i_p}t_{j_1}\cdots t_{j_q}\frac{\partial^p F}{\partial t_{i_1}\cdots t_{i_p}}(t_1,\ldots,t_k)\right|^2\prod_{i=1}^kt_i^{(r-k+1)d+\frac{b}{2}-1}\prod_{1\leq i<j\leq k}(t_i^2-t_j^2)^d\,\td t < \infty $$
for all $i_1,\ldots,i_p,j_1,\ldots,j_q\in\{1,\ldots,k\}$ with $p,q\geq0$. By the chain rule
$$ t_{i_1}\cdots t_{i_p}t_{j_1}\cdots t_{j_q}\frac{\partial^p F}{\partial t_{i_1}\cdots t_{i_p}}(t_1,\ldots,t_k) = 2^{-p}t_{i_1}^2\cdots t_{i_p}^2t_{j_1}\cdots t_{j_q}\frac{\partial^p K_\lambda}{\partial s_{i_1}\cdots s_{i_p}}\left((\tfrac{t_1}{2})^2,\ldots,(\tfrac{t_k}{2})^2\right) $$
with $\lambda=\frac{kd-e+1}{2}$. For $p=q=0$ we showed in Theorem~\ref{thm:PsiInL2} that
$$ \int_{C_k^+} \left|K_\lambda\left((\tfrac{t_1}{2})^2,\ldots,(\tfrac{t_k}{2})^2\right)\right|^2\prod_{i=1}^kt_i^{(r-k+1)d+\frac{b}{2}-1}\prod_{1\leq i<j\leq k}(t_i^2-t_j^2)^d\,\td t < \infty. $$
Now let $s_i=(\frac{t_i}{2})^2$ and consider $s_i\frac{\partial K_\lambda}{\partial s_i}(s_1,\ldots,s_k)$. By \eqref{eq:IntFormulaKBessel1} we have
$$ s_i\frac{\partial K_\lambda}{\partial s_i}(s_1,\ldots,s_k) = -\int_{\Omega^{(k)}} (s_ie_i|v^{-1})e^{-\tr(v)-(b_s|v^{-1})}\Delta(v)^{-\lambda}\,\td v $$
so that
$$ \left|s_{i_1}\cdots s_{i_p}\frac{\partial^p K_\lambda}{\partial s_{i_1}\cdots\partial s_{i_p}}(s_1,\ldots,s_k)\right| = \int_{\Omega^{(k)}}(s_{i_1}e_{i_1}|v^{-1})\cdots(s_{i_p}e_{i_p}|v^{-1})e^{-\tr(v)-(b_s|v^{-1})}\Delta(v)^{-\lambda}\,\td v. $$
Now for any $u\in\Omega^{(k)}$ we have $(u|e_i)\geq0$ for all $i=1,\ldots,k$ and hence
$$ (s_{i_1}e_{i_1}|v^{-1})\cdots(s_{i_p}e_{i_p}|v^{-1}) \leq (b_s|v^{-1})^p \qquad \forall\,v\in\Omega^{(k)}. $$
Let $C>0$ such that $x^pe^{-x}\leq Ce^{-\frac{1}{4}x}$ for $x\geq0$, then
$$ \left|s_{i_1}\cdots s_{i_p}\frac{\partial^p K_\lambda}{\partial s_{i_1}\cdots\partial s_{i_p}}(s_1,\ldots,s_k)\right| \leq C\int_{\Omega^{(k)}} e^{-\tr(v)-\frac{1}{4}(b_s|v^{-1})}\Delta(v)^{-\lambda}\,\td v = C\cdot K_\lambda(\tfrac{1}{4}s_1,\ldots,\tfrac{1}{4}s_k). $$
Finally consider $t_{j_1}\cdots t_{j_q}K_\lambda\left((\tfrac{t_1}{2})^2,\ldots,(\tfrac{t_k}{2})^2\right)$. Using \eqref{eq:IntFormulaKBessel2} we have
$$ t_{j_1}\cdots t_{j_q}K_\lambda\left((\tfrac{t_1}{2})^2,\ldots,(\tfrac{t_k}{2})^2\right) = t_{j_1}\cdots t_{j_q}\int_{\Omega^{(k)}} e^{-\frac{1}{2}(b_t|v+v^{-1})}\Delta(v)^{-\lambda}\,\td v. $$
Since $(e_i|v+v^{-1})\geq2$ for all $v\in\Omega^{(k)}$ we find
$$ t_{j_1}\cdots t_{j_q} \leq (t_{j_1}e_{j_1}|v+v^{-1})\cdots(t_{j_q}e_{j_q}|v+v^{-1}) \leq (b_t|v+v^{-1})^q. $$
Let $C'>0$ such that $x^qe^{-\frac{1}{2}x}\leq C'e^{-\frac{1}{4}x}$ for $x\geq0$, then
$$ \left|t_{j_1}\cdots t_{j_q}K_\lambda\left((\tfrac{t_1}{2})^2,\ldots,(\tfrac{t_k}{2})^2\right)\right| \leq C'\int_{\Omega^{(k)}} e^{-\frac{1}{4}(b_t|v+v^{-1})}\Delta(v)^{-\lambda}\,\td v = C'\cdot K_\lambda\left((\tfrac{t_1}{4})^2,\ldots,(\tfrac{t_k}{4})^2\right). $$
Combining both arguments we obtain
$$ \left|t_{i_1}^2\cdots t_{i_p}^2t_{j_1}\cdots t_{j_q}\frac{\partial^p K_\lambda}{\partial s_{i_1}\cdots s_{i_p}}\left((\tfrac{t_1}{2})^2,\ldots,(\tfrac{t_k}{2})^2\right)\right| \leq \const\times K_\lambda\left((\tfrac{t_1}{8})^2,\ldots,(\tfrac{t_k}{8})^2\right). $$
The same computation as in the proof of Theorem~\ref{thm:PsiInL2} (substituting $s_i=(\frac{t_i}{8})^2$ instead of $s_i=(\frac{t_i}{2})^2$) shows that
$$ \int_{C_k^+} \left|K_\lambda\left((\tfrac{t_1}{8})^2,\ldots,(\tfrac{t_k}{8})^2\right)\right|^2\prod_{i=1}^kt_i^{(r-k+1)d+\frac{b}{2}-1}\prod_{1\leq i<j\leq k}(t_i^2-t_j^2)^d\,\td t < \infty $$
and the proof is complete.
\end{proof}

\begin{theorem}\label{thm:UniRepOnL2}
Assume $d_+=d_-$. Then $\tilde I_0(-\nu_k)\subseteq L^2(\calO_k,\td\mu_k)$ and the $L^2$-inner product is a $\tilde\pi_{-\nu_k}$-invariant Hermitian form. The representation $(\tilde\pi_{-\nu_k},\tilde I_0(-\nu_k))$ integrates to an irreducible unitary representation of $G$ on $L^2(\calO_k,\td\mu_k)$.
\end{theorem}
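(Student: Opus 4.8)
The plan is to combine the $L^2$-embedding already established in Proposition~\ref{prop:gKmoduleL2} with the symmetry of the Bessel operator to produce an invariant inner product, and then to promote the resulting skew-symmetric Lie algebra representation to a unitary $G$-representation, identifying its Hilbert space with all of $L^2(\calO_k,\td\mu_k)$ by a Fourier-analytic argument. Since $\tilde I_0(-\nu_k)$ is by definition a $(\frakg,K)$-module, hence consists of $K$-finite vectors, the inclusion $\tilde I_0(-\nu_k)\subseteq L^2(\calO_k,\td\mu_k)$ is precisely Proposition~\ref{prop:gKmoduleL2}. Under the identification of the distribution $g\,\td\mu_k\in\tilde I_0(-\nu_k)$ with the function $g\in L^2(\calO_k,\td\mu_k)$, it remains to show that the $L^2$-inner product is $\tilde\pi_{-\nu_k}$-invariant, and since $K$ acts through the restriction of the manifestly unitary $L$-action of Proposition~\ref{prop:FTpicturegroupaction}, it suffices to check that each $X\in\frakg=\overline\frakn\oplus\frakl\oplus\frakn$ acts by a formally skew-symmetric operator. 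For $a\in\overline\frakn=V^-$ the action is multiplication by the purely imaginary function $i\tau(\cdot,a)$ (Proposition~\ref{prop:FTpicturealgebraaction}), hence skew. For $T\in\frakl$, restricting the group formula $\tilde\pi_\nu(h)f(x)=\chi_{\nu-\frac p2}(h)f(h^{-1}x)$ to the distributions $g\,\td\mu_k$ and invoking the $\chi_{kd}$-equivariance of $\td\mu_k$ from Theorem~\ref{thm:equivariantmeasures}, the action on $g$ becomes $g\mapsto|\Det_{V^+}(h)|^{-\frac{kd}{2p}}\,g(h^{-1}\cdot)$; the exponent is arranged to make this unitary on $L^2(\calO_k,\td\mu_k)$ exactly at $\nu=-\nu_k$, and differentiating gives skew-symmetry of $\td\tilde\pi_{-\nu_k}(T)$.

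For $b\in\frakn=V^+$ the action is $\frac1i\tau\bigl(b,\calB_\lambda(\cdot)\bigr)$ with $\lambda=p-2(-\nu_k)=2p-kd$. Because $2p-\lambda=kd$, Proposition~\ref{prop:BesselPartialIntegration} together with the symmetry of the tangential operator $\calB_{kd}$ on $L^2(\calO_k,\td\mu_k)$ (Theorem~\ref{thm:BesselSymmetricOnOrbits}) yields $\calB_\lambda[g\,\td\mu_k]=(\calB_{kd}g)\,\td\mu_k$, so the action on $g$ is $\frac1i\tau(b,\calB_{kd}g)$, which is skew-symmetric. Here one must ensure that the integration by parts underlying the symmetry of $\calB_{kd}$ produces no boundary contributions: the $K$-finite vectors are spanned by the functions of the form \eqref{eq:Formfalpha}, which are \emph{not} compactly supported on $\calO_k$, so this rests on extending to them the $L^1$-decay estimates proved for $\Psi_k$ and its derivatives in Theorem~\ref{thm:PsiInL1} and Proposition~\ref{prop:gKmoduleL2}. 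Granting this, $\frakg$ acts by skew-symmetric operators, so the $L^2$-inner product is a $\tilde\pi_{-\nu_k}$-invariant Hermitian form.

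It remains to integrate to $G$ and to identify the completion with the full $L^2$-space. By Theorem~\ref{thm:StructureDegPrincipalSeries} the $(\frakg,K)$-module $\tilde I_0(-\nu_k)$ is irreducible and unitarizable; an irreducible $(\frakg,K)$-module carries at most one invariant Hermitian form up to a real scalar, so the positive-definite $L^2$-form just constructed is that form, and by the standard correspondence between irreducible unitarizable $(\frakg,K)$-modules and irreducible unitary representations we obtain an irreducible unitary representation of $G$ on the completion $\calH=\overline{\tilde I_0(-\nu_k)}\subseteq L^2(\calO_k,\td\mu_k)$. Since $\overline\frakn$ acts on $\calH$ by the skew-adjoint multiplication operators $i\tau(\cdot,a)$, the one-parameter groups $\tilde\pi_{-\nu_k}(\exp(ta))$ act by multiplication with $e^{it\tau(\cdot,a)}$; hence $\calH$ is invariant under multiplication by every $e^{i\tau(\cdot,a)}$, $a\in V^-$. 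The von Neumann algebra generated by these unitaries is all of $L^\infty(\calO_k)$ acting by multiplication (the characters $e^{i\tau(\cdot,a)}$ separate points of $\calO_k$), whose closed invariant subspaces are exactly the $L^2(E)$ for measurable $E\subseteq\calO_k$; as $\calH$ contains the nowhere-vanishing function $\Psi_k$, we must have $E=\calO_k$ and therefore $\calH=L^2(\calO_k,\td\mu_k)$. Irreducibility of the representation is immediate from irreducibility of the underlying $(\frakg,K)$-module. I expect the genuine obstacle to be the boundary-term analysis in the second paragraph: transporting the symmetry of $\calB_{kd}$ from the single spherical vector to every $K$-finite vector requires the delicate $L^1$-estimates for multivariable K-Bessel functions and their partial derivatives, whereas once skew-symmetry is in place the integration and density steps are comparatively formal.
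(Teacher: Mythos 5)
Your proposal is correct, and its first half coincides with the paper's own proof: the inclusion $\tilde I_0(-\nu_k)_\Kfinite\subseteq L^2(\calO_k,\td\mu_k)$ is Proposition~\ref{prop:gKmoduleL2}, and skew-symmetry of the $\frakg$-action follows from Proposition~\ref{prop:FTpicturealgebraaction}, Proposition~\ref{prop:BesselPartialIntegration} and Theorem~\ref{thm:BesselSymmetricOnOrbits}, via exactly the distributional identity $\calB_{2p-kd}[g\,\td\mu_k]=(\calB_{kd}g)\,\td\mu_k$ and the boundary-term caveat you flag (the paper disposes of the boundary terms through the decay estimates in Theorem~\ref{thm:PsiInL1} and Proposition~\ref{prop:gKmoduleL2}, as you anticipate). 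The endgame, however, is genuinely different. The paper does \emph{not} assume irreducibility of the $(\frakg,K)$-module: it integrates the now-unitarizable module to a unitary representation $(\tau,\calH)$ with $\calH\subseteq L^2(\calO_k,\td\mu_k)$, observes that $\tau$ and $\tilde\pi_{-\nu_k}$ have the same Lie algebra action on the Lie algebra of $\overline P$, hence the same $\overline P$-action on $\calH$, and then invokes Mackey theory: by Proposition~\ref{prop:FTpicturegroupaction} the parabolic $\overline P=L\overline N$ already acts irreducibly on $L^2(\calO_k,\td\mu_k)$, which forces $\calH=L^2(\calO_k,\td\mu_k)$ and gives $G$-irreducibility in one stroke. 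You instead import irreducibility from Theorem~\ref{thm:StructureDegPrincipalSeries} and identify $\calH=L^2(\calO_k,\td\mu_k)$ using only the $\overline N$-unitaries: the von Neumann algebra they generate is all of $L^\infty(\calO_k,\td\mu_k)$, whose invariant closed subspaces are the spaces of functions supported on a measurable $E\subseteq\calO_k$, and $\Psi_k$ is almost everywhere nonvanishing. This is a valid substitute---essentially the Stone--von Neumann half of the Mackey argument, with the cyclic positive vector $\Psi_k$ replacing transitivity of the $L$-action---but it buys less: because it leans on the external structure theorem, it does not reprove irreducibility and unitarizability, which is precisely what the paper advertises its argument as doing at the start of Section~\ref{sec:L2models}; the paper's route is self-contained in this respect. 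Two minor repairs to your write-up: the assertion that ``$K$ acts through the restriction of the $L$-action'' is false, since $K\not\subseteq L$; what you need, and what your argument in fact supplies, is skew-symmetry of all of $\frakg\supseteq\frakk$ together with connectedness of $K$. And in passing from agreement of generators on $K$-finite vectors to invariance of $\calH$ under the multiplication group $e^{i\tau(\cdot,a)}$, you are implicitly using that $K$-finite vectors are analytic vectors (hence a core for the generator) and that a skew-adjoint operator on a closed subspace which is restricted by a skew-adjoint operator on the ambient space generates the restriction of the ambient unitary group; this is standard but should be said.
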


\begin{proof}
This is the same argument as in \cite[Theorem 2.30]{HKM14}. By Proposition~\ref{prop:gKmoduleL2} we have $\tilde I_0(-\nu_k)_\Kfinite\subseteq L^2(\calO_k,\td\mu_k)$, and by Theorem~\ref{thm:BesselSymmetricOnOrbits} and Proposition~\ref{prop:FTpicturealgebraaction} the $L^2$-inner product is an invariant Hermitian form on $\tilde I_0(-\nu_k)_\Kfinite$. Hence, the $(\frakg,K)$-module $\tilde I_0(-\nu_k)_\Kfinite$ integrates to a unitary representation $(\tau,\calH)$ on a Hilbert space $\calH\subseteq L^2(\calO_k,\td\mu_k)$. Since the Lie algebra actions of $\tau$ and $\tilde\pi_{-\nu_k}$ agree on the Lie algebra of $\overline P$, the group actions $\tau$ and $\tilde\pi_{-\nu_k}$ agree on $\calH$. But in view of Proposition~\ref{prop:FTpicturegroupaction} the subgroup $\overline P$ acts by Mackey theory irreducibly on $L^2(\calO_k,\td\mu_k)$ and hence $\calH=L^2(\calO_k,\td\mu_k)$ and $\tau$ is irreducible as $G$-representation.
\end{proof}

\begin{remark}\label{rem:MinKtypeHermitianAndOpq}
Let us comment on the two cases excluded in Theorem~\ref{thm:UniRepOnL2}.
\begin{enumerate}
\item In the case where $G$ is Hermitian, the K-Bessel function has to be replaced by the exponential function
$$ \Psi_k(mb_t) = e^{-(t_1+\cdots+t_k)}, \qquad m\in M\cap K,t\in C_k^+, $$
then one can show that $\Psi_k\,\td\mu_k$ transforms under $\td\tilde\pi_\nu(\frakk)$ by a unitary character of $\frakk$ for $\nu=-\nu_k$. Note that here the Lie algebra $\frakk$ has a one-dimensional center. In the same way as above one then shows that $\tilde I_0(-\nu_k)$ extends to an irreducible unitary representation of $G$ on $L^2(\calO_k,\td\mu_k)$ (see e.g. \cite[Section 2.1]{Moe13}). We remark that in this case the degenerate principal series has to be formed by inducing from a possibly non-trivial unitary character of $M$.
\item If $G=SO_0(p,q)$, $p\leq q$, then $r=2$ and the K-Bessel function $\Psi_1$ is essentially a classical K-Bessel function. In \cite{HKM14} it is shown that, after modifying the parameter of the Bessel function, $\Psi_1\,\td\mu_1$ generates a finite-dimensional $\frakk$-representation in $\td\tilde\pi_\nu$, $\nu=-\nu_k$, if and only if $p+q$ is even. This $\frakk$-representation is isomorphic to the representation $\calH^{\frac{q-p}{2}}(\RR^p)$ on spherical haromonics of degree $\frac{q-p}{2}$ in $p$ variables, and hence non-trivial for $p\neq q$. Also in this case the same methods as above show that the corresponding $(\frakg,K)$-module integrates to an irreducible unitary representation of $G$ on $L^2(\calO_1,\td\mu_1)$ (see \cite{HKM14} for details). We note that here one has to form the degenerate principal series by inducing from a non-trivial unitary character of $M$ if and only if $p$ and $q$ are both odd and $p-q\equiv2\mod4$.
\end{enumerate}
\end{remark}

\begin{corollary}\label{cor:MinRep}
Assume $d_+=d_-$. If $e=0$ or $V$ is complex, and $\frakg_\CC$ is not a Lie algebra of type $A$, then $J(\nu_1)$ is the unique minimal representation of $G$ (in the sense of \cite[Definition 2.16]{HKM14}).
\end{corollary}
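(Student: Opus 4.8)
The plan is to establish the two ingredients in \cite[Definition~2.16]{HKM14}: that the annihilator ideal $\operatorname{Ann}_{\calU(\frakg_\CC)}(J(\nu_1))$ is the Joseph ideal---equivalently, that the associated variety of $J(\nu_1)$ is the closure of the minimal nilpotent orbit $\calO_{\min}\subseteq\frakg_\CC$---and that a representation with these properties is unique. The essential simplification is that Theorem~\ref{thm:UniRepOnL2} already presents $J(\nu_1)\cong\tilde J(\nu_1)$ as an irreducible unitary representation on $L^2(\calO_1,\td\mu_1)$, so the geometry of the model gives direct access to the associated variety.

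First I would determine the Gelfand--Kirillov dimension and associated variety of $J(\nu_1)$ from the model. Since the underlying $(\frakg,K)$-module $\tilde I_0(-\nu_1)_\Kfinite$ is supported on the single orbit $\calO_1$, and the only non-trivial part of the $\frakg$-action (namely that of $\frakn=V^+$, by Proposition~\ref{prop:FTpicturealgebraaction}) is given by Bessel operators tangential to $\calO_1$, the associated variety of the annihilator should be the $G_\CC$-saturation $\overline{G_\CC\cdot\calO_1}$ inside the nilpotent cone of $\frakg_\CC$. The same Gelfand--Kirillov dimension can be cross-checked from the $K$-type decomposition $J(\nu_1)_\Kfinite=\bigoplus_{\b m\in\Lambda,\,m_2=0}V_\b m$ of Theorem~\ref{thm:StructureDegPrincipalSeries}, whose highest weights lie on the ray $\RR_{\geq0}\gamma_1$. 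The decisive geometric input is then that $\calO_1$ is the set of rank-one elements of the abelian nilradical $\frakn$: a primitive tripotent is a highest root vector, so $G_\CC\cdot\calO_1=\calO_{\min}$, and hence $\operatorname{Ann}(J(\nu_1))$ is a completely prime primitive ideal with associated variety $\overline{\calO_{\min}}$.

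With this in hand, since $\frakg_\CC$ is not of type $A$, Joseph's uniqueness theorem identifies any such ideal with the Joseph ideal; the correct infinitesimal character is fixed by the value $\nu_1=\tfrac p2-\tfrac d2$. This shows $J(\nu_1)$ is minimal in the sense of \cite[Definition~2.16]{HKM14}, and uniqueness of the minimal representation follows from uniqueness of the Joseph ideal together with the unitarity supplied by Theorem~\ref{thm:UniRepOnL2}. The main obstacle is the middle step: one must confirm that the rank-one orbit $\calO_1$ really sweeps out the \emph{minimal} nilpotent orbit and not a larger one, and that the resulting ideal is completely prime. This is precisely where the hypotheses $e=0$ or $V$ complex (together with $d_+=d_-$ and $\frakg_\CC$ not of type $A$) are indispensable, and checking it uniformly seems to require the dimension bookkeeping of Section~\ref{sec:OrbitStructure} combined with the classification of simple Jordan pairs, rather than a purely formal argument.
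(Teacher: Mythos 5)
Your skeleton---associated variety equals the closure of the minimal nilpotent orbit, plus complete primality of the annihilator, then Joseph's uniqueness for $\frakg_\CC$ not of type $A$---is the same as the paper's, but there is a genuine gap at the complete primality step. The word ``hence'' in your claim ``hence $\operatorname{Ann}(J(\nu_1))$ is a completely prime primitive ideal with associated variety $\overline{\calO_{\min}}$'' is unjustified: knowing the associated variety says nothing about complete primality, and Joseph's characterization of the Joseph ideal cannot be invoked without it. You concede this at the end, but propose to repair it by ``dimension bookkeeping \ldots combined with the classification of simple Jordan pairs,'' which is not a workable route and is not what the hypotheses $e=0$ or $V$ complex are for. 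The paper settles complete primality by a short conceptual argument that is exactly the payoff of the $L^2$-model: by Proposition~\ref{prop:FTpicturealgebraaction} and Theorem~\ref{thm:pullbackBessel}, the Lie algebra acts in the model by regular differential operators tangential to the irreducible variety $\calO_1=\calV_1$, giving an algebra homomorphism from $\calU(\frakg)$ to the algebra of regular differential operators on $\calV_1$; the latter algebra has no zero divisors, so the kernel of this homomorphism---which is the annihilator of $J(\nu_1)$---is completely prime. No classification is needed for this step.

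The associated variety input is also only sketched in your proposal (``should be the $G_\CC$-saturation $\overline{G_\CC\cdot\calO_1}$''): passing from the support of the $L^2$-model to the associated variety requires an argument via good filtrations, and verifying that the saturation of $\calO_1$ is the \emph{minimal} orbit rather than a larger one is precisely where the excluded cases enter. The paper does not redo this computation; it quotes \cite[Theorem~5.3~(2)]{MS12} for the case where $P$ and $\overline P$ are not conjugate, and disposes of the conjugate case wholesale by citing \cite[Corollary~2.32]{HKM14} (a case split your proposal omits). As written, your argument establishes neither of the two hypotheses feeding into Joseph's uniqueness theorem; both can be filled in, but the complete primality step should be filled by the differential-operator argument above, not by classification.
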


\begin{proof}
For $P$ and $\overline P$ conjugate this is shown in \cite[Corollary 2.32]{HKM14}. For $P$ and $\overline P$ not conjugate we showed in \cite[Theorem~5.3~(2)]{MS12} that the associated variety of $J(\nu_1)$ is the minimal nilpotent $K_\CC$-orbit. Since the Joseph ideal is the unique completely prime ideal in $\calU(\frakg)$ with associated variety the minimal nilpotent $K_\CC$-orbit, it remains to show that the annihilator ideal of $J(\nu_1)$ is completely prime. We employ the same argument as in \cite[Theorem 2.18]{HKM14} using the explicit $L^2$-model. The Lie algebra acts by regular differential operators on the irreducible variety $\calO_1=\calV_1$ and this induces an algebra homomorphism from $\calU(\frakg)$ to the algebra of regular differential operators on $\calV_1$. The latter algebra has no zero-divisors which implies that the kernel of this homomorphism (which is the annihilator of $J(\nu_1)$) is completely prime. This finishes the proof.
\end{proof}

\begin{remark}
If we include the cases of type $A$ or $D_2$ (for which the same results hold), then we obtain $L^2$-models for the minimal representations of the groups
\begin{align*}
 & Sp(n,\RR),\,Sp(n,\CC),\,SO^*(4n),\,SO(p,q)\,(p+q\mbox{ even}),\,SO(n,\CC)\\
 & E_{6(6)},\,E_6(\CC),\,E_{7(7)},\,E_{7(-25)},\,E_7(\CC).
\end{align*}
This is a complete list of all groups having a maximal parabolic subalgebra with abelian nilradical, which admit a minimal representation (see \cite{HKM14}).
\end{remark}

\subsection{The non-standard intertwining operator}

For any unitarizable quotient $J(\nu)=I(\nu)/I_0(\nu)$ in a (degenerate) principal series representation $I(\nu)$ there exists an intertwining operator $T:I(\nu)\to I(-\nu)$ with kernel $I_0(\nu)$ and the property that the invariant inner product on $J(\nu)$ is given by
$$ J(\nu)\times J(\nu)\to\CC, \quad (f,g)\mapsto\int_{V^-} Tf(x)\overline{g(x)}\,\td x. $$
In many cases these operators can be obtained from standard families of intertwining operators such as the Knapp--Stein intertwiners. However, as observed in \cite{MS12} in the case where $P$ and $\overline P$ are not conjugate such families do not exist. Still, unitarizable quotients can occur in this setting, and hence the corresponding intertwiners cannot be obtained from standard families by regularization. In \cite{MS12} we constructed non-standard intertwining operators between $I(\nu_k)$ and $I(-\nu_k)$ for $0\leq k\leq r-1$ in the case where $P$ and $\overline P$ are not conjugate, using algebraic methods. Here we provide a geometric version of these non-standard intertwiners in the Fourier transformed realization $\tilde I(\nu)$.

\begin{theorem}\label{thm:Intertwiner}
For any $0\leq k\leq r-1$ the map
$$ T_k:\tilde I(\nu_k) \to \tilde I(-\nu_k), \quad f\mapsto f|_{\calO_k}\,\td\mu_k $$
is an intertwining operator $\tilde\pi_{\nu_k}\to\tilde\pi_{-\nu_k}$. Its kernel is the subrepresentation
$$ \tilde I_0(\nu_k) = \Set{f\in\tilde I(\nu_k)}{f|_{\calO_k}=0} \subseteq \tilde I(\nu_k) $$
and its image is the subrepresentation $\tilde I_0(-\nu_k)\subseteq\tilde I(-\nu_k)$.
\end{theorem}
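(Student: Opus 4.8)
The plan is to verify the intertwining property on a generating set of $G$ and then identify the kernel and image representation-theoretically. Since $G$ is connected and generated by $\overline N$, $L$ and $N=\exp(\frakn)$, it suffices to check the intertwining on $\overline N$ and $L$ at the group level and on $\frakn=V^+$ infinitesimally; the latter then integrates by connectedness. For $\exp(a)\in\overline N$ the formula $\tilde\pi_\nu(\exp(a))f(x)=e^{i\tau(x,a)}f(x)$ from Proposition~\ref{prop:FTpicturegroupaction} gives the intertwining at once, since multiplication by $e^{i\tau(\cdot,a)}$ commutes with restriction to $\calO_k$ and with multiplication by $\td\mu_k$. For $h\in L$ I would pair $T_k(\tilde\pi_{\nu_k}(h)f)$ and $\tilde\pi_{-\nu_k}(h)(T_kf)$ against a test function, carry out the change of variables $x\mapsto hx$ on the $L$-invariant orbit $\calO_k$, and use the $\chi_{kd}$-equivariance of $\td\mu_k$ from Theorem~\ref{thm:equivariantmeasures}. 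The two sides then carry the characters $\chi_{\nu_k-\frac p2+kd}$ and $\chi_{\frac p2-\nu_k}$, which agree if and only if $2\nu_k=p-kd$, i.e.\ exactly for $\nu_k=\tfrac p2-k\tfrac d2$; this is the precise point where the value of $\nu_k$ and the character $kd$ of the equivariant measure are forced to match.

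The crux is the infinitesimal intertwining for $b\in\frakn=V^+$, which rests on the Bessel operator. By Proposition~\ref{prop:FTpicturealgebraaction} one has $\td\tilde\pi_{\nu_k}(b)f=\tfrac1i\tau(b,\calB_{p-2\nu_k}f)$ and $\td\tilde\pi_{-\nu_k}(b)F=\tfrac1i\tau(b,\calB_{p+2\nu_k}F)$, and since $\nu_k=\tfrac p2-k\tfrac d2$ these parameters become $p-2\nu_k=kd$ and $p+2\nu_k=2p-kd$. Thus $\calB_{kd}$ is tangential to $\calO_k$ by Theorem~\ref{thm:pullbackBessel}, so $(\calB_{kd}f)|_{\calO_k}$ depends only on $f|_{\calO_k}$, and the desired identity $\td\tilde\pi_{-\nu_k}(b)(T_kf)=T_k(\td\tilde\pi_{\nu_k}(b)f)$ reduces to the distributional identity
$$ \calB_{2p-kd}\bigl(f|_{\calO_k}\,\td\mu_k\bigr)=(\calB_{kd}f)|_{\calO_k}\,\td\mu_k. $$
Testing against $\varphi\in\calS(V^+)$, the formal adjointness of $\calB_{2p-kd}$ and $\calB_{kd}$ with respect to Lebesgue measure (Proposition~\ref{prop:BesselPartialIntegration}) moves $\calB_{kd}$ onto $\varphi$, and the symmetry of $\calB_{kd}$ on $L^2(\calO_k,\td\mu_k)$ (Theorem~\ref{thm:BesselSymmetricOnOrbits}) moves it back onto $f$. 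This is precisely the computation already performed for the spherical vector $\Psi_k\,\td\mu_k$ in the proof of Theorem~\ref{thm:PsiInL1}.

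Granting the intertwining, the kernel is immediate: $T_kf=f|_{\calO_k}\,\td\mu_k$ vanishes if and only if $f|_{\calO_k}=0$, which is the definition of $\tilde I_0(\nu_k)$. For the image I would note that $T_k\neq0$ because $\tilde I_0(\nu_k)$ is a proper subrepresentation (Theorem~\ref{thm:pullbackBessel}), so the image is isomorphic to $\tilde I(\nu_k)/\tilde I_0(\nu_k)=\tilde J(\nu_k)$, an irreducible rank-$k$ representation, which is spherical. As a nonzero irreducible $(\frakg,K)$-submodule of $\tilde I(-\nu_k)$ possessing a $K$-fixed vector, it must contain a multiple of the spherical vector $\psi_{-\nu_k}$ (unique up to scalars by Lemma~\ref{lem:KInvariantDistributions}), and hence, by irreducibility, coincides with the submodule $\td\tilde\pi_{-\nu_k}(\calU(\frakg))\psi_{-\nu_k}=\tilde I_0(-\nu_k)$.

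The main obstacle is analytic rather than algebraic: giving meaning to the restriction $f|_{\calO_k}$ for the distributions $f\in\tilde I(\nu_k)$, ensuring that $f|_{\calO_k}\,\td\mu_k$ is tempered, and—most delicately—justifying the symmetry step for $\calB_{kd}$ without boundary contributions, since elements of $\tilde I(\nu_k)$ are not compactly supported on $\calO_k$. I expect this to be dealt with exactly as in Theorem~\ref{thm:PsiInL1}: by reducing to $K$-finite vectors, where the relevant functions are explicit derivatives of the multivariable $K$-Bessel function multiplied by polynomials, and invoking the $L^1$-estimates that keep their first derivatives integrable against $\td\mu_k$, so that the integration by parts underlying the symmetry of $\calB_{kd}$ produces no boundary terms.
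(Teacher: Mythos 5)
Your algebraic skeleton is sound and in fact close to the paper's own mechanism: the character matching $2\nu_k=p-kd$ for the $L$-intertwining, the reduction of the $\frakn$-intertwining to the distributional identity $\calB_{2p-kd}\bigl(f|_{\calO_k}\,\td\mu_k\bigr)=(\calB_{kd}f)|_{\calO_k}\,\td\mu_k$ via Proposition~\ref{prop:BesselPartialIntegration} and Theorem~\ref{thm:BesselSymmetricOnOrbits}, and the identification of the image via sphericity plus irreducibility (a legitimate alternative to the paper's explicit computation of $T_k\psi_{\nu_k}$). The genuine gap is that you never establish that $T_k$ is defined on anything, and the place you point to for this --- ``exactly as in Theorem~\ref{thm:PsiInL1}'' --- cannot supply it. The estimates there concern the $k$-variable Bessel function $\Psi_k$ and its derivatives, which are functions \emph{on} $\calO_k$: that is the image side of $T_k$. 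What you need is control of the \emph{source} side: the $K$-finite vectors of $\tilde I(\nu_k)$ are built from the $r$-variable Bessel function $\Psi_{\nu_k}$ living on the closure of the open orbit $\calO_r$, and $\calO_k$ is a lower-dimensional boundary stratum, so even defining $f|_{\calO_k}$ amounts to proving that $K_\mu(t_1,\ldots,t_r)$ has a finite limit as $t_{k+1},\ldots,t_r\to0$. This is false for general parameters; it is precisely Clerc's restriction theorem \cite[Theorem 4.1]{Cle88}, valid for $\Re\mu<1+k\tfrac d2$, and one must check that $\mu=\tfrac{kd-e+1}{2}<1+k\tfrac d2$ (true since $e\geq0$). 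This single step is the analytic heart of the theorem: it simultaneously shows that $T_k$ is defined on the spherical vector, that $T_k\neq0$, and that $T_k\psi_{\nu_k}=\const\times\psi_{-\nu_k}$.

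Without this base case your argument does not get off the ground. In particular, your non-vanishing claim (``$T_k\neq0$ because $\tilde I_0(\nu_k)$ is proper'') presupposes that restriction to $\calO_k$ is meaningful on the vectors of $\tilde I(\nu_k)$, which is exactly what is unproven, so it is circular as stated. The paper's strategy is the reverse of yours: it first computes $T_k\psi_{\nu_k}$ via Clerc, then uses cyclicity of $\psi_{\nu_k}$ together with the same symmetry argument you describe to propagate definedness along $\td\tilde\pi_{\nu_k}(\calU(\frakg))\psi_{\nu_k}$ --- and here the images are automatically of the form \eqref{eq:FormWk}, so the $L^1$/$L^2$ estimates of Theorem~\ref{thm:PsiInL1} and Proposition~\ref{prop:gKmoduleL2} do justify the absence of boundary terms --- and finally extends from $\tilde I(\nu_k)_\Kfinite$ to $\tilde I(\nu_k)$ by the Casselman--Wallach globalization theorem. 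That last point also repairs a second, smaller gap in your proposal: ``integrating by connectedness'' addresses only the passage from $\frakn$ to $N$, not the passage from $K$-finite to arbitrary smooth vectors, where some globalization argument of this kind is unavoidable.
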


\begin{proof}
We first show that the spherical vector $\psi_{\nu_k}\in\tilde I(\nu_k)$ is mapped to the spherical vector $\psi_{-\nu_k}\in\tilde I(-\nu_k)$. Recall that $\psi_{\nu_k}=\Psi_{\nu_k}$ is given by
$$ \Psi_{\nu_k}(mb_t) = K_{\frac{p-e+1}{2}-\nu}(t_1,\ldots,t_r), \qquad m\in M\cap K,\,t\in C_r^+. $$
Due to a theorem of Clerc~\cite[Theorem 4.1]{Cle88} (see also \cite[Proposition 3.10]{Moe13}) the restriction of $K_\mu(t_1,\ldots,t_r)$ to $t_{k+1}=\ldots=t_r$ is defined and gives the K-Bessel function of $k$ variables with the same parameter:
$$ K_\mu(t_1,\ldots,t_k,0,\ldots,0) = \const\times K_\mu(t_1,\ldots,t_k) $$
whenever $\Re\mu<1+k\frac{d}{2}$. For $\nu=\nu_k$ we have $\mu=\frac{p-e+1}{2}-\nu_k=\frac{kd-e+1}{2}$ which is less than $1+k\frac{d}{2}$ since $e\geq0$. Hence
$$ T_k\psi_{\nu_k} = \const\times\psi_{-\nu_k}. $$
Now, $\tilde I(\nu_k)_\Kfinite$ is generated by $\psi_{\nu_k}$. Further, the map $T_k$ is $\frakg$-intertwining by Proposition~\ref{prop:FTpicturealgebraaction} and the fact that the Bessel operator is symmetric with respect to the measure $\td\mu_k$ (see Theorem~\ref{thm:BesselSymmetricOnOrbits}). Hence, $T_k$ is defined on the whole $\tilde I(\nu_k)_\Kfinite$ and its image is $\tilde I_0(-\nu_k)_\Kfinite=\td\tilde\pi_{-\nu_k}(\calU(\frakg))\psi_{-\nu_k}$. That its kernel is equal to $\tilde I_0(\nu_k)_\Kfinite$ is clear by the definition of $T_k$. Finally, $T_k$ extends to $\tilde I(\nu_k)$ by the Casselman--Wallach Globalization Theorem.
\end{proof}

\begin{remark}
In the case where $P$ and $\overline P$ are conjugate, the intertwiner $T_k$ simply is a regularization of the standard family of Knapp--Stein intertwining operators as observed in \cite{BSZ06}.
\end{remark}

\appendix

\section{K-Bessel functions on symmetric cones}\label{app:KBesselFunctions}

We recall the definition and the differential equation of the K-Bessel function on a symmetric cone and prove some integrability results. For more details we refer to \cite{Dib90}, \cite[XVI.\,\S\,3]{FK94} or \cite{Moe13}.

Let $A$ be a Euclidean Jordan algebra of dimension $n$ and rank $r$ (see \cite{FK94} for details). Denote by $\tr$ and $\Delta$ its Jordan algebra trace and determinant, and let $(x|y)=\tr(xy)$ be the trace form. Let $\Omega$ by the symmetric cone in $A$ which is the interior of the cone of squares, or equivalently the connected component of the invertible elements containing the identity element $e$.

For $\lambda\in\CC$ the K-Bessel function $\calK_\lambda$ on $\Omega$ is defined by
\begin{equation}
 \calK_\lambda(x) = \int_\Omega e^{-\tr(u^{-1})-(x|u)}\Delta(u)^{\lambda-\frac{2n}{r}}\,\td u = \int_\Omega e^{-\tr(v)-(x|v^{-1})}\Delta(v)^{-\lambda}\,\td v, \qquad x\in\Omega.\label{eq:IntFormulaKBessel1}
\end{equation}
Evaluated at a square the K-Bessel function can be expressed as
\begin{equation}
 \calK_\lambda(x^2) = \Delta(x)^{\frac{n}{r}-\lambda}\int_\Omega e^{-(x|u+u^{-1})}\Delta(u)^{\lambda-\frac{2n}{r}}\,\td u = \Delta(x)^{\frac{n}{r}-\lambda}\int_\Omega e^{-(x|v+v^{-1})}\Delta(v)^{-\lambda}\,\td v.\label{eq:IntFormulaKBessel2}
\end{equation}
All integrals converge for any $\lambda\in\CC$ and $x\in\Omega$ and we have $\calK_\lambda(x)>0$. If $\calB_\lambda$ is the Bessel operator on $A$ then $\calK_\lambda$ solves the differential equation (see e.g \cite[Proposition~7.2]{Dib90})
$$ \calB_\lambda\calK_\lambda(x) = \calK_\lambda(x)\cdot e, $$
where $e$ is the unit element of $A$. Choose a Jordan frame $e_1,\ldots,e_r\in A$ such that $e=e_1+\cdots+e_r$ and let
$$ A = \bigoplus_{1\leq i\leq j\leq r} A_{ij} $$
be the corresponding Peirce decomposition. Write $K_\lambda$ for the radial part of $\calK_\lambda$, i.e.,
\[
	K_\lambda(t_1,\ldots,t_r)=\calK_\lambda(t_1e_1+\cdots+t_re_r), \qquad t_1,\ldots,t_r>0.
\]
According to Proposition~\ref{prop:radialBessel} (cf.\ \cite[XV.2.8]{FK94}), $K_\lambda$ satisfies the differential equation
\begin{equation}\label{eq:DiffEqKBesselRadial}
	\calB_\lambda^iK_\lambda(t) = K_\lambda(t) \qquad \forall\,1\leq i\leq r,
\end{equation}
where
\[
	\calB_\lambda^i=t_i\frac{\partial^2}{\partial t_i^2}
			+\left(\lambda-(r-1)\frac{d}{2}\right)\,\frac{\partial}{\partial t_i}\\
			+\frac{d}{2}\sum_{j\neq i}\frac{1}{t_i-t_j}
				\left(t_i\frac{\partial}{\partial t_i}-t_j\frac{\partial}{\partial t_j}\right)
\]
and $d=\dim A_{ij}$ for any $i<j$.

\begin{lemma}\label{lem:L1L2KBessel}
We have
$$ \int_\Omega |\calK_\lambda(x)|^2 \Delta(x)^\mu\,\td x < \infty \qquad \Leftrightarrow \qquad \begin{cases}\mu>-1\qquad\mbox{and}\\\mu-2\lambda>-3-(r-1)d,\end{cases} $$
and
$$ \int_\Omega \calK_\lambda(x) \Delta(x)^\mu\,\td x < \infty \qquad \Leftrightarrow \qquad \begin{cases}\mu>-1\qquad\mbox{and}\\\mu-\lambda>-2-(r-1)\frac{d}{2},\end{cases} $$
where $dx$ is a Lebesgue measure on the open set $\Omega\subseteq A$. The latter integral evaluates to
$$ \int_\Omega\calK_\lambda(x)\Delta(x)^\mu\,\td x = \Gamma_{r,d}(\mu+\tfrac{n}{r})\Gamma_{r,d}(\mu-\lambda+\tfrac{2n}{r}), $$
where $\Gamma_{r,d}$ denotes the Gamma function of the symmetric cone $\Omega$.
\end{lemma}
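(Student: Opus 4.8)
The plan is to reduce both integrals, via Tonelli and the integral representation \eqref{eq:IntFormulaKBessel1}, to two classical integrals on the symmetric cone $\Omega$: the \emph{Gindikin Gamma integral} $\int_\Omega e^{-\tr(v)}\Delta(v)^{s-\frac nr}\,\td v=\Gamma_{r,d}(s)$ and its Laplace-transform companion $\int_\Omega e^{-(x|p)}\Delta(x)^{s-\frac nr}\,\td x=\Gamma_{r,d}(s)\Delta(p)^{-s}$ for $p\in\Omega$ (see \cite[Ch.~VII]{FK94}), both convergent precisely for $\Re s>(r-1)\frac d2$. Because $A$ is Euclidean we have $\frac nr=1+(r-1)\frac d2$, so the Gindikin threshold $\Re s>(r-1)\frac d2$ translates into the exponent condition $s-\frac nr>-1$; this is the source of every inequality below. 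Throughout, all integrands are nonnegative (for the modulus versions), so Tonelli justifies every interchange and converts finiteness into an honest ``if and only if'': divergence at any stage forces divergence of the whole.

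For the linear integral I would insert \eqref{eq:IntFormulaKBessel1}, written as $\calK_\lambda(x)=\int_\Omega e^{-\tr(v)-(x|v^{-1})}\Delta(v)^{-\lambda}\,\td v$, and swap the order of integration. The inner $x$-integral is the Laplace integral with $p=v^{-1}$ and $s=\mu+\frac nr$, giving $\Gamma_{r,d}(\mu+\frac nr)\Delta(v)^{\mu+\frac nr}$ and requiring $\mu>-1$. Substituting back, the residual $v$-integral $\int_\Omega e^{-\tr(v)}\Delta(v)^{\mu-\lambda+\frac nr}\,\td v$ is a Gindikin Gamma integral with $s=\mu-\lambda+\frac{2n}r$, equal to $\Gamma_{r,d}(\mu-\lambda+\frac{2n}r)$ and convergent iff $\mu-\lambda>-2-(r-1)\frac d2$. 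This yields simultaneously the value and the two stated conditions.

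For the $L^2$ integral I would first note $\overline{\calK_\lambda}=\calK_{\overline\lambda}$ (the kernel is real), so $|\calK_\lambda|^2=\calK_\lambda\calK_{\overline\lambda}$ and only $\Re\lambda$ enters; writing both factors as in \eqref{eq:IntFormulaKBessel1} and applying the Laplace integral to the inner $x$-integral (again forcing $\mu>-1$) produces the double integral $\Gamma_{r,d}(\mu+\frac nr)\int_\Omega\int_\Omega e^{-\tr(v)-\tr(w)}\Delta(v)^{-\lambda}\Delta(w)^{-\overline\lambda}\,\Delta(v^{-1}+w^{-1})^{-(\mu+\frac nr)}\,\td v\,\td w$. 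The decisive step is the change of variables $w=P(v^{1/2})w'$, using $P(v^{1/2})e=v$, $\Delta(P(a)b)=\Delta(a)^2\Delta(b)$, the Jacobian $\Delta(v)^{n/r}$, and $\tr(P(v^{1/2})w')=(v|w')$; this turns $v^{-1}+w^{-1}$ into $P(v^{-1/2})(e+(w')^{-1})$, hence $\Delta(v^{-1}+w^{-1})=\Delta(v)^{-1}\Delta(e+(w')^{-1})$, and after collecting all powers of $\Delta(v)$ the outer integral acquires the weight $\Delta(v)^{\mu-2\Re\lambda+\frac{2n}r}$. Its convergence near $v=0$ is again governed by the Gindikin threshold and gives exactly the stated condition $\mu-2\lambda>-3-(r-1)d$ (only $\Re\lambda$ enters, $\lambda$ being real in the applications, and $\frac{2n}r=2+(r-1)d$), while the exponential $e^{-\tr(v)}$ makes the region $v\to\infty$ harmless, matching the absence of any upper bound on $\mu$.

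The main obstacle is the inner $w'$-integral $\Phi(v)=\int_\Omega e^{-(v|w')}\Delta(w')^{-\overline\lambda}\Delta(e+(w')^{-1})^{-(\mu+\frac nr)}\,\td w'$, which still depends on $v$ and need not stay bounded as $v\to0$: its large-$w'$ tail loses its cutoff and, when $\Re\lambda\le\frac nr$, contributes a boundary singularity of order $\Delta(v)^{\Re\lambda-\frac nr}$. I would therefore split $\Phi$ into a bulk part (bounded on $\Omega$) and this tail part, estimate each by the Laplace formula, and check that feeding both back into the outer weight never yields a condition stronger than the two asserted --- concretely, that when $\Re\lambda\le\frac nr$ the second inequality is automatically implied by $\mu>-1$, and when $\Re\lambda>\frac nr$ it is the binding one. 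The rank-one case, where $\calK_\lambda$ is (up to an explicit power) the classical Macdonald function $K_{1-\lambda}$, provides a direct asymptotic check that the thresholds $\mu>-1$ and $\mu-2\lambda>-3$ are sharp and together correct.
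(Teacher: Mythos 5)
Your treatment of the linear integral is complete and coincides with the paper's argument: Tonelli, the Laplace formula in $x$ (producing $\Gamma_{r,d}(\mu+\tfrac{n}{r})$ and the condition $\mu>-1$), then a Gindikin Gamma integral in $v$ (producing $\Gamma_{r,d}(\mu-\lambda+\tfrac{2n}{r})$ and the second condition), with nonnegativity of all integrands turning the thresholds into an equivalence and giving the stated value.

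The $L^2$ half, however, has a genuine gap. After the substitution $w=P(v^{1/2})w'$ you stop one exact integration short: you keep the inner integral $\Phi(v)=\int_\Omega e^{-(v|w')}\Delta(w')^{-\lambda}\Delta(e+(w')^{-1})^{-(\mu+\frac{n}{r})}\,\td w'$ as an unevaluated function of $v$ and propose to control its singularity at $v=0$ by a bulk/tail splitting, deferring the decisive convergence analysis to a ``check'' that is never carried out. Moreover, the quantitative claims sketched for that check are off: the tail $\int e^{-(v|w')}\Delta(w')^{-\Re\lambda}\,\td w'$ has the pure-power singularity $\Delta(v)^{\Re\lambda-\frac{n}{r}}$ only when $\Re\lambda<1$ (the Laplace formula requires $\tfrac{n}{r}-\Re\lambda>(r-1)\tfrac{d}{2}$), and the threshold below which $\Phi$ actually blows up at $v=0$ is $\Re\lambda\leq 1+(r-1)d=\tfrac{2n}{r}-1$, not $\Re\lambda\leq\tfrac{n}{r}$; for intermediate $\lambda$ the singularity is not a pure power of $\Delta(v)$ at all. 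None of this is needed. Since $e^{-\tr(v)-(v|w')}=e^{-(v|e+w')}$, the $v$-integral is itself a Laplace integral and evaluates exactly to $\Gamma_{r,d}(\mu-2\lambda+\tfrac{3n}{r})\,\Delta(e+w')^{-(\mu-2\lambda+\frac{3n}{r})}$, convergent precisely under the second stated condition $\mu-2\lambda>-3-(r-1)d$; what remains is the explicit Beta-type integral $\int_\Omega\Delta(e+w')^{2\lambda-2\mu-\frac{4n}{r}}\Delta(w')^{\mu-\lambda+\frac{n}{r}}\,\td w'$, whose convergence conditions at $0$ and at $\infty$ are elementary and are implied by the two stated inequalities. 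This is the paper's route; with it the ``if and only if'' follows from Tonelli exactly as in your linear case, with no estimates, splittings, or rank-one sanity checks required.
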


\begin{proof}
We make use of the following integral formula (see \cite[Proposition VII.1.2]{FK94}):
$$ \int_\Omega e^{-(x|y)}\Delta(x)^{\lambda-\frac{n}{r}}\,\td x = \Gamma_{r,d}(\lambda)\Delta(y)^{-\lambda}, $$
where the integral converges (absolutely) if and only if $\lambda>(r-1)\frac{d}{2}$. We further need the identites (see \cite[Proposition III.4.2 and Lemma X.4.4]{FK94})
$$ \Delta(u^{-1}+v^{-1}) = \Delta(u+v)\Delta(u)^{-1}\Delta(v)^{-1}, \qquad \Delta(P(x)y)=\Delta(x)^2\Delta(y). $$
Then we have
\begin{align*}
 & \int_\Omega |\calK_\lambda(x)|^2 \Delta(x)^\mu\,\td x\\
 ={}& \int_\Omega\int_\Omega\int_\Omega e^{-\tr(u)-\tr(v)-(x|u^{-1})-(x|v^{-1})}\Delta(u)^{-\lambda}\Delta(v)^{-\lambda}\Delta(x)^\mu\,\td u\,\td v\,\td x\\
 ={}& \Gamma_{r,d}(\mu+\tfrac{n}{r})\int_\Omega\int_\Omega e^{-\tr(u)-\tr(v)}\Delta(u^{-1}+v^{-1})^{-\mu-\frac{n}{r}}\Delta(u)^{-\lambda}\Delta(v)^{-\lambda}\,\td u\,\td v\\
 ={}& \Gamma_{r,d}(\mu+\tfrac{n}{r})\int_\Omega\int_\Omega e^{-\tr(u)-\tr(v)}\Delta(u+v)^{-\mu-\frac{n}{r}}\Delta(u)^{\mu-\lambda+\frac{n}{r}}\Delta(v)^{\mu-\lambda+\frac{n}{r}}\,\td u\,\td v\\
 ={}& \Gamma_{r,d}(\mu+\tfrac{n}{r})\int_\Omega\int_\Omega e^{-\tr(u)-\tr(v)}\Delta(e+P(u^{\frac{1}{2}})^{-1}v)^{-\mu-\frac{n}{r}}\Delta(u)^{-\lambda}\Delta(v)^{\mu-\lambda+\frac{n}{r}}\,\td u\,\td v\\
 ={}& \Gamma_{r,d}(\mu+\tfrac{n}{r})\int_\Omega\int_\Omega e^{-\tr(u)-(u|v')}\Delta(e+v')^{-\mu-\frac{n}{r}}\Delta(u)^{\mu-2\lambda+\frac{2n}{r}}\Delta(v')^{\mu-\lambda+\frac{n}{r}}\,\td u\,\td v'\\
 ={}& \Gamma_{r,d}(\mu+\tfrac{n}{r})\Gamma_{r,d}(\mu-2\lambda+\tfrac{3n}{r})\int_\Omega \Delta(e+v')^{2\lambda-2\mu-\frac{4n}{r}}\Delta(v')^{\mu-\lambda+\frac{n}{r}}\,\td v',
\end{align*}
where we have substituted $v=P(u^{\frac{1}{2}})v'$ with $\td v=\Delta(u)^{\frac{n}{r}}\,\td v'$. Note that we have used \begin{equation}
 \mu+\tfrac{n}{r} > (r-1)\tfrac{d}{2} \qquad \mbox{and} \qquad \mu-2\lambda+\tfrac{3n}{r} > (r-1)\tfrac{d}{2}.\label{eq:L2Condition1}
\end{equation}
The final integral is finite if and only if
\begin{equation}
 \mu-\lambda+\tfrac{2n}{r}>(r-1)\tfrac{d}{2} \qquad \mbox{and} \qquad \lambda-\mu-\tfrac{3n}{r}<-1-(r-1)d.\label{eq:L2Condition2}
\end{equation}
Note that \eqref{eq:L2Condition1} is equivalent to $\mu>-1$ and $\mu-2\lambda>-3-(r-1)d$. Further, \eqref{eq:L2Condition2} is implied by \eqref{eq:L2Condition1} which proves the first claim. For the second claim we compute similarly
\begin{align*}
 \int_\Omega \calK_\lambda(x) \Delta(x)^\mu\,\td x &= \int_\Omega\int_\Omega e^{-\tr(u)-(x|u^{-1})}\Delta(u)^{-\lambda}\Delta(x)^\mu\,\td u\,\td x\\
 &= \Gamma_{r,d}(\mu+\tfrac{n}{r})\int_\Omega e^{-\tr(u)}\Delta(u)^{\mu-\lambda+\frac{n}{r}}\,\td u
\end{align*}
which is finite if and only if
$$ \mu+\tfrac{n}{r}>(r-1)\tfrac{d}{2} \qquad \mbox{and} \qquad \mu-\lambda+\tfrac{2n}{r}>(r-1)\tfrac{d}{2}. $$
These conditions are equivalent to $\mu>-1$ and $\mu-\lambda>-2-(r-1)\frac{d}{2}$ which shows the second claim. The last claim follows by using the integral formula \cite[Proposition VII.1.2]{FK94} once more.
\end{proof}

\providecommand{\bysame}{\leavevmode\hbox to3em{\hrulefill}\thinspace}
\providecommand{\MR}{\relax\ifhmode\unskip\space\fi MR }
\providecommand{\MRhref}[2]{%
  \href{http://www.ams.org/mathscinet-getitem?mr=#1}{#2}
}
\providecommand{\href}[2]{#2}

\end{document}